\documentclass[10pt]{amsart}
\setlength\topmargin{-10mm}
\setlength\textheight{240mm}
\setlength\oddsidemargin{-5mm}
\setlength\evensidemargin{-5mm}
\setlength\textwidth{180mm}
\usepackage{marginnote}
\usepackage[normalem]{ulem}
\DeclareRobustCommand{\sout}{\bgroup\markoverwith{\textcolor{red}{\rule[.5ex]{2pt}{0.4pt}}}\ULon}

\usepackage{enumitem}

\usepackage{mathtools, stmaryrd}

\usepackage{xparse} \DeclarePairedDelimiterX{\Iintv}[1]{\llbracket}{\rrbracket}{\iintvargs{#1}}
\NewDocumentCommand{\iintvargs}{>{\SplitArgument{1}{,}}m}
{\iintvargsaux#1} %
\NewDocumentCommand{\iintvargsaux}{mm} {#1\mkern1.5mu,\mkern1.5mu#2}

\usepackage{graphicx}
\usepackage{color}
\usepackage[colorlinks=true]{hyperref}
\usepackage{mathtools}

\definecolor{RedOrange}{cmyk}{ 0, 0.77, 0.87, 0}
\definecolor{RoyalPurple}{cmyk}{ 0.84, 0.53, 0, 0}
\definecolor{YellowGreen}{cmyk}{ 0.44, 0, 0.74, 0}
\definecolor{Fuchsia}{cmyk}{ 0.47, 0.91, 0, 0.08}
\definecolor{Blue}{cmyk}{ 0.84, 0.53, 0, 0}
\definecolor{BlueViolet}{cmyk}{ 0.84, 0.53, 0, 0}
\definecolor{Black}{cmyk}{ 0.75, 0.68, 0.67, 0.9}
\usepackage{xcolor}
\usepackage{verbatim}
\usepackage{amsmath}
\usepackage{amssymb, mathrsfs}
\usepackage{amsbsy}
\usepackage{bbm}
\usepackage{amscd}
\usepackage{amsthm}

\usepackage[english]{babel}
\usepackage{todonotes}
\usepackage[T1]{fontenc}

\usepackage{color}

\newcommand{\lf}{\lfloor}
\newcommand{\rf}{\rfloor}
\newcommand{\R}{\mathbb{R}}

\newcommand{\N}{\mathbb{N}}

\newcommand{\e}{\varepsilon}

\newcommand{\E}{\mathbb{E}}
\newcommand{\Z}{\mathbb{Z}}

\newcommand{\pp}{\mathbb{P}}
\renewcommand{\P}{\mathbb{P}}
\newcommand{\kD}{\mathcal{D}}
\newcommand{\eps}{\epsilon}
\newcommand{\kA}{\mathcal{A}}
\newcommand{\kB}{\mathcal{B}}
\newcommand{\kC}{\mathcal{C}}

\newcommand{\kR}{\mathcal{R}}
\newcommand{\kO}{\mathcal{O}}

\newcommand{\kF}{\mathcal{F}}

\newcommand{\kM}{\mathcal{M}}
\newcommand{\kE}{\mathcal{E}}
\newcommand{\kH}{\mathcal{H}}
\newcommand{\kI}{\mathcal{I}}
\newcommand{\kS}{\mathcal{S}}
\newcommand{\kN}{\mathcal{N}}

\newcommand{\kL}{\mathcal{L}}

\newcommand{\rmT}{\mathrm{T}}
\newcommand{\rmB}{\mathrm{B}}

\newcommand{\dd}{\mathrm{d}}

\newcommand{\by}{\boldsymbol{y}}
\newcommand{\bb}{\boldsymbol{b}}

\newcommand{\kJ}{\mathcal{J}}

\newcommand{\tf}{\tilde{f}}

\newcommand{\fhd}{f^{\textrm{hd},\kJ}}
\newcommand{\fsd}{f^{\textrm{sd},\kJ}}

\DeclareMathOperator{\1}{\mathbbm{1}}

\newcommand{\lin}{\left[\kern-0.15em\left[}
\newcommand{\rin} {\right]\kern-0.15em\right]}
\newcommand{\linf}{[\kern-0.15em [}
\newcommand{\rinf} {]\kern-0.15em ]}
\newcommand{\ilin}{\left]\kern-0.15em\left]}
\newcommand{\irin} {\right[\kern-0.15em\right[}

\def\be#1{\begin{equation*}#1\end{equation*}}
\def\ben#1{\begin{equation}#1\end{equation}}
\def\bea#1{\begin{eqnarray*}#1\end{eqnarray*}}
\def\bean#1{\begin{eqnarray}#1\end{eqnarray}}
\def\al#1{\begin{align*}#1\end{align*}}
\def\aln#1{\begin{align}#1\end{align}}

\usepackage{constants}

\newconstantfamily{c}{symbol=c}

\newconstantfamily{a}{symbol=\alpha}

\newcommand{\secno}[1]{\thesection.\arabic{#1}}
\newconstantfamily{kE}{
symbol=\mathcal{E},
format=\secno,
reset={section}
}


\renewcommand{\tilde}{\widetilde}

\DeclareMathOperator{\T}{T}

\newtheorem{lem}{Lemma}[section]

\newtheorem{prop}[lem]{Proposition}
\newtheorem{theo}[lem]{Theorem}

\newtheorem{cor}[lem]{Corollary}

\newtheorem {rem}[lem] {Remark}

\usepackage{color}
\definecolor{lilas}{RGB}{182, 102, 210}

\newcommand{\NK}{\color{lilas}}

\numberwithin{equation}{section}


\def\be#1{\begin{equation*}#1\end{equation*}}
\def\ben#1{\begin{equation}#1\end{equation}}
\def\bea#1{\begin{eqnarray*}#1\end{eqnarray*}}
\def\bean#1{\begin{eqnarray}#1\end{eqnarray}}

\begin{document}
\title[Upper tail large deviation for 1D-frog model]{Upper tail large deviation for the one-dimensional frog model}

\author[V.~H.~CAN]{Van Hao CAN}
\address[V.~H.~CAN]{Institute of Mathematics, Vietnam Academy of Science and Technology, 18 Hoang Quoc Viet, Cau Giay, Hanoi, Vietnam.}
\email{cvhao@math.ac.vn}

\author[N.~KUBOTA]{Naoki KUBOTA}
\address[N.~KUBOTA]{College of Science and Technology, Nihon University, Chiba 274-8501, Japan.}
\email{kubota.naoki08@nihon-u.ac.jp}

\author[S.~NAKAJIMA]{Shuta NAKAJIMA}
\address[S.~NAKAJIMA]{Graduate School of Science and Technology, Meiji University, Kanagawa 214-8571, Japan.}
\email{njima@meiji.ac.jp}


\begin{abstract}
In this paper, we study the upper tail large deviation for the one-dimensional frog model. In this model,  sleeping and active frogs are assigned to  vertices on $\Z$.   While  sleeping frogs do not move, the active ones move as  independent  simple random walks and activate any  sleeping frogs. 
The main object of interest in this model is the asymptotic behavior of the first passage time $\rmT(0,n)$, which is the time needed to activate the frog at the vertex $n$, assuming there is only one active frog at $0$ at the  beginning.  While the law of large numbers and central limit theorems have been well established, the intricacies of large deviations remain elusive. Using renewal theory,  B\'erard and Ram\'irez \cite{BR} have pointed out a slowdown phenomenon where the probability that the first passage time $\rmT(0,n)$ is significantly larger than its expectation decays sub-exponentially and lies between $\exp(-n^{1/2+o(1)})$ and $\exp(-n^{1/3+o(1)})$. In this article, using a novel covering process approach, we confirm that $1/2$ is the correct exponent, i.e., the rate of upper large deviations is given by $n^{1/2}$. Moreover, we obtain an explicit rate function that is characterized by properties of Brownian motion and is strictly concave.
\end{abstract}

\keywords{frog model, large deviations, rate function}
\subjclass[2010]{Primary 60K37; secondary 60K35; 82A51; 82D30}

\maketitle


\section{Introduction} 

In this paper, we treat the interacting particle system consisting of ``active'' and ``sleeping'' states as follows:
First of all, we place infinitely many particles in some space, according to a deterministic rule.
Active particles can randomly move around in the space and sleeping particles do not move at first. However, sleeping particles become active and start moving around as soon as they are touched by active particles.
Initially, only one particle is active and the others are sleeping.
When the system starts, the first active particle gradually generates active particles by touching sleeping ones, and they propagate across space, with time.

We call the interacting particle system above the \emph{frog model} and regard particles as frogs in the present paper.
However, the frog model has several names circumstantially.
The frog model was originally introduced in the image of information spreading (see the introduction of \cite{AlvMacPop02}):
every active frog has some information and shares it with sleeping frogs touched by active ones.
In \cite[Section~{2.4}]{TelWor99} (which is the first published article on the frog model), the frog model is called the egg model.
It is said that R.~Durrett coined the name ``frog model'' proposing a discrete space-time version (see the introduction of \cite{AlvMacPop02} again).
On the other hand, a continuous-time version of the frog model is interpreted as a combustion phenomena described by a system composed of two types of frogs.
In this case, the frog model is often called the ``combustion model'' or ``the reaction $A+B \to 2A$'' (see for instance \cite{RamSid04}).
One of interest object of the frog model is the diffusion speed of active particles, and it has been investigated in the view of the probabilistic theory for several decades: the law of large numbers, the central limit theorem and the large deviation principle, which provide the asymptotic behavior, the fluctuation around the average behavior, and the decay rate of the tail probability for the diffusion speed, respectively.

The study of the diffusion speed has mainly made progress in the case where an underlying space is the $d$-dimensional lattice $\Z^d$ ($d \geq 1$) and each site of $\Z^d$ initially has one frog (the genetic active frog is put on the origin $0$ of $\Z^d$).
We hereafter focus on this frog model.
Alves et al.~~\cite{AlvMacPop02} and Ram\'{\i}rez and Sidoravicius~\cite{RamSid04} completely solved the law of large numbers for the diffusion speed in all dimensions and both discrete and continuous-time settings.
On the other hand, the central limit theorem and the large deviation principle on $\Z$ are studied in \cite{ComQuaRam07} and \cite{BerRam10}, respectively.

This paper deals with a part of the large deviation principle for the diffusion speed in the discrete-time frog model on $\Z$.
B\'{e}rard and Ram\'{\i}rez~\cite{BerRam10} investigated this topic in the continuous-time setting.
In particular, they observed the so-called \emph{slowdown phenomenon} for the propagation of active frogs by giving some partial estimates for the upper tail large deviation probability for the diffusion speed, which is the probability that the diffusion speed deviates upward from its typical behavior (see the discussion above Theorem~\ref{mth1} for more details).
The argument used in \cite{BerRam10} might work for the discrete-time setting as well.
However, in this paper, we take a different approach using a novel energy coming from the one-dimensional Brownian motion (see \eqref{eq:def_energy}), and completely solve the upper tail large deviation probability for the diffusion speed in the discrete-time setting.

\subsection{The main result}
We first state the dynamics of frogs and define the diffusion speed precisely.
Let $d \geq 1$.
The dynamics of frogs are given by independent simple, symmetric random walks on $\Z^d$ (we drop the adjective ``symmetric'' below, as is customary).
For each $x \in \Z^d$, write $(S_n^x)_{n=0}^\infty$ for these random walks on $\Z^d$ with $S_0^x=x$.
This describes the trajectory of the frog initially sitting on $x$ after becoming active.
For any $x,y \in \Z^d$, the \emph{first passage time} from $x$ to $y$ is defined by
\begin{align*}
    \T(x,y):=\inf\left\{ \sum_{i=0}^{k-1}t(x_i,x_{i+1}):
    \begin{minipage}{13em}
        $k \geq 1$ and $x_0,x_1,\dots,x_k \in \Z^d$\\
        with $x_0=x$ and $x_k=y$
    \end{minipage}\right\},
\end{align*}
where
\begin{align*}
    t(x_i,x_{i+1}):=\inf\bigl\{ n \geq 0:S_n^{x_i}=x_{i+1} \bigr\}.
\end{align*}
The main object of interest in this paper is the first passage time $\T(0,\cdot)$, which represents the diffusion speed at which active frogs propagate from the origin $0$.
Let us now explain the dynamics of our frog model and the intuitive meaning of the first passage time $\T(0,y)$:
First, we put the particle on all sites of $\Z^d$.
The behavior of the frog sitting on a site $x$ is controlled by the simple random walk $S_\cdot^x$, but not all particles move around from the beginning.
At first, the only frog sitting on $0$ are active and perform a simple random walk.
On the other hand, the other frogs are sleeping and do not move.
Each sleeping frog becomes active and starts to perform a simple random walk once it is touched by an active frog.
When we repeat this procedure for the remaining sleeping frogs, $\T(0,y)$ represents the minimum time at which an active frog reaches $y$.
It is clear that the first passage time is subadditive in the following sense:
\begin{align*}
    \T(x,z) \leq \T(x,y)+\T(y,z),\qquad x,y,z \in \Z^d.
\end{align*}
Furthermore, Alves et al.~\cite[Section~3]{AlvMacPop02} showed the integrability of the first passage time $\T(x,y)$. Combining these with the subadditive ergodic theorem (see for instance \cite[Theorem~{6.4.1}]{Dur19_book}) yields the following asymptotic behavior of the first passage time:
there exists a (nonrandom) norm $\mu(\cdot)$ on the $d$-dimensional Euclidean space $\R^d$ such that $\P$-a.s.,
\begin{align}\label{eq:shape_thm}
    \lim_{|y|_1 \to \infty} \frac{\T(0,y)-\mu(y)}{|y|_1}=0,
\end{align}
where $|\cdot|_1$ denotes the $\ell^1$-norm on $\R^d$.
Furthermore, $\mu(\cdot)$ is invariant under permutations of the coordinates and under reflections in the coordinate hyperplanes.
The norm $\mu(\cdot)$ is called the \emph{time constant} and $\T(0,y)$ asymptotically behaves like $\mu(y)$ as $|y|_1 \to \infty$.

From now on, we assume $d=1$ and set $\mu:=\mu(1)$.
Before stating our main result, we shall explain the motivation for the present work.
As stated at the beginning of this section, B\'{e}rard and Ram\'{\i}rez~\cite{BerRam10}\footnote{Actually, \cite{BerRam10} adopts a little different setting from the one-particle-per-site frog model on $\Z$ as follows:
Initially, every site of the left of $0$ has a random number of active frogs, and every site on non-negative integers has a common fixed number of sleeping frogs.
Although it seems that arguments used in \cite{BerRam10} also run along the same lines for the discrete-time, one-particle-per-site frog model on $\Z$, we use a completely different approach in the present article to complete the upper tail large deviation estimate.
} studied large deviation principles for the continuous-time frog model on $\Z$.
In particular, they observed that the slowdown phenomenon for the propagation of active frogs, i.e., the upper tail large deviation probability for the first passage time decays slower than exponential~\cite[Theorem~2]{BerRam10}:
for any $\xi>0$,
\begin{align}\label{eq:BR_slowdown}
    e^{-t^{1/2+o(1)}}
    \leq \P\bigl( \T(0,\lfloor t \rfloor) \geq (\mu+\xi)t \bigr)
    \leq e^{-t^{1/3+o(1)}}
    \qquad \text{as } t \to \infty,
\end{align}
where $\lfloor t \rfloor$ is the greatest integer less than or equal to $t$.
However, this estimate is not optimal, and hence the present work is motivated by the desire to complete the upper tail large deviation estimate.

Let us prepare some notation to state our main result. First, for any $\xi>0$, denote by $\kC(\xi)$ the set of all functions $f:\R \to [0,\infty)$ satisfying the following three conditions:
\begin{itemize}
    \item $f$ is non-increasing and non-decreasing over $(-\infty,0]$ and $[0,\infty)$, respectively;
    \item $\lim_{x \to 0} f(x)=0$ holds;
    \item $\|f\|_\infty:=\sup_{x \in \R}f(x) \leq \xi$ and $\lim_{x \to \infty} f(x)=\xi$.
\end{itemize}
Next, for each $x \in \R$, let $\P_x^{\rm BM}$ be the law of one-dimensional Brownian motion starting at $x$.
In addition, write $(B_t)_{t \geq 0}$ for the trajectory of Brownian motion, and $\tau_y:=\inf\{ t \geq 0:B_t=y \}$ stands for the hitting time to $y \in \R$.
Then, for any $f \in \kC(\xi)$, \emph{the energy} of $f$ is defined by
\begin{align}\label{eq:def_energy}
    E(f):=-\int_\R \log \pp^{\rm BM}_x\bigl( \tau_y\geq f(y)-f(x) \quad \forall y\in \R \bigr) \,\dd x.
\end{align}
In addition, set for any $\xi>0$,
\begin{align}\label{deor}
    r(\xi):=\inf\bigl\{ E(f):f \in \kC(\xi) \bigr\},
\end{align}
which is the minimum energy over $\kC(\xi)$.

\begin{rem}
As we will see later (Step~2 in Section~\ref{subsect:sketch}), the key of the present work is to observe a localization phenomenon of the upper tail large deviation event $\{ \T(0,n) \geq (\mu+\xi)n \}$.
More precisely, $\{ \T(0,n) \geq (\mu+\xi)n \}$ is mainly affected by frogs siting on a bad interval, whose length has order $\sqrt{n}$ but whose passage time is more than $\xi n$.
The ensemble $\kC(\xi)$ can be referred as the set of functions recording all possible profiles of the space-time rescaled first passage time on a bad interval whose left endpoint is $0$, i.e., $f(u)=\T(0,u\sqrt{n})/n$ for $u \in \R$.
Moreover, as explained Section~\ref{Section: 4 Heuristic}, given $f\in \kC(\xi)$, the event that the scaled passage time $\T(x\sqrt{n},y\sqrt{n})/n$ is approximated by the $(f(y)-f(x))_+$ has probability roughly equal to $e^{-\sqrt{n}E(f)}$.
Hence, we refer to $E(f)$ as the energy of $f$, following the custom of statistical mechanics.
\end{rem}


We are now in a position to state our main result.
For the discrete-time, one-particle-per-site frog model on $\Z$, the following theorem completely provides the upper tail large deviation.

\begin{theo}\label{mth1}
Let $r_*:=r(1)$ and $\mu:=\mu(1)$.
Then, $r_*$ is positive and finite, and the first passage time of the frog model on $\Z$ satisfies the upper tail large deviation with speed $\sqrt{n}$ and rate function $\xi \longmapsto r_*\sqrt{\xi}$ ($\xi>0$).
More precisely,
for all $\xi>0$,
\begin{align*}
    \lim_{n \rightarrow \infty} \frac{1}{\sqrt{n}} \log\P\bigl( {\rm T}(0,n) \geq (\mu+\xi) n \bigr)
    = -r_* \sqrt{\xi}.
\end{align*}
\end{theo}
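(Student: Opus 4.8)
The plan is to split the statement into three independent pieces: the \emph{scaling identity} $r(\xi)=r_*\sqrt\xi$ together with $0<r_*<\infty$; the lower bound $\liminf_{n}\frac1{\sqrt n}\log\P(\T(0,n)\ge(\mu+\xi)n)\ge-r(\xi)$; and the matching upper bound. Given the first piece, the other two yield $\lim_n\frac1{\sqrt n}\log\P(\T(0,n)\ge(\mu+\xi)n)=-r(\xi)=-r_*\sqrt\xi$, which is the theorem.

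\textbf{Step 1: scaling and $0<r_*<\infty$.} For $\xi>0$ define $T_\xi\colon\kC(\xi)\to\kC(1)$ by $(T_\xi f)(x):=\xi^{-1}f(\sqrt\xi\,x)$; this is a bijection. Brownian scaling says that if $(B_t)$ is a Brownian motion then so is $(\sqrt\xi\,B_{t/\xi})$, so the hitting time of $\sqrt\xi\,y$ by the latter equals $\xi$ times the hitting time of $y$ by $(B_t)$; hence $\P^{\mathrm{BM}}_{\sqrt\xi\,x}(\tau_{\sqrt\xi\,y}\ge f(\sqrt\xi\,y)-f(\sqrt\xi\,x)\ \forall y)=\P^{\mathrm{BM}}_{x}(\tau_{y}\ge (T_\xi f)(y)-(T_\xi f)(x)\ \forall y)$. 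Substituting $x\mapsto\sqrt\xi\,x$ and $y\mapsto\sqrt\xi\,y$ in \eqref{eq:def_energy} then gives $E(f)=\sqrt\xi\,E(T_\xi f)$, and taking infima yields $r(\xi)=\sqrt\xi\,r(1)=r_*\sqrt\xi$. Finiteness of $r_*$ follows by evaluating $E$ on the single unit step $f_0=\1_{[0,\infty)}\in\kC(1)$, where the constraint collapses to $\tau_0\ge1$, so $E(f_0)=-\int_{-\infty}^0\log\P^{\mathrm{BM}}_x(\tau_0\ge1)\,\dd x$, and the integrand is $O(\log(1/|x|))$ near $0$ and decays like $e^{-x^2/2}$ as $x\to-\infty$, hence is integrable. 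Positivity of $r_*$ is the one genuinely analytic point here: any $f\in\kC(1)$ must rise from $0$ to within $\tfrac14$ of its supremum $1$, and (again via Brownian scaling) forcing a Brownian motion not to cross such an accumulated ``barrier'' before a fixed time costs a positive amount of energy that can be bounded below uniformly in $f$; I would isolate this as a short lemma. (Note that any profile with a point of strict increase has infinite energy, so the infimum defining $r(\xi)$ is over pure-jump functions, which are approximated by finite step functions.)

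\textbf{Step 2: lower bound.} Fix small $\e>0$, choose $\xi'>\xi$ close to $\xi$, and (using Step 1) a finite step function $f=\sum_{i=1}^{m}h_i\1_{[a_i,\infty)}\in\kC(\xi')$ with $0<a_1<\dots<a_m$, $\sum_i h_i\ge(1-\e)\xi'$, and $E(f)\le r(\xi')+\e$. Place a \emph{bad interval} of width $\asymp\sqrt n$ at distance $o(n)$ from the origin, with barriers at the sites $\lfloor a_i\sqrt n\rfloor$, and let $G_n$ be the event that every frog native to a $K\sqrt n$-neighbourhood of this interval performs a random walk respecting the rescaled profile $f$ (the frog started near $x\sqrt n$ does not reach the level of the next barrier before the time prescribed by $f(\cdot)-f(x)$), and that no frog starting more than $K\sqrt n$ to the left reaches the first barrier before time $\xi'n$. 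On $G_n$, because in one dimension the activated set is always an interval, crossing the bad interval (including frogs that drift into it from the left, which are confined) costs at least $(\sum_i h_i)n(1-o(1))\ge(1-o(1))\xi' n$; moreover the frogs to the right of the interval are untouched, so by \eqref{eq:shape_thm} the remaining passage time to $n$ is at least $(1-o(1))\mu n$. Hence on $G_n$, for $n$ large, $\T(0,n)\ge(\mu+\xi)n$. Finally $G_n$ is an intersection of $\asymp\sqrt n$ events about \emph{independent} random walks, and by the local central limit theorem each factor converges after $\sqrt n$-rescaling to the corresponding Brownian confinement probability; multiplying, $\P(G_n)\ge\exp(-(1+\e)\sqrt n\,E(f))$. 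Thus $\liminf_n\frac1{\sqrt n}\log\P(\T(0,n)\ge(\mu+\xi)n)\ge-(1+\e)(r(\xi')+\e)$, and letting $\xi'\downarrow\xi$, $\e\downarrow0$ and using continuity of $\xi\mapsto r_*\sqrt\xi$ finishes.

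\textbf{Step 3: upper bound and the main obstacle.} This is the hard direction, and is where the covering process does the work. The goal is to show that the event $\{\T(0,n)\ge(\mu+\xi)n\}$ essentially forces a single ``bad interval'' of width $\asymp\sqrt n$ across which the passage time is $\approx\xi n$, the rest of $[0,n]$ being typical. Concretely: (a) coarse-grain $[0,n]$ into blocks of width $\asymp\sqrt n$ and, using the covering process together with subadditivity, bound $\T(0,n)$ from above by a sum of $\asymp\sqrt n$ (asymptotically independent) block passage times $\Delta_k$ with $\E[\Delta_k]\asymp\mu\sqrt n$; (b) prove the \emph{single-block upper tail} $\limsup_n\frac1{\sqrt n}\log\P(\Delta_1\ge cn)\le-r(c)$ for every $c>0$, by further coarse-graining space and time inside the block on scale $\sqrt n$: realising $\Delta_1\ge cn$ produces a rescaled profile $f\in\kC(c')$ with $c'\ge c$, and the covering process exhibits the event ``this profile is approximately realised'' as contained in an intersection of $\asymp\sqrt n$ events about independent random walks being confined, whose probability is at most $\exp(-(1-o(1))\sqrt n\,E(f))$, with the Brownian energy $E(f)$ emerging as the limiting Riemann sum of the log-confinement probabilities; summing over an $e^{o(\sqrt n)}$-net of profiles and using $E(f)\ge r(c')\ge r(c)$ gives the claim; (c) since $r(c)=r_*\sqrt c$ is concave, hence $r(c_1)+r(c_2)\ge r(c_1+c_2)$, the block sum $\sum_k\Delta_k$ is a heavy-tailed sum whose exceedance of $(\mu+\xi)n$ is cheapest via one big jump carrying essentially all of the excess $\xi n$ (the remaining $\asymp\sqrt n$ blocks staying within $\mu n+o(n)$ by the law of large numbers), giving $\P(\T(0,n)\ge(\mu+\xi)n)\le e^{o(\sqrt n)}\P(\Delta_1\ge(\xi-\e)n)\le\exp(-(1-o(1))r_*\sqrt{(\xi-\e)n})$, and $\e\downarrow0$ finishes. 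I expect the bottleneck to be step (b): the frog trajectories are not independent (a frog's path influences when others activate, hence which crossing constraints are in force), ``the front is slow'' is a priori an intersection over \emph{all} relay strategies, and one must match the discrete random-walk confinement probabilities to the continuous functional $E(f)$ uniformly in the profile. The covering process must be designed precisely so that it exposes the front's increments using genuinely independent randomness and turns ``the front fails to advance'' into an explicit confinement event for independent random walks, so that the relevant probability factorises and converges to the Brownian energy.
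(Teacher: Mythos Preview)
Your high-level plan aligns with the paper's: Brownian scaling gives $r(\xi)=r_*\sqrt\xi$ (Lemma~\ref{thm2}); the lower bound plants an explicit bad interval of width $M\sqrt n$ (Propositions~\ref{prop:ts2}(i) and~\ref{lem:tmxn}(i)); the upper bound localizes to a few bad intervals, applies a single-interval estimate, and exploits subadditivity of $\sqrt\cdot$ (Propositions~\ref{prop:ts2}(ii) and~\ref{lem:tmxn}(ii)). Two points need correction.

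\textbf{Localization (your step (a)).} You propose to chop $[0,n]$ into $\asymp\sqrt n$ blocks of width $\sqrt n$ and apply a one-big-jump principle to their (nearly independent) passage times, with ``the remaining $\asymp\sqrt n$ blocks staying within $\mu n+o(n)$ by the law of large numbers''. The paper does \emph{not} do this, and there is an obstruction: to make block passage times independent you must restrict each block's frogs to a window of width $O(\sqrt n)$, but restricting to a window that small inflates the tail by a factor $e^{\Theta(\sqrt n)}$ (Lemma~\ref{Task3} with $a\asymp\sqrt n$, $h\asymp n$), precisely the scale you are trying to resolve; and the LLN does not give concentration to error $e^{-\omega(\sqrt n)}$. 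The paper instead runs the concentration argument at the much finer scale $(\log n)^2$: intervals are coloured blue/red according as their passage time is below/above the square of their length, and an exponential-martingale bound (Lemma~\ref{lem:syi}) gives $\P(\sum_{\text{blue}}\T\ge(\mu+\delta)n)\le e^{-n^{2/3}}$. The red intervals are then aggregated via a hierarchical dyadic covering (Lemmas~\ref{lem:sti1d}, \ref{lem:sak}, \ref{lem:akl}) into at most a \emph{constant} number $M$ of intervals, separated by $M^3\sqrt n$. Only now are the intervals genuinely independent (restriction costs merely $e^{O(\sqrt n/M)}$), and the sum over $\kH^\delta_{m,n}$ in Proposition~\ref{prop:ts2}(ii) plays the role of your one-big-jump. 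In short: the covering process lives in step (a), not (b), and its purpose is to reduce from $\sqrt n$ blocks to $O(1)$ blocks. Your step (b) itself---discretize the profile on a $\sqrt n$-mesh, use independence of the individual frogs' walks, union-bound over polynomially many profiles---is exactly what the paper does in Section~\ref{subs:limsup1d}.

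\textbf{Step-function approximation and $r_*>0$.} Your parenthetical (``continuous increase forces infinite energy, so the infimum is over pure-jump functions, which are approximated by finite step functions'') is morally right, but the last clause hides real work: a pure-jump monotone function can have countably many jumps, and showing that finite step functions suffice for $\inf_{\kC(\xi)}E$ requires the multi-stage hard/soft deformation machinery of Section~\ref{sec:props} (Propositions~\ref{prop:truncation}--\ref{prop:smallheight}). This equality is precisely what your Step~2 needs to connect the explicit step-function construction to $r(\xi)$. Finally, the paper does \emph{not} prove $r_*>0$ directly from the variational problem; it first establishes the slowdown $\P(\T(0,n)\ge(\mu+\xi)n)\le e^{-c(\xi)\sqrt n}$ as a corollary of the covering-process Lemma~\ref{lem:keyup} (Proposition~\ref{prop:ts2}(iii)), and then reads off $r_*>0$ by combining this with the already-proved lower bound on the probability. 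Your direct analytic route is plausible but not the one taken, and would itself need a uniform-in-$f$ argument that is not immediate.
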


\subsection{Related works}\label{subsect:related}
Let us finally comment on earlier literature for the frog model.
The recurrence/transience problem was the first published result on the frog model.
Telcs and Wormald~\cite[Section~{2.4}]{TelWor99} treated the one-particle-per-site frog model and proved that it is recurrent for all $d \geq 1$, i.e., almost surely, active particles infinitely often visit $0$ (the frog model is said to be transient if it is not recurrent).
This result was developed into the relation between the strength of transience for a single random walk and the number of frogs.
Actually, Popov~\cite{Pop01} introduced the frog model with random initial configurations and exhibited phase transitions of the recurrence and transience in terms of the density of initial configuration.
After that, Alves et al.~introduced the frog model on an arbitrary graph with random initial configurations and random lifetimes, and studied phase transitions of the survival of frogs and recurrence/transience (see \cite{AlvMacPop02} and \cite{Pop03} for more details).
Recently, Kosygina and Zerner~\cite{KosZer17} obtained a zero-one law of recurrence and transience for the frog model with random initial configurations.
Furthermore, in \cite{DobGanHofPopWei18,GanSch09} and \cite{HofJohJun16,HofJohJun17,MicRos19_arXiv,MicRos20,MulWie20}, the recurrence/transience problem is also studied for the frog model with drift (this means that active frogs perform asymmetric random walks) and on (some $d$-ary, Galton--Watson and non-amenable) trees, respectively.

The spread of active frogs, which is the interest of the present paper, was first investigated for the frog model on $\Z^d$ in the discrete- and continuous-time settings (see \cite{AlvMacPop02} and \cite{RamSid02}).
More precisely, they showed that the asymptotic behavior of the first passage time can be controlled by the time constant, which is a (nonrandom) norm on $\R^d$ (see \eqref{eq:shape_thm}).
Hence, the central limit theorem and the large deviation principle are the next step to understand the behavior of the first passage time in details.
However, there are a few results for these topics.
In the multi-dimensional case, the authors proved that the first passage time has a sublinear variance and satisfies a concentration inequality, and its tail probability decays sub-exponentially (see \cite{CanNak19} and \cite{Kub19}).
These results give some clues for the central limit theorem and the large deviation principle for the first passage time, but are not enough to solve these problems completely.
Moreover, since the main tools used in \cite{CanNak19} and \cite{Kub19} are percolation arguments, those do not work in the one-dimensional case.
This means that we need completely different approaches to study the central limit theorem and the large deviation principle in the frog model on $\Z$.
Actually, Ram\'{\i}rez et al.~\cite{BerRam10,ComQuaRam07} discussed the central limit theorem and the large deviation principle for the frog model on $\Z$ by using a renewal structure, which is developed in the study of random walks in random environments.
Moreover, in the present article, we complete the upper tail large deviation estimate by using the energy $E(f)$, which is never seen in previous works.

In forthcoming papers, we will study large deviations for the first passage time in higher dimensions.
Then, for the upper tail large deviation, it is also useful to observe a localization phenomenon:
let $\mathbf{e}_1$ be the first coordinate vector of $\Z^d$, and the upper tail large deviation event $\{ \T(0,n\mathbf{e}_1) \geq (\mu+\xi)n \}$ is affected by frogs sitting on a bad region which is the ball centered at $n\mathbf{e}_1$ and of radius $\kO(\sqrt{n})$.
Due to the geometry of the bad ball, the results in higher dimensions are different.
Particularly, when $d=2$ (resp.~$d\geq 3$), the speed is $n/\log n$ (resp.~$n$) and the rate function is linear $\xi \mapsto c_d \xi$ (where $c_d$ is a constant depending on $d$).
In contrast, it appears that the lower tail large deviation event $\{\T(0,n\mathbf{e}_1) \leq (\mu-\xi)n\}$ is affected by overall frogs and the lower tail large deviation probability decays exponentially regardless of the dimension.

Similar localization phenomena have been observed in other models, including First-passage percolation and the chemical distance in percolation. In a study of First-passage percolation with weights under tail estimates, Cosco and Nakajima \cite{CN23} established a specific rate function for upper tail large deviations, known as the discrete p-capacity. Furthermore, Dembin and Nakajima \cite{DN23+} demonstrated the existence of the rate function for upper tail large deviations of the chemical distance in super-critical percolation when the dimension is three or higher. This is characterized by a space-time cut-point that all paths between the endpoints must pass through later than a specified time. These upper tail large deviations share similarities in the sense that the passage times are abnormally large due to the environments surrounding the endpoints. In our current study of the one-dimensional frog model, we also confirm the appearance of localization phenomena on the event of upper-tail large deviations. However, we do not know its location, which makes the structure complicated. Moreover, the analysis of the models mentioned above heavily depends on the slab argument, a concept introduced by Kesten, which is not applicable in our current study due to its one-dimensional nature. Instead, we introduce a new argument of covering process (see Section~\ref{sec:propts2} for details), which will be also used for upper tail large deviations in the two-dimensional frog model in the forthcoming paper.

\subsection{Sketch of proof}\label{subsect:sketch}
In this subsection, we summarize the main steps in the proof of Theorem~\ref{mth1}.
The symbol $o_M(1)$ stands for some constants satisfying $\lim_{M\to\infty}\limsup_{n\to\infty}|o_M(1)|=0$, which may change from line to line.

\vspace{1em}

\noindent
\textbf{Step~0: Scaling invariance of energy.} In Lemma \ref{thm2} (see Section~\ref{subsect:BMSRW} below), we demonstrate the scaling invariance of the rate function taking advantage of the scaling invariance of Brownian motion:
\ben{
r(\xi) := \inf_{f\in\kC(\xi)}E(f) =r_*\sqrt{\xi}, \qquad r_*:=r(1).
}

\vspace{1em}

\noindent
\textbf{Step~1: Localization of upper tail large deviation event.}
Let us next observe that a certain localization phenomenon affects the upper tail large deviation event.
Intuitively, a good strategy to delay the transmission on $\Z$ is to retard the propagation of active frogs on a \textit{bad interval} whose length is of order $\sqrt{n}$, but the passage time is of order $n$: for all sufficiently large $M \in \N$, as $n \to \infty$,
\begin{align*}
    \frac{-1}{\sqrt{n}}
    \log \P(\T(0,n) \geq (\mu+\xi)n)
    \approx
    \frac{-1}{\sqrt{n}}
    \log \P\big( \T(0,\lfloor M\sqrt{n} \rfloor{\NK )} \geq \xi n \bigr) + o_M(1). 
\end{align*}
However, this strategy does not work directly.
Instead, we show the following inequalities in Proposition~\ref{prop:ts2} (see Section~\ref{sec:po1d} below), which are weaker versions of the above approximation:
\begin{align*}
    &\pp(\rmT(0,n)\geq (\mu+\xi)n)\geq \exp(o_M(1) \sqrt{n})  \, \pp\big(\rmT(0,\lfloor M\sqrt{n} \rfloor) \geq (\xi+o_M(1))n\big),\\
    &\pp(\rmT(0,n)\geq (\mu+\xi)n) \leq\exp(o_M(1)\sqrt{n})  \sup_{\substack{\xi_1,\ldots,\xi_M\geq 0 \\ \xi_1+\ldots+\xi_M=\xi}} \, \prod_{i=1}^{M} \pp\big(\rmT(0,\lfloor M\sqrt{n} \rfloor)\geq (\xi_i- o_M(1)/M)n\big).
\end{align*}
These inequalities tell us that the upper tail large deviation event can be localized around several bad intervals whose length is of order $\sqrt{n}$ and where the total passage time is approximately greater than $\xi n$.

\vspace{1em}

\noindent \textbf{Step~2: Upper tail estimate for the first passage time on the bad interval.}
Thanks to Step~1, it suffices to take care of the upper tail probability of the form $\P(\T(0,\lfloor M\sqrt{n} \rfloor) \geq \xi n)$.
Our task is now to prove that for all sufficiently large $M \in \N$, as $n \to \infty$,
\begin{align}\label{eq:step2}
    \frac{-1}{\sqrt{n}} \log \pp\big( \rmT(0,\lfloor M\sqrt{n} \rfloor)\geq \xi n \bigr)
    \approx \inf_{f\in\kC(\xi)}E(f) +o_M(1).
\end{align}
The heuristic ideas of the above approximation is as follows:
Let $f(u):={\rm T}(0,u\sqrt{n})/n$ be the space-time rescaled passage time on the bad interval.
Then, $\rmT(0, M\sqrt{n})\geq \xi n $ if and only if $f \in \kC_M(\xi):=\{f \in \kC(\xi): f(M) \geq \xi \}$.
In addition, it always holds by the triangle inequality that
\begin{align*}
  t(x,y) \geq {\rm T}(0,y)-{\rm T}(0,x)= n\bigl( f(y/\sqrt{n})-f(x/\sqrt{n}) \bigr) \quad \forall \, \, x, y \in \Z.
\end{align*}
Since $t(x,y)$'s are the hitting times of simple random walks, one can expect by Donsker's invariance principle that as $n \to \infty$,
\begin{align*}
    \P\bigl( \forall x,y\in \Iintv{-M\sqrt{n},M\sqrt{n}},\,t(x,y) \geq n(f(y/\sqrt{n})-f(x/\sqrt{n})) \bigr)
    \approx \exp\bigl( -\sqrt{n}(E(f) + o_M(1)) \bigr),
\end{align*}
where for any $a,b \in \R$, $\Iintv{a,b}$ is the set of integers on the interval $[a,b]$ (see  Section~\ref{Section: 4 Heuristic} for more detailed explanation of this approximation).
These observations imply \eqref{eq:step2}.
It should be noted that the final approximation is essentially heuristic in nature, even though we have chosen $f$ as a random quantity included in the class $\mathcal{C}_M(\xi)$ in our previous discussion.
Specifically, certain approximations are only valid when applied to step functions.
Consequently, we will establish the final outcome through a two-step proof.

\vspace{1em}

\noindent
{\bf Step 2a: Arising of energy functional.}
Let $\kC^{\rm Step}(\xi)$ be the set of all step functions in $\kC(\xi)$.
In Proposition~\ref{lem:tmxn} (see Section~\ref{sec:po1d} below), we estimate the left-hand side of \eqref{eq:step2} precisely:
\begin{align*}
    \limsup_{n\to\infty}\frac{-1}{\sqrt{n}}
    \log\P\bigl(\T(0,\lfloor M\sqrt{n} \rfloor) \geq \xi n \bigr)
    \leq \inf_{f\in\kC^{\rm Step}(\xi)}E(f) +o_M(1)
\end{align*}
and
\begin{align*}
    \liminf_{n \to\infty}\frac{-1}{\sqrt{n}}
    \log\P\left(\T(0,\lfloor M\sqrt{n} \rfloor) \geq \xi n\right)
    \geq \inf_{f\in\kC(\xi)}E(f) -o_M(1).
\end{align*}

\vspace{1em}

\noindent \textbf{Step 2b:	Energy approximation.} In Proposition~\ref{prop:s} (see Section~\ref{sec:po1d} below), we show  that the ground state energy in $\kC(\xi)$ can be approximated by the energy of step functions:
\ben{
\inf_{f\in\kC(\xi)}E(f)=\inf_{f\in\kC^{\rm Step}(\xi)}E(f).
}

\vspace{1em}

\noindent
\textbf{Conclusion:} It directly follows from Steps~0 and 2 that as $n \to \infty$,
\ben{
\frac{-1}{\sqrt{n}}\log\pp \left(\rmT(0,\lfloor M\sqrt{n} \rfloor) \geq \xi n\right) \approx r_*\sqrt{\xi} + o_M(1).
}
Plugging this into Step~1, we arrive at 
\al{
&\limsup_{n \to\infty}\frac{-1}{\sqrt{n}}\log\pp ({\rm T}(0,n) \geq (\mu+\xi)n) \leq  r_*\sqrt{\xi},\\
&\liminf_{n \to\infty}\frac{-1}{\sqrt{n}}\log\pp \left(\rmT(0,n) \geq (\mu+\xi)n\right)
\geq \liminf_{M \to \infty} \liminf_{n \to \infty} \left\{ \sup_{\substack{\xi_1,\ldots,\xi_M\geq 0 \\ \xi_1+\ldots+\xi_M=\xi}} r_*\bigl( \sqrt{\xi_1}+\ldots+\sqrt{\xi_M} \bigr) {\NK -} o_M(1) \right\}  \geq r_*\sqrt{\xi},
}
by using the inequality $\sqrt{x_1} + \ldots +\sqrt{x_m} \geq \sqrt{x_1 + \ldots + x_m}$.
Therefore, the proof of Theorem~\ref{mth1} is complete.

\subsection{Organization of the paper}\label{subsect:organization}
Let us describe how the present article is organized.
The rest of the paper is devoted to justifying the steps in Section~\ref{subsect:sketch}.
First of all, in Section~\ref{sect:prelim}, we summarize properties of the simple random walk, the Brownian motion and the frog model, which are used throughout this paper.
In particular, by using the scaling invariance of Brownian motion, Lemma~\ref{thm2} gives the scaling invariance of energy and completes Step~0 in Section~\ref{subsect:sketch}.

Section~\ref{sec:po1d} is devoted to the proof of our main result (Theorem~\ref{mth1}).
Proposition~\ref{prop:ts2} guarantees the validity of Step~1 in Section~\ref{subsect:sketch}.
In particular, we can also observe a slowdown phenomenon for the first passage time (see Proposition~\ref{prop:ts2}-(iii)).
This is a weaker version of Theorem~\ref{mth1} but tells us that $r_*=r(1) \in (0,\infty)$.
Furthermore, Proposition~\ref{lem:tmxn} and \ref{prop:s}
deal with Steps~{2a} and {2b} in Section~\ref{subsect:sketch}, respectively.
We just give the statements of the above propositions in Section~\ref{sec:po1d}, and complete the proof of Theorem~\ref{mth1} for now.

The aim of Section~\ref{sect:slowdown} is to show Proposition~\ref{lem:tmxn}.
This section consists of two parts: In Subsections~\ref{subs:liminf1d}, we optimize the energy for the lower bound of the upper tail probability (see Proposition~\ref{lem:tmxn}-(i)).
Note that at this stage, the energy on $\mathcal{C}^{\textrm{Step}}(\xi)$ (not $\mathcal{C}(\xi)$) is used for the lower bound (In Section~\ref{sec:props}, we check that the energy on $\mathcal{C}(\xi)$ is approximated by that on $\mathcal{C}^{\textrm{Step}}(\xi)$).
On the other hand, in Subsections~\ref{subs:liminf1d}, we optimally estimate the upper tail probability from above by using the energy on $\mathcal{C}(\xi)$ (see Proposition~\ref{lem:tmxn}-(ii)).

Section~\ref{sec:propts2} gives the proof of Proposition~\ref{prop:ts2}.
This proposition consists of parts (i)--(iii), and those proofs are mainly given in Sections~\ref{subsect:prop3.1-2}, \ref{subsect:prop3.1-3} and \ref{subsect:prop3.1-1}.
The proof of part~{(ii)} is the most difficult, and the key lemma is Lemma~\ref{lem:keyup} stated in Section~\ref{subsect:prop3.1-2}.
Since its proof is a little bit long, we postpone it into Section~\ref{polku}.

In Section~\ref{sec:props}, we prove Proposition~\ref{prop:s}, which guarantees that the energy on $\mathcal{C}(\xi)$ is approximated by that on $\mathcal{C}^{\textrm{Step}}(\xi)$.
More precisely, we shall use a delicate multi step deformation that gradually transforms a function in  $\mathcal{C}(\xi)$ to another in $\mathcal{C}^{\textrm{Step}}(\xi)$ with approximated energy.  

In Appendix, we prove some technical lemmas used in the paper. In particular, we show  a version of conditional FKG inequality for Brownian motion, which may be of independent interest.

We close this section with some general notation:
\begin{itemize}
\item For any $a \in \R$, $\lfloor a \rfloor$ is the greatest integer less than or equal to $a$.
In addition, $\lceil a \rceil$ is the smallest integer larger than or equal to $a$.

\item For any $a,b \in \R$, $\Iintv{a,b}$ is the set of integers on the interval $[a,b]$.

\item For any subset $A$ of $\Z$, denote by $|A|$ the cardinality of $A$.

\item For any $A,B \subset \R$, ${\rm d}(A,B)$ denotes the Euclidean distance between two sets $A$ and $B$:
\begin{align*}
    {\rm d}(A,B) :=\inf\{ |x-y|:x \in A,y \in B \}.
\end{align*}

\item For $x \in A \subset \Z$ and $y \in \Z$, we denote by $\rmT_A(x,y)$ the first passage time from $x$ to $y$ using only frogs inside $A$, or more formally
\begin{align*}
    \T_A(x,y):=\inf\left\{ \sum_{i=0}^{k-1}t(x_i,x_{i+1}):
    \begin{minipage}{12.5em}
        $k \geq 1$ and $x_1,\dots,x_{k-1} \in A$,\\
        and  $x_0=x$ and $x_k=y$
    \end{minipage}\right\}.
\end{align*}
Moreover, define for $A, U, V \subset \Z^d$,
\begin{align*}
    \T_A(U,V) := \inf \{\T_A(x,y): x \in U \cap A, y \in V\}.
\end{align*}
Note that we have $\T_A \geq \T$ for all subsets $A$ of $\Z^d$.

\item Throughout the paper, $c$, $c'$, $C$, $C'$, $c_i$ and $C_i$ ($i=0,1,\dots$) denote some constants with $0<c,c',C,C',c_i,C_i<\infty$.
If it is necessary to emphasize the dependence on some parameter $a$, then we write $c(a)$ for instance.
\end{itemize}

\section{Preliminaries}\label{sect:prelim}

In this paper, we use often properties for the Brownian motion, the simple random walk and the frog model.
Hence, this section summarizes those properties.
In particular, Lemma~\ref{thm2} proves the scaling invariance of energy, which corresponds to Step~0 in Section~\ref{subsect:sketch}.

\subsection{Some properties of simple random walk and Brownian motion}\label{subsect:BMSRW}
We start with some simple estimates for the hitting time and the range of the simple random walk.

\begin{lem} \label{lemrw}
For simplicity of notation, we use the notation $(S_i)_{i=0}^\infty$ to express the simple random walk $(S_i^0)_{i=0}^\infty$ starting at $0$.
Then, the following results hold:
\begin{itemize}
\item [(i)] Let $\kR_t$ be the range of the random walk up to time $t$, i.e., $\kR_t:=\{S_i:i\in \Iintv{0,t}\}$.
There exists a positive constant $c$ such that  for all $t\geq m^2$,
    \be{
    \pp(|\kR_t| \leq m) \leq \exp(-ct/m^2).
    }

\item[(ii)] For any $a >0$, there exists a constant $c=c(a) \in (0,1)$ such that for all $n\in \N$,
    \be{
    \sup_{ |x| \leq a \sqrt{n}} \pp(t(0,x) \geq n) \leq c.
    }
    
\item[(iii)] There exist positive constants $c$ and $C$ such that for all  $x \in \Z$ and $n\in\N$,
    \be{
    \pp(t(0,x) \leq n) \leq C \exp(-cx^2/n).
    }
\end{itemize}
\end{lem}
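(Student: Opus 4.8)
All three estimates are classical facts about the lazy-free simple symmetric random walk on $\Z$, and I would prove them by reducing each to a standard statement about the one-dimensional walk.

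\textbf{Part (i).} The plan is to exploit the fact that a short range forces the walk to spend a long time confined to a small set. If $|\kR_t| \leq m$, then the walk stays inside an interval of length $m$ for the entire time horizon $\Iintv{0,t}$. Partitioning $\Iintv{0,t}$ into $\lfloor t/m^2 \rfloor$ blocks of length $m^2$, on each block the walk must fail to exit the interval of length $m$ around its starting position. By the standard diffusive estimate (or a crude reflection/martingale bound), there is a constant $p<1$, independent of $m$, such that a simple random walk run for time $m^2$ exits any fixed interval of length $m$ containing its start with probability at least $1-p$. Applying the Markov property at the block endpoints gives $\pp(|\kR_t|\le m)\le p^{\lfloor t/m^2\rfloor}$, and since $t\ge m^2$ we have $\lfloor t/m^2\rfloor \ge t/(2m^2)$, which yields the claim with $c=-\tfrac12\log p$.

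\textbf{Parts (ii) and (iii).} For (iii), the hitting time $t(0,x)$ of a symmetric walk satisfies $t(0,x)\ge |x|$ trivially, and more usefully, the event $\{t(0,x)\le n\}$ forces $\max_{i\le n}|S_i|\ge |x|$; the reflection principle bounds $\pp(\max_{i\le n}S_i\ge |x|)\le 2\pp(S_n\ge |x|)$, and a Hoeffding/Chernoff bound for the sum $S_n$ of $n$ i.i.d. $\pm1$ steps gives $\pp(S_n\ge |x|)\le \exp(-x^2/(2n))$. This produces (iii) with $C=2$, $c=1/2$. For (ii), the point is the matching lower bound: if $|x|\le a\sqrt n$, the walk has a decent chance of reaching $x$ within time $n$, so $\pp(t(0,x)\ge n)$ is bounded away from $1$ uniformly. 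One clean route is to note $\pp(t(0,x)< n)\ge \pp(S_n = x,\ t(0,x)\le n)$, but it is cleaner to invoke Donsker's theorem: $S_{\lfloor n\cdot\rfloor}/\sqrt n$ converges to Brownian motion, so $\pp(t(0,\lfloor a\sqrt n\rfloor)\le n)\to \pp^{\rm BM}_0(\tau_a\le 1)>0$, and likewise for $|x|<a\sqrt n$ one gets a uniform positive lower bound by monotonicity of hitting probabilities in $|x|$ together with a compactness/continuity argument at the finitely many relevant scales — or simply by a direct local-CLT computation showing $\pp(t(0,x)\le n)\ge \delta(a)>0$ for all $|x|\le a\sqrt n$, $n$ large, and absorbing the finitely many small $n$ into the constant.

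\textbf{Main obstacle.} None of these is deep; the only mild subtlety is making part~(ii) genuinely \emph{uniform} over all $|x|\le a\sqrt n$ and all $n\in\N$ rather than just asymptotically. I would handle this by first proving a uniform lower bound $\pp(t(0,x)\le n)\ge \delta$ for $|x|\le a\sqrt n$ and all $n\ge n_0(a)$ via the reflection principle together with a local central limit theorem (bounding $\pp(S_n=x)$ from below by $c/\sqrt n$ uniformly for $|x|\le a\sqrt n$ of the correct parity, and summing over a window of times), and then noting that for each of the finitely many $n<n_0$ the supremum is over finitely many $x$ and is a maximum of probabilities strictly less than $1$, hence itself strictly less than $1$; taking the max of the two constants gives the desired $c=c(a)\in(0,1)$.
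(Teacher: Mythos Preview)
Your proposal is correct and follows essentially the same route as the paper: for (i) the paper also blocks time into pieces of length $m^2$ and uses the Markov property together with a uniform bound $\sup_k \pp(|\kR_{k^2}|\le k)<1$ (established there via the CLT); for (ii) the paper likewise reduces to the extreme point $x=\lfloor a\sqrt n\rfloor$ by monotonicity and invokes Donsker's invariance principle; and for (iii) the paper cites a reference, but the underlying argument is exactly your reflection-plus-Gaussian-tail bound. If anything, you are more careful than the paper about the uniformity in $n$ in part~(ii).
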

\begin{proof}
By the usual central limit theorem, 
 \be{
\lim_{k \rightarrow \infty} \pp(S_{k^2} \leq k)= \pp(Z \leq 1) <1, 
 }
where $Z$ is a standard normal random variable. Therefore, 
\be{
\sup_{k \geq 1} \pp(|\kR_{k^2}| \leq k) \leq  \sup_{k \geq 1} \pp(S_{k^2} \leq k) =:\alpha < 1.
}
This combined with the Markov property implies that for $t\geq m^2$,
\begin{align*}
    \P(|\kR_t| \leq m)
    \leq \P\Bigl( \bigl| \{ S_i:i \in \Iintv{jm^2+1,  (j+1)m^2} \} \bigr| \leq m \quad \forall j \in \Iintv{0, t/m^2} \Bigr)
    \leq \alpha^{t/2m^2},
\end{align*}
and part~(i) follows.
For part~(ii), we use Donsker's invariance principle: as $n \to \infty$,
\begin{align*}
  \sup_{ |x| \leq a \sqrt{n}} \pp(t(0,x) \geq n) =\P\bigl( t(0,\lfloor a\sqrt{n}\rfloor) \geq n \bigr) \longrightarrow \P_0^{\rm BM} (\tau_a \geq 1) \in (0,1),
\end{align*}
where $\P_0^{\textrm{BM}}$ is the law of Brownian motion starting at $0$ and $\tau_a$ is the hitting time of Brownian motion to $a$ (see also above \eqref{eq:def_energy}).
This implies part~(ii) immediately. 
Let us finally prove part~(iii).
From \cite[Proposition 2.1.2-(b)]{LL}, there exist constants $c$ and $C$ such that for all $x \in \Z \setminus \{ 0\}$ and $n \in \N$,
\begin{align*}
    \pp(t(0,x) \leq n) \leq \pp \left(\max_{1 \leq i \leq n} |S_i| \geq |x|\right) \leq C \exp(-cx^2/n),
\end{align*}
and part~(iii) follows.
\end{proof}

Let us next state some properties for the hitting time of the Brownian motion.

\begin{lem}\label{thm2} 
Given $\xi>0$ and $f \in \mathcal{C}(\xi)$, we define the rescaled function $f_\xi(x):=\xi^{-1}f(\sqrt{\xi}x)$.
Then, $f_\xi\in \mathcal{C}(1)$ and  $E(f)=\sqrt{\xi}E(f_\xi)$ for any $\xi>0$ and $f \in \mathcal{C}(\xi)$.
As a corollary, the function $r$ defined in \eqref{deor} satisfies
\begin{align*}
    r(\xi)=\sqrt{\xi}\,r(1) \quad \forall \xi>0.
\end{align*}
\end{lem}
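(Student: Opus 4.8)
The plan is to exploit the Brownian scaling relation directly on the defining integral \eqref{eq:def_energy}. First I would check that the map $f \mapsto f_\xi$, where $f_\xi(x) := \xi^{-1} f(\sqrt{\xi}\, x)$, really sends $\kC(\xi)$ into $\kC(1)$: monotonicity on $(-\infty,0]$ and $[0,\infty)$ is preserved since $x \mapsto \sqrt{\xi}\,x$ is an increasing homeomorphism and $\xi^{-1}>0$; the limit $\lim_{x\to 0} f_\xi(x) = \xi^{-1}\lim_{x\to 0} f(\sqrt{\xi}x) = 0$; and $\|f_\xi\|_\infty = \xi^{-1}\|f\|_\infty \le 1$ with $\lim_{x\to\infty} f_\xi(x) = \xi^{-1}\lim_{x\to\infty} f(\sqrt{\xi}x) = \xi^{-1}\xi = 1$. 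So $f_\xi \in \kC(1)$, and the map is a bijection between $\kC(\xi)$ and $\kC(1)$ with inverse $g \mapsto (x\mapsto \xi g(x/\sqrt{\xi}))$.

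Next I would compute $E(f)$ in terms of $f_\xi$. Inside the probability in \eqref{eq:def_energy} the event is $\{\tau_y \ge f(y)-f(x)\ \forall y\in\R\}$ under $\P_x^{\rm BM}$. Substituting $f(y) = \xi f_\xi(y/\sqrt{\xi})$ and $f(x) = \xi f_\xi(x/\sqrt{\xi})$, the condition becomes $\tau_y \ge \xi\bigl(f_\xi(y/\sqrt{\xi}) - f_\xi(x/\sqrt{\xi})\bigr)$ for all $y$. Now use Brownian scaling: if $(B_t)_{t\ge0}$ starts at $x$, then $(\xi^{-1}B_{\xi t})_{t\ge0}$ starts at $\xi^{-1/2}x$ and is again a Brownian motion, i.e.\ $\widetilde{B}_t := \xi^{-1/2} B_{\xi t}$ has law $\P_{x/\sqrt{\xi}}^{\rm BM}$. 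Writing $\widetilde\tau_{y'}$ for the hitting time of $y'$ by $\widetilde B$, one has $\widetilde\tau_{y'} = \xi^{-1}\tau_{\sqrt{\xi}\,y'}$; reparametrizing $y = \sqrt{\xi}\,y'$, the event $\{\tau_y \ge \xi(f_\xi(y/\sqrt{\xi}) - f_\xi(x/\sqrt{\xi}))\ \forall y\}$ translates exactly into $\{\widetilde\tau_{y'} \ge f_\xi(y') - f_\xi(x/\sqrt{\xi})\ \forall y'\}$. Hence
\begin{align*}
\P_x^{\rm BM}\bigl(\tau_y \ge f(y)-f(x)\ \forall y\in\R\bigr)
= \P_{x/\sqrt{\xi}}^{\rm BM}\bigl(\tau_{y'} \ge f_\xi(y') - f_\xi(x/\sqrt{\xi})\ \forall y'\in\R\bigr).
\end{align*}
Taking $-\log$ and integrating over $x\in\R$, the change of variables $x = \sqrt{\xi}\,u$ (so $\dd x = \sqrt{\xi}\,\dd u$) gives $E(f) = \sqrt{\xi}\int_\R -\log \P_u^{\rm BM}(\tau_{y'}\ge f_\xi(y')-f_\xi(u)\ \forall y')\,\dd u = \sqrt{\xi}\,E(f_\xi)$.

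Finally, for the corollary: since $f\mapsto f_\xi$ is a bijection $\kC(\xi)\to\kC(1)$ and $E(f)=\sqrt{\xi}E(f_\xi)$, taking infima yields $r(\xi) = \inf_{f\in\kC(\xi)} E(f) = \sqrt{\xi}\inf_{g\in\kC(1)} E(g) = \sqrt{\xi}\,r(1)$.

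I do not anticipate a serious obstacle here — the statement is essentially a clean consequence of Brownian scale invariance — but the one point needing a little care is the bookkeeping of the two simultaneous rescalings (space by $\sqrt{\xi}$, the values of $f$ by $\xi$, and time by $\xi$) so that the hitting-time inequality inside the probability transforms correctly; in particular one should verify that the universal-over-$y$ quantifier is genuinely preserved under the substitution $y = \sqrt{\xi}\,y'$ (it is, since $y\mapsto \sqrt{\xi}\,y'$ is a bijection of $\R$), and that measurability/finiteness of the integrand is not an issue (it is handled the same way as in the definition of $E$, and in any case both sides are simultaneously $+\infty$ or finite).
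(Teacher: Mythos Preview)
Your proposal is correct and follows essentially the same route as the paper: Brownian scaling to rewrite the probability in terms of $f_\xi$, followed by the change of variables $x=\sqrt{\xi}\,u$ in the integral. The only slip is the typo ``$(\xi^{-1}B_{\xi t})_{t\ge0}$ starts at $\xi^{-1/2}x$'' --- the scaling factor should be $\xi^{-1/2}$, as you immediately write correctly when defining $\widetilde B_t$; your verification that $f_\xi\in\kC(1)$ and that $f\mapsto f_\xi$ is a bijection is in fact more thorough than the paper's.
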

\begin{proof}
Let $\xi>0$ and $f\in \mathcal{C}(\xi)$.
Since $\lim_{x \to \infty} f(x)=\xi$, we have $\lim_{x \to \infty} f_\xi(x)=1$, and $f_{\xi} \in \kC(1)$ holds.
Moreover,  by the scaling invariance of Brownian motion, i.e., the laws of $(B_t)_{t\geq 0}$ and $(B_{\xi t}/\sqrt{\xi})_{t\geq 0}$ coincide
(see for instance \cite[Lemma~9.4]{KarShr91_book}), we have 
	\begin{align*}
		-E(f)
		&= \int_\R \log \mathbb{P}_x^{\rm BM} \left( \tau_y \geq f(y)-f(x)\quad\forall y \in \R \right) \dd x\\
		&= \int_\R \log \mathbb{P}_{x/\sqrt{\xi}}^{\rm BM} \left( \tau_{y/\sqrt{\xi}}\geq \xi^{-1}(f(y)-f(x))\quad\forall y \in \R \right) \,\dd x.
	\end{align*}
	By the change of variables $t=x/\sqrt{\xi}$ and $s=y/\sqrt{\xi}$, the rightmost side above is equal to
	\begin{align*}
		&\sqrt{\xi} \int_\R
		\log \mathbb{P}_t^{\rm BM} \left( \tau_s \geq \xi^{-1} f(\sqrt{\xi}s)-\xi^{-1} f(\sqrt{\xi}t)\quad\forall s \in \R \right) \dd t\\
		&= \sqrt{\xi} \int_\R \log \mathbb{P}_t^{\rm BM}(\tau_s \geq f_\xi(s)-f_\xi(t)\quad\forall s \in \R) \,\dd t
		=- \sqrt{\xi}E(f_\xi).
	\end{align*}
	Therefore, we obtain the equality $E(f)=\sqrt{\xi}E(f_\xi)$, which also implies that $r(\xi) =\sqrt{\xi} r(1)$.
\end{proof}

\begin{lem}\label{lem:maxbrown}
We have for any $u>0$,
\begin{align*}
    \frac{{\rm d}}{ {\rm d} u}	\pp_0^{\rm BM}\left( \max_{0 \leq s \leq 1} B_s  \leq u \right)
    = \sqrt{\frac{2}{\pi}}e^{-u^2/2}. 
\end{align*}
As a consequence, for all $t,u>0$,
\begin{align*}
    \pp_0^{\rm BM} (\tau_u \geq t )= \sqrt{\frac{2}{\pi}}\int_0^{u/\sqrt{t}} e^{-s^2/2} \, \dd s\asymp 1\land \frac{u}{\sqrt{t}}.
\end{align*}
\end{lem}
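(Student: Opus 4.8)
The statement is Lemma~\ref{lem:maxbrown}, which computes the density of the running maximum of standard Brownian motion on $[0,1]$ and then derives the formula and two-sided estimate for $\pp_0^{\rm BM}(\tau_u \geq t)$. The plan is to first establish the distributional identity via the reflection principle, then differentiate, and finally use scaling to pass to general $t$.

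\textbf{Step 1: Reflection principle.} The classical reflection principle for Brownian motion gives, for $u>0$,
\begin{align*}
    \pp_0^{\rm BM}\left( \max_{0 \leq s \leq 1} B_s > u \right) = 2\,\pp_0^{\rm BM}(B_1 > u),
\end{align*}
equivalently $\pp_0^{\rm BM}(\max_{0\le s\le 1}B_s \le u) = 1 - 2\pp_0^{\rm BM}(B_1>u) = 2\pp_0^{\rm BM}(0 \le B_1 \le u)$. One can either cite this as standard (e.g. from \cite{KarShr91_book}) or give the quick proof: by the strong Markov property at $\tau_u$, the event $\{\max_{0\le s\le 1}B_s>u\}$ splits according to whether $B_1>u$ or $B_1\le u$, and conditionally on $\tau_u\le 1$ the post-$\tau_u$ increment is symmetric, so $\pp_0^{\rm BM}(\tau_u\le 1,\,B_1\le u)=\pp_0^{\rm BM}(\tau_u\le 1,\,B_1>u)=\pp_0^{\rm BM}(B_1>u)$.

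\textbf{Step 2: Differentiate.} Since $B_1$ is standard Gaussian, $\pp_0^{\rm BM}(0\le B_1\le u)=\frac{1}{\sqrt{2\pi}}\int_0^u e^{-v^2/2}\,\dd v$, so
\begin{align*}
    \pp_0^{\rm BM}\left( \max_{0 \leq s \leq 1} B_s \le u \right) = \frac{2}{\sqrt{2\pi}}\int_0^u e^{-v^2/2}\,\dd v = \sqrt{\frac{2}{\pi}}\int_0^u e^{-v^2/2}\,\dd v,
\end{align*}
and differentiating in $u$ gives $\frac{\dd}{\dd u}\pp_0^{\rm BM}(\max_{0\le s\le 1}B_s\le u) = \sqrt{\tfrac{2}{\pi}}\,e^{-u^2/2}$, which is the first claim.

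\textbf{Step 3: Scaling and the estimate.} For general $t,u>0$, observe $\{\tau_u\ge t\}=\{\max_{0\le s\le t}B_s\le u\}$ up to a null event, and by Brownian scaling $(B_{ts}/\sqrt t)_{s\ge 0}$ is again a standard Brownian motion (as already invoked in Lemma~\ref{thm2}), so $\pp_0^{\rm BM}(\max_{0\le s\le t}B_s\le u)=\pp_0^{\rm BM}(\max_{0\le s\le 1}B_s\le u/\sqrt t)=\sqrt{\tfrac{2}{\pi}}\int_0^{u/\sqrt t}e^{-v^2/2}\,\dd v$, giving the displayed formula. For the two-sided bound $\asymp 1\wedge\frac{u}{\sqrt t}$: when $u/\sqrt t\le 1$ the integrand lies in $[e^{-1/2},1]$, so the integral is between $e^{-1/2}\frac{u}{\sqrt t}$ and $\frac{u}{\sqrt t}$; when $u/\sqrt t\ge 1$ the integral is between $\sqrt{\tfrac{2}{\pi}}\int_0^1 e^{-v^2/2}\dd v$ (a positive constant) and $\sqrt{\tfrac{2}{\pi}}\int_0^\infty e^{-v^2/2}\dd v=1$. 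In both regimes this matches $1\wedge\frac{u}{\sqrt t}$ up to universal constants.

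\textbf{Main obstacle.} There is no real obstacle here; the only thing requiring minor care is keeping track of the null-set identification $\{\tau_u\ge t\}=\{\sup_{s\le t}B_s\le u\}$ (the two differ only on $\{\sup_{s\le t}B_s=u\}$, which has probability zero) and making sure the reflection-principle statement is quoted in exactly the boundary-inclusive form needed. Everything else is a one-line computation.
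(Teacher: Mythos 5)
Your proof is correct and follows essentially the same route as the paper: the paper simply cites \cite[page~96]{KarShr91_book} for the reflection-principle identity that you reprove in Steps~1--2, and then uses Brownian scaling and an elementary bound on $\int_0^{x\wedge 1}e^{-s^2/2}\,\dd s$ exactly as in your Step~3. The only difference is that you give the short derivation of the running-maximum density instead of citing it.
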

\begin{proof}
The first statement is proved in \cite[page 96]{KarShr91_book}.
Hence, the scaling invariance of Brownian motion shows that for all $t,u>0$,
\begin{align*}
    \pp_0^{\rm BM} (\tau_u \geq t ) = 		\pp_0^{\rm BM} \left(\max_{0 \leq s \leq t} B_s \leq u \right) = \pp_0^{\rm BM} \left(\max_{0 \leq s \leq 1} B_s \leq \dfrac{u}{\sqrt{t}} \right) = \sqrt{\frac{2}{\pi}}\int_0^{u/\sqrt{t}} e^{-s^2/2} \, \dd s.
\end{align*}
The last asymptotic formula follows from that for all $x>0$,
\begin{align*}
    \frac{x \wedge 1}{3} \leq \int_0^{x \wedge 1} e^{-s^2/2} \, \dd s \leq \int_0^{x} e^{-s^2/2} \, \dd s \leq x \land \sqrt{\pi},
\end{align*}
and the proof is complete.
\end{proof}

In Step~2 of Section~\ref{subsect:sketch}, we use Donsker's invariance principle and approximate the trajectory of the simple random walk by that of the Brownian motion.
To do this procedure rigorously, for any function $f$ on $\R$ and $\epsilon>0$, we define two perturbed versions $f^{\pm,\epsilon}$ of $f$ by
\begin{align}\label{per:f}
    f^{+,\eps}(u)=\sup_{v\in [u-\eps,u+\eps]}f(v),\quad f^{-,\eps}(u)=\inf_{v\in [u-\eps,u+\eps]}f(v) \qquad \forall u \in \R.
\end{align}
In particular, the following lemmas play a key role in approximating the energy by using step functions in Step~{2a} of Section~\ref{subsect:sketch}.

\begin{lem}\label{lem:kmt}
Let $f$ and $g$ be bounded functions on $\R$ with $f(x) \leq g(x)$ for all $x \in \R$. For all positive constants $\epsilon_1,\epsilon_2,M$, 
 there exists $n_0 \in \N$ such that for all $n\geq n_0$ and $x \in \Iintv{-M\sqrt{n},M\sqrt{n}}$, 
\begin{align}\label{eq:kmt}
\begin{split}
	&\pp^{{\rm BM}}_{x/\sqrt{n}} \Bigl( \tau_v \geq f^{+,\epsilon_1}(v)-g(x/\sqrt{n})\quad\forall v \in [-M,M] \Bigr)-\epsilon_2\\
	&\leq \pp\Bigl( t(x,y) \geq n( f(y/\sqrt{n})-g(x/\sqrt{n}) )\quad\forall y \in \Iintv{-M\sqrt{n},M\sqrt{n}} \Bigr)\\
	&\leq \pp^{{\rm BM}}_{x/\sqrt{n}} \Bigl( \tau_v \geq f^{-,\epsilon_1}(v)-g(x/\sqrt{n})\quad\forall v \in [-M,M] \Bigr)+\epsilon_2.	
\end{split}
\end{align}
\end{lem}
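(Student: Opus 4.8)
The plan is to deduce Lemma~\ref{lem:kmt} from the Komlós–Major–Tusnády (KMT) strong embedding, which lets us couple the simple random walk path $(S^x_k)_{k \le n}$ with a standard Brownian motion so that the sup-norm error on $\Iintv{0,n}$ is $O(\log n)$ almost surely, hence $o(\sqrt n)$; after the diffusive rescaling $S^x_{\lfloor nt\rfloor}/\sqrt n \rightsquigarrow B_t$ this error is $o(1)$. Passing from the embedding of a single walk to the joint statement over all pairs $x,y \in \Iintv{-M\sqrt n, M\sqrt n}$ is where a little care is needed: for a \emph{fixed} $x$, the family of hitting times $\{t(x,y)\}_y$ is a deterministic functional of the single path $(S^x_k)_k$, so one KMT coupling of that one path controls all $y$ simultaneously. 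Thus fix $x \in \Iintv{-M\sqrt n, M\sqrt n}$, couple $(S^x_k)_{0\le k\le n}$ with a Brownian motion $(\tilde B_t)_{0\le t\le 1}$ started at $x/\sqrt n$ (after rescaling space by $\sqrt n$ and time by $n$) so that $\Delta_n:=\sup_{0\le t\le 1}|S^x_{\lfloor nt\rfloor}/\sqrt n - \tilde B_t| \le C\log n/\sqrt n$ with probability $\ge 1-\epsilon_2/2$ for $n$ large.

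On that high-probability event I would translate the random-walk hitting-time statement into a Brownian one. The event $\{t(x,y) \ge n(f(y/\sqrt n) - g(x/\sqrt n))\ \forall y\}$ says that the rescaled walk, before reaching any level $v = y/\sqrt n$, has spent time at least $f(v) - g(x/\sqrt n)$; equivalently $\tau^{S}_v \ge f(v)-g(x/\sqrt n)$ for all $v$ in the relevant grid, where $\tau^S$ is the (rescaled) walk's hitting time. Since $\Delta_n$ is small, $|\tau^S_v - \tilde\tau_v| $ is small whenever the Brownian path is not nearly tangent to the level $v$ — more precisely, if the walk hits $v$ at rescaled time $s$ then $\tilde B$ is within $\Delta_n$ of $v$ at time $s$, so $\tilde B$ hits the interval $[v-\Delta_n, v+\Delta_n]$ by time $s$; conversely. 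Quantitatively, on $\{\Delta_n \le \epsilon_1'\}$ one gets the sandwich
\begin{align*}
\bigl\{ \tilde\tau_v \ge f^{+,\epsilon_1}(v) - g(x/\sqrt n)\ \forall v \in [-M,M]\bigr\}
\ \subseteq\
\bigl\{ t(x,y) \ge n(f(y/\sqrt n)-g(x/\sqrt n))\ \forall y \bigr\}^{\!c\,c},
\end{align*}
and the reverse inclusion with $f^{-,\epsilon_1}$; here the perturbations $f^{\pm,\epsilon_1}$ of \eqref{per:f} are exactly what absorbs both the $\Delta_n$-shift in space and the discreteness of the grid $\{y/\sqrt n\}$ versus the continuum $v\in[-M,M]$. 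Replacing $\tilde B$ by a fresh Brownian motion $B$ with law $\pp^{\rm BM}_{x/\sqrt n}$ (same law) and paying another $\epsilon_2/2$ for the coupling failure event yields both inequalities in \eqref{eq:kmt}, with an $n_0$ depending only on $\epsilon_1,\epsilon_2,M$ and not on $x$ (uniformity is automatic because after rescaling the starting point $x/\sqrt n \in [-M,M]$ and KMT constants are universal).

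Two technical points deserve attention. First, I must make sure the monotone perturbations genuinely dominate the error: the map $v\mapsto f^{+,\epsilon_1}(v)$ increases $f$ on an $\epsilon_1$-neighbourhood, so I need $\Delta_n < \epsilon_1$ and also need that moving the target level by $\Delta_n$ changes the required hitting-time threshold by at most the gap between $f$ and $f^{\pm,\epsilon_1}$ — this is immediate from the definitions once $\Delta_n\le\epsilon_1$, without any continuity assumption on $f$ or $g$, which matters since the lemma only assumes boundedness. Second, one should note $t(x,y)$ may exceed $n$ (the walk need not hit $y$ by time $n$); this only helps the lower-tail-type inequalities here since it makes $\{t(x,y)\ge \cdots\}$ easier, and on the Brownian side $\tau_v$ can be $+\infty$ likewise, so no boundary issue arises. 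The main obstacle is really the bookkeeping in the sandwich step — tracking how the $O(\log n/\sqrt n)$ coupling error, the grid spacing $1/\sqrt n$, and the $\epsilon_1$-perturbation interact so that the inclusions hold for \emph{all} $x$ uniformly — rather than any deep probabilistic input; the KMT theorem and the scaling relation of Lemma~\ref{lem:maxbrown} (or just Donsker) do all the heavy lifting.
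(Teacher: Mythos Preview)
Your approach is essentially identical to the paper's: couple the rescaled walk to Brownian motion via KMT so that the sup-norm error is at most $\epsilon_1/2$ with probability $\ge 1-\epsilon_2$, then show the inclusions $\mathcal{E}_n\cap\{\tau_v\ge f^{+,\epsilon_1}(v)-g(x/\sqrt n)\ \forall v\}\subset\{t(x,y)\ge n(f(y/\sqrt n)-g(x/\sqrt n))\ \forall y\}$ and the reverse with $f^{-,\epsilon_1}$. The only cosmetic differences are that the paper takes the coupling supremum over $t\in[0,\|f\|_\infty]$ rather than $[0,1]$ (needed since $f$ is merely bounded, not bounded by $1$), and explicitly disposes of the range $|v-x/\sqrt n|\le\epsilon_1$ using the hypothesis $f\le g$, which makes $f^{-,\epsilon_1}(v)-g(x/\sqrt n)\le 0$ there.
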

\begin{proof}
Let $\epsilon_1,\epsilon_2,M>0$. Without loss of generality, we  assume that $g(x/\sqrt{n})=0$ by subtracting $g(x/\sqrt{n})$ from $f$ and $g$ if needed. 
Let $(B_t)_{t \geq 0}$ be the Brownian motion starting at $x/\sqrt{n}$, and  
 $(S_i)_{i=0}^\infty$ the simple random walk starting at $x$.
The Koml\'{o}s--Major--Tusn\'{a}dy approximation (see \cite[Lemma~17]{DenWac15} for instance) allows us to couple $(\frac{1}{\sqrt{n}}S_{\lfloor nt \rfloor})_{t\geq 0}$ and $(B_t)_{t\geq 0}$ on a common probability space with probability measure $\mathbf{P}$ in such a way that for all $n$ large enough depending on $\epsilon_1,\epsilon_2,f$,
\begin{align}\label{eq:KMTapprox}
	\mathbf{P}(\kE_n^c) <\eps_2,\text{ where }
	\kE_n:=\left\{ \sup_{0 \leq t \leq \|f\|_\infty} \left|B_t-\frac{1}{\sqrt{n}}S_{\lfloor nt \rfloor}\right|< \frac{\epsilon_1}{2} \right\}.
\end{align}

We first consider the second inequality in \eqref{eq:kmt}. Suppose that $\kE_n$ occurs, and assume further that for all $y \in \Iintv{-M\sqrt{n},M\sqrt{n}}$,
\begin{align}\label{additional assumption}
	t(x,y) \geq n f(y/\sqrt{n}).
\end{align}
If $y\geq x$, then \eqref{additional assumption} implies $\sup\{S_t:t\leq n f(y/\sqrt{n})\}\leq y$. 
Thus, by $\kE_n$, 
choosing $y:=\lfloor (v-(\epsilon_1/2))\sqrt{n}\rfloor$, one has for all $v \in [x/\sqrt{n}+\epsilon_1, M]$,
\be{
\sup \left\{ B_t: t \leq f^{-,\epsilon_1}(v) \right\}\leq \sup \left\{ B_t: t \leq f(y/\sqrt{n}) \right\} <\frac{y}{\sqrt{n}}+ \frac{\epsilon_1}{2}	\leq v, 
}
which implies $\tau_v \geq f^{-,\epsilon_1}(v).$ 
By symmetry, we have $\tau_v\geq  f^{-,\epsilon_1}(v)$ for all  $v \in [-M,x/\sqrt{n}-\epsilon_1]$. 
Observe that for all $v \in \R$ with $|v-x/\sqrt{n}| \leq \epsilon_1$, by the assumption that $f\leq g$, we have  $f^{-,\epsilon_1}(v) \leq f(x/\sqrt{n}) \leq g(x/\sqrt{n})=0$.
Hence, we have $\tau_v \geq f^{-,\epsilon_1}(v)$ for all $v \in [-M,M]$.
 In conclusion, 
\be{
\kE_n\cap \Bigl\{ t(x,y) \geq n f(y/\sqrt{n}) \quad\forall y \in \Iintv{-M\sqrt{n},M\sqrt{n}} \Bigr\} \subset \Bigl\{ \tau_v \geq f^{-,\epsilon_1}(v)\quad\forall v \in [-M,M] \Bigr\}.
}
This combined with \eqref{eq:KMTapprox} yields the second inequality of \eqref{eq:kmt}.

Next, we consider the first inequality in \eqref{eq:kmt}.
Suppose that $\kE_n$ occurs and that $\tau_v \geq  f^{+,\eps_1}(v)$ holds for all $v \in [-M,M]$. 
If $v\geq x/\sqrt{n}$, then $\sup\{B_t:t\leq n f^{+,\eps_1}(v)\}\leq v$. 
Thus, by  $\kE_n$,  for all $y\in\Iintv{x,M\sqrt{n}}$, with $v:=y/\sqrt{n}-\eps_1/2$, $$\sup \left\{ S_n: n \leq f(y/\sqrt{n}) \right\}< \sup \left\{ B_t: t \leq f^{+,\eps_1}(v) \right\}+\frac{\eps_1} 2 \leq v+\frac{\eps_1} 2	=\frac y {\sqrt{n}},$$ 
 which implies $t(x,y) \geq  f(y/\sqrt{n})$.
By symmetry, we have $t(x,y) \geq  f(y/\sqrt{n})$ for all $y\in\Iintv{-M\sqrt{n},x}$.
Putting things together, we reach 
\be{
\kE_n\cap \Bigl\{ \tau_v \geq f^{+,\epsilon_1}(v)\quad\forall v \in [-M,M] \Bigr\} \subset \Bigl\{ t(x,y) \geq n f(y/\sqrt{n}) \quad\forall y \in \Iintv{-M\sqrt{n},M\sqrt{n}} \Bigr\}.
}
This combined with \eqref{eq:KMTapprox} yields the first inequality of \eqref{eq:kmt}.
\end{proof}

\begin{lem}\label{lem:h_decreasing}
Fix $\xi>0$.
Let $f \in \kC^{\rm Step}(\xi)$ (which is the set of all step functions in $\kC(\xi)$) and let $\Gamma$ be the set of all the points of discontinuity of $f$.
Then, for any $u \in \Gamma^c$,
\begin{align*}
    \lim_{\delta \searrow 0}\pp^{\rm BM}_{u} \bigl( \tau_v \geq f^{+,\delta}(v)-f(u)\quad\forall v \in \R \bigr)
    = \pp^{\rm BM}_{u}(\tau_v \geq f(v)-f(u)\quad\forall v \in \R).
\end{align*}
\end{lem}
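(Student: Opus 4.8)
The statement is a continuity claim: as the perturbation parameter $\delta\searrow0$, the Brownian survival probability for the inflated obstacle $f^{+,\delta}$ converges to that for $f$ itself, at a point $u$ that is not a jump of the step function $f$. The natural approach is a monotone-convergence argument combined with the fact that the limiting "difference event" has zero probability because $f$ is a step function. First I would record the two monotonicities: $f^{+,\delta}$ is non-increasing in $\delta$ (pointwise, since the sup is over an interval that grows with $\delta$), so the events
\[
A_\delta:=\bigl\{\tau_v\ge f^{+,\delta}(v)-f(u)\ \ \forall v\in\R\bigr\}
\]
are \emph{increasing} as $\delta\searrow0$; moreover each $A_\delta\subset A_0:=\{\tau_v\ge f(v)-f(u)\ \forall v\in\R\}$ since $f^{+,\delta}\ge f$. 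By monotone convergence along a sequence $\delta_k\searrow0$,
\[
\lim_{k\to\infty}\pp^{\rm BM}_u(A_{\delta_k})=\pp^{\rm BM}_u\Bigl(\bigcap_k A_{\delta_k}\Bigr),
\]
and the whole content of the lemma is that $\bigcap_k A_{\delta_k}$ differs from $A_0$ by a $\pp^{\rm BM}_u$-null set. (Monotonicity in $\delta$ also shows the limit does not depend on the sequence, so it suffices to treat $\delta_k=1/k$.)

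**The key step.** I would show $\pp^{\rm BM}_u\bigl(A_0\setminus\bigcap_k A_{\delta_k}\bigr)=0$. A Brownian path $\omega\in A_0$ fails to lie in $\bigcap_kA_{\delta_k}$ exactly when there is some $v$ with $\tau_v(\omega)<\sup_{|w-v|\le\delta_k}f(w)-f(u)$ for all large $k$, i.e. when the path "just barely" satisfies the $f$-constraint near a jump of $f$: since $f$ is a step function with jump set $\Gamma$, $f^{+,\delta}(v)>f(v)$ only for $v$ within $\delta$ to the left of a point of $\Gamma$, and there $f^{+,\delta}(v)$ equals the value of $f$ just past that jump. So the bad event is contained in the union over the (locally finite) jump points $\gamma\in\Gamma$ with associated post-jump level $\ell_\gamma:=f(\gamma{+})$ of the event $\{\tau_{v}(\omega)=\ell_\gamma-f(u)\text{ for some }v<\gamma\text{ arbitrarily close to }\gamma\}$, which forces $\tau_\gamma(\omega)\le\ell_\gamma-f(u)$ with the boundary case; more cleanly, one shows the bad set is contained in $\bigcup_{\gamma\in\Gamma}\{\tau_{\gamma^-}=\ell_\gamma-f(u)\}$ where $\tau_{\gamma^-}:=\lim_{v\uparrow\gamma}\tau_v$, up to the constraint being met with equality. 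Each such event has probability zero: for fixed real levels $a$ and $s$, $\pp^{\rm BM}_u(\tau_a=s)=0$ because $\tau_a$ has a density (Lemma~\ref{lem:maxbrown} gives $\pp^{\rm BM}_u(\tau_a\le t)$ as an explicit absolutely continuous function of $t$), and we only remove countably many levels since $\Gamma$ is countable. Hence the bad set is null and $\pp^{\rm BM}_u(\bigcap_k A_{\delta_k})=\pp^{\rm BM}_u(A_0)$, which is the claim.

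**Where the care is needed.** The main obstacle is making the set-theoretic containment "$A_0\setminus\bigcap_kA_{\delta_k}\subset$ (countable union of hitting-time-equals-a-constant events)" precise, because the offending value of $v$ can itself vary with $\delta_k$ and can approach a jump point of $f$. The point to exploit is that $u\in\Gamma^c$ and $\Gamma$ is closed and discrete on any bounded interval, so near each $\gamma\in\Gamma$ the function $f^{+,\delta}$ is eventually (in $\delta$) equal to a single constant $\ell_\gamma$ on a one-sided neighborhood of $\gamma$, and the constraint $\tau_v\ge\ell_\gamma-f(u)$ for all $v$ in a left-neighborhood of $\gamma$ is, by continuity of $v\mapsto\tau_v(\omega)$ from the left along a fixed continuous path, equivalent to $\tau_v\ge\ell_\gamma-f(u)$ with $v\uparrow\gamma$; a path in $A_0$ but not in $\bigcap_kA_{\delta_k}$ is precisely one where this holds with equality in the limit. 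One also has to confirm that only finitely many $\gamma\in\Gamma$ are relevant on the time scale where the path matters (those with $\ell_\gamma-f(u)$ not exceeding the a.s.-finite running maximum behavior), but since we only need a null-set bound, the crude countable union over all of $\Gamma$ suffices and we invoke $\pp^{\rm BM}_u(\tau_a=s)=0$ for each of the countably many pairs. I would also note the degenerate boundary cases ($u$ at the edge of where $f$ is constant, or $f\equiv\xi$ for $v$ large) cause no trouble since there $f^{+,\delta}=f$ for small $\delta$.
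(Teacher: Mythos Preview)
Your approach is correct and matches the paper's: both reduce via monotonicity to showing that the residual event (paths satisfying the $f$-constraint but violating every $f^{+,\delta}$-constraint) is null, localize the violation to the finitely many jump points $\gamma\in\Gamma$ via a compactness/pigeonhole step, and then use that $\{\tau_\gamma=c\}$ is $\pp^{\rm BM}_u$-null for each $\gamma\ne u$ together with a.s.\ continuity of $v\mapsto\tau_v$. One slip to fix: since $A_\delta$ \emph{increases} as $\delta\searrow0$, the monotone limit is $\bigcup_k A_{\delta_k}$, not $\bigcap_k A_{\delta_k}$; your subsequent analysis (paths failing $A_{\delta_k}$ for every $k$, with the witness $v$ allowed to depend on $k$) is already the correct description of $A_0\setminus\bigcup_k A_{\delta_k}$, so only the symbol needs correcting.
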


{
\begin{proof}
Since $f$ is bounded, we can take a constant $M$ such that the support of $f$ is in $[-M,M]$.
Hence, it suffices to show that for any $u \in \Gamma^c$,
\begin{align}\label{eq:newproof_goal}
    \lim_{\delta \searrow 0} \pp^{\rm BM}_{u} \bigl( \tau_v \geq f^{+,\delta}(v)-f(u)\quad\forall v \in [-M,M] \bigr)
    = \pp^{\rm BM}_{u}(\tau_v \geq f(v)-f(u)\quad\forall v \in [-M,M]).
\end{align}
To this end, we fix $u \in \Gamma^c$.
Due to the monotonicity of $f^{+,\delta}$ in $\delta$, the limit of the above expression exists and
\begin{align*}
    \lim_{\delta \searrow 0} \pp^{\rm BM}_{u} \bigl( \tau_v \geq f^{+,\delta}(v)-f(u)\quad\forall v \in [-M,M] \bigr)
    \leq \pp^{\rm BM}_{u}(\tau_v \geq f(v)-f(u) \quad \forall v \in [-M,M]).
\end{align*}
For the opposite inequality, let $\Gamma$ be the set of all the points of discontinuity of the function $f$.
Note that $\Gamma$ is finite since $f$ is a step function.
Moreover, set $f^+(x):=\lim_{\delta \searrow 0}f^{+,\delta}(x)$ and consider the event
\begin{align*}
    \kE:=\left\{ \forall w\in \Gamma,\,\tau_w\neq f^+(w)-f(u),\,\lim_{n\to\infty}\tau_{w\pm 1/n}=\tau_w \right\}.
\end{align*}
Clearly, $\P^{\rm BM}_{u}(\kE)=1$ holds due to the finiteness of $\Gamma$.
Hence, the desired opposite inequality follows once we prove that
\begin{align}\label{elimh}
    \bigl\{ \forall \delta>0,\,\exists v \in [-M,M],\,\tau_v< f^{+,\delta}(v)-f(u) \bigr\} \cap \kE
    \subset \{ \exists v\in [-M,M],\,\tau_v<f(v)-f(u) \}.
\end{align}
Indeed, \eqref{elimh} combined with the monotonicity of $f^{+,\delta}$ in $\delta$ implies that
\begin{align*}
    \lim_{\delta \searrow 0} \pp^{\rm BM}_{u} \bigl( \tau_v \geq f^{+,\delta}(v)-f(u)\quad\forall v \in [-M,M] \bigr)
    &= \pp^{\rm BM}_{u} \bigl( \bigl\{ \exists \delta>0,\,\tau_v \geq f^{+,\delta}(v)-f(u)\quad\forall v \in [-M,M] \bigr\} \cup \kE^c \bigr)\\
    &\geq \pp(\tau_v \geq f(v)-f(u) \quad \forall v\in [-M,M]),
\end{align*}
and the desired opposite inequality follows.

To prove \eqref{elimh}, suppose that the event in the left-hand side of \eqref{elimh} occurs.
Then, for each $\delta \in (0,1]$, we can take $v_\delta \in [-M,M]$ such that $\tau_{v_\delta}<f^{+,\delta}(v_\delta)-f(u)$.
By the compactness of $[0,1]$ and $[-M,M]$, there exist a sequence $(\delta_k)_{k=1}^\infty$ on $(0,1]$ and $v_*\in [-M,M]$ such that $\delta_k \searrow 0$ and $v_{\delta_k}\to v_*$ as $k \to \infty$.
If $v_* \notin \Gamma$, then $f^{+,\delta_k}(v_{\delta_k})=f(v_{\delta_k})$ holds for a sufficiently large $k$.
This means that we have $\tau_{v_{\delta_k}}<f(v_{\delta_k})-f(u)$, and thus the event in the right-hand side of \eqref{elimh} occurs.
Let us next treat the case where $v_*\in \Gamma$.
Since $f$ is a step function, $f^{+,\delta_k}(v_{\delta_k}) \leq f^+(v_*)$ holds for all large $k$.
Hence,
\begin{align*}
    \limsup_{k \to \infty} \tau_{v_{\delta_k}}
    \leq \limsup_{k \to \infty} f^{+,\delta_k}(v_{\delta_k})-f(u)
    \leq f^+(v_*)-f(u).
\end{align*}
This, together with the fact that 
$\tau_{v_*} \not= f^+(v_*)-f(u)$ and $\tau_{v_{\delta_k}} \to \tau_{v_*}$ as $k \to \infty$ on the event $\kE$, implies $\tau_{v_*}<f^+(v_*)-f(u)$.
Hence, there almost surely exists a (random) constant $\e>0$ such that $\tau_v<f^+(v_*)-f(u)$ for all $v \in [v_*-\e,v_*+\e]$.
Moreover, since $f$ is a step function and $v_* \in \Gamma$, there exists an open interval $I \subset [-M,M]$ such that $v_*$ is an endpoint of $I$ and $f(v)=f^+(v_*)$ for all $v \in I$.
Since $[v_*-\varepsilon,v_*+\varepsilon] \cap I$ is not empty, we can take $v \in [v_*-\varepsilon,v_*+\varepsilon] \cap I \subset [-M,M]$ such that
\begin{align*}
    \tau_v
    <f^+(v_*)-f(u)
    = f(v)-f(u),
\end{align*}
and the event in the right-hand side of \eqref{elimh} also occurs in the case where $v_* \in \Gamma$.
Therefore, \eqref{elimh} is proved.
\end{proof}
}

\subsection{Some prior estimates of the one-dimensional frog model}
In this part, we present some lemmas on large deviation behaviors of the first passage time of the frog model around the starting point.

\begin{lem} \label{lem:t1}
There exists $c \in (0,1/64)$ such that for any $\delta>0$, $A \geq 4c\sqrt{\delta}$, and $n$ large enough, we have
\ben{\label{lem:t1 (1)}
\P(|\rmB_{\delta n}| \leq  c \delta \sqrt{n}/A) \leq \exp(-A\sqrt{n}),
}
where for $t\geq 0$,
 \be{
 \rmB_t:=\{x \in \Z: \T(0,x) \leq t\}.
 }
Consequently, for any $\eta, A >0$ satisfying $A\geq 32 c \eta$,
\ben{\label{lem:t1 (2)}
    \P\Bigl(\rmT(0,\lf \eta\sqrt{n}\rf) \wedge \rmT(0,-\lf\eta \sqrt{n}\rf) \geq 2A \eta n/c \Bigr) \leq \exp(-A\sqrt{n}).
	}
\end{lem}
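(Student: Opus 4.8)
\emph{Reduction.} The plan is first to reduce \eqref{lem:t1 (1)} to a clean statement about how slowly the activated set can grow. I would begin by recording that $x\mapsto\T(0,x)$ is non-decreasing on $\Z_{\ge0}$ and non-increasing on $\Z_{\le0}$: in any relay realising $\T(0,x)$ with $x>0$, the first excursion that reaches a level $w\in(0,x)$ is performed by a nearest-neighbour walk, so $w$ is visited by the time that relay has spent, whence $\T(0,w)\le\T(0,x)$. Consequently $\rmB_t$ is a discrete interval $\Iintv{-L_t,R_t}$ containing $0$. Setting $T:=\delta n$ and $R:=c\delta\sqrt n/A$, the hypothesis $A\ge4c\sqrt\delta$ is exactly $R\le\sqrt T/4$ (equivalently $T\ge16R^2$), and $A\sqrt n=cT/R$. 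Since $\T(0,1)\wedge\T(0,-1)\le1$ we have $|\rmB_T|\ge2$ for $T\ge1$, so $\{|\rmB_{\delta n}|\le R\}$ is empty when $R<2$; for $R\ge2$ I decompose over the final cloud size, $\P(|\rmB_{\delta n}|\le R)\le\sum_{j=2}^{\lfloor R\rfloor}\P(|\rmB_{\delta n}|=j)$, and the task becomes to prove $\P(|\rmB_{\delta n}|=j)\le e^{-2cT/j}$ for every $2\le j\le R$ (note $T\ge16j^2$). Summing this over $j$ and using $\lfloor R\rfloor\le e^{cT/R}$ — valid for $n$ large since $T/R\ge4\sqrt T$ while $\log R\le\tfrac12\log T$ — yields $\P(|\rmB_{\delta n}|\le R)\le e^{-cT/R}=e^{-A\sqrt n}$.

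\emph{Fixing the cloud size: the few-frogs regime.} On $\{|\rmB_T|=j\}$ the activated set is an interval of $j$ sites and $S^0$ cannot leave that window before time $T$, since an exit would activate a site outside $\rmB_T$. For $j\le c_1/(4c)$ this already suffices: splitting $[0,T]$ into $\asymp T/j^2$ blocks of length $j^2$ and using in each block that a simple random walk started anywhere leaves a fixed window of $j$ sites within $j^2$ steps with probability bounded below (Lemma~\ref{lemrw}(ii) with a fixed constant), one gets $\P(|\rmB_T|=j)\le e^{-c_1T/j^2}$, which is $\le e^{-2cT/j}$ once $j\le c_1/(4c)$ and $n$ is large. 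So the constant $c$ in the statement will be chosen below $1/64$ and small enough that this range is non-empty.

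\emph{The crux: the many-frogs regime.} For $j>c_1/(4c)$ one must improve the confinement exponent from $T/j^2$ to $T/j$, the gain being that a cloud of $j$ activated frogs provides $\asymp j$ essentially independent chances to escape the window. I would split $[0,T]$ into $K:=\lfloor T/j^2\rfloor\,(\ge16)$ epochs of length $j^2$ and lower-bound the frontier epoch by epoch: if at the start of an epoch the frontier is at $r$ with $\Iintv{0,r}$ activated, then by Lemma~\ref{lemrw}(ii) each of the frogs $r,r-1,\dots$ carries its own walk to the point $r+\lceil aj\rceil$ (distance $\asymp j=\sqrt{j^2}$) within $j^2$ further steps with probability at least $1-c_0$ for an absolute $c_0<1$, independently over frogs; using $\asymp j$ of them the frontier advances by $\ge\lceil aj\rceil$ in that epoch with probability $\ge1-c_0^{\,\alpha j}=1-e^{-c_2j}$. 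Chaining the $K$ epochs gives $\P(|\rmB_T|=j)\le e^{-c_2jK}\le e^{-c_3T/j}$, and choosing $c$ below $c_3/4$ (and below $c_1/4$, and below $1/64$) closes the estimate. The genuine obstacle here — and the reason this is more than a union bound — is the independence bookkeeping: the activation times of the relay frogs are random and correlated with the very walk-increments used to measure the advance, and one walk cannot serve two epochs with the same time-interval. I would handle this by never invoking the true dynamics but running an explicit lower-bounding relay in which each round is driven by a prescribed, previously unused increment of some activated frog's walk, so distinct rounds consume disjoint pieces of the randomness, and then transferring the interval lower bound to $\rmB_T$ via $\T(0,y)\le\T(0,x)+t(x,y)$. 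One must also guarantee that $\asymp j$ frogs are already available at the start of each epoch; this is where the confinement $|\rmB_t|\le j$ for all $t\le T$ is combined with Lemma~\ref{lemrw}(i), which makes it (super-polynomially) unlikely for the cloud to remain much smaller than $j$ sites for a long stretch.

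\emph{Deducing \eqref{lem:t1 (2)}.} If $\rmT(0,\lf\eta\sqrt n\rf)\wedge\rmT(0,-\lf\eta\sqrt n\rf)\ge2A\eta n/c$, then at time $t:=\lceil2A\eta n/c\rceil-1$ neither $\pm\lf\eta\sqrt n\rf$ has been activated, so by the interval property $\rmB_t\subseteq\Iintv{-\lf\eta\sqrt n\rf+1,\lf\eta\sqrt n\rf-1}$ and $|\rmB_t|\le2\eta\sqrt n$. I then apply \eqref{lem:t1 (1)} with $\delta':=t/n$ (which tends to $2A\eta/c$, hence lies in a bounded range bounded away from $0$, so the ``$n$ large'' there applies uniformly) and the same $A$: the hypothesis $A\ge32c\eta$ is precisely what gives $4c\sqrt{\delta'}\le4c\sqrt{2A\eta/c}=4\sqrt{2cA\eta}\le A$, while $c\delta'\sqrt n/A=ct/(A\sqrt n)\ge2\eta\sqrt n-c/(A\sqrt n)\ge|\rmB_t|$ for $n$ large. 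Hence the event in \eqref{lem:t1 (2)} is contained in $\{|\rmB_{\delta'n}|\le c\delta'\sqrt n/A\}$, whose probability is at most $e^{-A\sqrt n}$.
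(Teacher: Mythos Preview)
Your reduction and the derivation of \eqref{lem:t1 (2)} from \eqref{lem:t1 (1)} are fine, and the few-frogs case is handled correctly. The gap is in the many-frogs regime, at exactly the point you flag: ensuring $\asymp j$ active frogs are available at the start of each epoch. You propose to use Lemma~\ref{lemrw}(i), but applied to the single walk $S^0$ it gives only $\P(|\rmB_{t_0}|<j/2)\le\P(|\kR_{t_0}|<j/2)\le e^{-c' t_0/j^2}$. For this error term to be absorbed into the target $e^{-2cT/j}$ you would need $t_0/j^2\gtrsim T/j$, hence $t_0\gtrsim jT$, which exceeds $T$ once $j\ge2$. Concretely, when $j$ is of order $R$ (the values that dominate your sum), $T/j^2\asymp T/R^2=A^2/(c^2\delta)\ge16$ is only a constant, so the single-walk input is \emph{not} super-polynomially small and cannot by itself lift the exponent from $T/j^2$ to $T/j$. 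Some multiscale argument is unavoidable.

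The paper sidesteps the per-$j$ decomposition entirely. It sets $\rmT_k:=\inf\{t:|\rmB_t|\ge2^k\}$, writes $\P(|\rmB_{\delta n}|\le\sqrt{\eps n})\le\P(\rmT_{k_n}>\delta n)$ with $k_n=\lfloor\log_2\sqrt{\eps n}\rfloor$, and bounds the right side by Chernoff with $\alpha\asymp1/\sqrt{\eps n}$: at time $\rmT_{j-1}$ there are at least $2^{j-1}$ independent walks, so by Lemma~\ref{lemrw}(i),
\[
\P(\rmT_j-\rmT_{j-1}>t\mid\mathcal F_{j-1})\le\bigl(\P(|\kR_t|<2^j)\bigr)^{2^{j-1}}\le e^{-c_0t/2^j},
\]
whence $\E[e^{\alpha(\rmT_j-\rmT_{j-1})}\mid\mathcal F_{j-1}]\le2e^{\alpha2^{2j+1}}$ and the product over $j\le k_n$ is controlled by $\sum_j2^{2j}\asymp\eps n$. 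The dyadic scale is precisely what makes the bootstrapping automatic: at stage $j$ one already has $2^{j-1}$ frogs, so the number of available frogs and the required range grow together. Your epoch argument can be repaired by recursing on $j$ (split on whether $|\rmB_{T/2}|\ge j/2$ and bound the small-cloud branch by the estimate at scale $j/2$), but that recursion is the dyadic argument in a different guise.
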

\begin{proof}
	
	For any $k\geq 1$, let us define 
	$${\rm T}_k:=\min\{t\in\N:  |\rmB_t|\geq 2^k\}.$$
By Lemma \ref{lemrw}-(i), there exists a positive constant $c_0$ such that  for $t\geq m^2$ 
	\ben{ \label{krtm}
		\pp(|\kR_t| \leq m) \leq \exp(-2c_0t/m^2),
	} 
where $\kR_t$ is the range of the simple random walk starting at $0$ up to time $t$ (see also the statement of part~{(i)} in Lemma~\ref{lemrw}).
Let	\be{
		\eps:=(c_0\delta/4A)^2, \qquad k_n:=\lf\log_2(\sqrt{\eps n})\rf.
	}
 	Next, we consider the filtration $(\kF_j)_{j\geq 1}$ defined by
	$$\kF_j:=\sigma(\rmT_1,\cdots,\rmT_j).$$
By the Markov inequality, for any $\alpha >0$
	\begin{align} \label{tnn2}
		\P(|\rmB_{\delta n}| \leq \sqrt{\eps n})&\leq\pp({\rm T}_{k_n}\geq \delta n) \leq e^{-\alpha  \delta n} \E e^{\alpha {\rm T}_{k_n}} 
		= e^{-\alpha  \delta n} \E \left[ \prod^{k_n}_{j=1} \E [e^{\alpha ({\rm T}_j-{\rm T}_{j-1})} \mid \mathcal{F}_{j-1}] \right].
	\end{align}
	Since $|\rmB_{{\rm T}_{j-1}}|\geq 2^{j-1}$, using \eqref{krtm},   we have for $t\geq 2^{2j}$
	\begin{align} \label{todtj}
		\pp({\rm T}_j-{\rm T}_{j-1}> t \mid \mathcal{F}_{j-1})&\leq \left( \pp ( |\kR_{t}| < 2^{j})\right)^{2^{j-1}} \leq e^{-c_0 t/2^j}.
	\end{align}
Taking $\alpha:=c_0/(2\sqrt{\eps n})=2A/(\delta\sqrt{n})$, we have $c_0/2^j \geq c_0/2^{k_n} \geq c_0/\sqrt{\e n}=2\alpha$ for all $j \in \Iintv{1,k_n}$.
Hence, for all $j \in \Iintv{1,k_n}$,
	\begin{align*}
		\E [e^{\alpha ({\rm T}_j-{\rm T}_{j-1})} \mid \mathcal{F}_{j-1}]& \leq  \alpha \int_{0}^\infty e^{\alpha t} \pp({\rm T}_j-{\rm T}_{j-1}> t \mid \mathcal{F}_{j-1})dt\\
		&\leq \alpha 2^{2j} e^{\alpha 2^{2j}}+ \alpha\int_{ 2^{2j}}^\infty e^{\alpha t} e^{-c_0 t/2^j} dt \leq e^{\alpha 2^{2j+1}} + \alpha \int_{ 2^{2j}}^\infty e^{-\alpha t} dt  \leq  2e^{\alpha  2^{2j+1}}.
	\end{align*}
	Thus, since $2^{2k_n+2}\leq 4\eps n \leq \delta n/4$ and $\alpha\delta/2=A/\sqrt{n}$ by the choice of $\eps,\alpha$ and  the condition $A \geq 4c\sqrt{\delta}$, we have for $n$ large enough depending on $c_0,\e,A$,
	\begin{align*}
		e^{-\alpha  \delta n} \E  \left(\prod^{k_n}_{j=1} \E [e^{\alpha ({\rm T}_j-{\rm T}_{j-1})} \mid \mathcal{F}_{j-1}] \right)  &\leq e^{-\alpha  \delta n}  \prod^{k_n}_{j=1} \left(e^{\alpha  2^{2j+1}}+1\right) \leq e^{-\alpha  \delta n} \prod^{k_n}_{j=1} 2e^{\alpha  2^{2j+1}}\\
		&\leq \exp \left( -\alpha \delta n + k_n \log{2} +  \alpha     2^{2k_n+2} \right)\\
                &\leq \exp(- \alpha \delta n/ 2 )=
\exp{(-A\sqrt{n})}.
	\end{align*}
This combined with \eqref{tnn2} yields \eqref{lem:t1 (1)}.

Next we treat \eqref{lem:t1 (2)}.
Given $\eta, A>0$, we set 
		$\delta := 2A\eta/c$. Since $A^2 \geq 32 A c\eta  = 16 c^2 \delta$ by the assumption $A \geq 32 c \eta$, Part (i)  gives  
	\be{
		\pp\bigl( |\rmB_{\delta n}| > 2 \eta \sqrt{n} \bigr) \geq 1- \exp(-A\sqrt{n}).
	}
	Moreover, if $|\rmB_{\delta n}| > 2 \eta \sqrt{n}$, then either $\rmT(0,\lf \eta \sqrt{n}\rf) < \delta n$ or $\rmT(0,-\lf \eta \sqrt{n}\rf) <\delta n$. Hence,
	\be{
		\pp\Bigl( \rmT(0,\lf \eta\sqrt{n})\rf) \wedge \rmT(0,-\lf\eta \sqrt{n}\rf)<\delta n \Bigr) \geq 1-\exp(-A\sqrt{n}),
	}
and the proof is complete.
\end{proof}

\begin{lem} \label{lem:taln}
The following results hold:
\begin{itemize}
\item [(i)] For any $\alpha>0$, there exists $c=c(\alpha)>0$ such that if $n \in \N$ is large enough, then for all $x\in\Iintv{-\sqrt{n},\sqrt{n}}$,
\begin{align*}
    \pp\Bigl( {\rm T}_{[-\sqrt{\alpha n}, \sqrt{\alpha n}] }(0,x) \geq  n \Bigr) \leq \exp(-c\sqrt{n}).
\end{align*} 

\item[(ii)] There exists a universal constant $c>0$ such that for any $\alpha, \beta>0$, if $n$ is sufficiently large depending on $\alpha$ and $\beta$, then
\begin{align*}
    \pp\bigl( \T(0,\lfloor \alpha \sqrt{n} \rfloor) \geq \beta n \bigr)
    \leq \exp\bigl( -c\alpha^{-1}\beta\sqrt{n} \bigr).
\end{align*}
\end{itemize}
\end{lem}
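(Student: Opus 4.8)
The plan is to derive both parts from Lemma \ref{lem:t1}, the hitting-time estimates of Lemma \ref{lemrw} and Lemma \ref{lem:maxbrown}, and one genuinely new ingredient: a large-deviation bound for coverage of a box by the \emph{confined} frog model. For Part (ii) I would first record the elementary fact that, $\pp$-a.s., $\rmB_t$ is an interval containing $0$: if a ``hole'' at a site $m$ first appears at some time, tracing one step back forces a hole at $\{m-1,m\}$ or $\{m,m+1\}$, contradicting minimality (base case $\rmB_0=\{0\}$). Hence $\rmT(0,\cdot)$ is non-decreasing on $\Z_{\ge 0}$ and $\rmT(0,m)\overset{d}{=}\rmT(0,-m)$ by reflection symmetry. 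If $\beta\ge 64\alpha^2$, apply Lemma \ref{lem:t1}\,\eqref{lem:t1 (2)} with $\eta:=\alpha$ and $A:=c\beta/(2\alpha)$ (the hypothesis $A\ge 32c\alpha$ is exactly $\beta\ge 64\alpha^2$, and $2A\eta n/c=\beta n$), giving $\pp(\rmT(0,\lf\alpha\sqrt n\rf)\ge\beta n)\le\pp\big(\rmT(0,\lf\alpha\sqrt n\rf)\wedge\rmT(0,-\lf\alpha\sqrt n\rf)\ge\beta n\big)\le e^{-A\sqrt n}=e^{-(c/2)\alpha^{-1}\beta\sqrt n}$. If instead $\beta<64\alpha^2$, put $\eta:=\sqrt\beta/8<\alpha$; monotonicity of $\rmT(0,\cdot)$ gives $\pp(\rmT(0,\lf\alpha\sqrt n\rf)\ge\beta n)\le\pp(\rmT(0,\lf\eta\sqrt n\rf)\ge\beta n)$, and since $\beta=64\eta^2$ the previous case applies with $\eta$ in place of $\alpha$, producing the bound $e^{-(c/2)\eta^{-1}\beta\sqrt n}=e^{-4c\sqrt\beta\,\sqrt n}\le e^{-(c/2)\alpha^{-1}\beta\sqrt n}$, the last step because $\alpha^{-1}\beta<8\sqrt\beta$. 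The universal constant is $c/2$; floor artifacts are absorbed into ``$n$ large''.

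For Part (i): since $\rmT_A$ is non-increasing in $A$, replacing the box by $\Iintv{-\sqrt{\alpha_0 n},\sqrt{\alpha_0 n}}$ whenever $\alpha>\alpha_0$ reduces to the case $\alpha\le\alpha_0$ for any fixed small $\alpha_0$; by symmetry I also assume $x\ge 0$, and set $R:=\lf\sqrt{\alpha n}\rf$, $A:=\Iintv{-R,R}$. The heart of the matter is a \emph{confined coverage estimate}: for each fixed $\theta>0$ and $n$ large, $\pp\big(\exists\,y\in A:\ \rmT_A(0,y)>\theta n\big)\le e^{-c(\alpha,\theta)\sqrt n}$ (and likewise with $A$ replaced by the near-symmetric box $A^-$ below). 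I would obtain this by rerunning the renewal argument of Lemma \ref{lem:t1}\,\eqref{lem:t1 (1)} \emph{inside the box}: $\rmB^A_t:=\{z:\rmT_A(0,z)\le t\}$ is again an interval (no-hole argument), and at the stopping times $\widehat T_j:=\min\{t:|\rmB^A_t\cap A|\ge 2^j\}$ there are at least $2^{j-1}$ active frogs; by the strong Markov property each performs a fresh walk afterwards, and if any such walk has range $\ge 2^j$ within the next $t$ steps it enlarges $|\rmB^A_t\cap A|$ to $2^j$. Lemma \ref{lemrw}(i) then gives $\pp(\widehat T_j-\widehat T_{j-1}>t\mid\kF_{\widehat T_{j-1}})\le e^{-c_0 t/2^j}$ exactly as in Lemma \ref{lem:t1}, and the same Chernoff computation, summed over $j\le\log_2(2R)$, yields the claim once $\alpha_0$ is small enough that $\theta$ dominates the $O(\alpha)$ loss from $\sum_j 2^{2j}\asymp R^2\asymp\alpha n$. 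The delicate point here, and the main obstacle, is that (unlike the full-lattice case of Lemma \ref{lem:t1}) a frog's range can ``leak out'' of the box, so one must keep track of which active frogs can actually extend coverage into the uncovered part of $A$; this uses that $A$ is symmetric about $0$ and much wider than $\sqrt{\theta n}$, so such frogs always exist until the box is covered.

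Granting the confined coverage estimate, Part (i) is assembled as follows. If $x\le R$ then $x\in A$, so $\{\rmT_A(0,x)\ge n\}\subseteq\{\exists y\in A:\ \rmT_A(0,y)>n\}$ and we are done by coverage with $\theta=1$. If $x>R$ (only possible for $\alpha<1$), use a ``sliver'': let $r:=\lceil\delta R\rceil$ with $\delta$ a small universal constant, $Z:=\Iintv{R-r+1,R}$, $A^-:=A\setminus Z$. For $z\in Z$ one has $\rmT_A(0,z)\le\min_{y\in A^-}\big(\rmT_{A^-}(0,y)+t(y,z)\big)$, and the right-hand side is measurable with respect to $\{S^w:w\in A^-\}$, hence independent of $\{S^z:z\in Z\}$. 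Combining the confined coverage estimate for $A^-$ with the bound $\pp(t(0,j)>\tfrac n3)\le C_0(1\wedge j/\sqrt n)$ (Lemma \ref{lem:maxbrown} and Donsker, with $\delta$ chosen so that the product over the $\asymp r$ small-distance hitting times from $\Iintv{R-2r,R-r}$ to $Z$ is $\le e^{-c\sqrt n}$) shows that, off an $\{S^w:w\in A^-\}$-measurable event of probability $\le e^{-c\sqrt n}$, every $z\in Z$ satisfies $\rmT_A(0,z)\le\tfrac{2n}{3}$. On that event $\rmT_A(0,x)\le\tfrac{2n}{3}+\min_{z\in Z}t(z,x)$, and by the independence just noted $\pp\big(\min_{z\in Z}t(z,x)>\tfrac n3\big)=\prod_{z\in Z}\pp\big(t(0,x-z)>\tfrac n3\big)\le(1-\delta_0)^{|Z|}\le e^{-c(\alpha)\sqrt n}$, where $\delta_0:=1-\sup_{1\le j\le\sqrt n}\pp(t(0,j)>\tfrac n3)>0$ for $n$ large (again Donsker, the supremum occurring at $j=\lf\sqrt n\rf$). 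Summing the two exceptional probabilities gives $\pp(\rmT_A(0,x)\ge n)\le e^{-c(\alpha)\sqrt n}$, completing the plan.
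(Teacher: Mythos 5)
Your Part~(ii) contains a fatal reversed inequality. You write
\begin{align*}
\pp\bigl(\rmT(0,\lfloor\alpha\sqrt n\rfloor)\ge\beta n\bigr)\le\pp\bigl(\rmT(0,\lfloor\alpha\sqrt n\rfloor)\wedge\rmT(0,-\lfloor\alpha\sqrt n\rfloor)\ge\beta n\bigr),
\end{align*}
but the event on the right is the \emph{intersection} $\{\rmT(0,\lfloor\alpha\sqrt n\rfloor)\ge\beta n\}\cap\{\rmT(0,-\lfloor\alpha\sqrt n\rfloor)\ge\beta n\}$, so the inequality goes the other way. Lemma~\ref{lem:t1}-(2) only says that the activated cloud spreads fast in \emph{at least one} direction (because $\rmB_t$ is an interval through $0$, a large cardinality forces one endpoint to be far); it says nothing about the direction containing your target, and reflection symmetry of the \emph{law} does not convert the two-sided bound into a one-sided one (the right half-line can be slow while the left is fast). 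This is precisely the gap the paper fills in its proof of part~(i): after Lemma~\ref{lem:t1} produces $\ge\sqrt{\e n}/2$ active frogs inside $\Iintv{-\sqrt{\e n},\sqrt{\e n}}$ by time $n/2$, each of them \emph{independently} attempts to hit the specific site $x$ within time $n/2$, and each fails with probability at most $c<1$ by Lemma~\ref{lemrw}-(ii); the product $c^{\sqrt{\e n}/2}$ gives the one-sided bound. Part~(ii) is then deduced from part~(i) by rescaling ($m=\lceil\alpha^2 n\rceil$) plus a subadditive chaining over $K=\lceil 2/\gamma\rceil$ translates to handle small $\gamma=\beta/(2\alpha^2)$. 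Your Part~(ii) bypasses Part~(i) entirely and cannot be repaired using Lemma~\ref{lem:t1}-(2) alone.

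Your Part~(i) rests on a ``confined coverage estimate'' that you do not actually prove: you yourself flag that in the box-restricted dynamics a frog's range may leak outside $A$ or extend only into the already-covered side, so the step $\pp(\widehat T_j-\widehat T_{j-1}>t\mid\kF_{\widehat T_{j-1}})\le e^{-c_0t/2^j}$ does not follow from Lemma~\ref{lemrw}-(i) as in Lemma~\ref{lem:t1}, and the remark that ``such frogs always exist until the box is covered'' is not an argument. Note that this machinery is unnecessary: the paper never covers the whole box. It runs the \emph{unrestricted} covering process only until $\sqrt{\e n}$ sites are activated (with $\e=\min\{\alpha,c^2/4\}$); since $\rmB_{\T_{k_n}}$ is an interval through $0$ of cardinality $\le\sqrt{\e n}$, all frogs used so far lie in $\Iintv{-\sqrt{\e n},\sqrt{\e n}}\subseteq\Iintv{-\sqrt{\alpha n},\sqrt{\alpha n}}$, so the restricted and unrestricted passage times agree up to that point, and the leakage issue never arises; the remaining work is the independent-targeting step described above, which also handles $x$ outside the box without your sliver construction. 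Your final product bound $\prod_{z\in Z}\pp(t(0,x-z)>n/3)\le(1-\delta_0)^{|Z|}$ is the right mechanism, but everything feeding into it is unsupported.
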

\begin{proof}
First of all, let $c\in(0,1/64)$ be the constant as in Lemma \ref{lem:t1} and recall $\rmT_{k}=\inf\{t:|\rmB_t|\geq 2^{k}\}$.
Fix $\alpha >0$ and $x \in \Z$ with $|x| \leq \sqrt{n}$, and define
\be{
\e := \min \{\alpha, c^2/4\}, \quad k_n:= \lf\log_2\sqrt{ \e n}\rf.
}
Then, for all $n$ sufficiently large (independently of $c$, $\alpha$ and $\eps$),
\ben{ \label{tkne}
\pp({\rm T}_{k_n} >  n/2) \leq  \pp(|\rmB_{n/2}|\leq \sqrt{ \e n}) \leq \pp(|\rmB_{n/2}|\leq c\sqrt{ n}/2) \leq \exp(-\sqrt{n}),
}
where we have used Lemma~\ref{lem:t1} with $\delta=1/2$, $A = 1\geq 4c\sqrt{\delta}$ in the last inequality.
Note that, at the time $\T_{k_n}$, the number of active frogs is larger than $\sqrt{\e n}/2$ and  the set of sites visited by active frogs is a subset of $\Iintv{-\sqrt{\e n},\sqrt{\e n}}$.
Therefore, by the strong Markov property, we have for any $x \in \Z$ with $|x| \leq \sqrt{n}$,
\begin{align*}
    \pp\Bigl( \T_{[-\sqrt{\e n}, \sqrt{\e n}]}(0, x)\geq  n,
    \T_{k_n}\leq n/2 \Bigr)
    \leq \left\{ \max_{|z|\leq \sqrt{\e n}}\pp(t(z,x) \geq n/2) \right\}^{\sqrt{\e n}/2}\leq \exp(-c'\sqrt{n}),
\end{align*}
for some constant $c'=c'(\e)>0$.
Here the last inequality follows from Lemma~\ref{lemrw}-(ii) and that $\max \{|z-x|:|z| \leq \sqrt{\e n}, |x| \leq \sqrt{n} \} \leq (1+\sqrt{\e}) \sqrt{n}$. Combining this with \eqref{tkne} yields that
\begin{align*}
    \pp\Bigl( \T_{[-\sqrt{\e n}, \sqrt{\e n}]}(0, x)\geq n \Bigr) \leq \exp(-\sqrt{n}) + \exp(-c'\sqrt{n}).
\end{align*}
Hence, part~{(i)} of the lemma follows since $\rmT_{[-\sqrt{\e n}, \sqrt{\e n}]}(0,x) \geq \rmT_{[-\sqrt{\alpha n}, \sqrt{\alpha n}]}(0,x)$ by $\varepsilon \leq \alpha$.

Next, we consider part~{(ii)}.
Fix $\alpha,\beta>0$.
Then, letting $n$ large enough and taking $m:=\lceil \alpha^2 n \rceil$ and $\gamma:=\beta/(2\alpha^2)$, one has
\begin{align*}
    \pp\bigl( \T(0, \lfloor \alpha \sqrt{n} \rfloor)\geq \beta n \bigr)
    \leq \pp\bigl( \rmT(0,  \lfloor \sqrt{m} \rfloor) \geq  \gamma m \bigr).
\end{align*}
Hence, for part~{(ii)}, it suffices to check that there exists a positive constant $c$ (which is independent of $\alpha$ and $\beta$) such that if $n$ is large enough depending on $\alpha$ and $\beta$, then
\begin{align}\label{eq:tsn_goal}
    \pp\bigl( \T(0,  \lfloor \sqrt{m} \rfloor)\geq  \gamma m \bigr)
    \leq \exp(-c\alpha^{-1}\beta\sqrt{n}).
\end{align}
Since $\T_I(0,\cdot) \geq \rmT(0,\cdot)$ for all $I \subset \Z$, part~(i) implies that there exists a positive constant $c_1$ (which is independent of $\alpha$ and $\beta$) such that for all $\ell$ sufficiently large,
\begin{align}\label{tsn}
    \pp\bigl( \T(0,\lfloor \sqrt{\ell} \rfloor) \geq \ell \bigr) \leq \exp(-c_1\sqrt{\ell}).
\end{align}
In the case $\gamma \geq 1$, \eqref{eq:tsn_goal} is a direct consequence of \eqref{tsn}.
However, \eqref{eq:tsn_goal} is not trivial in the case $\gamma<1$ since $\{ \T(0,\lfloor \sqrt{m} \rfloor)\geq \gamma m \} \supset \{ \T(0,\lfloor \sqrt{m} \rfloor)\geq m \}$.
To overcome this problem, we use the subadditivity to obtain
\begin{align*}
    \T(0,\lfloor \sqrt{m} \rfloor)
    \leq \sum_{i=0}^{K-1} \T\Bigl( i \lfloor (\gamma/2)\sqrt{m} \rfloor,(i+1)i \lfloor (\gamma/2)\sqrt{m} \rfloor \Bigr)
\end{align*}
with $K:=\lceil 2/\gamma \rceil$.
Therefore, the union bound, the translation invariance and \eqref{tsn} show that there exists a positive constant $c$ (which is independent of $\alpha$ and $\beta$) such that if $n$ is large enough depending on $\alpha$ and $\beta$, then
\begin{align*}
    \pp\bigl( \T(0,\lfloor \sqrt{m} \rfloor) \geq \gamma m \bigr)
    &\leq K \times \pp\Bigl( \T(0,\lfloor (\gamma/2)\sqrt{m} \rfloor) \geq \frac{\gamma m}{K} \Bigr)\\
    &\leq K \times \pp\Bigl(\T(0,\lfloor \sqrt{(\gamma^2/4)m} \rfloor) \geq (\gamma^2/4)m \Bigr)\\
    &\leq \Bigl( \frac{2}{\gamma}+1 \Bigr)\exp\Bigl( -\frac{c_1}{4}\alpha^{-1}\beta \sqrt{n} \Bigr)
    \leq \exp(-c\alpha^{-1}\beta \sqrt{n}).
\end{align*}
This is the desired conclusion \eqref{eq:tsn_goal}, and part~{(ii)} is proved.
\end{proof}

\begin{lem}\label{Task3}
There exists a universal constant $\alpha>0$ such that  for all $L \in \N $ and $a,h>0$ satisfying $a^2 \geq h$, we have
\begin{align*}
    \P\bigl( \T_{[-a,L]}(0,L) \geq h \bigr)
    \leq \exp (\alpha h/a) \,\P(\T(0,L) \geq h).
\end{align*}
\end{lem}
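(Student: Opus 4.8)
\emph{Overview.} The plan is to discard the parts of $\Z$ that cannot matter, and then to enlarge the window $\Iintv{-a,L}$ in stages by a conditioning (``gluing'') argument. First note the deterministic bound $\T(x,y)\ge|x-y|$: a nearest–neighbour walk needs at least $|x-y|$ steps to cross that distance, and the first active frog ever to occupy the site $-j$ (with $j\ge 1$) sits at $-j+1$ one step earlier, so inductively $\T(0,-j)\ge\T(0,-j+1)+1\ge j$; similarly on the right and after translation. Consequently, on $\{\T(0,L)<h\}$ a path realising $\T(0,L)$ cannot visit a site $y$ with $|y|\ge h$ (a visit would produce a subpath of length $\ge\T(0,y)\ge|y|\ge h>\T(0,L)$) nor a site $y>L$; hence that path uses only sites of $\Iintv{-h,L}$ and $\T_{\Iintv{-h,L}}(0,L)=\T(0,L)$, while on $\{\T(0,L)\ge h\}$ both quantities are $\ge h$. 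Thus $\{\T_{\Iintv{-h,L}}(0,L)\ge h\}=\{\T(0,L)\ge h\}$, and (treating $a\ge h$ trivially, where already $\T_{[-a,L]}(0,L)=\T(0,L)$ on $\{\,\cdot<h\}$) it suffices to prove a bound of the form
\[
\P\bigl(\T_{[-a,L]}(0,L)\ge h\bigr)\le C\,\P\bigl(\T_{[-2a,L]}(0,L)\ge h\bigr)\qquad(a^2\ge h),
\]
with $C$ universal, and then iterate the doubling $a\mapsto 2a$ about $\lceil\log_2(h/a)\rceil$ times (each step preserves $a^2\ge h$, and after that many steps the left endpoint drops below $-h$); since $C^{\lceil\log_2(h/a)\rceil}\le e^{\alpha h/a}$ for a universal $\alpha$ when $h/a\ge1$, the lemma follows. (If the per–step factor is not constant but only $e^{g}$ with $g$ summing to $O(h/a)$ over the stages, the same conclusion holds.)

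\emph{The gluing step.} Since $\{\T_{[-2a,L]}(0,L)\ge h\}\subseteq\{\T_{[-a,L]}(0,L)\ge h\}$, it is enough to bound $\P\bigl(A\cap\{\T_{[-2a,L]}(0,L)<h\}\bigr)$, where $A:=\{\T_{[-a,L]}(0,L)\ge h\}$. I would condition on $\mathcal G:=\sigma\bigl((S^x)_{x\in\Iintv{-a,L}}\bigr)$, with respect to which $A$ is measurable and the $\approx a$ ``new'' walks $(S^z)_{z\in\Iintv{-2a,-a-1}}$ are independent. On $A$, any $\Iintv{-2a,L}$–path from $0$ to $L$ of length $<h$ must use a site in $\Iintv{-2a,-a-1}$; taking the last index $i$ at which such a path sits below $-a$, the walk $S^{x_i}$ (with $x_i\in\Iintv{-2a,-a-1}$) reaches $-a$ on the way to $x_{i+1}\ge-a$ within fewer than $h-\T_{[-2a,L]}(0,x_i)\le h-a$ steps of its own clock (using $\T_{[-2a,L]}(0,x_i)\ge\T(0,x_i)\ge|x_i|\ge a+1$). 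Hence $A\cap B\subseteq\{\T_{[-2a,L]}(0,L)\ge h\}$ for the good event $B:=\{\text{no }S^z,\ z\in\Iintv{-2a,-a-1},\ \text{reaches }-a\text{ within }h-a\text{ steps}\}$, and $B$ is independent of $\mathcal G$ with
\[
\P(B)=\prod_{j}\P\bigl(t(0,j)>h-a\bigr),
\]
the product running over $j$ with $-a-j\in\Iintv{-2a,-a-1}$. By Lemma~\ref{lemrw}(iii) the factors with $j\gtrsim\sqrt{h-a}$ are bounded away from $0$ and contribute $\ge e^{-c\sqrt{h-a}}$, while for smaller $j$ one has $\P(t(0,j)>h-a)\asymp j/\sqrt{h-a}$ (Lemma~\ref{lem:maxbrown} and Donsker, or a direct reflection bound), whose product over $j\le\sqrt{h-a}$ is again $\ge e^{-c\sqrt{h-a}}$ by Stirling. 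This gives $\P(B)\ge e^{-c\sqrt{h-a}}$, hence a per–doubling factor $e^{c\sqrt{h-a}}$.

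\emph{Main obstacle.} The difficulty is that this crude factor $e^{c\sqrt{h-a}}$ is too lossy: the new walk adjacent to the old boundary, started at $-a-1$, reaches $-a$ within $h-a$ steps with probability tending to $1$, so the event $B$ above is far too demanding, and telescoping $e^{c\sqrt{h-a_i}}$ over the stages produces $e^{O(\sqrt h\,\log(h/a))}$ rather than the required $e^{O(h/a)}$ when $\sqrt h\ll a\ll h$. To obtain the correct order one has to argue that frogs next to the boundary cannot genuinely \emph{help}: a new frog at $-a-j$ is activated no earlier than time $a+j$, so within the remaining budget it can only relay over a distance $\lesssim h-a-j$, and a long excursion of that walk back toward $L$ is itself exponentially unlikely in its length. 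Quantitatively I would encode this by replacing $B$ with the event that each new walk, within its active window of length $h-a-j$, does not travel a distance comparable to that window toward $\Iintv{-a,L}$ — controlled again by Lemma~\ref{lemrw}(iii) and the Gaussian tail of the running maximum — or, alternatively, by restarting the dynamics at the first time the front reaches $-a$ and invoking the slow–front estimates (Lemmas~\ref{lem:t1} and \ref{lem:taln}) to show that penetrating deep into $\Iintv{-2a,-a-1}$ is costly. Making this accounting uniform over the whole range $\sqrt h\le a$, so that the contributions over the successive scales telescope to $e^{O(h/a)}$, is where the real work lies.
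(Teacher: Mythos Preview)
Your doubling scheme is unnecessary, and the obstacle you identify is an artifact of choosing the wrong barrier for the new frogs. The paper handles all frogs $x<-a$ in a single step via the inclusion
\[
\{\T_{[-a,L]}(0,L)\ge h\}\cap\{\forall x<-a:\ t(x,0)\ge h\}\ \subset\ \{\T(0,L)\ge h\}.
\]
The two events on the left depend on disjoint families of walks, so by independence $\P(\T_{[-a,L]}(0,L)\ge h)\cdot\P(\forall x<-a:\ t(x,0)\ge h)\le\P(\T(0,L)\ge h)$, and it only remains to bound the second factor from below. The point you miss is that the correct target is the origin, not $-a$: a frog at $x<-a$ can only advance the right front if it crosses into the positive half-line, hence through $0$; merely reaching $-a$ is useless because every site of $[-a,0]$ is already activated by the time any frog below $-a$ wakes up ($\rmB_s$ is an interval containing $0$). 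The inclusion then follows: given a path $0=x_0,\dots,x_k=L$ of cost $<h$ (WLOG all $x_i\le L$), take the last index $j$ with $x_j<-a$; if $x_{j+1}\ge0$ then $t(x_j,0)\le t(x_j,x_{j+1})<h$, while if $x_{j+1}\in[-a,-1]$ one splices an optimal $[-a,L]$-path from $0$ to $x_{j+1}$ onto $x_{j+1},\dots,x_k$, using the deterministic identity $\T_{[-a,\infty)}(0,y)=\T(0,y)$ for $y\in[-a,0]$.

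With the barrier moved to $0$, the frog at $-a-1$ has to travel distance $a+1\ge\sqrt h$ and so stays away with probability bounded away from zero; by contrast your event $B$ asks it to avoid $-a$, one step away, which it does only with probability $\asymp 1/\sqrt{h-a}$ --- this is exactly the collapse you ran into. The full product is
\[
\prod_{j>a}\P(t(0,j)\ge h)\ \ge\ \prod_{j>a}\bigl(1-Ce^{-cj^2/h}\bigr)\ \ge\ \exp\Bigl(-c'\!\sum_{j>a}e^{-cj^2/h}\Bigr)\ \ge\ e^{-\alpha h/a},
\]
using $\sum_{j>a}e^{-cj^2/h}\le\sqrt{h/c}\,e^{-ca^2/h}\le h/(2a\sqrt c)$ when $a^2\ge h$ (the last step via $e^{-t}\le 1/(1+t)\le 1/(2\sqrt t)$). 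Your speculative fixes --- restricting excursion lengths, invoking slow-front lemmas --- are not needed.
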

\begin{proof}
We notice that
\begin{align*}
    \{ \T_{[-a, L]}(0,L) \geq h\} \cap \{\forall x<-a,\,t(x,0)\geq h\} \subset \{ \T(0,L) \geq h \},
\end{align*}
and that the two events appearing the left-hand side are independent.
Thus,
\begin{align*}
    \P(\forall x<-a, \,t(x,0) \geq h)
    \,\mathbb{P}\bigl( {\rm T}_{[-a,L]}(0,L) \geq h \bigr) \leq \mathbb{P}({\rm T}(0,L) \geq h).
\end{align*}
Hence, once it is proved that there exists a universal constant $\alpha>0$ such that for all $a,h>0$ with $a^2 \geq h$,
\begin{align}\label{eq:toxha_main}
    \P(\forall x<-a, \,t(x,0) \geq h)
    \geq \exp(-\alpha h/a),
\end{align}
the desired conclusion follows immediately.
To prove \eqref{eq:toxha_main}, we use Lemma~\ref{lemrw}-(iii):
there exist  positive constants $c$ and $C$ independent of $x,h$ such that
\begin{align*}
    \pp(t(x,0) \geq h) \geq 1 -C\exp(-cx^2/h).
\end{align*}
Moreover, if $x^2 \geq h$, then 
\be{
1 -C\exp(-cx^2/h) \geq \exp\left(-c'\exp(-cx^2/h) \right),
}
with some $c'=c'(c,C)>0$. 	Hence, since 	
$$\sum_{x = -a-1}^{-\infty} \exp(-cx^2/h)  \leq  \int_{a}^{\infty} \exp(-ct^2/h) dt \leq  \exp(-ca^2/h)\int_0^\infty \exp(-cs^2/h) ds\leq \sqrt{h/c}\, \exp(-ca^2/h),$$ we have
\begin{align*}
    \pp( \forall \, x < -a,\,t(x,0) \geq h) \geq \exp \left(-\sum_{x = -a-1}^{-\infty}c'\exp(-cx^2/h) \right) \geq \exp \left(-c''\sqrt{h}\exp(-ca^2/h)\right),
\end{align*}
where $c''=c'/\sqrt{c}$. In addition, using $e^{-t}\leq 1/(1+t)$ for $t>0$ and the Cauchy--Schwarz inequality, one has
	\be{
		\sqrt{h}\exp(-ca^2/h) \leq  \frac{\sqrt{h}}{1+(c a^2/h)} \leq  \frac{\sqrt{h}}{2\sqrt{c a^2/h}}= \frac{h}{2a\sqrt{c}}, 
	}
With these observations, for all $a,h>0$ with $a^2 \geq h$,
\begin{align*}
    \pp( \forall \, x < -a,\,t(x,0) \geq h)
    \leq \exp\left(-\frac{c''h}{2a\sqrt{c}} \right),
\end{align*}
and \eqref{eq:toxha_main} follows by taking $\alpha:=c''/(2\sqrt{c})$.
\end{proof}

The following lemma gives a rough upper tail large deviation estimate, which is a counterpart of the result for the continuous-time frog model \cite[Theorem~2-(b)]{BerRam10}.

\begin{lem} \label{lem:tgcn}
There exist positive constants $c$ and $C$ such that for all $n$ sufficiently large,
\be{
\pp(\rmT(0,n) \geq Cn) \leq \exp(-cn^{1/4}).
} 
\end{lem}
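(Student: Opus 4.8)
The plan is to cut $\Iintv{0,n}$ into $\sim n/\ell$ consecutive blocks of a common length $\ell$ (ultimately $\ell\asymp n^{1/4}$), to bound $\rmT(0,n)$ by the sum of the crossing times of these blocks computed with frogs confined to a bounded neighbourhood of each block, and then to conclude by a concentration estimate for a sum of essentially independent, light‑tailed contributions. Concretely, fix a constant $\kappa\ge 1$, set $B_i:=\Iintv{(i-\kappa)\ell,(i+1)\ell}$, and use subadditivity together with $\rmT_A\ge\rmT$ to write
\begin{align*}
    \rmT(0,n)\ \le\ \sum_{i=0}^{\lceil n/\ell\rceil-1}\rmT(i\ell,(i+1)\ell)\ \le\ \sum_{i=0}^{\lceil n/\ell\rceil-1}\rmT_{B_i}\bigl(i\ell,(i+1)\ell\bigr)\ =:\ \sum_i X_i .
\end{align*}
By translation invariance the $X_i$ are identically distributed, and $X_i$ uses only the frogs sitting in $B_i$, so $X_i$ and $X_j$ are independent once $|i-j|>\kappa+1$; hence $(X_i)_i$ splits into $\kappa+2$ subsequences of i.i.d.\ random variables.

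The heart of the matter is a good control of a single $X_0\overset{d}{=}\rmT_{[-\kappa\ell,\ell]}(0,\ell)$. I would obtain it by combining three of the estimates above. Lemma~\ref{Task3} trades the restriction for a multiplicative factor $e^{\alpha h/(\kappa\ell)}$, which is absorbed by taking $\kappa$ a large enough constant (the competing exponential below has the same $h/\ell$ dependence); Lemma~\ref{lem:taln}-(ii) gives $\P(\rmT(0,\ell)\ge h)\le e^{-ch/\ell}$ for $h$ at least a suitable multiple of $\ell$; and Lemma~\ref{lem:taln}-(i), applied via $\rmT_{[-\sqrt h,\sqrt h]}\le\rmT_{[-\kappa\ell,\ell]}$, gives $\P(X_0\ge h)\le e^{-c\sqrt h}$ for $h\gtrsim\ell^2$. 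Together with the linear bound $\E[\rmT(0,\ell)]=O(\ell)$ coming from the integrability of the passage time \cite{AlvMacPop02} and the subadditive ergodic theorem, these yield $\E[X_0]=O(\ell)$, $\operatorname{Var}(X_0)=O(\ell^2)$, and a tail $\P(X_0\ge h)\le C\exp(-c\,h/\ell)$ on $[C_0\ell,\ell^2]$, resp.\ $\le C\exp(-c\sqrt h)$ for $h\ge\ell^2$.

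Given this, fix one of the $\kappa+2$ i.i.d.\ subsequences, say with $N\asymp n/\ell$ terms, and truncate each $X_i$ at level $T:=\ell^2$. The probability that some $X_i$ in the subsequence exceeds $T$ is at most $N e^{-c\ell}$; and, by Bernstein's inequality applied to the truncated variables (which lie in $[0,\ell^2]$ with mean $O(\ell)$ and variance $O(\ell^2)$), the probability that their sum exceeds $2\E[X_0]\,N$ is at most $e^{-cN/\ell}$. With $\ell:=\lceil n^{1/4}\rceil$ one has $N\asymp n^{3/4}$, $Ne^{-c\ell}\le e^{-c'n^{1/4}}$ and $e^{-cN/\ell}\le e^{-c'\sqrt n}$, so each subsequence has sum $\le C_1 n$ except on an event of probability $\le e^{-c' n^{1/4}}$. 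Summing over the $\kappa+2$ subsequences and renaming constants gives $\P(\rmT(0,n)\ge Cn)\le e^{-c n^{1/4}}$; here the exponent $n^{1/4}$ is exactly the scale at which the number of blocks and the per‑block truncation tail balance, and it is not optimized.

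The step I expect to be most delicate is the control of $X_0$ in the intermediate window $\ell\lesssim h\lesssim\ell^2$, where $\rmT(0,\ell)$ sits between its ballistic and its diffusive regimes: there Lemma~\ref{lem:taln}-(ii) must be invoked with parameters $\alpha,\beta$ that depend on $\ell$, so one has to check that its ``$n$ sufficiently large depending on $\alpha,\beta$'' hypothesis is met by $\ell\asymp n^{1/4}$, and for $h$ only a bounded multiple of $\ell$ one must fall back on the $L^1$ bound $\E[\rmT(0,\ell)]=O(\ell)$ via Markov; simultaneously the restriction cost $e^{\alpha h/(\kappa\ell)}$ of Lemma~\ref{Task3} must stay below the competing exponential, which fixes how large the constant margin $\kappa$ must be, while $\kappa$ must remain bounded so that the independence splitting costs only a constant factor. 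The remaining ingredients — the subadditive decomposition, the splitting into independent subsequences, and the Bernstein bound — are routine.
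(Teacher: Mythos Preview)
Your approach works and is genuinely different from the paper's. The paper uses \emph{unit} steps: it writes $\rmT(0,n)\le\sum_{i}\rmT_{\Delta_n(i)}(i,i+1)$ with $\Delta_n(i)=\Iintv{i-\sqrt n/4,\,i+\sqrt n/4}$, truncates each summand at level $\sqrt n$, splits the $n$ terms into $\lfloor\sqrt n\rfloor$ independent groups (since the neighborhoods have width $\asymp\sqrt n$), and bounds the truncated sum via a hands-on exponential-moment computation that needs only the stretched-exponential tail $\P(X_i\ge t)\lesssim e^{-c_0\sqrt t}$ from Lemma~\ref{lem:taln}-(i). Your version replaces unit steps by mesoscopic blocks of length $\ell=n^{1/4}$ with proportional neighborhoods, so you obtain $O(1)$ independent groups and can apply Bernstein off the shelf; the price is a more delicate single-block analysis combining Lemma~\ref{Task3} with a uniform reading of Lemma~\ref{lem:taln}-(ii). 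Both routes truncate at height $\sqrt n$, and in each the exponent $n^{1/4}$ emerges as the truncation cost.

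One point to correct: the inequality you cite, ``$\rmT_{[-\sqrt h,\sqrt h]}\le\rmT_{[-\kappa\ell,\ell]}$'', goes the wrong way for your purpose (a larger frog set yields a \emph{smaller} passage time, and for $\ell^2<h<(\kappa\ell)^2$ neither interval contains the other), so the claimed tail $\P(X_0\ge h)\le Ce^{-c\sqrt h}$ for $h\ge\ell^2$ is not available --- with only $O(\ell)$ frogs the far tail of $X_0$ is in fact heavy, and the untruncated $\E[X_0]$ need not be $O(\ell)$. Fortunately your argument only uses $\P(X_0\ge\ell^2)\le e^{-c\ell}$, which follows directly from $X_0\le\rmT_{[-\ell,\ell]}(0,\ell)$ and Lemma~\ref{lem:taln}-(i), together with the first two moments of the \emph{truncated} variable $X_0\wedge\ell^2$, and those do come out $O(\ell)$ and $O(\ell^2)$ from your tail on $[C_0\ell,\ell^2]$. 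With that adjustment the proof goes through.
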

\begin{proof}
By the subadditivity and the fact that $\rmT_A \geq \rmT$ holds for all subsets $A$ of $\Z$, we have for all $C>0$,
\begin{align}\label{tocn}
    \pp\left( \rmT(0,n) \geq Cn \right)
    \leq n \pp\bigl( \rmT_{\Delta_n(0)}(0,1) \geq \sqrt{n} \bigr)+\pp \left( \sum_{i=0}^{n-1} \rmT_{\Delta_n(i)}(i,i+1)\1\{\T_{\Delta_n(i)}(i,i+1) \leq \sqrt{n}\} \geq Cn \right),
\end{align}
where $\Delta_t(i):=\Iintv{i-\sqrt{t}/4, i+\sqrt{t}/4}$ for $t>0$ and $i \in \N$.
Let us first estimate the first probability in the right-hand side of \eqref{tocn}.
Note that by Lemma~\ref{lem:taln}-(i), there exists a constant $c_0>0$ such that for all $t \geq 1$,
\begin{align}\label{lem2.7}
    \pp\bigl( \rmT_{\Delta_t(0)}(0,1) \geq t \bigr) \leq c_0^{-1}\exp(-c_0\sqrt{t}).
\end{align}
This implies that for all $n \in \N$,
\begin{align}\label{hao}
    \pp\bigl( {\rm T}_{\Delta_n(0)}(0,1) \geq \sqrt{n} \bigr)
    \leq \pp\Bigl( {\rm T}_{\Delta_{\sqrt{n}}(0)}(0,1) \geq \sqrt{n} \Bigr)
    \leq c_0^{-1}\exp(-c_0n^{1/4}).
\end{align}

We next estimate the second probability in the right-hand side of \eqref{tocn}.
Define for $i=0,\ldots n-1$,
\begin{align*}
    X_i:= \T_{\Delta_n(i)}(i,i+1) \1\{\rmT_{\Delta_n(i)}(i,i+1) \leq \sqrt{n}\}.
\end{align*}
By definition, $X_i$'s are bounded from above by $\sqrt{n}$ almost surely.
Moreover, the translation invariance and \eqref{lem2.7} imply that for all $i=0,\ldots,n-1$ and $t \in [1,\sqrt{n}]$,
\begin{align*}
    \pp(X_i \geq t)
    = \pp({\rm T}_{\Delta_n(0)} \geq t)
    \leq \pp({\rm T}_{\Delta_t(0)} \geq t)
    \leq c_0^{-1}\exp(-c_0 \sqrt{t}).
\end{align*}
This means that for all $i=0,\dots,n-1$,
\begin{align*}
    \E\biggl[ \exp\biggl( \frac{2c_0}{3}\sqrt{X_i} \biggr) \biggr]
    \leq 1+c_0^{-1}\int_1^\infty t^{-3/2}\,\dd t
    < \infty.
\end{align*}
With these observations, the mean-value theorem proves that there exists a constant $\alpha=\alpha(c_0)>0$ such that for all $i=0,\dots,n-1$,
\begin{align}\label{eosx}
\begin{split}
    \E\biggl[ \exp\biggl( \frac{c}{3n^{1/4}} X_i \biggr) \biggr]
    &\leq 1+\frac{c_0}{3n^{1/4}}\E\biggl[ X_i \exp\biggl( \frac{c_0}{3n^{1/4}}X_i \biggr) \biggr]\\
    &\leq 1+\frac{c_0}{3n^{1/4}}\E\biggl[ X_i \exp\biggl( \frac{c_0}{3}\sqrt{X_i} \biggr) \biggr]\\
    &\leq 1+\frac{6}{c_0n^{1/4}}\E\biggl[ \exp\biggl( \frac{2c_0}{3}\sqrt{X_i} \biggr) \biggr]
    \leq 1+\frac{\alpha}{n^{1/4}}.
\end{split}
\end{align}
Here we used the fact that $X_i \leq \sqrt{n}$ almost surely in the second inequality.
Furthermore, the third inequality follows from the fact that $\exp(c_0\sqrt{t}/3) \geq c_0^2t/18$ for all $t \geq 0$.
To estimate the sum of $(X_i)_{i=0}^{n-1}$ by using \eqref{eosx}, we divide $\Iintv{0,n-1}$ into $\lfloor \sqrt{n} \rfloor$ groups as follows:
\begin{align*}
    \Iintv{0,n-1}
    =\bigcup_{j=0}^{\lfloor \sqrt{n} \rfloor-1} \kI_j,\qquad
    \kI_j
    := \big  \{i \in \Iintv{0,n-1}:i \equiv j \pmod{\lfloor \sqrt{n} \rfloor} \big\}.
\end{align*}
Remark that $X_i$ depends only on the frogs $(S^x_{\cdot})_{|x-i|\leq \sqrt{n}/4}$.
Thus, for each $j \leq \lfloor \sqrt{n} \rfloor$, the random variables $(X_i)_{i \in \kI_j}$ are independent. 
Therefore, Markov's inequality and \eqref{eosx} show that if $h\geq 6\alpha |\kI_j|/c_0$, then for each $j=0,\dots,\lfloor \sqrt{n} \rfloor$,
\begin{align*}
    \pp\Biggl( \sum_{i \in \kI_j} X_i \geq h \Biggr)
    &= \pp\Biggl( \sum_{i \in \kI_j} \frac{c_0}{3n^{1/4}}X_i \geq \frac{c_0}{3n^{1/4}}h \Biggr)\\
    &\leq \exp\biggl( -\frac{c_0h}{3n^{1/4}} \biggr) \prod_{i \in \kI_j} \E\biggl[ \exp\biggl( \frac{c}{3n^{1/4}} X_i \biggr) \biggr]\\
    &\leq \exp\biggl( -\frac{c_0h}{3n^{1/4}} \biggr) \biggl( 1+\frac{\alpha}{n^{1/4}} \biggr)^{|\kI_j|}
    \leq \exp\biggl( -\frac{c_0h}{6n^{1/4}} \biggr).
\end{align*}
For each $j=0,\ldots, \lfloor \sqrt{n} \rfloor-1$, we have $12\alpha\sqrt{n}/c_0 \geq 6\alpha|\kI_j|/c_0$ due to $|\kI_j| \leq 2 \sqrt{n}$.
This enables us to use the above estimate with $h=12\alpha\sqrt{n}/c_0$ to obtain
\begin{align*}
    \pp\Biggl( \sum_{i=0}^{n-1} X_i \geq \frac{12\alpha}{c_0} n \Biggr)
    \leq \sum_{j=0}^{\lfloor \sqrt{n} \rfloor-1}\pp\Biggl( \sum_{i \in \kI_j} X_i \geq \frac{12\alpha}{c_0} \sqrt{n} \Biggr)
    \leq \sqrt{n}\exp(-\alpha n^{1/4}).
\end{align*}
Therefore, combining this estimate with \eqref{tocn} and \eqref{hao}, we get the desired conclusion.
\end{proof}
  
\section{Large deviation of the first passage time: Proof of the main theorem} \label{sec:po1d}
The aim of this section is to show Theorem~\ref{mth1}.
To this end, we fix $\xi>0$ and prove
\begin{align}
    &r_* \in (0,\infty),\label{rsnz}\\
    &\limsup_{n\to\infty}\frac{-1}{\sqrt{n}}\log{\mathbb{P}({\rm T}(0,n)\geq (\mu +\xi) n)} \leq r(\xi) =r_*\sqrt{\xi},\label{liminf1d}\\
    &\liminf_{n\to\infty}\frac{-1}{\sqrt{n}}\log{\mathbb{P}({\rm T}(0,n)\geq (\mu +\xi) n)}\geq r(\xi)=r_*\sqrt{\xi}. \label{limsup1d}
\end{align}
The proofs of these claims are based on the following three key propositions.
The first proposition treats Step~1 of Section~\ref{subsect:sketch}, which asserts a localization phenomenon of upper tail large deviation.
We also observe a slowdown phenomenon for the upper tail large deviation probability of the first passage time.

\begin{prop}\label{prop:ts2}
The following results hold:
\begin{itemize}
\item [(i)] For any $\xi,\delta>0$, there exists $c=c(\xi,\delta)>0$ such that for any $M\in \N$, if $n\in\N$  is  large enough depending on $M$, then
\begin{align*}
    \P(\T(0,n)\geq (\mu+\xi) n)
    \geq \P\bigl( \T(0,\lfloor M\sqrt{n} \rfloor) \geq (\xi+\delta) n \bigr)-\exp(-cn^{2/3}).
\end{align*}

\item[(ii)] For any $\xi,c,\delta,A>0$, there exists $M_0=M_0(\xi,c,\delta,A)>2+\xi$ such that for any $M\geq M_0$, if $n\in\N$ large enough depending on $M$,
\begin{align*}
    \P(\T(0,n)\geq (\mu+\xi) n)
    \leq \exp(-A\sqrt{n})+\exp(c\sqrt{n}) \,\sum_{m =1}^M \,\,\sum_{(h_i)_{i=1}^m \in \kH^{\delta}_{m,n}} \,\,\prod_{i=1}^m \pp\bigl( \T(0, \lfloor M\sqrt{n} \rfloor) \geq h_i \bigr),
\end{align*}
where
\begin{align}\label{hdmn}
    \kH^{\delta}_{m,n}
    := \left\{ (h_i)_{i=1}^m \in \N^m:  (\xi- \delta)n\leq \sum_{i = 1}^m h_i \leq  M n \right\}.
\end{align}

\item[(iii)](Slowdown phenomenon) For any $\xi >0$, there exists a positive constant $c=c(\xi)$ such that
\begin{align*}
    \pp(\T(0,n) \geq (\mu +\xi)n) \leq \exp(-c\sqrt{n}).
\end{align*}
\end{itemize}
\end{prop}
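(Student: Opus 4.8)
\textbf{Proof proposal for Proposition~\ref{prop:ts2}.}

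The plan is to prove the three parts separately, with parts (i) and (ii) being the substantive claims and part (iii) being an easy consequence of what is already available. The unifying idea behind (i) and (ii) is a covering-process decomposition: the path realizing $\T(0,n)$ passes through a sequence of intervals of length of order $\sqrt n$, and the total passage time $\geq (\mu+\xi)n$ must be ``paid'' either as the generic linear cost $\mu n$ or as an excess $\xi n$ concentrated on a bounded number of bad intervals of length $\lfloor M\sqrt n\rfloor$; the combinatorial bookkeeping of how the excess is distributed among these intervals is what produces the sum over $(h_i)_{i=1}^m\in\kH^\delta_{m,n}$ in (ii).

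For part~(i), the lower bound, I would fix $M$ and tile $\Iintv{0,n}$ into $\lceil n/(M\sqrt n)\rceil \asymp \sqrt n/M$ consecutive blocks of length $\lfloor M\sqrt n\rfloor$. I would force one designated block to be ``bad'' in the sense that $\T$ across it is $\geq (\xi+\delta)n$; this event has the probability on the right-hand side, and it is determined by the frogs inside a window of width $\kO(\sqrt n)$, hence independent of the frogs controlling the other blocks. On the complementary blocks I need $\T$ to be close to its typical value $\mu\cdot M\sqrt n$ up to an $o_M(1)n$-type error, which holds with probability $\geq 1-\exp(-cn^{2/3})$ by a concentration/union bound: the number of blocks is $\kO(\sqrt n)$, and for each block the deviation of $\T(0,\lfloor M\sqrt n\rfloor)$ above $(\mu+\delta/2)M\sqrt n$ (say) can be controlled by Lemma~\ref{lem:taln}-(ii) applied at the appropriate scale, giving a bound like $\exp(-c n^{3/4})$ or $\exp(-cn^{2/3})$ after optimizing; the $n^{2/3}$ in the statement suggests a slightly lossy but simpler split. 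Summing the block contributions plus the single bad block via subadditivity $\T(0,n)\leq \sum_{\text{blocks}}\T(\text{across block})$, and noting that the bad block's $(\xi+\delta)n$ plus the other blocks' total $\approx \mu n + o_M(1)n$ exceeds $(\mu+\xi)n$ once $n$ is large and $\delta$ absorbs the error, gives the claim.

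For part~(ii), the upper bound, the strategy is to first discard a low-probability event of size $\exp(-A\sqrt n)$ using Lemma~\ref{lem:t1} (to control the behavior near $0$ and guarantee enough active frogs are produced quickly), and then, on the good event, to cover the excess by a bounded collection of bad intervals. Concretely, I would look at the realized geodesic for $\T(0,n)$, chop $[0,n]$ into unit-or-$\sqrt n$-scale pieces, and argue that the set of positions where the local passage time rate substantially exceeds $\mu$ is contained in at most $m\leq M$ intervals of length $\lfloor M\sqrt n\rfloor$ (this is the covering process of Section~\ref{sec:propts2}); on each such interval $i$ the passage time is some $h_i\geq 0$ with $\sum h_i\geq(\xi-\delta)n$ (the $-\delta$ absorbing the accumulated sub-$\mu$ slack and boundary corrections), and $\sum h_i\leq Mn$ trivially since each $h_i\leq$ (diameter)$\times$(something) $\leq$ a constant times $n$, refined to $Mn$ by the choice $M_0>2+\xi$. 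The $\exp(c\sqrt n)$ prefactor comes from the entropy of choosing the locations of these $\leq M$ intervals among $\kO(\sqrt n)$ candidate positions, i.e.\ $\binom{\kO(\sqrt n)}{M}\leq \exp(c_M\sqrt n)$, together with a union bound over $m$ and over $(h_i)\in\kH^\delta_{m,n}$; the intervals can be chosen far enough apart (or one sacrifices a further $\exp(o_M(1)\sqrt n)$ via Lemma~\ref{Task3} to decouple the boundary effects) that the events ``$\T$ across interval $i$ is $\geq h_i$'' become independent, yielding the product. Part~(iii) then follows by combining (ii) with the a priori bound of Lemma~\ref{lem:tgcn} (or directly Lemma~\ref{lem:taln}-(ii)): each factor $\pp(\T(0,\lfloor M\sqrt n\rfloor)\geq h_i)$ is at most $\exp(-c M^{-1} h_i n^{-1/2})$, so $\prod_i \leq \exp(-cM^{-1}n^{-1/2}\sum h_i)\leq \exp(-cM^{-1}(\xi-\delta)\sqrt n)$, which dominates the $\exp(c\sqrt n)$ entropy factor once $M$ is large and $\delta$ small, and absorbs the $\exp(-A\sqrt n)$ term.

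The main obstacle I anticipate is part~(ii): making the ``covering process'' rigorous, i.e.\ showing that the excess passage time really can be localized onto only $\kO(1)$ intervals of length $\kO(\sqrt n)$ rather than being smeared over many small excesses, and doing this while keeping the locations in a set of entropy $\exp(o(\sqrt n))$ and retaining enough independence between intervals to factor the probability. This is precisely why the paper isolates Lemma~\ref{lem:keyup} and devotes Section~\ref{polku} to it; I would expect to need a careful greedy/peeling construction of the bad intervals together with the decoupling Lemma~\ref{Task3} to handle interactions across interval boundaries.
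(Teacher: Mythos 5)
Your outline of part~(ii) is in the spirit of the paper's proof (the covering process, Lemma~\ref{lem:keyup}, decoupling by Lemma~\ref{Task3}), and you correctly flagged it as the hard part. There are, however, two genuine gaps.

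For part~(i), the block-forcing argument fails because you use subadditivity in the wrong direction: $\T(0,n)\leq\sum_{\text{blocks}}\T$ is an \emph{upper} bound on $\T(0,n)$, so forcing one block to have a slow crossing does not force $\T(0,n)$ to be large. Indeed, once the frogs to the left of $\lfloor M\sqrt{n}\rfloor$ are awake, they can assist in crossing the remaining distance much faster than the fresh per-block passage times, and there is no superadditivity to invoke. The paper's actual argument is entirely different: it introduces the event $\kE_\Delta$ that every optimal path from $0$ to $n$ visits $\Delta=\Iintv{M\sqrt{n},\e n}$ (its complement has probability $\leq\exp(-cn^{2/3})$, proved via the estimate $\P(t(0,m)=\T(0,m))\leq\exp(-cm^{2/3})$), invokes the \emph{lower}-tail LDP of B\'erard--Ram\'irez to guarantee $\T(k,n)\geq(\mu-\delta)n$ for all $k\in\Delta$, and then uses the identity $\T(0,n)=\T(0,k)+\T(k,n)$ for $k$ on the optimal path together with $\T(0,k)\geq\T(0,\lfloor M\sqrt{n}\rfloor)$. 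None of these ingredients appears in your sketch, and they are exactly what replaces the failed subadditivity step.

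For part~(iii), the quantifiers do not close. You want $\exp(-c'M^{-1}(\xi-\delta)\sqrt{n})$ to dominate the entropy factor $\exp(c\sqrt{n})$ coming from~(ii) ``once $M$ is large'' --- but the decay \emph{worsens} as $M$ grows, and more fundamentally $c$ and $M$ are entangled: $M_0=M_0(\xi,c,\delta,A)$ grows like $c^{-6}$ as $c\to 0$ (trace the proof of Lemma~\ref{lem:keyup}), so $c'M_0^{-1}(\xi-\delta)=\kO(c^6)\ll c$, and the comparison goes the wrong way for every choice of $c$. The paper instead proves~(iii) directly from Lemma~\ref{lem:keyup}, which carries only a \emph{polynomial} prefactor $n^{2M}$ rather than $\exp(c\sqrt{n})$; then a single application of Lemma~\ref{lem:taln}-(ii) to $\P(\T(0,\lfloor M^6\sqrt{n}\rfloor)>\xi n/(2M))$ already beats the polynomial, with no balancing of competing exponential rates required.
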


The following two propositions justify Steps~{2a} and {2b} of Section~\ref{subsect:sketch}, which surfaces energy functionals and claims that the ground state energies in $\mathcal{C}(\xi)$ and $\mathcal{C}^{\rm Step}(\xi)$ coincide, respectively
(see above \eqref{eq:def_energy} for the definition of $\mathcal{C}(\xi)$ and recall that $\kC^{\rm Step}(\xi)$ is the set of all step functions in $\kC(\xi)$).

\begin{prop}\label{lem:tmxn}
For any $\xi>0$, the following results hold:
\begin{itemize}
\item [(i)] We have
\begin{align*}
     \limsup_{M\to\infty}\limsup_{n\to\infty}
    \frac{-1}{\sqrt{n}}\log\P\bigl( \T(0,\lfloor M\sqrt{n} \rfloor) \geq \xi n \bigr)
	\leq \inf_{f\in\kC^{\rm Step}(\xi)}E(f).
\end{align*}

\item[(ii)] For any $M$ sufficiently large,
\begin{align*}
    \liminf_{n \to\infty}
    \frac{-1}{\sqrt{n}}\log\P\bigl( \rmT(0,\lfloor M\sqrt{n} \rfloor) \geq \xi n\bigr)
    \geq \inf_{f\in\kC(\xi)}E(f) - M^{-2}.
\end{align*}
\end{itemize}
\end{prop}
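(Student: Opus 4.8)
The plan is to prove the two bounds of Proposition~\ref{lem:tmxn} separately, both via Donsker's invariance principle applied through the Koml\'os--Major--Tusn\'ady coupling of Lemma~\ref{lem:kmt}, with the lower bound on the probability (part (i)) obtained by \emph{exhibiting a near-optimal step function} and the upper bound (part (ii)) obtained by \emph{covering the bad interval by finitely many windows} and using a union-type argument.

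\textbf{Part (i): upper bound on $-\frac{1}{\sqrt n}\log\pp(\T(0,\lfloor M\sqrt n\rfloor)\ge \xi n)$.} Fix $\eta>0$ and a step function $f\in\kC^{\rm Step}(\xi)$ with $E(f)\le \inf_{g\in\kC^{\rm Step}(\xi)}E(g)+\eta$; by truncating far out we may assume $f$ is supported in $[-M',M']$ for some $M'$, and since $\lim_{x\to\infty}f(x)=\xi$ we may arrange $f(M')=\xi$, so that $f\in\kC_M(\xi)$ for all $M$ large. The strategy for the frog model to achieve $\T(0,\lfloor M\sqrt n\rfloor)\ge\xi n$ is to force the hitting times of the underlying random walks on the slab $\Iintv{-M\sqrt n, M\sqrt n}$ to obey $t(x,y)\ge n(f(y/\sqrt n)-f(x/\sqrt n))$ for all $x,y$; on this event, by the triangle inequality $\T(0,\lfloor M\sqrt n\rfloor)\ge n f(M) = \xi n$. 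Thus it suffices to lower bound the probability of that event. First I would partition $\Iintv{-M\sqrt n, M\sqrt n}$ into $\kO(\sqrt n)$ blocks of side of order $\epsilon\sqrt n$, relative to a mesh $\epsilon$ chosen so that $f$ is constant on each block interior (possible since $f$ is a step function), and argue that the events ``$t(x,y)\ge n(f(y/\sqrt n)-f(x/\sqrt n))$ for all $x,y$ in block $j$ and its nearest neighbours'' are, up to FKG-type correlation inequalities, approximately independent across blocks. For a single block centered near $x$, Lemma~\ref{lem:kmt} turns the probability into $\pp_{x/\sqrt n}^{\rm BM}(\tau_v\ge f^{\pm,\epsilon_1}(v)-f(x/\sqrt n)\ \forall v)$ up to an additive $\epsilon_2$; taking a Riemann-sum over the $\kO(\sqrt n/\epsilon)$ block centers and sending $n\to\infty$, then $\epsilon_1,\epsilon_2\to0$, then the mesh $\to0$, the product of these probabilities converges to $\exp(-\sqrt n(E(f)+o_M(1)))$ by definition \eqref{eq:def_energy} of the energy and continuity of $f^{+,\delta}$ at continuity points (Lemma~\ref{lem:h_decreasing}). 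Finally, to pass from ``all of $\Z$'' to ``only frogs inside the slab,'' one pays the extra cost that frogs outside the slab do not speed things up — but since $\T_A\ge\T$ always, this direction is free: restricting to a sub-population can only increase passage times, so the probability that \emph{all} of $\Z$ realizes the slow profile lower-bounds... actually here one wants the reverse, so one uses Lemma~\ref{lem:t1} / Lemma~\ref{Task3} to control the contribution of frogs just outside and absorb it into $o_M(1)$.

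\textbf{Part (ii): lower bound on $-\frac{1}{\sqrt n}\log\pp$ (i.e.\ upper bound on the probability).} Here I would cover the interval $[-M,M]$ (rescaled) by points $u_0<u_1<\dots<u_K$ with spacing $\le\epsilon$ and observe that on the event $\{\T(0,\lfloor M\sqrt n\rfloor)\ge\xi n\}$, the random profile $f_n(u):=\T(0,\lceil u\sqrt n\rceil)/n$ lies in $\kC_M(\xi)$; moreover the monotonicity of $\T(0,\cdot)$ on each side of $0$ forces $f_n$ to be within $o(1)$ of a step function taking values in a finite $\delta$-net of $[0,\xi]$. So the probability is bounded by a sum over finitely many ``discretized profiles'' $\phi$ of $\pp(f_n\approx\phi)$. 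For each fixed discretized profile $\phi$, the event $\{f_n\approx\phi\}$ forces $t(x_j,x_{j+1})\ge n(\phi(u_{j+1})-\phi(u_j))-o(n)$ for consecutive net points whose images under $\phi$ increase, and by independence of the hitting-time increments along a monotone sequence of sites these factor into a product; applying Lemma~\ref{lem:kmt} in the other direction and Lemma~\ref{lem:maxbrown} for the single-walk hitting probabilities, and taking a Riemann sum, each such probability is at most $\exp(-\sqrt n(E(\tilde\phi)-o_M(1)))$ where $\tilde\phi\in\kC(\xi)$ is an interpolation of $\phi$ — hence at most $\exp(-\sqrt n(\inf_{g\in\kC(\xi)}E(g)-o_M(1)))$. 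Since there are only $\exp(o(\sqrt n))$ many discretized profiles (polynomially many net points, boundedly many values), the union bound costs nothing on the exponential scale, giving the claim with an $o_M(1)$ that one then checks is $\le M^{-2}$ for $M$ large by tracking the dependence of the error terms on $M$, $\epsilon$, $\delta$.

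\textbf{Main obstacle.} The genuinely hard part is the \emph{independence/correlation management} in both directions: the events ``$t(x,y)$ is large for all $x,y$ in a window'' for overlapping or adjacent windows are not independent, and the same simple random walk $S^x_\cdot$ enters the constraints $t(x,y)$ for many targets $y$. For the lower bound on the probability (part (i)) one needs a conditional FKG inequality for Brownian motion — which the paper flags as being proved in the appendix and ``of independent interest'' — to say that demanding the slow profile on disjoint blocks is at least as likely as the product over blocks; getting the right error when gluing the $\kO(\sqrt n/\epsilon)$ Riemann-sum terms, and ensuring the $\epsilon_1,\epsilon_2$ of Lemma~\ref{lem:kmt} can be sent to zero \emph{after} $n\to\infty$ uniformly over all $\kO(\sqrt n)$ windows, is the delicate bookkeeping. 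For part (ii), the subtlety is that the discretized profile only controls $\T(0,\cdot)$, not the individual $t(x,y)$, so one must argue that a slow value of $\T(0,\lceil u_{j+1}\sqrt n\rceil)$ relative to $\T(0,\lceil u_j\sqrt n\rceil)$ really does force a large $t$ along \emph{some} near-optimal intermediate sequence of sites, and that the cost of this is captured by $E$ evaluated on the limiting profile; controlling the $M^{-2}$-level error requires care because the window count and the net size both depend on $M$.
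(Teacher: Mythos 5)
Your plan has the right global architecture (exhibit a near-optimal step function for part~(i), discretize the profile and union-bound for part~(ii)), but both parts contain a misunderstanding of where the key probabilistic factorization comes from, and the one in part~(ii) is fatal.

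\emph{Part (i) -- FKG is a red herring.}
You say one ``needs a conditional FKG inequality for Brownian motion to say that demanding the slow profile on disjoint blocks is at least as likely as the product over blocks.'' No correlation inequality is needed here at all. The events
$\bigl\{\,t(x,y)\ge n(f(y/\sqrt n)-f(x/\sqrt n))\ \forall\,y\,\bigr\}$, indexed by $x$, are \emph{exactly} independent, because for fixed $x$ all the $t(x,y)$'s are functions of the single walk $S^x_\cdot$ and these walks are independent over $x$ by construction. Lemma~\ref{lem:tfs_lower} and the equality
\[
\pp\Bigl(t(x,y)\ge n(f(y/\sqrt n)-f(x/\sqrt n))\ \forall x,y\Bigr)
=\prod_x\pp\Bigl(t(x,y)\ge\dots\ \forall y\Bigr)
\]
are what the paper uses; the block decomposition you propose is unnecessary, and the conditional FKG inequality in the appendix is deployed for an entirely different purpose (inside Lemma~\ref{lem:ning}, to control the energy functional itself, not inside Proposition~\ref{lem:tmxn}). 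Relying on FKG here would also obscure where the $\exp(-\sqrt n E(f))$ comes from: it is precisely the Riemann sum over the $\kO(\sqrt n)$ independent factors.

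\emph{Part (ii) -- the factorization over consecutive net points does not decay.}
This is the genuine gap. You restrict attention to the constraints $t(x_j,x_{j+1})\ge n(\phi(u_{j+1})-\phi(u_j))$ for the $\kO(M/\epsilon)$ consecutive pairs of net points. Each such single-walk probability is of order $\epsilon/\sqrt{\phi(u_{j+1})-\phi(u_j)}$ by Lemma~\ref{lem:maxbrown}; multiplying $\kO(M/\epsilon)$ of them gives a quantity bounded away from $0$ \emph{uniformly in $n$}, so this product cannot produce an $\exp(-c\sqrt n)$ bound, let alone the sharp rate $\exp(-\sqrt n\,E(\phi))$. Your concluding paragraph actually gestures at the fix -- ``the discretized profile only controls $\T(0,\cdot)$, not the individual $t(x,y)$'' -- but the argument in the body doesn't carry it out. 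The correct route is: the discretized profile $(\T(0,x_{i}))_i$ at net points pins down a step function $\phi$ on all of $[-2M,2M]$ by monotonicity, and then the triangle inequality $t(x,y)\ge \T(0,y)-\T(0,x)\ge n\bigl(\phi^{-,\delta}(y/\sqrt n)-\phi^{+,\delta}(x/\sqrt n)\bigr)$ is a constraint for \emph{every} $x\in\Iintv{-2M\sqrt n,2M\sqrt n}$ simultaneously. One then factorizes over this full range of $x$ -- $\kO(\sqrt n)$ independent walks -- and only then does the Riemann sum convert to $\exp\bigl(-\sqrt n(E^+_{\delta,M}(\phi)-o(1))\bigr)$. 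Without that step the union bound over profiles yields nothing. (The explicit $M^{-2}$ error is then extracted not by generic bookkeeping but via the separate Lemma~\ref{lem:EpE}, which compares $E_M$ to $E$ and handles the $\delta\to0$ limit of $E^+_{\delta,M}$ over $\kC_{M,\eta}$; your sketch leaves this as a hand-wave.)
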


\begin{prop}\label{prop:s}
We have
\begin{align*}
    \inf\left\{E(f):f\in\kC^{\rm Step}(\xi) \right\}
    = \inf\left\{ E(f):f\in\kC(\xi) \right\}.
\end{align*}
\end{prop}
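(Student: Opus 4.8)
Since $\kC^{\rm Step}(\xi) \subset \kC(\xi)$, the inequality $\inf\{E(f):f\in\kC(\xi)\} \le \inf\{E(f):f\in\kC^{\rm Step}(\xi)\}$ is trivial, so the content is the reverse inequality: given any $f\in\kC(\xi)$ and any $\delta>0$, we must produce a step function $g\in\kC^{\rm Step}(\xi)$ with $E(g)\le E(f)+\delta$. The plan is to carry this out by a sequence of elementary deformations, each of which changes the energy by a controlled amount, and each of which brings $f$ closer to being a step function. Recall from Lemma~\ref{lem:maxbrown} that $\pp_0^{\rm BM}(\tau_u\ge t)\asymp 1\wedge (u/\sqrt t)$, so the integrand $-\log\pp^{\rm BM}_x(\tau_y\ge f(y)-f(x)\ \forall y)$ in \eqref{eq:def_energy} is finite for a.e.\ $x$ precisely when $f$ is strictly monotone near $x$ with the right one-sided behaviour; the danger in any deformation is creating long flat stretches of $f$ away from $0$, which blow up the integrand, or pushing the jump heights so large near $x$ that the probability collapses.

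Here is the intended chain of reductions. \textbf{Step 1 (truncation and compactification).} First replace $f$ by $f\wedge(\xi-\eta)$ off a large interval and then by a function that is constant $=\xi-\eta$ outside $[-T,T]$ and equals $\xi$ at $\pm\infty$ only in the limit; more precisely, cut $f$ to agree with a function that is eventually constant, losing only $o_\eta(1)$ in energy because the tails of the $x$-integral contribute little (the hitting probabilities there are close to $1$, since $f(y)-f(x)$ is small for $x$ large of the same sign). \textbf{Step 2 (bounding the modulus of continuity / flat pieces).} Perturb $f$ so that it is strictly increasing on $[0,T]$ and strictly decreasing on $[-T,0]$ with a quantitative lower bound on the slope away from $0$: replace $f$ by $(1-\rho)f + \rho\, p$ where $p$ is a fixed smooth strictly monotone profile in $\kC(\xi)$; by dominated convergence (using the $\asymp$ bound of Lemma~\ref{lem:maxbrown} to dominate) $E((1-\rho)f+\rho p)\to E(f)$ as $\rho\searrow0$. \textbf{Step 3 (discretization into a step function).} For the now-regular $f$, define $g$ on a fine grid of mesh $1/N$ by $g = f^{+,1/N}$ on, say, $[0,T]$ and $g=f^{-,1/N}$ on $[-T,0]$ (taking the appropriate monotone one-sided modification so that $g\in\kC^{\rm Step}(\xi)$), and let $N\to\infty$. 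That $E(g)\to E(f)$ is exactly the kind of statement packaged by Lemma~\ref{lem:h_decreasing} together with dominated convergence in the $x$-variable: pointwise in $x$ the hitting probabilities converge, and they are dominated uniformly using the quantitative monotonicity from Step~2.

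The main obstacle, I expect, is Step~3 — interchanging the limit $N\to\infty$ with the $x$-integral in \eqref{eq:def_energy}. Pointwise convergence of the inner probability is essentially Lemma~\ref{lem:h_decreasing}, but that lemma is stated for $f$ already a step function and for $u$ outside the (finite) discontinuity set; here $f$ is continuous and we are approximating it by step functions, so one needs the convergence $\pp^{\rm BM}_x(\tau_v\ge f^{\pm,1/N}(v)-f(x)\ \forall v)\to\pp^{\rm BM}_x(\tau_v\ge f(v)-f(x)\ \forall v)$ for a.e.\ $x$, which follows from continuity of $f$ and of the Brownian path, plus the fact that the event $\{\tau_v = f(v)-f(x)\text{ for some }v\}$ has probability zero. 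For the domination one must rule out that $-\log\pp^{\rm BM}_x(\cdots)$ is too large for the approximants: here the strict monotonicity and slope lower bound from Step~2 give, via $\pp_0^{\rm BM}(\tau_u\ge t)\gtrsim 1\wedge(u/\sqrt t)$, a bound of the form $-\log(\cdots)\le C(1+x^2)\mathbf 1_{[-T,T]}(x)$ uniformly in $N$, which is integrable. The bookkeeping that $g$ genuinely lands in $\kC^{\rm Step}(\xi)$ — non-increasing then non-decreasing, $\lim_{x\to0}g(x)=0$, $\|g\|_\infty\le\xi$, $\lim_{x\to\infty}g(x)=\xi$ — is routine once the one-sided modifications in Steps~1–3 are set up consistently, e.g.\ by always rounding $f$ upward on $[0,\infty)$ and downward on $(-\infty,0]$ and adjusting the terminal value by an $\eta$ that is sent to $0$ at the very end.
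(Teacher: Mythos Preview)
There is a genuine gap, and it is in Step~2. The convex combination $(1-\rho)f+\rho p$ with a strictly monotone profile $p$ does not have energy close to $E(f)$; it has \emph{infinite} energy for every $\rho>0$. The reason is structural: by Lemma~\ref{lem:maxbrown} (or directly from the bound in the paper's Lemma~6.1(ii)), for any $g\in\kC(\xi)$ and any $y>x>0$ with $g(y)>g(x)$ one has $\theta_g(x)\le C|x-y|/\sqrt{g(y)-g(x)}$. If $g$ has a positive lower right derivative at $x$, say $g(y)-g(x)\ge c(y-x)$ for $y$ close to $x$, then letting $y\downarrow x$ gives $\theta_g(x)\le C\sqrt{(y-x)/c}\to 0$, so $\theta_g(x)=0$ and $-\log\theta_g(x)=+\infty$. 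Thus any function in $\kC(\xi)$ with finite energy must have zero derivative Lebesgue-a.e.; in particular, your perturbation $(1-\rho)f+\rho p$ has $\theta\equiv 0$ on a set of full measure and $E=\infty$. The dominated-convergence claim $E((1-\rho)f+\rho p)\to E(f)$ therefore fails.

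This also breaks Step~3 as written, since your proposed dominator for the $N\to\infty$ limit relied precisely on the quantitative slope bound from Step~2. Without it you cannot control $-\log\theta_{g_N}(x)$ uniformly in $N$: for a step approximation of a function with slope $c$ on a mesh of size $1/N$, the contribution of each mesh interval to the energy is of order $(\log N)/N$, and summing over $\asymp N$ intervals gives a divergent $\log N$. This is why the paper cannot simply discretize and pass to the limit. Its argument runs in the opposite direction: rather than regularizing $f$, it \emph{flattens} $f$ through a careful multi-scale procedure (Propositions~6.4--6.7), using a ``soft'' deformation that never increases energy (Lemma~6.2) together with a ``hard'' deformation whose energy cost is controlled via the estimate $\theta_g(x)\gtrsim \mathrm{dist}(x,\text{next jump})$ on flat pieces (Lemma~6.3). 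The delicate part is handling intervals of moderate oscillation, which requires clustering them and choosing, cluster by cluster, which deformation to apply; none of this is captured by a direct discretization.
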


The proofs of Propositions~\ref{prop:ts2},  \ref{lem:tmxn} and \ref{prop:s} will be presented in the subsequent sections, and let us here complete the proofs of \eqref{rsnz}, \eqref{liminf1d} and \eqref{limsup1d}.

\begin{proof}[\bf Proof of (\ref{rsnz}) and (\ref{liminf1d})]
We first prove $r_*<\infty$.
Use the fact that $\log (1-t) \geq -t/2$ for $t \in (0,1)$ and Lemma~\ref{lem:maxbrown} to obtain
\begin{align*}
    \log\P_0^{\rm BM}(\tau_x \geq 1)
    \geq -\frac{1}{2} \P_0^{\rm BM}(\tau_x <1) \geq -\frac{1}{\sqrt{2\pi}}e^{-x^2/2}.
\end{align*}
Since the function $g(x)=\1\{ x \geq 1 \}$ belongs to $\mathcal{C}(1)$, we have
\begin{align}\label{rfi}
\begin{split}
    r_*
    &\leq E(g)=-\int_{-\infty}^1 \log \pp^{\rm BM}_x(\tau_1 \geq 1) \,\dd x
    = -\int_0^{\infty} \log\pp_0^{\rm BM}(\tau_x \geq 1) \,\dd x\\
    &\leq \frac{1}{\sqrt{2\pi}} \int_0^{\infty} e^{-x^2/2} \, \dd x=\frac{1}{2}<\infty.
\end{split}
\end{align}

Let us next prove \eqref{liminf1d}.
Lemma~\ref{thm2} and Propositions~\ref{prop:s}, \ref{lem:tmxn}-(i) and \ref{prop:ts2}-(i) show that for any $\delta>0$,
\begin{align}\label{uors}
\begin{split}
    r_*\sqrt{\xi+\delta}=\inf_{f \in \mathcal{C}^{\rm Step}(\xi+\delta)}E(f)
    &\geq \limsup_{M \to \infty}\limsup_{n\to\infty}\frac{-1}{\sqrt{n}} \log\P\bigl( \T(0,\lfloor M\sqrt{n} \rfloor) \geq (\xi+\delta)n \bigr)\\
    & \geq \limsup_{n\to\infty} \min \left\{ \frac{-1}{\sqrt{n}}\log\P(\T(0,n) \geq (\mu+\xi)n),cn^{1/6} \right\}\\
    &= \limsup_{n\to\infty} \frac{-1}{\sqrt{n}}\log\pp(\T(0,n) \geq (\mu+\xi)n),
\end{split}
\end{align}
and \eqref{liminf1d} follows by letting $\delta \searrow 0$.

It remains to show $r_*>0$.
Due to \eqref{uors} and Proposition~\ref{prop:ts2}-(iii) with $\xi=\delta=1$,
\begin{align*}
    -r_*\sqrt{2}
    \leq \liminf_{n\to\infty} \frac{1}{\sqrt{n}}\log\P(\T(0,n) \geq (\mu+1)n) \leq -c(1)<0,
\end{align*}
where $c(1)$ is a positive constant as in Proposition~\ref{prop:ts2}-(iii).
Therefore, $r_*>0$ holds and we obtain $r_* \in (0,\infty)$.
\end{proof}

\begin{proof}[\bf Proof of (\ref{limsup1d})]
By Proposition~\ref{prop:ts2}-(ii), for any $c,\delta,A>0$, there exists $M_0>2+\xi$ such that for all $M\geq M_0$,
\begin{align*}
    &\liminf_{n \to \infty \frac{-1}{\sqrt{n}}} \log \mathbb{P}({\rm T}(0,n)\geq (\mu+\xi) n)\\
    &\geq \min\left\{ A,-c+\min_{1 \leq m \leq M} \liminf_{n \to \infty} \min_{(h_i)_{i=1}^m \in \kH_{m,n}^{\delta}}
    \frac{-1}{\sqrt{n}}\sum_{i=1}^m \log\pp\bigl( \T(0,\lfloor M\sqrt{n} \rfloor) \geq h_i \bigr) \right\}.
\end{align*}
Thus, it suffices to prove that for any $\delta \in (0,\xi/2)$, $M\geq M_0$ and $m\in\Iintv{1,M}$,
\begin{align}\label{eq:tacxn_goal}
    \liminf_{n \to \infty} \min_{(h_i)_{i=1}^m \in \kH_{m,n}^\delta}
    \frac{-1}{\sqrt{n}}\sum_{i=1}^m\log \pp\bigl( \T(0,\lceil M\sqrt{n} \rceil) \geq h_i \bigr)
    \geq r_*\sqrt{\xi-2\delta}-2M^{-1}.
\end{align}
Indeed, by \eqref{eq:tacxn_goal},
\begin{align*}
    \liminf_{n \to \infty} \frac{-1}{\sqrt{n}} \log \mathbb{P}({\rm T}(0,n)\geq (\mu+\xi) n)
    \geq \min\left\{ A,-c+r_*\sqrt{\xi-2\delta}-2M^{-1} \right\}.
\end{align*}
Hence, letting $M \to \infty$, and then $c,\delta \searrow 0$ and $A \to \infty$ proves \eqref{limsup1d}.

To prove \eqref{eq:tacxn_goal}, we fix $M \geq M_0$ and $m \in \Iintv{1,M}$.
By Proposition~\ref{lem:tmxn}-(ii), if $n$ is large enough depending on $M,\delta$, for any $i\in \Iintv{1,M^2/\delta}$,
\begin{align*}
    \frac{-1}{\sqrt{n}}\log\P\bigl( \T(0,\lfloor M\sqrt{n} \rfloor) \geq i \delta n/M \bigr)
    \geq r_*\sqrt{i \delta/M}-2M^{-2}.
\end{align*}
Therefore, for $n$ large enough, we have
\begin{align*}
    \min_{(h_i)_{i=1}^m \in \kH_{m,n}^\delta} \frac{-1}{\sqrt{n}}\sum_{i=1}^m
    \log \P\bigl( \T(0,\lfloor M\sqrt{n} \rfloor) \geq h_i \bigr)
    &\geq \min_{(h_i)_{i=1}^m \in \kH_{m,n}^\delta}
    \frac{-1}{\sqrt{n}}\sum_{i=1}^m\log \P\left(\T(0,\lfloor M\sqrt{n} \rfloor) \geq \left\lfloor \frac{M h_i}{\delta n}\right\rfloor \frac{\delta n}{M} \right)\\
    &\geq \min_{(h_i)_{i=1}^m \in \kH_{m,n}^\delta}
    \sum_{i=1}^m \left(r_*\sqrt{\left\lfloor \frac{M h_i}{\delta n}\right\rfloor \frac{\delta}{M}}-2M^{-2}\right)\\
    &\geq r_* \times \min_{(h_i)_{i=1}^m \in \kH_{m,n}^\delta}
    \sum_{i=1}^m \sqrt{\left( \frac{h_i}{n}-\frac{\delta}{M} \right)_+}-2M^{-1}.
\end{align*}
Since $r_* \geq 0$ and $\sqrt{a_1}+\dots+\sqrt{a_\ell} \geq \sqrt{a_1+\dots+a_\ell}$ for any $a_1,\dots,a_\ell \geq 0$, this is further bounded from below by
\begin{align*}
    r_* \times \min_{(h_i)_{i=1}^m \in \kH_{m,n}^\delta} \sqrt{\sum_{i=1}^m\left( \frac{h_i}{n}-\frac{\delta}{M} \right)_+}-2M^{-1}
    \geq r_*\sqrt{\xi-2\delta}-2M^{-1},
\end{align*}
which is the desired bound \eqref{eq:tacxn_goal}.
\end{proof}


\section{Arising of energy functional: Proof of Proposition \ref{lem:tmxn}}\label{sect:slowdown}
In this section, we focus on proving that
\ben{\label{LDP around 0}
\lim_{M\to\infty}\lim_{n\to\infty}\frac{-1}{\sqrt{n}}
\log\mathbb{P}\bigl( {\rm T}(0,\lfloor M\sqrt{n} \rfloor) \geq \xi n \bigr)
= \inf_{f\in\kC(\xi)}E(f).
}
The section is divided into four parts. First,  we explain the heuristic of the proof in Section~\ref{Section: 4 Heuristic}. Second, we prove the lower bound (Proposition \ref{lem:tmxn}-(i)) in Section~\ref{subs:liminf1d}. Third, we prove the upper bound (Proposition \ref{lem:tmxn}-(ii)) in Section~\ref{subs:limsup1d}. Finally, we prove an auxiliary result, Lemma~\ref{lem:EpE}, that is used in the proof of Proposition~\ref{lem:tmxn}-(ii).

\subsection{Heuristic behind the proof} \label{Section: 4 Heuristic}
We explain here general ideas to prove Proposition \ref{lem:tmxn}. 
  Observe that
    \aln{
{\rm T}(0, \lfloor M\sqrt{n} \rfloor)\geq \xi n \quad & \Rightarrow  \ \quad  \exists \,  f\in \kC(\xi);~\forall x,y\in \Iintv{-M\sqrt{n},M\sqrt{n}},\,t(x,y)\geq n(f(x/\sqrt{n})-f(y/\sqrt{n})),\label{ass1}\\
{\rm T}(0, \lfloor M\sqrt{n} \rfloor)\geq \xi n \quad  &\Leftarrow  \ \quad  \exists \,  f\in \kC^{\rm Step}(\xi);~\forall x,y\in \Iintv{-M\sqrt{n},M\sqrt{n}},\,t(x,y)\geq n(f(x/\sqrt{n})-f(y/\sqrt{n})),\label{assB}
}
Indeed, if ${\rm T}(0, \lfloor M\sqrt{n} \rfloor)\geq \xi n$, letting $f(u):={\rm T}(0,\sqrt{n} u)/n$ for $u\in[-M,M]$ so that $f\in \kC(\xi)$, then 
$$t(x,y)\geq {\rm T}(0,y)-{\rm T}(0,x)= n(f(y/\sqrt{n})-f(x/\sqrt{n})),$$
where we have used  the triangular inequality.
This implies \eqref{ass1}. On the other hand, Lemma~\ref{lem:tfs_lower} below shows that if the right-hand side of \eqref{assB} holds for some step function $f \in \kC^{\rm Step}(\xi)$, then $\T(0, \lfloor M\sqrt{n} \rfloor)\geq \xi n$. Additionally, as it will be shown in Section~\ref{sec:props}, we can interchange the spaces $\kC(\xi)$ and  $\kC^{\rm Step}(\xi)$ in the computation of the rate function, and hence we essentially have the equivalence relation in \eqref{ass1}.

As a consequence,
\begin{align}
\label{local equiv hitting time}
  \P\bigl( \T(0, \lfloor M\sqrt{n} \rfloor)\geq \xi n \bigr)
  \approx \P\Bigl( \exists f\in \kC(\xi);~\forall x,y\in \Iintv{-M\sqrt{n},M\sqrt{n}},\,t(x,y)\geq n(f(y/\sqrt{n})-f(x/\sqrt{n})) \Bigr).
\end{align}
  By the Laplace principle, one  expects that the right-hand side of \eqref{local equiv hitting time} is approximated by
\begin{align*}
    &\sup_{f\in  \kC(\xi)}\P\Bigl( \forall x,y\in \Iintv{-M\sqrt{n},M\sqrt{n}},\,t(x,y)\geq n(f(y/\sqrt{n})-f(x/\sqrt{n})) \Bigr)\\
    &=\sup_{f\in  \kC(\xi)}\prod_{x\in  \Iintv{-M\sqrt{n},M\sqrt{n}}}\P\Bigl( \forall y\in \Iintv{-M\sqrt{n},M\sqrt{n}},\, t(x,y) \geq n(f(y/\sqrt{n})-f(x/\sqrt{n})) \Bigr).
\end{align*}
By Donsker's invariant principle, one further expects that
\begin{align*}
    &\P\Bigl( \forall y \in \Iintv{-M\sqrt{n},M\sqrt{n}},\,t(x,y)\geq n(f(x/\sqrt{n})-f(y/\sqrt{n})) \Bigr)\\
    &\approx \P_{x/\sqrt{n}}^{\rm BM}\bigl( \forall v\in [-M,M],\,\tau_v \geq f(v)-f(x/\sqrt{n}) \bigr).
\end{align*}
Hence, one would get
\al{
  &\prod_{x\in  \Iintv{-M\sqrt{n},M\sqrt{n}}}\P\Bigl( \forall y\in \Iintv{-M\sqrt{n},M\sqrt{n}},\, t(x,y) \geq n(f(y/\sqrt{n})-f(x/\sqrt{n})) \Bigr)\\
  &\approx \prod_{x\in  \Iintv{-M\sqrt{n},M\sqrt{n}}}\P_{x/\sqrt{n}}^{\rm BM}\bigl( \forall v\in [-M,M],\,\tau_v \geq f(v)-f(x/\sqrt{n}) \bigr)\\
  &= \exp{\left(\sqrt{n}\cdot \frac{1}{\sqrt{n}}\sum_{x\in  \Iintv{-M\sqrt{n},M\sqrt{n}}}\log{ \P_{x/\sqrt{n}}^{\rm BM}(\forall v\in [-M,M],\,\tau_v \geq f(v)-f(x/\sqrt{n}))}\right)}\\
  &\approx \exp\left(\sqrt{n} \int_{-M}^M\log{ \P_{u}^{\rm BM}\bigl( \forall v\in [-M,M],\,\tau_v \geq f(v)-f(u) \bigr)\dd u} \right)\\
  &= \exp\bigl( -\sqrt{n}(E(f) + o_M(1)) \bigr).
}
Combining these approximations we arrive at the desired equation \eqref{LDP around 0}.

In fact, some of them are not straightforward and  we will only prove that  
\al{
\inf_{f\in\kC(\xi)}E(f) -o_M(1) \leq \lim_{n\to\infty}\frac{-1}{\sqrt{n}}
\log\mathbb{P}\bigl( {\rm T}(0,\lfloor M\sqrt{n} \rfloor) \geq \xi n \bigr)
 \leq  \inf_{f\in\kC^{\rm Step}(\xi)}E(f) +o_M(1).
}

\subsection{Proof of Proposition~\ref{lem:tmxn}-(i)}\label{subs:liminf1d}
Let us start with a simple observation.

\begin{lem}\label{lem:tfs_lower}
Let $\xi >0$ and $f \in \mathcal{C}^{\rm Step}(\xi)$.
If $M$ is large enough, then for all $n \in \N$,
\begin{align*}
    &\log \P\Bigl( \rmT_{[-M\sqrt{n},M\sqrt{n}]}(0,\lfloor M\sqrt{n} \rfloor)\geq \xi n \Bigr)\\
    &\geq \sum_{x \in \Iintv{-M\sqrt{n},M\sqrt{n}}} \log \pp\Bigl( t(x,y) \geq n( f(y/\sqrt{n})-f(x/\sqrt{n}) )
    \quad \forall y\in \Iintv{-M\sqrt{n},M\sqrt{n}} \Bigr).
\end{align*}
\end{lem}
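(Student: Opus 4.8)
The plan is to exhibit, on the event in the right-hand side, an explicit lower bound for $\rmT_{[-M\sqrt n,M\sqrt n]}(0,\lfloor M\sqrt n\rfloor)$ that forces it to be at least $\xi n$, and then to note that the probability of that event factorizes over the starting site $x$ because the random walks $(S^x_\cdot)_{x\in\Z}$ are independent. Concretely, suppose that
\[
t(x,y)\geq n\bigl(f(y/\sqrt n)-f(x/\sqrt n)\bigr)\qquad\forall\,x,y\in\Iintv{-M\sqrt n,M\sqrt n}.
\]
I would then argue that for \emph{any} admissible path $x_0=0,x_1,\dots,x_k=\lfloor M\sqrt n\rfloor$ with all $x_i\in[-M\sqrt n,M\sqrt n]$, the telescoping sum gives
\[
\sum_{i=0}^{k-1}t(x_i,x_{i+1})\;\geq\;\sum_{i=0}^{k-1} n\bigl(f(x_{i+1}/\sqrt n)-f(x_i/\sqrt n)\bigr)\;=\;n\bigl(f(\lfloor M\sqrt n\rfloor/\sqrt n)-f(0)\bigr).
\]
Since $f\in\kC^{\rm Step}(\xi)$ we have $f(0)=0$ (from $\lim_{x\to0}f(x)=0$ together with monotonicity; for a step function $f(0)=0$ exactly once $M$ is large enough that $0$ is not a jump affecting the value, or one simply takes $f$ normalized so that $f(0)=0$), and $f$ is non-decreasing on $[0,\infty)$ with $\lim_{x\to\infty}f(x)=\xi$. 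The one point that needs care is that $f(\lfloor M\sqrt n\rfloor/\sqrt n)$ need not equal $\xi$ for fixed $n$; this is exactly why the statement requires ``$M$ large enough''. Here I would invoke that $f$ is a \emph{step} function with finitely many jumps, so there is a threshold $x_f<\infty$ with $f\equiv\xi$ on $[x_f,\infty)$; choosing $M\geq x_f+1$ guarantees $\lfloor M\sqrt n\rfloor/\sqrt n\geq x_f$ for all $n\in\N$, hence $f(\lfloor M\sqrt n\rfloor/\sqrt n)=\xi$ and the bound reads $\sum_i t(x_i,x_{i+1})\geq \xi n$. Taking the infimum over admissible paths yields $\rmT_{[-M\sqrt n,M\sqrt n]}(0,\lfloor M\sqrt n\rfloor)\geq\xi n$ on this event.

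Thus the event $\{\forall x,y\in\Iintv{-M\sqrt n,M\sqrt n},\ t(x,y)\geq n(f(y/\sqrt n)-f(x/\sqrt n))\}$ is contained in $\{\rmT_{[-M\sqrt n,M\sqrt n]}(0,\lfloor M\sqrt n\rfloor)\geq\xi n\}$. Now I would write the former event as the intersection over $x\in\Iintv{-M\sqrt n,M\sqrt n}$ of the events
\[
A_x:=\bigl\{\,t(x,y)\geq n(f(y/\sqrt n)-f(x/\sqrt n))\quad\forall y\in\Iintv{-M\sqrt n,M\sqrt n}\,\bigr\},
\]
and observe that $A_x$ depends only on the walk $S^x_\cdot$ (through $t(x,y)=\inf\{m\geq0:S^x_m=y\}$), so the $A_x$ are mutually independent. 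Therefore
\[
\P\bigl(\rmT_{[-M\sqrt n,M\sqrt n]}(0,\lfloor M\sqrt n\rfloor)\geq\xi n\bigr)\;\geq\;\P\Bigl(\bigcap_{x}A_x\Bigr)\;=\;\prod_{x\in\Iintv{-M\sqrt n,M\sqrt n}}\P(A_x),
\]
and taking logarithms gives exactly the claimed inequality.

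The only genuine obstacle is the threshold issue in the previous paragraph — making precise that for a step function one can pick $M$ uniformly in $n$ so that $f(\lfloor M\sqrt n\rfloor/\sqrt n)=\xi$ — and handling the normalization $f(0)=0$; everything else (the telescoping triangle-inequality bound and the independence-based factorization) is routine. Note also that only walks started inside $[-M\sqrt n,M\sqrt n]$ enter both the event and the definition of $\rmT_{[-M\sqrt n,M\sqrt n]}$, so there is no measurability mismatch, and the restriction to paths staying in $[-M\sqrt n,M\sqrt n]$ in the definition of $\rmT_{[-M\sqrt n,M\sqrt n]}$ is harmless for the lower bound since we are bounding \emph{every} such path from below.
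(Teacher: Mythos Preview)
Your proof is correct and follows essentially the same approach as the paper's: the paper also telescopes along an admissible path, uses that $f(0)=0$ (which holds directly from $\lim_{x\to 0}f(x)=0$, $f\geq 0$, and monotonicity on $[0,\infty)$ --- no hedging about ``normalization'' or ``$M$ large enough'' is needed for this point), chooses $M$ beyond the last jump of $f$ so that $f(\lfloor M\sqrt n\rfloor/\sqrt n)=\xi$ for all $n$, and then factorizes by independence of the walks $(S^x_\cdot)_x$. Your treatment of the threshold $M\geq x_f+1$ is in fact slightly more explicit than the paper's, which simply takes $M\geq M_0+1$ where $f$ is constant outside $[-M_0,M_0]$ and notes $f(M-1)=\xi$.
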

\begin{proof}
Fix $\xi >0$ and $f \in \mathcal{C}^{\rm Step}(\xi)$.
By definition, $f(0)=0$ holds and we can take a sufficiently large $M_0 \in \N$ such that $f$ is constant outside $[-M_0,M_0]$.
Note that $f(M_0)=\|f\|_\infty=\xi$, where $\|f\|_\infty=\sup_{u \in \R}f(u)$ (see also above \eqref{eq:def_energy}).
Let $M \geq M_0+1$ and $n \in \N$. Suppose that
\be{
t(x,y) \geq n( f(y/\sqrt{n})-f(x/\sqrt{n}) )\quad
	\forall x, \, y\in \Iintv{-M\sqrt{n},M\sqrt{n}}.
}
Then, since $\xi=f(M-1) \leq f(\lfloor M\sqrt{n} \rfloor/\sqrt{n}) \leq f(M)=\xi$ holds, one has for all sequence $(x_i)_{i=0}^\ell$ on $\Z$ with $x_0=0$ and $x_\ell= \lfloor M\sqrt{n} \rfloor$,
\begin{align*}
    \sum_{i=1}^\ell t(x_{i-1},x_i)\geq \sum_{i=1}^\ell n(f(x_i/\sqrt{n})-f(x_{i-1}/\sqrt{n}))=n(f(x_{\ell}/\sqrt{n})-f(0))= \xi n,
\end{align*}
which implies $\T(0,\lfloor M\sqrt{n} \rfloor)\geq \xi n$.
Therefore,
\begin{align*}
    &\log \P\Bigl( \T_{[-M\sqrt{n},M\sqrt{n}]}(0,\lfloor M\sqrt{n} \rfloor)\geq \xi n \Bigr)\\
    &\geq \log \P\Bigl( t(x,y) \geq n( f(y/\sqrt{n})-f(x/\sqrt{n}) )\quad\forall x,y \in \Iintv{-M\sqrt{n},M\sqrt{n}} \Bigr)\\
    &= \sum_{x \in \Iintv{-M\sqrt{n},M\sqrt{n}}}
		\log \P\Bigl( t(x,y) \geq n( f(y/\sqrt{n})-f(x/\sqrt{n}) ) \quad \forall y\in \Iintv{-M\sqrt{n},M\sqrt{n}} \Bigr),
\end{align*}
where the last equation follows from the independence of the simple random walks.
Hence, the lemma follows.
\end{proof}

We are now in a position to prove Proposition~\ref{lem:tmxn}-(i).

\begin{proof}[\bf Proof of Proposition~\ref{lem:tmxn}-(i)]
Using Lemma~\ref{Task3} with $a=M\sqrt{n}$, $L=\lfloor M\sqrt{n} \rfloor$, and $h=\xi n$, one has
\begin{align*}
    \frac{-1}{\sqrt{n}}\log\P\bigl( \T(0,\lfloor M\sqrt{n} \rfloor) \geq  \xi n \bigr)
    \leq \frac{\alpha \xi}{M}-\frac{1}{\sqrt{n}}\log\P\Bigl( \T_{[-M\sqrt{n},M\sqrt{n}]}(0,\lfloor M\sqrt{n} \rfloor)\geq \xi n \Bigr),
\end{align*}
where $\alpha$ is a universal, positive constant as in Lemma \ref{Task3}. Therefore, it suffices to prove
\begin{align}\label{tmcn}
    \limsup_{M\to\infty}\limsup_{n\to\infty}
    \frac{-1}{\sqrt{n}} \log\P\Bigl( \T_{[-M\sqrt{n},M\sqrt{n}]}(0,\lfloor M\sqrt{n} \rfloor) \geq \xi n\Bigr)
    \leq \inf_{f\in\kC^{\rm S
    tep}(\xi)}E(f).
\end{align}
Let $\eta>0$ and $M \in \N$ be sufficiently small and large, respectively.
We take $f_* \in \kC^{\rm Step}(\xi)$ such that 
\begin{align}\label{eq:f_star}
    E(f_*) \leq \inf_{f\in\kC^{\rm Step}(\xi)}E(f)+\eta.
\end{align}
Lemma~\ref{lem:tfs_lower} yields that for all $n \in \N$,
\ben{\label{eq:f*_lower}
\log \mathbb{P}\Bigl( {\rm T}_{[-M\sqrt{n},M\sqrt{n}]}(0,\lfloor M\sqrt{n} \rfloor)\geq \xi n \Bigr) \geq I_n,
}
where
\begin{align*}
    I_n :=\sum_{x \in \Iintv{-M\sqrt{n},M\sqrt{n}}}
    \log \P\Bigl( t(x,y) \geq n( f_*(y/\sqrt{n})-f_*(x/\sqrt{n}) )
		\quad \forall y \in \Iintv{-M\sqrt{n},M\sqrt{n}} \Bigr).
\end{align*}
We enumerate the points of discontinuity of $f_*$ as $u_{-\ell'}<u_{-1}<0<u_1<\dots<u_\ell$ and set $u_0:=0$.
Given $\epsilon\in(0,1)$ and $n \in \N$, let
\begin{align*}
	K^{(1)}_{\epsilon,n}:=(\sqrt{n}K^{(1)}_\epsilon) \cap \Z, \quad K^{(2)}_{\epsilon,n}:=(\sqrt{n}K^{(2)}_\epsilon) \cap \Z,
\end{align*}
where
\begin{align*}
	K^{(1)}_\epsilon:=[-M,M] \setminus \bigcup_{i=-\ell'}^{\ell} \left[ u_i-2\epsilon,u_i+2\epsilon \right],\quad
K^{(2)}_\epsilon:=\bigcup_{i=-\ell'}^{\ell} \left[ u_i-2\epsilon,u_i+2\epsilon \right].
\end{align*}
Now, decompose 
\begin{align*}
	I_n= I_{\epsilon,n}^{(1)}+I_{\epsilon,n}^{(2)},
\end{align*}
where
\begin{align*}
	&I_{\epsilon,n}^{(1)}:=\sum_{x \in K^{(1)}_{\epsilon,n}} \log \pp\Bigl(
		t(x,y) \geq n( f_*(y/\sqrt{n})-f_*(x/\sqrt{n}) ) \quad \forall y\in \Iintv{-M\sqrt{n},M\sqrt{n}} \Bigr),\\
	&I_{\epsilon,n}^{(2)}:=\sum_{x \in K^{(2)}_{\epsilon,n}} \log \pp\Bigl(
		t(x,y) \geq n( f_*(y/\sqrt{n})-f_*(x/\sqrt{n}) ) \quad \forall y\in \Iintv{-M\sqrt{n},M\sqrt{n}} \Bigr).
\end{align*}
Once we prove
\begin{align}\label{eq:I1}
    &\limsup_{\epsilon \searrow 0} \limsup_{n \to \infty}
    \frac{-1}{\sqrt{n}}I_{\epsilon,n}^{(1)}
    \leq -\int_{-M}^M\log \P^{\rm BM}_u(\tau_v \geq f_*(v)-f_*(u)\quad\forall v \in \R)\,\dd u,\\
    &\limsup_{\epsilon \searrow 0} \limsup_{n \to \infty} \frac{-1}{\sqrt{n}}I_{\epsilon,n}^{(2)} = 0,\label{eq:I2}
\end{align}
these combined with \eqref{eq:f_star} and \eqref{eq:f*_lower} imply
\begin{align*}
    &\limsup_{M\to\infty}\limsup_{n\to\infty}\frac{-1}{\sqrt{n}}\log \mathbb{P}\Bigl( {\rm T}_{[-M\sqrt{n},M\sqrt{n}]}(0,\lfloor M\sqrt{n} \rfloor)\geq \xi n \Bigr)\\
    &\leq \limsup_{M\to\infty}\limsup_{\epsilon\to 0}\limsup_{n\to\infty}\frac{-1}{\sqrt{n}} (I_{\epsilon,n}^{(1)}+I_{\epsilon,n}^{(2)})\\
    &\leq -\int_\R \log\P^{\rm BM}_u(\tau_v \geq f_*(v)-f_*(u)\quad\forall v \in \R)\,\dd u
    = E(f_*) \leq \inf_{f\in\kC(\xi)}E(f)+\eta,
\end{align*}
and hence \eqref{tmcn} follows by letting $\eta \searrow 0$.

It remains to prove \eqref{eq:f_star} and \eqref{eq:f*_lower}.
We first check \eqref{eq:I1}.
Let $\delta,\delta'\in (0,\e)$ be sufficiently small.
Given $u \in \R$, we define
$$h_*^\delta(u):=\pp^{\rm BM}_{u} \Bigl( \tau_v \geq f_*^{+,\delta}(v)-f_*(u) \quad \forall v \in \R \Bigr). $$
 Remark that since $f_*$ is a step function, we can take $M$ sufficiently large such that $f_*|_{(-\infty,-M+1]}$ and $f_*|_{[M-1,\infty)}$ are both constant functions, and thus for any $u\in \R$,
\ben{ \label{hsd}
h_*^\delta(u)=\pp^{\rm BM}_{u} \Bigl( \tau_v \geq f_*^{+,\delta}(v)-f_*(u) \quad \forall v \in [-M+1,M-1] \Bigr). }
Lemma~\ref{lem:kmt} with $f=g=f_*$ and $\epsilon_1=\delta$, $\epsilon_2=\delta'$ yields that if $n$ is large enough and $x \in K^{(1)}_{\epsilon,n}$, then
\begin{align}\label{hep1}
\begin{split}
	&\pp \Bigl( t(x,y) \geq n( f_*(y/\sqrt{n})-f_*(x/\sqrt{n}) )
	\quad \forall y \in \Iintv{-M\sqrt{n},M\sqrt{n}} \Bigr)
	\geq  h_*^\delta(x/\sqrt{n})-\delta'.
\end{split}
\end{align}
Moreover, due to the definition of $K^{(1)}_\epsilon$ and the fact that $\|f_*\|_\infty=\xi$, we have	for all $u \in K^{(1)}_\epsilon$,
\begin{align}\label{hgeqc}
	h_*^\delta(u)
	\geq \pp^{\rm BM}_{u}(\tau_{u-\epsilon} \wedge \tau_{u+\epsilon} \geq \xi)
	= \pp^{{\rm BM}}_{0}\left( \max_{0 \leq t \leq \xi} |B_t| \leq \epsilon \right)
	=: c(\xi,\epsilon) \in (0,1).
\end{align}
By \eqref{hep1} and \eqref{hgeqc}, for all $x \in K^{(1)}_{\epsilon,n}$,
\begin{align*}
	&\pp\Bigl( t(x,y) \geq n( f_*(y/\sqrt{n})-f_*(x/\sqrt{n}) )
		\quad \forall  y \in \Iintv{-M\sqrt{n},M\sqrt{n}} \Bigr)
	\geq  \left( 1-\frac{\delta'}{c(\xi,\epsilon)} \right)h_*^\delta(x/\sqrt{n}).
\end{align*}
It follows that
\begin{align}\label{eq:h2_approx}
	I_{\epsilon,n}^{(1)}
	\geq \sum_{x \in K^{(1)}_{\epsilon,n}} \log h_*^\delta(x/\sqrt{n})+(2M\sqrt{n}+1) \log\left( 1-\frac{\delta'}{c(\xi,\epsilon)} \right).
\end{align}
Notice that if $0 \leq h \leq \delta$, then for all $x \in K^{(1)}_{\epsilon,n}$ and $v \in \R$,
\begin{align*}
	f_*(x/\sqrt{n}+h)=f_*(x/\sqrt{n}),\quad
	f_*^{+,2\delta}(v+h) \geq f_*^{+,\delta}(v). 
\end{align*}
This together with \eqref{hsd} yields
\begin{align*}
	h_*^{2\delta}(x/\sqrt{n}+h)
 & \leq  \pp^{\rm BM}_{x/\sqrt{n}+h} \Bigl( \tau_{v} \geq f_*^{+,2\delta}(v)-f_*(x/\sqrt{n}+h)\quad\forall v \in [-M,M] \Bigr)\\
	&= \pp^{\rm BM}_{x/\sqrt{n}} \Bigl( \tau_{v-h} \geq f_*^{+,2\delta}(v)-f_*(x/\sqrt{n})\quad\forall v \in [-M,M] \Bigr)\\
	&= \pp^{\rm BM}_{x/\sqrt{n}} \Bigl( \tau_v \geq f_*^{+,2\delta}(v+h)-f_*(x/\sqrt{n})\quad\forall v \in [-M-h,M-h] \Bigr)\\
 &\leq \pp^{\rm BM}_{x/\sqrt{n}} \Bigl( \tau_v \geq f_*^{+,\delta}(v)-f_*(x/\sqrt{n})\quad\forall v \in [-M-h,M-h] \Bigr)
 \leq h_*^\delta(x/\sqrt{n}).
\end{align*}
Hence, if $n$ is large enough so that $1/\sqrt{n}\leq \delta<\eps$, since $ \log h_*^{2\delta}(u)=0$ for any $u\geq M-1$, then we have
\begin{align*}
	\sqrt{n} \int_{K^{(1)}_{\epsilon/2}} \log h_*^{2\delta}(u)\,\dd u
	&\leq \sum_{x \in K^{(1)}_{\epsilon,n}} \sup_{0 \leq h \leq 1/\sqrt{n}} \log h_*^{2\delta}(x/\sqrt{n}+h)\leq \sum_{x \in K^{(1)}_{\epsilon,n}} \log h_*^\delta(x/\sqrt{n}).
\end{align*}
This together with \eqref{eq:h2_approx} proves that
\begin{align}\label{eq:tt_case1}
\begin{split}
	\limsup_{n \to \infty} \frac{-1}{\sqrt{n}}I_{\epsilon,n}^{(1)}
	&\leq \limsup_{n \to \infty} \frac{-1}{\sqrt{n}}\sum_{x \in K^{(1)}_{\epsilon,n}} \log h_*^\delta(x/\sqrt{n})-2M\log\left( 1-\frac{\delta'}{c(\xi,\epsilon)} \right)\\
	&\leq -\int_{K^{(1)}_{\epsilon/2}} \log h_*^{2\delta}(u)\,\dd u-2M\log\left( 1-\frac{\delta'}{c(\xi,\epsilon)} \right).
\end{split}
\end{align}
Since $f_* \in \kC^{\rm step}(\xi)$, Lemma~\ref{lem:h_decreasing} and the monotone convergence theorem imply
\begin{align*}
    \lim_{\delta \searrow 0} \int_{K^{(1)}_{\epsilon/2}} \log h_*^{2\delta}(u)\,\dd u
    \geq \int_{-M}^M \log\pp^{\rm BM}_{u} \Bigl( \tau_v \geq f_*(v)-f_*(u) \quad \forall v \in \R \Bigr)\,\dd u.
\end{align*}
Combining this with \eqref{eq:tt_case1}, we have for any $\epsilon>\delta'>0$,
\begin{align*}
    \limsup_{n \to \infty} \frac{-1}{\sqrt{n}}I_{\epsilon,n}^{(1)}
    &\leq -\int_{-M}^M \log\pp^{\rm BM}_{u} \Bigl( \tau_v \geq f_*(v)-f_*(u) \quad \forall v \in \R \Bigr)\, \dd u
    -2M\log\left( 1-\frac{\delta'}{c(\xi,\epsilon)} \right).
\end{align*}
Therefore, \eqref{eq:I1} follows by letting $\delta' \searrow 0$ and then $\epsilon \searrow \infty$.

Let us finally check \eqref{eq:I2}.
Define for $ i\in \Iintv{-\ell',\ell}$ and $\delta >0$,
\begin{align*}
	L_{\delta,n}(i):=[\sqrt{n}(u_i-2\delta),\sqrt{n}(u_i+2\delta)] \cap \Z.
\end{align*}
Let $\delta\in (0,1)$ be small enough (depending on $f_*$) so that for any $n\in\N$, $(L_{\delta,n}(i))_{i \in \Iintv{-\ell' ,   \ell}}$ are disjoint.
Take an arbitrary $\eps \in (0,\delta/2)$ and note that $K^{(2)}_{\epsilon,n}=\bigsqcup_{i=-\ell'}^\ell L_{\epsilon,n}(i)$.
 
We first consider  $i \in \Iintv{1,\ell}$ and $x \in L_{\epsilon,n}(i)$. Since $f_*$ is increasing in $[0,\infty)$ and decreasing in $(-\infty,0]$ and satisfies $0 \leq f_*(u) \leq \xi$ for all $u \in \R$, we have
\bea{
\pp(t(x,0) \geq \xi n, t(x,y_x) \geq \xi n)\leq \pp\Bigl( t(x,y) \geq n( f_*(y/\sqrt{n})-f_*(x/\sqrt{n}) ) \quad \forall y \in \Iintv{-M\sqrt{n},M\sqrt{n}} \Bigr),
}
where $\Delta_n:=\lceil \delta \sqrt{n}\rceil$ and
\be{
y_x := \begin{cases}
    \lceil u_i \sqrt{n} \rceil &\textrm{if } (u_i-2 \epsilon) \sqrt{n} \leq x < u_i \sqrt{n},\\
\lceil u_i \sqrt{n}\rceil+\Delta_n &\textrm{if }  u_i \sqrt{n}  \leq x  \leq (u_i+2 \epsilon) \sqrt{n}.
\end{cases}
}
 The strong Markov property shows
\begin{align*}
    \pp(t(x,0) \geq \xi n, t(x,y_x) \geq \xi n)
    & =\pp_x(\tau_0  \geq \xi n, \, \tau_{y_x} \geq n\xi)\\
    &\geq \pp_x\Bigl( \tau_{\Delta_n} < \tau_{y_x},\max_{k\in \Iintv{0,\xi n}}|S_{\tau_{\Delta_n}+k}-S_{\tau_{\Delta_n}}|<\Delta_n  \Bigr)\\
    &=  \pp_x\big( \tau_{\Delta_n} < \tau_{y_x} \big) \,\pp \left(\max_{0 \leq k \leq n\xi}|S^0_k|< \Delta_n \right).
\end{align*}
A standard result for the simple random walk (see for instance \cite[(1.20)]{Law91_book}) and the fact $y_x \geq 2\Delta_n$ give
\begin{align*}
    \pp_x\big( \tau_{\Delta_n} < \tau_{y_x} \big) = \frac{y_x-x}{y_x-\Delta_n} \geq \frac{y_x-x}{\delta\sqrt{n}}. 
\end{align*}
Moreover, by  Donsker's invariance principle, for $n$ large enough,
\begin{align*}
	\pp \left(\max_{0 \leq k \leq n\xi}|S^0_k|<\Delta_n\right)
	&\geq \frac{1}{2}\pp\left(\sup_{0 \leq t \leq \xi}|B_t|<\frac{\delta}{2} \right)=:c(\xi,\delta)>0.
\end{align*}
Therefore,
\be{
\pp\Bigl( t(x,y) \geq n( f_*(y/\sqrt{n})-f_*(x/\sqrt{n}) ) \quad \forall y \in \Iintv{-M\sqrt{n},M\sqrt{n}} \Bigr) \geq c_* \frac{y_x-x}{\sqrt{n}},
}
with $c_*:=c_*(\xi,\delta):=c(\xi,\delta)/\delta$.
Hence,
\begin{align}\label{Delta n approx}
\begin{split}
    &\limsup_{n \to \infty}\frac{-1}{\sqrt{n}}\sum_{x \in K^{(2)}_{\e,n}(i)} \log \pp\Bigl( t(x,y) \geq n( f_*(y/\sqrt{n})-f_*(x/\sqrt{n}) )\quad\forall y \in \Iintv{-M\sqrt{n},M\sqrt{n}} \Bigr)\\
    &\leq -4\epsilon \log (c_*)-2 \liminf_{n \to \infty}\frac{1}{\sqrt{n}}\sum_{k=1}^{\lceil 2 \epsilon n \rceil} \log \left(\frac{k}{\sqrt{n}} \right)\\
    &= -4\epsilon \log (c_*)-2 \int_0^{2 \epsilon} \log t\, \dd t
    = -4\epsilon  (  \log(2 c_*\epsilon) -1).
\end{split}
\end{align}
In the case $i \in \Iintv{-\ell',1}$, the above argument also works by symmetry, and \eqref{Delta n approx} is valid for $i \in \Iintv{-\ell',1}$.
In the case $i=0$, setting for $x \in L_{\e,n}(0)$,
\begin{align*}
    y_x := \begin{cases}
    0 &\textrm{if }   x <0, \\
\Delta_n &\textrm{if }  x \geq 0,
\end{cases}
\end{align*}
one can apply the above argument again and obtain \eqref{Delta n approx} for $i=0$.
In conclusion, we reach
\begin{align*}
    0 \leq \limsup_{n \to \infty} \frac{-1}{\sqrt{n}}{I_{\eps,n}^{(2)}}
    \leq -4\epsilon (\ell+\ell'+1) \left( \log (2c_* \epsilon) -1 \right), 
\end{align*}
and letting $\epsilon \searrow 0$ proves \eqref{eq:I2}.
\end{proof}

\subsection{Proof of Proposition~\ref{lem:tmxn}-(ii)}\label{subs:limsup1d}
For any $\xi,M,\eta>0$, we define
\begin{align*}
	\kC_M(\xi)
		&:= \{ f \in \kC(\xi):f|_{(-\infty,-M]}\equiv \text{const} \text{ and } f|_{[M,\infty)}\equiv \text{const} \},\\
	\kC_{M,\eta}(\xi)
		&:= \{ f \in \kC(\xi):f|_{(-\infty,-M]}\equiv \text{const}, f|_{[M,\infty)}\equiv \text{const} \text{ and } f|_{[-\eta,\eta]}\equiv 0 \}.
\end{align*}
Recall the notations $f^{\pm,\delta}$ from \eqref{per:f}. For any $\xi>0$ and $f\in \kC(\xi)$, set
\begin{align*}
	E_M(f)&:=-\int_{-M}^M \log \pp^{\rm BM}_u\left(\tau_v \geq  f(v)-f(u)\quad\forall v \in \R \right) \dd u,\\
	E^+_{\delta,M}(f)&:=-\int_{-M}^M \log\pp^{\rm BM}_u\Bigl( \tau_v \geq f^{-,\delta}(v)-f^{+,\delta}(u)
		\quad \forall  v \in \R \Bigr) \dd u.
\end{align*} 
\begin{lem} \label{lem:EpE}
 For any  $\xi_0>0$, there exists $M_0\in\N$ such that for any $M\geq M_0$ and $\xi\in (0,\xi_0)$,
\begin{align*}
	\inf_{f\in \kC_{M}(\xi)}E_M(f) \geq \inf_{f\in \kC(\xi)}E(f)-M^{-2}.
\end{align*}
Furthermore, for all $M \geq 1$ and $\eta\in (0,1)$,
\begin{align*}
	\lim_{\delta\to 0} \inf_{f\in \kC_{M,\eta}(\xi)}E^+_{\delta,M}(f)=\inf_{f\in \kC_{M,\eta}(\xi)}E_M(f).
\end{align*}
\end{lem}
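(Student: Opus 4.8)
The plan is to prove the two assertions separately; the first is a short tail estimate and the second is the substantial one.

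\emph{First inequality.} For $f\in\kC_M(\xi)\subseteq\kC(\xi)$ I would write $E(f)=E_M(f)+R(f)$ with $R(f):=-\int_{|x|>M}\log\pp^{\rm BM}_x(\tau_y\ge f(y)-f(x)\ \forall y)\,\dd x\ge0$, so that $E_M(f)=E(f)-R(f)\ge\inf_{g\in\kC(\xi)}E(g)-R(f)$, and it suffices to show $R(f)\le M^{-2}$ whenever $\xi<\xi_0$, $f\in\kC_M(\xi)$, $M\ge M_0(\xi_0)$. For $x>M$ one has $f(x)=\xi=\|f\|_\infty\ge f(y)$ for all $y$, so the probability is $1$ and there is no contribution. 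For $x<-M$, put $c_-:=f(-M)\in[0,\xi]$; any $y$ with $f(y)>c_-$ lies in $(0,\infty)$, and to violate $\tau_y\ge f(y)-c_-\ (\le\xi<\xi_0)$ the Brownian motion started at $x<-M<0$ must first reach $0$, so $\pp^{\rm BM}_x(\exists y:\tau_y<f(y)-f(x))\le\pp^{\rm BM}_x(\tau_0<\xi_0)\le e^{-x^2/(2\xi_0)}$ by Lemma~\ref{lem:maxbrown}. For $M$ large (depending only on $\xi_0$) this is $\le1/2$ on $(-\infty,-M)$, hence $-\log\pp^{\rm BM}_x(\tau_y\ge f(y)-f(x)\ \forall y)\le2e^{-x^2/(2\xi_0)}$ and $R(f)\le\int_{-\infty}^{-M}2e^{-x^2/(2\xi_0)}\,\dd x\le(2\xi_0/M)e^{-M^2/(2\xi_0)}\le M^{-2}$. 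Taking the infimum over $f\in\kC_M(\xi)$ finishes this part.

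\emph{Second equality, easy direction and compactness set-up.} Since $f^{-,\delta}\le f\le f^{+,\delta}$, the constraint defining $E^+_{\delta,M}$ is weaker than the one defining $E_M$, so $E^+_{\delta,M}(f)\le E_M(f)$ for every $f$; as $\delta\mapsto E^+_{\delta,M}(f)$ is non-increasing, $\lim_{\delta\to0}\inf_{f\in\kC_{M,\eta}(\xi)}E^+_{\delta,M}(f)$ exists and is $\le\inf_{f\in\kC_{M,\eta}(\xi)}E_M(f)$. For the reverse, I would pick $\delta_k\downarrow0$ and $f_k\in\kC_{M,\eta}(\xi)$ with $E^+_{\delta_k,M}(f_k)\to L:=\lim_{\delta\to0}\inf_{f}E^+_{\delta,M}(f)$. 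The $f_k$ are uniformly bounded and monotone on each of $(-\infty,0]$ and $[0,\infty)$, so by Helly's selection theorem one may pass to a subsequence along which $f_k\to f_\infty$ pointwise; all defining constraints of $\kC_{M,\eta}(\xi)$ pass to the limit — crucially, $f_k\equiv0$ on $[-\eta,\eta]$ forces $\lim_{x\to0}f_\infty(x)=0$ — so $f_\infty\in\kC_{M,\eta}(\xi)$.

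\emph{Second equality, hard direction.} Using monotonicity of the $f_k$ together with the pointwise convergence, I would check that $\liminf_k f_k^{-,\delta_k}(v)\ge f_\infty^-(v)$ for every $v$, where $f_\infty^-(v)$ is the one-sided limit of $f_\infty$ at $v$ taken from the side of the origin (left limit for $v\ge0$, right limit for $v\le0$), and that $f_k^{+,\delta_k}(u)\to f_\infty(u)$ at every continuity point $u$ of $f_\infty$, i.e.\ for a.e.\ $u$. Consequently, for a.e.\ $u$ and $\pp^{\rm BM}_u$-a.e.\ path, if for infinitely many $k$ one has $\tau_v\ge f_k^{-,\delta_k}(v)-f_k^{+,\delta_k}(u)$ for all $v$, then $\tau_v\ge f_\infty^-(v)-f_\infty(u)$ for all $v$; so by reverse Fatou
\[\limsup_k\pp^{\rm BM}_u\bigl(\tau_v\ge f_k^{-,\delta_k}(v)-f_k^{+,\delta_k}(u)\ \forall v\bigr)\le\pp^{\rm BM}_u\bigl(\tau_v\ge f_\infty^-(v)-f_\infty(u)\ \forall v\bigr).\]
The right-hand probability equals $\pp^{\rm BM}_u(\tau_v\ge f_\infty(v)-f_\infty(u)\ \forall v)$: the functions $f_\infty^-$ and $f_\infty$ differ only at the countably many jump points $v_0$ of $f_\infty$, and there the constraint $\tau_{v_0}\ge f_\infty(v_0)-f_\infty(u)$ is $\pp^{\rm BM}_u$-a.s.\ forced by the constraints at $v$ just beyond $v_0$ (on the side away from the origin), since $f_\infty(v)$ and $f_\infty^-(v)$ tend to $f_\infty(v_0^\pm)\ge f_\infty(v_0)$ as $v\to v_0$ while $\tau_v\to\tau_{v_0}$ by the a.s.\ continuity of $v\mapsto\tau_v$. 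Finally, since $-\log\pp^{\rm BM}_u(\cdots)\ge0$, Fatou's lemma gives
\[L=\lim_k E^+_{\delta_k,M}(f_k)\ge\int_{-M}^M\Bigl(-\log\pp^{\rm BM}_u\bigl(\tau_v\ge f_\infty(v)-f_\infty(u)\ \forall v\bigr)\Bigr)\,\dd u=E_M(f_\infty)\ge\inf_{f\in\kC_{M,\eta}(\xi)}E_M(f),\]
which together with the easy direction yields the equality.

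\emph{Main obstacle.} I expect the delicate points to be the two claims surrounding the jumps of $f_\infty$: first, pinning down the one-sided limiting behaviour of the perturbations $f_k^{-,\delta_k}$, $f_k^{+,\delta_k}$ at a jump of $f_\infty$ — there the relaxation need not shrink with $\delta_k$ and can depend on how $f_k$ approaches $f_\infty$ near the jump relative to $\delta_k$; and second, showing that whatever relaxation survives is harmless, i.e.\ that the hitting-time constraint at an isolated point of a monotone profile is automatically enforced by the neighbouring constraints, for which the a.s.\ continuity and monotonicity of $v\mapsto\tau_v$ are the key inputs. The Helly compactness, the stability of $\kC_{M,\eta}(\xi)$ under pointwise limits, and the harmless adjustments to $f_\infty$ at the endpoints $\pm M$ should all be routine.
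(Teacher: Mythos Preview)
Your argument is correct, and the first inequality is proved in essentially the same way as in the paper: only the half-line $(-\infty,-M)$ contributes, and the reflection-principle tail bound plus $-\log(1-p)\le 2p$ finishes it.

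For the second equality your route differs somewhat from the paper's. The paper introduces an auxiliary functional $\tilde E^+_{\delta,M}(f)$ with an extra $-\delta$ inside the probability and $+\delta$ added outside; this built-in slack lets them, via a L\'evy-distance compactness lemma, transfer the near-minimizers $f_{\delta}$ to a \emph{fixed} limit $f_*\in\kC_{M,\eta}(\xi)$ with $\tilde E^+_{\delta_k,M}(f_*)\le \tilde E^+_{\delta_l,M}(f_{\delta_l})$, and only afterwards let $\delta_k\downarrow 0$ for this single $f_*$ by monotone convergence. Your approach is more direct: you extract a Helly limit $f_\infty$, keep the $f_k$ varying, and pass to the limit in one shot via Fatou, using $\liminf_k f_k^{-,\delta_k}(v)\ge f_\infty(v^{\mp})$ (for $v\gtrless 0$) and $\limsup_k f_k^{+,\delta_k}(u)\le f_\infty(u)$ at continuity points $u$. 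Both arguments have to repair the relaxation at the countably many jump points of the limit function, and both do so in the same way---using the almost-sure continuity of $v\mapsto\tau_v$ together with $f_\infty(v_0^+)\ge f_\infty(v_0)$ for $v_0>0$ (and symmetrically for $v_0<0$). What the paper's extra machinery buys is a clean separation of the compactness step from the $\delta\to 0$ step; what your approach buys is brevity and no auxiliary functional. Two small points worth tightening: Helly gives pointwise convergence only at continuity points of $f_\infty$, which is all you actually use; and the membership $f_\infty\in\kC_{M,\eta}(\xi)$ relies on the observation that every $f_k\in\kC_{M,\eta}(\xi)$ is identically $\xi$ on $[M,\infty)$, which you should state explicitly.
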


The proof of Lemma~\ref{lem:EpE} is postponed until Section~\ref{subsect:lem_Epe}, and we complete the proof of Proposition~\ref{lem:tmxn}-(ii) for now.

    \begin{proof}[\bf Proof of Proposition \ref{lem:tmxn}-(ii)]
Fix $\xi>0$ throughout the proof. By Lemma~\ref{lem:t1} with $2\eta$ in place of $\eta$ and $A=r(\xi)$,
there exists $C=C(\xi)>0$ such that for any $\eta\in(0,r(\xi))$, if $n \in \N$ is large enough, then
\begin{align}
&\pp\Bigl(\rmT(0,\lceil 2\eta \sqrt{n} \rceil) \wedge \rmT(0,-\lceil 2\eta \sqrt{n} \rceil) \geq C \eta n \Bigr)
	\leq e^{-r(\xi)\sqrt{n}}.\label{eq:local}
\end{align}
By Lemma~\ref{lem:taln}-(ii) with $\alpha=2M$ and $\beta=4M^2$, there exists a universal constant $c>0$ such that for any $M>0$, if $n \in \N$ is large enough, then
\begin{align}\label{eq:global}
&\mathbb{P}\Bigl( {\rm T}(0,\lfloor 2M\sqrt{n} \rfloor)\vee\rmT(0,-\lfloor 2M\sqrt{n} \rfloor)\geq 4M^2n \Bigr)
	\leq e^{-cM \sqrt{n}},
 \end{align}
Set $M_1:=|r(\xi)|/c$ and $\eta_0:=\min\{ \xi/(2C),1/4 \}$.
We fix $M \geq M_1$ and $\eta \in (0,\eta_0)$. For simplicity, we are not explicitly mentioning the dependence on 
$M$ and $\eta$ in the absence of any ambiguity. We define for  $n \in \N$,
\begin{align*}
	\mathcal{E}_n:=\left\{
	\begin{minipage}{19.5em}
	\begin{itemize}
	\setlength{\leftskip}{-2.3em}
		\item $\rmT(0,\lfloor M\sqrt{n} \rfloor) \geq \xi n$,
		\item $\rmT(0,\lfloor 2\eta\sqrt{n} \rfloor) \wedge \rmT(0,-\lfloor 2\eta\sqrt{n} \rfloor) \leq C\eta n$,
		\item $\rmT(0,\lfloor 2M\sqrt{n} \rfloor)\vee \rmT(0,-\lfloor 2M\sqrt{n} \rfloor) \leq 4M^2n$
	\end{itemize}
	\end{minipage}
	\right\}.
\end{align*}
First, assume
\begin{align}\label{eq:energy_key}
    \liminf_{n \to \infty} \frac{-1}{\sqrt{n}}
    \log\P(\mathcal{E}_n)
    \geq \inf_{f \in \kC_{M+2,\eta}(\xi-C \eta)} E_{M+2}(f),
\end{align}
and complete the proof of Proposition~\ref{lem:tmxn}-(ii).
By \eqref{eq:local} and \eqref{eq:global} and the choice of $M$, we have
\begin{align*}
	\liminf_{n \to\infty}\frac{-1}{\sqrt{n}}\log\pp \left(\rmT(0,\lfloor M\sqrt{n} \rfloor) \geq \xi n\right)
	&\geq \liminf_{n \to\infty}\frac{-1}{\sqrt{n}}\log\left\{ 2e^{-r(\xi)\sqrt{n}}+\pp(\mathcal{E}_n) \right\}\\
	&\geq \min\left\{ r(\xi), \,  \inf_{f \in \kC_{M+2,\eta}(\xi-C \eta)} E_{M+2}(f) \right\}.
\end{align*}
By Lemma~\ref{lem:EpE}, since $\kC_{M+2,\eta}(\xi-C \eta)\subset \kC(\xi-C \eta)$,   if $M \geq M_0(\xi)$ the constant as in this lemma,   then 
\begin{align*}
	  \inf_{f \in \kC_{M+2,\eta}(\xi-C \eta)} E_{M+2}(f)&\geq  \inf_{f \in \kC(\xi-C \eta)} E_{M+2}(f) \geq \inf_{f\in \kC(\xi-C\eta)}E(f)+M^{-2} =r(\xi-C\eta)-M^{-2}.
\end{align*}
Letting $\eta \searrow 0$ with continuity of $r$ (Lemma~\ref{thm2}), as desired in Proposition \ref{lem:tmxn}-(ii), we have  for $M\geq \max \{M_1,M_0\}$
\al{
\liminf_{n \to\infty}\frac{-1}{\sqrt{n}}\log\pp
\left(\rmT(0,\lfloor M\sqrt{n} \rfloor) \geq \xi n \right) 
\geq r(\xi) -M^{-2}.
} 

Our task is now to prove \eqref{eq:energy_key}.
To this end, let $\delta \in (0,\eta)$ be sufficiently small, and set $J:=2\lceil 2M/\delta \rceil$. 
For $n \in \N$ sufficiently large, we consider the sequence  $x_{i,n}:= \lfloor i(\delta/2)\sqrt{n} \rfloor$ for $i\in \Iintv{-J,J}$. Furthermore, the subset $\kA_n$ of $(\N_0)^{2J+1}$ is defined by
\begin{align*}
	\kA_n:=\left\{ (t_i)_{i=-J}^J \in (\N_0)^{2J+1}:\hspace{-2em}
	\begin{minipage}{7truecm}
	\begin{itemize}
		\item $t_{J/2} \geq \xi n$,
		\item $t_\sigma \wedge t_{-\sigma} \leq C\eta n$,
		\item $0 \leq t_i \leq 4M^2 n$ for all $i\in\Iintv{-J,J}$,
		\item $t_i \leq t_{i+1}$ for all $i\in\Iintv{0,J-1}$,
		\item $t_i \geq t_{i+1}$ for all $i\in\Iintv{-J,-1}$
	\end{itemize}
	\end{minipage}
	\right\},
\end{align*}
where $\sigma:=\lfloor 4\eta/\delta \rfloor$ and $C$ is the constant appearing in \eqref{eq:local}.
This describes the space induced by the configuration of $(\T(0,x_{i, n}))_{i=-J}^J$ conditioned on the event $\mathcal{E}_n$, and
note that $|\mathcal{A}_n|$ is at most $(4M^2 n)^{2J+1}$.
Hence,
\begin{align} \label{pent}
	\pp(\mathcal{E}_n)
	&\leq \sum_{(t_i)_{i=-J}^J \in \kA_n}\pp(\T(0,x_{i,n})=t_i\quad\forall i\in \Iintv{-J, J}) \notag \\
	&\leq (4M^2n)^{2J+1} \max_{(t_i)_{i=-J}^J \in \kA_n}\pp(\T(0,x_{i,n})=t_i\quad\forall i\in \Iintv{-J, J}).
\end{align}
For each $n \in \N$, let $(t_{i,n})_{i=-J}^J$ be an element of $\mathcal{A}_{n}$ attaining the above maximum.
To derive the desired bound \eqref{eq:energy_key}, we take the following steps (1) and (2):
\begin{enumerate}
    \item For all sufficiently small $\delta>0$, if $n\in\N$ is large enough, then we can construct a step function $\phi=\phi_n$ on $\R$ satisfying that $\phi(u) \geq \xi$ for all $u \geq M+1$, $\phi(\eta) \wedge \phi(-\eta) \leq C\eta$, and \begin{align}\label{eq:int_to_E}
	\begin{split}
		&\frac{1}{\sqrt{n}}\log\pp(\T(0,x_{i,n})=t_{i,n}\quad\forall i\in \Iintv{-J, J})\\
		&\leq \int_{-(M+3)}^{M+3} \log^-\Bigl[ \pp_u^{\rm BM}\Bigl( \tau_v \geq \phi^{-,\delta}(v)-\phi^{+,\delta}(u)
			\quad \forall  v \in [-(M+3),M+3] \Bigr)+\delta\Bigr] \,\dd u,
        \end{split}
    \end{align}
    where $\log^-u:=\log{(u \wedge 1)}\leq 0$ for $u>0$.
 
 \item We build a function $\psi=\psi_n \in \mathcal{C}_{M+2,\eta}(\xi-C\eta)$ such that
    \begin{align}\label{eq:const_psi}
    \begin{split}
		&\int_{-(M+3)}^{M+3} \log\Bigl[ \pp_u^{\rm BM}\Bigl( \tau_v \geq \phi^{-,\delta}(v)-\phi^{+,\delta}(u)\quad\forall v \in [-(M+3),M+3] \Bigr)+\delta \Bigr] \,\dd u\\
		&\leq \int_{-(M+2)}^{M+2} \log\Bigl[ \pp_u^{\rm BM}\Bigl( \tau_v \geq \psi^{-,\delta}(v)-\psi^{+,\delta}(u)\quad\forall v \in \R \Bigr)+\delta \Bigr] \,\dd u.
    \end{split}
    \end{align}
\end{enumerate}
These guarantee that for all sufficiently small $\delta>0$,
\begin{align*}
    \liminf_{n \to \infty}\frac{-1}{\sqrt{n}}
    \log\pp(\T(0,x_{i,n})=t_{i,n}\quad\forall i\in \Iintv{-J, J})
    \geq \liminf_{n \to \infty}E_{\delta,M+2}^+(\psi_n)
    \geq \inf_{f \in \mathcal{C}_{M+2,\eta}(\xi-C\eta)} E_{\delta,M+2}^+(f).
\end{align*}
This together with \eqref{pent} leads to
\begin{align*}
    \liminf_{n \to \infty} \frac{-1}{\sqrt{n}} \log\pp(\mathcal{E}_n)
    \geq \lim_{\delta \searrow 0} \inf_{f \in \mathcal{C}_{M+2,\eta}(\xi-C\eta)} E_{\delta,M+2}^+(f),
\end{align*}
and \eqref{eq:energy_key} follows. We shall complete the proof by carrying out steps~(1) and (2).

\vspace{0.5em}

\noindent
\underline{\bf Step~(1)}\quad
For simplicity of notation, we write $t_i=t_{i,n}$ and $x_i=x_{i,n}$. We define for $u \in \R$,
\begin{align*}
    \phi(u)=\phi_n(u)
    := \frac{t_{-J}}{n}\1\{ u \leq u_{-J}\}
    &+\sum_{i=-J+1}^0 \frac{t_{i}}{n} \1\{ u_{i-1}<u \leq u_i\}\\
    &+\sum_{i=0}^{J-1} \frac{t_{i}}{n}\1\{ u_i \leq u<u_{i+1} \}(u)
    +\frac{t_{J}}{n}\1\{ u_J \leq u \},
\end{align*}
where $u_i=u_{i,n}:=x_{i,n}/\sqrt{n}$ for $i \in \Iintv{-J,J}$. From the definition of $\kA_n$,  $\phi$ is a step function on $\R$ satisfying that $\phi(u) \geq \xi$ for all $u \geq M+1$ and $\phi(\eta) \wedge \phi(-\eta) \leq C\eta$.

We suppose $\T(0,x_{i})=t_{i}$ for any $i\in \Iintv{-J, J}$ . 
 Note that for all $x,y \in \Iintv{-2M\sqrt{n},2M\sqrt{n}}$,
\begin{align*}
	t(x,y) \geq \T(0,y) -\T(0,x) \geq n \big( \phi^{-,\delta}(y/\sqrt{n})-\phi^{+,\delta}(x/\sqrt{n}) \big).
\end{align*}
Hence, by the independence of the  simple random walks,
\begin{align*}
	&\pp\left( \T(0,x_{i})=t_{i}\quad\forall i\in \Iintv{-J, J} \right)\\
	&\leq \pp\Bigl( t(x,y) \geq n\left(\phi^{-,\delta}(y/\sqrt{n})-\phi^{+,\delta}(x/\sqrt{n})\right)\quad\forall x,y \in \Iintv{-2M\sqrt{n},2M\sqrt{n}} \Bigr)\\
	&= \prod_{x \in \Iintv{-2M\sqrt{n},2M\sqrt{n}}}\pp\Bigl( t(x,y) \geq n\left(\phi^{-,\delta}(y/\sqrt{n})-\phi^{+,\delta}(x/\sqrt{n})\right)\quad\forall y \in \Iintv{-2M\sqrt{n},2M\sqrt{n}} \Bigr).
\end{align*}
By  Lemma~\ref{lem:kmt} with $f=\phi^{-,\delta}$, $g=\phi^{+,\delta}$, $\epsilon_1=\epsilon_2=\delta$, for all large $n$, this is bounded from above by 
\begin{align*}
\prod_{x \in \Iintv{-2M\sqrt{n},2M\sqrt{n}}}\, (1\land h_{\delta}(x/\sqrt{n})),
\end{align*}
where for $u \in \R$,
\begin{align*}
	h_{\delta}(u):=
	\pp^{{\rm BM}}_u \Bigl( \tau_v \geq \phi^{-,2\delta}(v)-\phi^{+,2\delta}(u)
	\quad \forall  v \in [-2M+\delta,2M-\delta] \Bigr)+2\delta.
\end{align*}
Note that if $x \in \Iintv{-2M\sqrt{n},2M\sqrt{n}}$, $v \in \R$ and $0 \leq |h| \leq 1/\sqrt{n}<\delta$, then
\begin{align*}
	&\phi^{+,4\delta}(x/\sqrt{n}+h) \geq \phi^{+,2\delta}(x/\sqrt{n}),\quad \phi^{-,4\delta}(v+h)\leq 
	\phi^{-,2\delta}(v),\quad \forall v \in \R.
\end{align*}
Hence, for all large $n$,
\begin{align*}
	&\frac{1}{\sqrt{n}}\sum_{x \in \Iintv{-2M\sqrt{n},2M\sqrt{n}}} \log^- h_{\delta}(x/\sqrt{n})
\leq \sum_{x \in \Iintv{-2M\sqrt{n},2M\sqrt{n}}} \int_{x/\sqrt{n}}^{(x+1)/\sqrt{n}}
		\log^-  h_{2\delta}(u) \,\dd u\\
	&\leq \int_{-(M+3)}^{M+3}
		\log^-\left[\pp^{{\rm BM}}_u \Bigl( \tau_v \geq \phi^{-,4\delta}(v)-\phi^{+,4\delta}(u)
	\quad \forall  v \in [-(M+3),M+3] \Bigr)+4\delta\right]\,\dd u.
\end{align*}
Therefore, \eqref{eq:int_to_E} follows by  replacing $4\delta$ with $\delta$.\\

\noindent \underline{\bf Step~(2)}\quad
We write
\begin{align*}
	\phi_\diamond(u):=((\phi(u)\land \xi)-C\eta)_+,\qquad u \in \R.
\end{align*}
Since
\begin{align*}
	(\phi_\diamond^{-,\delta}(v)-\phi_\diamond^{+,\delta}(u))_+ \leq (\phi^{-,\delta}(v)-\phi^{+,\delta}(u))_+,\qquad u,v \in \R,
\end{align*}
we have
\begin{align*}
	&\int_{-(M+3)}^{M+3} \log^-\left[\pp_u^{\rm BM}\Bigl( \tau_v \geq \phi^{-,\delta}(v)-\phi^{+,\delta}(u)
		\quad \forall  v \in [-(M+3),M+3] \Bigr)+\delta\right] \,\dd u\\
	&\leq \int_{-(M+3)}^{M+3} \log^-\left[\pp_u^{\rm BM}\Bigl( \tau_v \geq \phi_\diamond^{-,\delta}(v)-\phi_\diamond^{+,\delta}(u)
			\quad \forall  v \in [-(M+3),M+3] \Bigr)+\delta\right] \,\dd u.
\end{align*}
Note that $\min\{\phi(\eta),\phi(-\eta)\} \leq C\eta$ from step~(1). We  define a function $\psi=\psi_n$ as follows: 
\begin{align*}
\psi (\cdot) = 	\phi_\diamond(\cdot+\eta) \quad  \text{if } \phi(\eta) \leq C\eta; \qquad  \psi (\cdot) = 	\phi_\diamond(\cdot-\eta) \quad  \text{otherwise}. 
\end{align*}
Then, $\psi  \in\kC_{M+2,\eta}(\xi-C\eta)$. 
In the case $\phi(\eta) \leq C\eta$, applying the change of variables $w=u-\eta$, we have for any $\delta \in (0,\eta/5)$,
\begin{align*}
	&\int_{-(M+3)}^{M+3} \log^-\left[\pp_u^{\rm BM}\Bigl( \tau_v \geq \phi_\diamond^{-,\delta}(v)-\phi_\diamond^{+,\delta}(u)
		\quad \forall  v \in [-(M+3),M+3] \Bigr) +\delta\right]\,\dd u\\
	&\leq \int_{-(M+3)+2\eta}^{M+3-3\eta} \log^-\left[\pp_w^{\rm BM}\Bigl( \tau_v \geq \psi^{-,\delta}(v)-\psi^{+,\delta}(w)
			\quad \forall  v \in [-M+3-\eta,M+3-\eta] \Bigr) +\delta\right]\,\dd w\\
	&\leq \int_{-(M+2)}^{M+2} \log\left[\pp_w^{\rm BM}\Bigl( \tau_v \geq \psi^{-,\delta}(v)-\psi^{+,\delta}(w)
		\quad \forall  v \in \R \Bigr)+\delta\right] \,\dd w=-E^+_{\delta,M+2}(\psi).
\end{align*}
When $\phi(-\eta) \leq C\eta$, by the change of variables $w=u+\eta$, we have the same. Therefore, \eqref{eq:const_psi} follows.
\end{proof}

\subsection{Proof of Lemma~\ref{lem:EpE}}\label{subsect:lem_Epe}
This subsection is devoted to the proof of Lemma~\ref{lem:EpE}.
We first introduce a variant of the L\'{e}vy distance in $\kC_{M,\eta}(\xi)$: given $f,g \in \kC_{M,\eta}(\xi)$, we define
\begin{align*}
	D(f,g):=\inf\left\{ \eps>0:f(x)>g^{-,\eps}(x)-\eps \text{ and } g(x)>f^{-,\eps}(x)-\eps\quad\forall x \in \R \right\}.
\end{align*}
The next lemma provides the compactness of  the  distance. 
\begin{lem}\label{lem:levy}
Let $(f_k)_{k=1}^\infty$ be a sequence on $\kC_{M,\eta}(\xi)$. 
Then, there exist a subsequence $(f_{k(n)})_{n=1}^\infty$  and $f_* \in \kC_{M,\eta}(\xi)$ such that $D(f_{k(n)},f_*) \to 0$ as $n \to \infty$.
\end{lem}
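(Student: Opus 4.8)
The plan is to recognize $D$ as a variant of the L\'evy metric on a tight family of monotone functions and to reduce the statement to Helly's selection theorem.

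First I would record that any $f\in\kC_{M,\eta}(\xi)$ is completely determined by its restriction to $[-M,M]$, on which it is non-increasing on $[-M,0]$ and non-decreasing on $[0,M]$ with values in $[0,\xi]$; indeed $f\equiv 0$ on $[-\eta,\eta]$, $f\equiv\xi$ on $[M,\infty)$ (being constant there while $\lim_{x\to\infty}f(x)=\xi$), and $f\equiv f(-M)$ on $(-\infty,-M]$. Applying Helly's selection theorem to the bounded monotone families $(f_k|_{[-M,0]})_k$ and $(f_k|_{[0,M]})_k$, and passing to a further subsequence along which the numbers $f_k(-M)\in[0,\xi]$ converge, one obtains a subsequence $(f_{k(n)})_n$ and a function $f_*$, non-increasing on $(-\infty,0]$ and non-decreasing on $[0,\infty)$ with $0\le f_*\le\xi$, such that $f_{k(n)}(x)\to f_*(x)$ at every continuity point $x$ of $f_*$; here one sets $f_*\equiv\xi$ on $[M,\infty)$, $f_*\equiv 0$ on $[-\eta,\eta]$ and $f_*\equiv\lim_n f_{k(n)}(-M)$ on $(-\infty,-M]$ (all consistent with the pointwise limits, since every $f_{k(n)}$ has these forms outside $(-M,M)$), adjusting the values of $f_*$ at $\pm M$ if necessary so as to keep $f_*$ monotone on each half-line. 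All defining properties of $\kC_{M,\eta}(\xi)$ are then inherited: monotonicity on each half-line, $\lim_{x\to 0}f_*(x)=0$ together with $f_*\equiv 0$ on $[-\eta,\eta]$, $\|f_*\|_\infty\le\xi$ with $\lim_{x\to\infty}f_*(x)=\xi$ (from $f_*\equiv\xi$ on $[M,\infty)$), and constancy outside $[-M,M]$. Hence $f_*\in\kC_{M,\eta}(\xi)$.

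The remaining and only delicate task is to show $D(f_{k(n)},f_*)\to 0$. Fix $\eps\in(0,\eta)$. Since $f_*$ is monotone on each half-line it has at most countably many discontinuities, so I would choose a finite grid $-M=a_0<a_1<\dots<a_N=M$ of continuity points of $f_*$ that contains both $-\eta$ and $\eta$ and satisfies $a_{j+1}-a_j<\eps$ for all $j$; then each interval $[a_j,a_{j+1}]$ lies in exactly one of $[-M,-\eta]$, $[-\eta,\eta]$, $[\eta,M]$. Pick $n_0$ with $\max_{0\le j\le N}|f_{k(n)}(a_j)-f_*(a_j)|<\eps/2$ for all $n\ge n_0$ (possible since there are finitely many $a_j$ and $a_0=-M$ is among them). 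Fix $n\ge n_0$ and $x\in\R$. If $|x|>M$, then $|f_{k(n)}(x)-f_*(x)|<\eps/2$ (difference $0$ when $x>M$, and at most $|f_{k(n)}(-M)-f_*(-M)|$ when $x<-M$), so from $f_*^{-,\eps}(x)\le f_*(x)$ and $f_{k(n)}^{-,\eps}(x)\le f_{k(n)}(x)$ the two inequalities $f_{k(n)}(x)>f_*^{-,\eps}(x)-\eps$ and $f_*(x)>f_{k(n)}^{-,\eps}(x)-\eps$ follow. If $|x|\le M$, pick $j$ with $a_j\le x\le a_{j+1}$; in the non-decreasing case $[a_j,a_{j+1}]\subset[\eta,M]$ one has $a_j\in[x-\eps,x+\eps]$ and
\[
f_{k(n)}(x)\ \ge\ f_{k(n)}(a_j)\ \ge\ f_*(a_j)-\tfrac{\eps}{2}\ >\ f_*^{-,\eps}(x)-\eps,\qquad
f_{k(n)}^{-,\eps}(x)\ \le\ f_{k(n)}(a_j)\ \le\ f_*(a_j)+\tfrac{\eps}{2}\ <\ f_*(x)+\eps,
\]
using $f_*^{-,\eps}(x)=\inf_{|v-x|\le\eps}f_*(v)\le f_*(a_j)$, $f_{k(n)}^{-,\eps}(x)\le f_{k(n)}(a_j)$ and $f_*(a_j)\le f_*(x)$; on $[-M,-\eta]$ the same holds with $a_{j+1}$ in place of $a_j$, and on $[-\eta,\eta]$ both functions vanish so the inequalities are immediate. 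Since $x\in\R$ was arbitrary, $D(f_{k(n)},f_*)\le\eps$ for all $n\ge n_0$, and since $\eps\in(0,\eta)$ was arbitrary, $D(f_{k(n)},f_*)\to 0$.

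The main obstacle is this last paragraph: one has to reconcile the ``convergence at continuity points'' output of Helly's theorem with the precise form of $D$, in particular with the one-sided smoothing $f\mapsto f^{-,\eps}$ and with possible jumps of $f_*$. This is essentially standard L\'evy-metric bookkeeping; the one genuinely model-specific device is forcing $\pm\eta$ into the grid, which keeps every grid interval inside a single monotonicity region and thereby makes the sandwich estimates above legitimate.
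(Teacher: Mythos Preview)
Your proof is correct and follows essentially the same strategy as the paper: both arguments exploit compactness of bounded monotone functions in a L\'evy-type metric. The paper packages this by splitting $f$ into two increasing pieces $f_+,f_-$, invoking Prokhorov's theorem, and citing the standard fact that weak convergence implies L\'evy-distance convergence; you instead apply Helly's selection theorem directly and carry out the L\'evy bookkeeping by hand via the finite grid. One small phrasing issue: you require the grid to consist of continuity points of $f_*$ while also containing $\pm\eta$ and $\pm M$, but these four points need not be continuity points; what you actually use (and have) is that $f_{k(n)}\to f_*$ at every grid point, which holds at $\pm\eta,\pm M$ by construction regardless of continuity, so the argument goes through unchanged once you relax the description of the grid accordingly.
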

\begin{proof}
Since the same argument works when dividing these functions by $\xi$, without loss of generality, we suppose $\xi=1$.
Given $f\in\kC(1)$, we define the new functions 
$$f_+(x)=
\begin{cases}
f(x)& \text{ if $x\geq 0$}\\
0& \text{ if $x< 0$}
\end{cases},
\quad f_-(x)=
\begin{cases}
1& \text{ if $x> M+1$}\\
f(-x)& \text{ if $0<x\leq M+1$}\\
0& \text{ if $x< 0$}
\end{cases}.
$$
Given increasing functions $F,G$ with $\lim_{x\to\infty}F(x)=\lim_{x\to\infty}G(x)=1$,   the L\'evy distance is defined as 
$$L(F,G):=\inf\{\e>0 : \forall x\in \R,\,F(x)>G(x-\e)-\e,\,G(x)>F(x-\e)-\e\}.$$
By the  definition of $D$, we have
$$D(f,g)\leq L(f^+,g^+)+L(f^-,g^-).$$
By Prokhorov's theorem and the fact that weak convergence implies convergence of L\'evy distance (c.f., \cite[Theorem 5.1 and Remark (iv) on page 72]{B}), there exist a subsequence $(f_{n_k})_{k\geq 1}$ and $f^+_*$ such that $L(f^+_{n_k},f_*^+)\to 0$ as $k\to\infty$. Applying the same results again, there exist a subsequence $(n'_k)_{k\geq 1}$ of $(n_k)_{k\geq 1} $ and $f^-_*$ such that $L(f^-_{n'_k},f_*^-)\to 0$ as $k\to\infty$. Letting
$$f_*(x):=
\begin{cases}
 1& 
\text{ if $x \geq M$}, \\
f_*^+(x)& 
\text{ if $ \eta < x < M$},\\
0& \text{ if $|x|\leq \eta$},\\
f_*^-(-x)& \text{ if $x<-\eta$},
\end{cases}$$
we have $D(f_{n'_k},f_*)\to 0$ as $k\to\infty$. By definition, $f_*$ is non-decreasing in $[0,\infty)$ and non-increasing in $(-\infty,0]$. Moreover, by the convergences in L\'evy distance,  $f_*|_{[M,\infty)}\equiv 1$, $f_*|_{[-\eta,\eta]}\equiv 0$, and hence  $f_* \in \kC_{M,\eta}(1)$.
\end{proof}

We next consider a modification of the energy $E_{\delta,M}^+(f)$:
for any $f \in \kC(\xi)$,
\begin{align*}
	\tilde{E}_{\delta,M}^+(f)
	:=- \int_{-M}^M \log\left[\pp^{\rm BM}_x\left(\tau_y\geq  f^{-,\delta}(y)-f^{+,\delta}(x) -\delta
	\quad \forall  y \in \R \right)+\delta\right]\dd x.
\end{align*}
By definition, $\tilde{E}_{\delta,M}^+(f) \leq E_{\delta,M}^+(f) \leq E_M(f)$ holds for all $\delta>0$ and $f \in \kC(\xi)$.

    \begin{lem}\label{prop:EpE_contra}
Let $M \geq 1$ and $\eta>0$. If  
\begin{align*}
	\lim_{\delta\to 0} \inf_{f\in \kC_{M,\eta}(\xi)}\tilde{E}^+_{\delta,M}(f) <\inf_{f\in \kC_{M,\eta}(\xi)}E_{M}(f),
\end{align*}
then we can  find  a sequence $(\delta_k)_{k=1}^\infty \downarrow 0$  and $f_* \in \kC_{M,\eta}(\xi)$ such that
\begin{align*}
	\lim_{k \to \infty} \tilde{E}^+_{\delta_k,M}(f_*)& < \inf_{f\in \kC_{M,\eta}(\xi)}E_{M}(f).
\end{align*}
\end{lem}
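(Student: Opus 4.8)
The plan is a compactness-and-diagonalization argument built on the variant Lévy distance $D$ and on the slack parameters inside $\tilde{E}^+_{\delta,M}$. Write $\ell:=\lim_{\delta\to 0}\inf_{f\in\kC_{M,\eta}(\xi)}\tilde{E}^+_{\delta,M}(f)$ and $L:=\inf_{f\in\kC_{M,\eta}(\xi)}E_M(f)$, so that the hypothesis reads $\ell<L$; fix $\e:=(L-\ell)/2>0$. Since $\inf_f\tilde{E}^+_{\delta,M}(f)\to\ell$ as $\delta\to 0$, I can pick a sequence $\delta_l\downarrow 0$ with $\delta_1<\eta$ together with functions $f_{\delta_l}\in\kC_{M,\eta}(\xi)$ satisfying $\tilde{E}^+_{\delta_l,M}(f_{\delta_l})<\ell+\e$ for all $l$. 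Applying Lemma~\ref{lem:levy} to $(f_{\delta_l})_l$ and passing to a subsequence — which I relabel, keeping $(\delta_k)$ decreasing to $0$ — I obtain $f_*\in\kC_{M,\eta}(\xi)$ with $D(f_{\delta_k},f_*)\to 0$.

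The crux is the comparison $\tilde{E}^+_{\delta_k,M}(f_*)\le \tilde{E}^+_{\delta_l,M}(f_{\delta_l})$, valid whenever $\delta_l<\delta_k/4$ and $D(f_{\delta_l},f_*)<\delta_k/4$. For this it suffices to prove the two pointwise estimates
\[
f_*^{-,\delta_k}(x)<f_{\delta_l}^{-,\delta_l}(x)+\tfrac{\delta_k}{4},\qquad
f_*^{+,\delta_k}(x)>f_{\delta_l}^{+,\delta_l}(x)-\tfrac{\delta_k}{4}\qquad(x\in\R),
\]
since, together with $\delta_l<\delta_k/2$, they force $f_*^{-,\delta_k}(y)-f_*^{+,\delta_k}(x)-\delta_k\le f_{\delta_l}^{-,\delta_l}(y)-f_{\delta_l}^{+,\delta_l}(x)-\delta_l$ for all $x,y$; hence the event inside $\pp^{\rm BM}_x$ defining $\tilde{E}^+_{\delta_k,M}(f_*)$ contains the corresponding event for $\tilde{E}^+_{\delta_l,M}(f_{\delta_l})$, and since moreover the outer additive shift satisfies $\delta_k\ge\delta_l$, monotonicity of $\log$ and integration over $[-M,M]$ give the claim. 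The first estimate is immediate from the definition of $D$: choosing a witness $\rho<\delta_k/4$ with $f_{\delta_l}(w)>f_*^{-,\rho}(w)-\rho$ for all $w$, one gets $f_{\delta_l}^{-,\delta_l}(y)\ge f_*^{-,\delta_l+\rho}(y)-\rho\ge f_*^{-,\delta_k}(y)-\rho$ (using $\delta_l+\rho<\delta_k$). The second estimate is where the geometry of $\kC_{M,\eta}(\xi)$ enters: because $\delta_k<\eta$ and every element of $\kC_{M,\eta}(\xi)$ vanishes on $[-\eta,\eta]$, is non-decreasing on $[0,\infty)$ and non-increasing on $(-\infty,0]$, one has $f_*^{+,\delta_k}(x)=f_*(x+\delta_k)$ for $x\ge 0$ (and $=f_*(x-\delta_k)$ for $x<0$), and then, using $D$ at the point $x+\delta_k$ and half-line monotonicity of $f_{\delta_l}$,
\[
f_*(x+\delta_k)>f_{\delta_l}^{-,\rho}(x+\delta_k)-\rho\ge f_{\delta_l}^{-,\delta_k/4}(x+\delta_k)-\rho\ge f_{\delta_l}^{+,\delta_k/4}(x)-\rho\ge f_{\delta_l}^{+,\delta_l}(x)-\tfrac{\delta_k}{4};
\]
the case $x<0$ is symmetric. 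I expect this second estimate — tracking the one-sided perturbations $f^{\pm,\delta}$ through the half-line monotonicity — to be the only genuinely fiddly point.

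To conclude: for each $k$ choose $l=l(k)>k$ with $\delta_l<\delta_k/4$ and $D(f_{\delta_l},f_*)<\delta_k/4$ (possible since both $\delta_l\to0$ and $D(f_{\delta_l},f_*)\to0$), so that $\tilde{E}^+_{\delta_k,M}(f_*)\le\tilde{E}^+_{\delta_l,M}(f_{\delta_l})<\ell+\e=(L+\ell)/2$, whence $\limsup_{k\to\infty}\tilde{E}^+_{\delta_k,M}(f_*)\le(L+\ell)/2<L$. Finally I would observe that $\delta\mapsto\tilde{E}^+_{\delta,M}(f_*)$ is non-increasing (enlarging $\delta$ lowers the threshold $f_*^{-,\delta}(y)-f_*^{+,\delta}(x)-\delta$ and raises both the probability and the additive $\delta$ inside the logarithm), so along the decreasing sequence $(\delta_k)$ the values $\tilde{E}^+_{\delta_k,M}(f_*)$ are non-decreasing and hence converge; their limit equals the above $\limsup$, which is $<L=\inf_{f\in\kC_{M,\eta}(\xi)}E_M(f)$, as required. (Alternatively one avoids the monotonicity remark by passing to one more subsequence along which $\tilde{E}^+_{\delta_k,M}(f_*)$ converges — it is bounded below, e.g.\ by $-2M\delta_k$ — and relabeling.)
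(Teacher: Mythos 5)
Your proof is correct and follows essentially the same route as the paper's: extract near-minimizers $f_{\delta_l}$, pass to a $D$-convergent subsequence via Lemma~\ref{lem:levy}, prove the two pointwise perturbation estimates comparing $f_*^{\pm,\delta_k}$ with $f_{\delta_l}^{\pm,\delta_l}$, deduce $\tilde{E}^+_{\delta_k,M}(f_*)\leq \tilde{E}^+_{\delta_l,M}(f_{\delta_l})$, and conclude. Your extra remarks (the normalization $\delta_1<\eta$, the explicit use of half-line monotonicity and vanishing on $[-\eta,\eta]$ in the second estimate, and the monotonicity of $\delta\mapsto\tilde{E}^+_{\delta,M}(f_*)$ ensuring the limit exists) merely make explicit points the paper leaves implicit.
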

\begin{proof}
We fix $\e>0$ such that
\aln{\label{edpl2}
\lim_{\delta \to 0} \inf_{f\in \kC_{M,\eta}(\xi)}\tilde{E}^+_{\delta, M}(f)+\e
	<\inf_{f\in \kC_{M,\eta}(\xi)}E_{M}(f)-\e.
 }
 Since $\delta \mapsto  \inf_{f\in \kC_{M,\eta}(\xi)}\tilde{E}^+_{\delta,M}(f)$ is non-decreasing, there exists $\delta_0>0$ such that for any $\delta\in (0,\delta_0)$, 
we can find $f_{\delta}\in \kC_{M,\eta}(\xi)$ satisfying 
\begin{align}\label{edpl}
	\tilde{E}^+_{\delta,M}(f_{\delta}) \leq \lim_{\delta\to 0} \inf_{f\in \kC_{M,\eta}(\xi)}\tilde{E}^+_{\delta,M}(f)+\e.
\end{align}
By Lemma~\ref{lem:levy}, there exist a sequence $(\delta_l)_{l=1}^\infty\subset (0,\delta_0)$ and $f_* \in \kC_{M,\eta}(\xi)$  such that  $\delta_l\downarrow 0$ and  $D(f_{\delta_l},f_*) \to 0$ as $l \to \infty$.
Fixing $k\in\N$, we take $l\geq k$ large enough satisfying that $\delta_l<\delta_k/4$ and $D(f_{\delta_l},f_*)<\delta_k/4$.
We show that for all $x \in \R$,
\begin{align}\label{eq:f&f*}
	f^{-,\delta_k}_*(x)<f_{\delta_l}^{-,\delta_l}(x)+\frac{\delta_k}{4},\quad
	f_*^{+,\delta_k}(x)>f_{\delta_l}^{+,\delta_l}(x)-\frac{\delta_k}{4}.
\end{align}
The first inequality directly follows from $D(f_{\delta_l},f_*)<\delta_k/4$. Since $f_{\delta_l},f_* \in \kC_{M,\eta}(\xi)$ and $\max\{D(f_{\delta_l},f_*),\delta_l\}<\delta_k/4$, 
\begin{align*}
	f_*^{+,\delta_k}(x)
	= \sup_{y\in [x-\delta_k,x+\delta_k]}f_*(y)
 &>\sup_{y\in [x-\delta_k,x+\delta_k]}\inf_{\delta\in[-\delta_k/4,\delta_k/4]}f_{\delta_l}^{-,\delta_k/4}(y+\delta)-\frac{\delta_k}{4}\\
&	\geq f_{\delta_l}^{+,\delta_k/4}(x)-\frac{\delta_k}{4}
		\geq f_{\delta_l}^{+,\delta_l}(x)-\frac{\delta_k}{4}.
\end{align*}
Hence, the second inequality of \eqref{eq:f&f*} holds. Therefore,
\begin{align*}
    \pp_x^{\rm BM}\left( \tau_y \geq f^{-,\delta_k}_*(y)-f^{+,\delta_k}_*(x)-\delta_k\quad\forall y \in \R \right)
    \geq \pp_x^{\rm BM} \left( \tau_y \geq f^{-,\delta_l}_{\delta_l}(y)-f^{+,\delta_l}_{\delta_l}(x)-\delta_l
		\quad \forall  y \in \R \right).
\end{align*}
Combining this with  \eqref{edpl2} and \eqref{edpl}, we obtain for all $k$,
\begin{align*}
	\tilde{E}^+_{\delta_k,M}(f_*)
	\leq \tilde{E}^+_{\delta_l,M}(f_{\delta_l})
	\leq \lim_{\delta \to 0} \inf_{f\in \kC_{M,\eta}(\xi)}\tilde{E}^+_{\delta, M}(f)+\e
	<\inf_{f\in \kC_{M,\eta}(\xi)}E_{M}(f)-\e. 
 \end{align*}
This  completes the proof by letting $k \to \infty$.
\end{proof}

We are now in a position to prove Lemma~\ref{lem:EpE}.

\begin{proof}[\bf Proof of Lemma~\ref{lem:EpE}]
Fix $\xi_0,\eta>0$.
We first prove the first claim, i.e., there exists $M_0\in\N$ such that for any $M\geq M_0$ and $\xi\leq\xi_0$,
\begin{align*}
	\inf_{f \in \kC_M(\xi)} E_M(f)  \geq \inf_{f \in \kC(\xi)} E(f)-M^{-2}.
\end{align*}
We take $M_0=M_0(\xi_0)\in\N$ sufficiently large such  that for any $M\geq M_0$,
\begin{align*}
    \pp_0^{\rm BM}(\tau_{M_0} \leq \xi_0) \leq \frac{1}{2},\qquad
    \frac{4}{\sqrt{2\pi}}\int_M^\infty \int_{x/\sqrt{\xi_0}}^\infty e^{-t^2/2}\,\dd t \,\dd x \leq M^{-2}.
\end{align*}
Suppose that $M\geq M_0$ and $\xi\leq \xi_0$.  For any $f \in \kC_M(\xi)$, 
since $\xi_0 \geq f \geq 0$, and $f$ is a constant function on  $(-\infty,-M]$ and $[M,\infty)$, we have
\begin{align*}
    E_M(f)- E(f)
    &= \int^{-M}_{-\infty} \log \pp_x^{\rm BM}(\tau_y \geq f_M(y)-f_M(x)\quad\forall y\geq 0) \,\dd x\\
    &\geq \int^{-M}_{-\infty} \log \pp_x^{\rm BM}(\tau_0 \geq \xi_0 )\,\dd x  \geq \int_{M}^{\infty} \log \pp_0^{\rm BM}(\tau_x \geq \xi_0 ) \,\dd x.
\end{align*}
By Lemma~\ref{lem:maxbrown} and the fact that $\log(1-t) \geq -2t$ for $0\leq t \leq 1/2$, we have for all $x \geq M$,
\begin{align*}
    \log \pp_0^{\rm BM}(\tau_x \geq \xi_0 )
    = \log \left\{ 1-\pp_0^{\rm BM}(\tau_x <\xi_0 ) \right\}
    \geq -2\pp_0^{\rm BM}(\tau_x < \xi_0 )
    = -\frac{4}{\sqrt{2\pi}}\int_{x/\sqrt{\xi_0}}^\infty e^{-t^2/2}\,\dd t.
\end{align*}
Combining the last three displayed equations, we reach
\begin{align*}
	E_M(f)-E(f)
	&\geq -\frac{4}{\sqrt{2\pi}}\int_M^\infty \int_{x/\sqrt{\xi_0}}^\infty e^{-t^2/2}\,\dd t \,\dd x \geq -M^{-2}.
\end{align*}
This estimate holds for  all $f \in \kC_M(\xi)$ and thus  the first claim follows.

Let us next prove the second claim, i.e., for all $M\geq 1$,
\begin{align*}
	\lim_{\delta\to 0} \inf_{f\in \kC_{M,\eta}(\xi)}E^+_{\delta,M}(f)=\inf_{f\in \kC_{M,\eta}(\xi)}E_M(f).
\end{align*}
Since $\tilde{E}^+_{\delta,M}(f) \leq E^+_{\delta,M}(f) \leq E_{M}(f)$  for all $f \in \kC_{M,\eta}(\xi)$, it suffices to show that for all $M \geq 1$,
\begin{align}\label{ldedf}
	\lim_{\delta\to 0} \inf_{f\in \kC_{M,\eta}(\xi)}\tilde{E}^+_{\delta,M}(f) \geq \inf_{f\in \kC_{M,\eta}(\xi)}E_{M}(f).
\end{align}
Suppose, towards a contradiction, that 
\begin{align*}
	\lim_{\delta\to 0} \inf_{f\in \kC_{M,\eta}(\xi)}\tilde{E}^+_{\delta,M}(f) < \inf_{f\in \kC_{M,\eta}(\xi)}E_{M}(f).
\end{align*}
By Lemma~\ref{prop:EpE_contra}, we can find  a sequence $(\delta_k)_{k=1}^\infty \downarrow 0$  and $f_* \in \kC_{M,\eta}(\xi)$ such that 
\begin{align*}
	\lim_{k \to \infty} \tilde{E}^+_{\delta_k,M}(f_*) <\inf_{f\in \kC_{M,\eta}(\xi)}E_{M}(f).
\end{align*}
Once we prove 
\begin{align}\label{approx 2}
	 E_{M}(f_*)\leq \lim_{k\to\infty }\tilde{E}^+_{\delta_k,M}(f_*) ,
\end{align}
the following inequalities hold:
\begin{align*}
	\inf_{f\in \kC_{M,\eta}(\xi)}E_{M}(f)
	\leq E_{M}(f_*)
	\leq \lim_{k\rightarrow \infty} \tilde{E}^+_{\delta_k,M}(f_*)
	< \inf_{f \in \kC_{M,\eta}(\xi)}E_{M}(f),
\end{align*}
which derives a contradiction.
Therefore, one has \eqref{ldedf}.
It remains to check \eqref{approx 2}.
Let $\Gamma_*$ be the set of all the points of discontinuity of $f_*$.
Since $f_* \in \kC_{M,\eta}(\xi)$, $\Gamma_*$ is a finite subset of $[-M,M]$. 
Hence,  by the monotone convergence theorem  and  $\lim_{k \to \infty} f_*^{\pm,\delta_k}(z)=f_*(z)$ holds for any $z \in [-M,M] \cap \Gamma_*^c$,
\begin{align} \label{letdk}
	-\tilde{E}^+_{\delta_k,M}(f_*)
	&= \int_{[-M,M]} \log \left[ \pp^{\rm BM}_x\left( \tau_y \geq
		f_*^{-,\delta_k}(y)-f_*^{+,\delta_k}(x)-\delta_k\quad\forall y \in \R  \right) + \delta_k \right]\dd x \notag \\
	&\leq \int_{[-M,M]} \log \left[ \pp^{\rm BM}_x\left( \tau_y \geq
		f_*^{-,\delta_k}(y)-f_*^{+,\delta_k}(x)-\delta_k\quad\forall y \in \Gamma_*^c \right) + \delta_k \right]\dd x \notag \\
  &\xrightarrow{k \rightarrow \infty} \int_{[-M,M]} \pp^{\rm BM}_x\Bigl( \tau_y \geq f_*(y)-f_*(x)\quad\forall y \in  \Gamma_*^c \Bigr) \,\dd x.
\end{align}
Let $\mathcal{E}_x$ be the event appearing in the last probability.
Fixing $z \in \Gamma_*$, we can find a sequence $(y_i)_{i=1}^\infty$ on $\Gamma_*^c
$ such that  $\lim_{i \to \infty} y_i=z$ and  $\lim_{i \to \infty} f_*(y_i) \geq f_*(z)$.   Moreover, if $\kE_x$ occurs, then $\tau_{y_i} \geq f_*(y_i)-f_*(x)$ for all $i \geq 1$.  This combined with \cite[(8.8)]{KarShr91_book} shows that $\P_x^{\rm BM}$-a.s.~on $\mathcal{E}_x$,
\begin{align*}
	\tau_z=\lim_{i \to \infty} \tau_{y_i} \geq \lim_{i \to \infty} f_*(y_i)-f_*(x)\geq f_*(z)-f_*(x).
\end{align*}
Since $\Gamma_*$ is finite, this implies
\begin{align*}
    \P_x^{\rm BM}(\mathcal{E}_x)
    = \P_x^{\rm BM}(\tau_y \geq f_*(y)-f_*(x)\quad\forall y \in \R).
\end{align*}
Combining the above with \eqref{letdk}, we obtain \eqref{approx 2}.
\end{proof}

\section{Localization phenomenon: Proof of Proposition \ref{prop:ts2} } \label{sec:propts2}  
In this section, we consider the localization of the rare event and show that 
  \ben{ \label{slp}
 \lim_{n\rightarrow \infty} \frac{1}{\sqrt{n}}\log \pp ({\rm T}(0,n)\geq (\mu+\xi) n) = \lim_{n\rightarrow \infty} \frac{1}{\sqrt{n}}\log \pp(\rmT(0,M\sqrt{n}) \geq \xi n) + o_M(1).
  }
 This roughly implies that the best strategy to delay the transmission from $0$ to $n$ is to slow down the infection in a bad interval (in the sense that ${\rm T}(a,b)\geq |a-b|^2$ for the interval $[a,b]$) of size  $\kO(\sqrt{n})$. 
 This section is organized as follows:
 The first two subsections are devoted to the proof of the upper bound in \eqref{slp}, i.e. Proposition~\ref{prop:ts2}-(ii), which is the most difficult part. The last two subsections are devoted to the proofs of Proposition~\ref{prop:ts2}-(i) and (iii). 
\subsection{Proof of Proposition~\ref{prop:ts2}-(ii)}\label{subsect:prop3.1-2}

In this subsection, we aim to show that the large deviation event can be localized around several bad intervals whose total passage time is larger than $\xi n$. 
 Let us summarize here the main ideas of the proof. We shall use a multi-step covering process to localize the bad intervals. 

\textbf{ Step 1} ({\bf Control of  small bad intervals}): We show in this step that the bad intervals of size less than $(\log n)^2$ are harmless to the upper tail large deviation event. More precisely, we divide $[0,n]$  into subintervals:
\be{
[0,n] \subset \bigcup_{i \not \in \mathbf{Red}} [i,i+K_n] \cup \bigcup_{i  \in \mathbf{Red}} [i,i+K_n], 
}
where $ \mathbf{Red}$ denotes the set of red intervals, i.e. bad intervals of size $K_n=\lfloor (\log n)^2 \rfloor$:
\be{
\mathbf{Red}:=\{i \in K_n \N\cap [0,n]: \rmT(i,i+K_n ) \geq K_n^2 \},
}
and then prove that 
\be{
\pp \biggl( \sum_{i \not \in \mathbf{Red}} \rmT(i,i+K_n ) \geq (\mu+o(1))n\biggr) \leq  \exp(-n^{2/3}). 
}
\textbf{Step~2 (First covering of red intervals)}: We aim to aggregate these intervals into larger ones that are far from each other. Precisely, we seek for a covering such that
\begin{itemize}
\setlength{\itemsep}{0.5em}
    \item $\displaystyle \bigcup_{i\in \mathbf{Red}} [i,i+  K_n] \subset \bigcup_{j=1}^\ell [S_j,T_j+\kL_j]$,
    \item ${\rm T}(S_j, T_j+\kL_j)\leq 16 \kL_j^2$ for $j=1,\dots,\ell$,
    \item $(x_j-\tfrac{\kL_j}{3},x_j+\tfrac{\kL_j}{3})\cap (x_k-\tfrac{\kL_k}{3},x_k+\tfrac{\kL_k}{3})=\varnothing$ for $1 \leq j<k \leq \ell$,
\end{itemize}
where $x_j$ is a focal point of $[S_j,T_j+\kL_j]$. Roughly speaking, $\kL_1$ is the largest bad radius in dyadic scale: $\kL_1:= \max \{2^k: \exists \, x \in [0,n], \rmT(x,x+2^k) \geq 2^{2k}\}$ and take $[S_1,T_1]:=[x_1-\kL_1,x_1+\kL_1]$ with $x_1$  a maximizer in the definition of $\kL_1$.  The second interval is obtained by the same process applied to the remaining  $[0,n] \setminus [S_1,T_1]$. We continue this procedure until all red intervals are covered. 

 Furthermore, by construction of such a covering, applying Perles's type arguments, we are able to control the number of bad intervals at a given size. In particular, we can show that the moderate bad intervals of size $o(\sqrt{n})$ are controllable, i.e. there exists a constant $c>0$ such that  for any $\varepsilon, \delta>0$
 \be{
  \pp \left(\sum_{j=1}^{\ell}  \rmT(S_j, T_j + \kL_j) \1\{ \kL_j \leq \sqrt{\varepsilon n} \} \geq \delta n\right) 
\leq \exp(-c \delta \sqrt{n/\e}).
 }
\textbf{Step~3 (Refined covering of bad intervals)}: We apply an additional aggregation process to cover the large bad interval  
\be{
\bigcup_{j: \kL_j \geq \sqrt{\varepsilon n}} [S_j, T_j + \kL_j]  \subset \bigcup_{i=1}^m [s_i, t_i],
}
 where $([s_i, t_i])_{i=1}^m$ are intervals satisfying 
 \be{
 {\rm d}([s_i,t_i], [s_j,t_j])>M^3\sqrt{n}\text{, and } \sum_{i=1}^m |t_i-s_i|\leq M^6\sqrt{n},
 }
    with some $m \leq M$.  Combining the constructed coverings, we arrive at 
\be{
[0,n] \subset \bigcup_{i \not \in \mathbf{Red}} {\rm T}(i,i+K_n) \cup \bigcup_{j: \kL_j \leq \sqrt{\varepsilon n}} [S_j, T_j + \kL_j]  \cup  \bigcup_{i=1}^m [s_i, t_i].
}
Remark that since the intervals  $([s_i, t_i])_{i=1}^m$  are sufficiently far from each other, by nearly independence of the passage times on these intervals, we can show that with $\varepsilon=\varepsilon(\delta, M)$ chosen suitably,
\al{
\pp  \left(\sum_{i=1}^{m}  \rmT(s_i,t_i)  \geq (\xi-\delta)n  \right) &\lesssim \sum_{(n_i)_{i=1}^m\in \N^m}\prod_{i=1}^m \pp  \left(\rmT(s_i,t_i)  \geq  n_i  \right) \mathbf{1}_{\sum_i n_i\geq (\xi-\delta)n },
}
which implies Proposition~\ref{prop:ts2}-(ii).

To this end, let us prepare a lemma and a corollary.
\begin{lem}\label{lem:keyup}
For any $\delta,A>0$, there exists $M_1=M_1(\delta, A)\in\N$ such that for any $M\geq M_1$, $\xi\geq 0$, and for any $n\in\N$ large enough,
\begin{align*}
    \pp(\rmT(0,n)\geq (\mu+\xi)n) \leq \exp(-A \sqrt{n}) + n^{2M} \max_{(s_i,t_i)_{i=1}^m \in \kS_M(n)} \pp \left( \sum_{i=1}^m \rmT(s_i,t_i) \geq (\xi- \delta)n \right),
\end{align*}
where
\begin{align*}
    \kS_M(n):=\left\{ (s_i,t_i)_{i=1}^m:
    \hspace{-2em}
    \begin{minipage}{22em}
        \begin{itemize}
        \setlength{\itemsep}{0.3em}
            \item $m \in \Iintv{1,M}$,
            \item $s_i,t_i \in \Iintv{0,n}$ and $s_i<t_i$ for all $i \in \Iintv{1,m}$,
            \item ${\rm d}([s_i,t_i], [s_j,t_j])>M^3\sqrt{n}$ for all $i<j$,
            \item $\sum_{i=1}^m |t_i-s_i|\leq M^6\sqrt{n}$.
        \end{itemize}
    \end{minipage}
    \right\}
\end{align*}
Recall that ${\rm d}([s_i,t_i],[s_j,t_j])$ stands for the Euclidean distance between two intervals $[s_i,t_i]$ and $[s_j,t_j]$, see Section~\ref{subsect:organization}.
\end{lem}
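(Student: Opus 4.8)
The plan is to turn the three-step covering scheme described above into a genuine argument. Fix $\delta,A>0$ and put $K_n:=\lfloor(\log n)^2\rfloor$. One first picks $M_1=M_1(\delta,A)$ large, then, given $M\ge M_1$, a scale threshold $\varepsilon=\varepsilon(\delta,A,M)\in(0,1)$ small enough that $\exp(-c\delta\sqrt{n/\varepsilon})\le\exp(-A\sqrt n)$; the compatibility of this with a later constraint of the form ``$M\sqrt\varepsilon$ bounded below'' is precisely what $M\ge M_1(\delta,A)$ buys. Say an interval $[x,x+2^k]\subset\Iintv{0,n}$ is a \emph{bad interval of scale $2^k$} if $\rmT(x,x+2^k)\ge 2^{2k}$; in the actual proof one uses localised passage times here, so that bad events on far-apart intervals become independent, at the price of slightly enlarging the eventual covering.

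\textbf{Step 1 (the non-bad part is $\mu n+o(n)$).} Cut $\Iintv{0,n}$ into $\asymp n/K_n$ consecutive blocks of length $K_n$, calling a block \emph{red} if its (localised) passage time is $\ge K_n^2$. By subadditivity $\rmT(0,n)$ is at most the sum of the block passage times, and the non-red blocks contribute a sum of variables bounded by $K_n^2$ whose common mean is $\mu K_n+o(K_n)$ --- this uses the shape theorem~\eqref{eq:shape_thm}, the integrability of $\rmT$, and the tail estimate of Lemma~\ref{lem:taln} (with Lemma~\ref{lem:tgcn} giving the uniform integrability needed to pass from~\eqref{eq:shape_thm} to the convergence of means). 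Grouping the blocks into $O(K_n)$ subfamilies of well-separated, hence independent, blocks and applying Hoeffding to each yields $\pp\bigl(\sum_{i\notin\mathbf{Red}}\rmT(i,i+K_n)\ge(\mu+\delta/4)n\bigr)\le\exp(-n^{2/3})\le\exp(-A\sqrt n)$ for $n$ large; on the complement the red blocks must carry passage time $\ge(\xi-\delta/4)n$.

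\textbf{Steps 2--3 (covering the bad part by $\le M$ well-separated intervals).} Peel off bad intervals greedily at non-increasing dyadic scales: let $\kL_1$ be the largest scale with a bad interval, centre the first covering interval at a maximiser, delete it and repeat on the remaining set until every red block is covered, obtaining intervals $[S_j,T_j+\kL_j]$ with $\rmT(S_j,T_j+\kL_j)\le 16\kL_j^2$ and pairwise disjoint cores $(x_j-\kL_j/3,x_j+\kL_j/3)$. A Perles--Vitali-type count using the disjointness of the cores together with $\pp(\rmT(x,x+2^k)\ge 2^{2k})\le\exp(-c2^k)$ (Lemma~\ref{lem:taln}-(ii)) bounds the number of scale-$2^k$ bad intervals, whence $\pp\bigl(\sum_{j:\kL_j\le\sqrt{\varepsilon n}}\rmT(S_j,T_j+\kL_j)\ge\delta n/4\bigr)\le\exp(-c\delta\sqrt{n/\varepsilon})\le\exp(-A\sqrt n)$: moderate-scale bad intervals are negligible. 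The same union bound over $\le n$ positions and over scales $2^k\ge C(A)\sqrt n$ (suitable $C(A)$) shows that, off an event of probability $\le\exp(-A\sqrt n)$, every bad interval has scale $\le C(A)\sqrt n$; and applied to $(M+1)$ disjoint-core bad intervals of scale in $(\sqrt{\varepsilon n},C(A)\sqrt n)$ it gives probability $\le(2n\exp(-c\sqrt{\varepsilon n}))^{M+1}\le\exp(-A\sqrt n)$ provided $M\ge M_1(\delta,A)$ --- this is precisely where $M$ must be taken large in terms of $\delta$ and $A$. Hence, off an event of probability $\le\exp(-A\sqrt n)$, there are at most $M$ bad intervals of scale in $(\sqrt{\varepsilon n},C(A)\sqrt n)$, each of length $\le 3C(A)\sqrt n$; merging those within distance $M^3\sqrt n$ of one another yields $(s_i,t_i)_{i=1}^m\in\kS_M(n)$ (with $m\le M$, $\mathrm{d}([s_i,t_i],[s_j,t_j])>M^3\sqrt n$, $\sum_i|t_i-s_i|\le M^6\sqrt n$) covering all bad intervals not already discarded in Steps 1--2. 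Applying subadditivity to the partition of $\Iintv{0,n}$ into the $[s_i,t_i]$ and the intervening gaps, and bounding the passage time of the gaps by $(\mu+\delta/2)n$ via Step 1 (the gaps meet only non-red and moderate-scale-bad blocks), we obtain $\sum_{i=1}^m\rmT(s_i,t_i)\ge(\xi-\delta)n$ on the complement of all the rare events. Absorbing the finitely many rare events into the $\exp(-A\sqrt n)$ term and taking a union bound over the $\le n^{2M}$ choices of configuration in $\kS_M(n)$ gives the assertion.

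The main obstacle is the geometric--combinatorial heart of Steps~2--3: building the greedy dyadic covering and checking its separation and passage-time properties, running the Perles--Vitali count so that the moderate scales drop out, and --- most delicately --- choosing $K_n,\varepsilon,M,C(A)$ so that every error term (from Step~1's concentration, from discarding moderate-scale intervals, from excluding overlong ones, and from bounding the number of large bad intervals) collapses to a single $\exp(-A\sqrt n)$. The other recurring subtlety is that passage times on far-apart intervals are only approximately independent, which forces the use of localised passage times in Step~1 and in the definition of bad intervals.
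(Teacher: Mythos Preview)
Your plan is essentially the paper's proof (Section~\ref{polku}): the same red/blue partition at scale $(\log n)^2$, the same greedy dyadic covering of the red part (Lemma~\ref{lem:sti1d}), the same Perles-type count on the number of bad intervals per scale (Lemma~\ref{lem:sak}), and the same merging step (Lemmas~\ref{lem:int}--\ref{lem:akl}). Two places where the paper's execution differs and is slightly cleaner: (i)~in Step~1 it does not apply Hoeffding to truncated localized passage times but further stratifies the blue blocks into light/moderate/dark sublevels (Lemma~\ref{lem:syi}) and controls the \emph{counts} of moderate and dark blocks via Chernoff---this sidesteps having to verify that the mean of the localized truncated block passage time is still $(\mu+o(1))K_n$; (ii)~the paper fixes $\varepsilon=(c_0\delta/64A)^2$ depending only on $(\delta,A)$ (see~\eqref{eda}) and \emph{then} chooses $M_1=M_1(\varepsilon,A)$, which removes the apparent circularity in your phrasing ``choose $\varepsilon=\varepsilon(\delta,A,M)$ subject to a later constraint $M\sqrt\varepsilon\gtrsim 1$''.
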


We postpone the proof of Lemma~\ref{lem:keyup}  to the next subsection.
Although the next corollary is a direct consequence of Lemma~\ref{lem:keyup}, it is useful to control the probability that the first passage time extremely deviates upward from the time constant.
We use the corollary in not only the proof of (ii) but also that of (i) in Proposition~\ref{prop:ts2}.

\begin{cor}\label{corollary}
For any $A>0$, there exists $M_2=M_2(A)\in \N$ such that for any $M\geq M_2$, and for any $n\in\N$ large enough,
\begin{align*}
    \pp(\rmT(0,n) \geq M n) \leq \exp(-A\sqrt{n}).
\end{align*}
\end{cor}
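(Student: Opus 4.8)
The plan is to derive Corollary~\ref{corollary} from Lemma~\ref{lem:keyup} by a suitable choice of parameters together with the crude a priori bound on $\rmT(0,\cdot)$ provided by Lemma~\ref{lem:taln}-(ii) (or, equivalently, Lemma~\ref{lem:tgcn}). First I would fix $A>0$ and apply Lemma~\ref{lem:keyup} with, say, $\delta:=1$ and with $A+1$ in place of $A$; this yields an integer $M_1=M_1(A)$ so that for $M\geq M_1$, $\xi\geq 0$, and $n$ large,
\begin{align*}
    \pp(\rmT(0,n)\geq (\mu+\xi)n)
    \leq \exp(-(A+1)\sqrt{n}) + n^{2M}\max_{(s_i,t_i)_{i=1}^m\in\kS_M(n)}\pp\Bigl(\sum_{i=1}^m\rmT(s_i,t_i)\geq (\xi-1)n\Bigr).
\end{align*}
I would then choose $\xi$ so that $\mu+\xi=M$, i.e. $\xi:=M-\mu$; for $M$ large this is positive and $\xi-1=M-\mu-1$.

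Next I would bound the max over $\kS_M(n)$. For any admissible family $(s_i,t_i)_{i=1}^m$ we have $\sum_i|t_i-s_i|\leq M^6\sqrt n$, so by the union bound and subadditivity of the passage time (and the fact that $\rmT$ is translation invariant),
\begin{align*}
    \pp\Bigl(\sum_{i=1}^m\rmT(s_i,t_i)\geq (\xi-1)n\Bigr)
    \leq \sum_{i=1}^m\pp\Bigl(\rmT(s_i,t_i)\geq \tfrac{(\xi-1)n}{M}\Bigr)
    \leq M\,\pp\bigl(\rmT(0,\lfloor M^6\sqrt n\rfloor)\geq \tfrac{(\xi-1)n}{M}\bigr),
\end{align*}
where in the last step I used that $t_i-s_i\leq M^6\sqrt n$ together with monotonicity of $\rmT(0,\cdot)$ in the endpoint. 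Now apply Lemma~\ref{lem:taln}-(ii) with $\alpha=M^6$ and $\beta=(\xi-1)/M=(M-\mu-1)/M$: there is a universal $c>0$ such that for $n$ large,
\begin{align*}
    \pp\bigl(\rmT(0,\lfloor M^6\sqrt n\rfloor)\geq \tfrac{(\xi-1)n}{M}\bigr)
    \leq \exp\bigl(-c\,M^{-6}\cdot\tfrac{M-\mu-1}{M}\,\sqrt n\bigr)
    = \exp\bigl(-c\,M^{-7}(M-\mu-1)\sqrt n\bigr).
\end{align*}
For $M$ large this exponent is at least $\tfrac{c}{2}M^{-6}\sqrt n$ in absolute value, so the whole contribution is at most $M\exp(-\tfrac{c}{2}M^{-6}\sqrt n)$; multiplying by the polynomial prefactor $n^{2M}$ (which is $\exp(2M\log n)=\exp(o(\sqrt n))$) keeps it bounded by, say, $\exp(-\tfrac{c}{4}M^{-6}\sqrt n)$ for $n$ large.

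Finally I would combine the two pieces:
\begin{align*}
    \pp(\rmT(0,n)\geq Mn)=\pp(\rmT(0,n)\geq (\mu+\xi)n)
    \leq \exp(-(A+1)\sqrt n)+\exp\bigl(-\tfrac{c}{4}M^{-6}\sqrt n\bigr).
\end{align*}
Both terms are $\leq \exp(-A\sqrt n)$ once $M$ is taken large enough that $\tfrac{c}{4}M^{-6}\geq A$ — wait, that goes the wrong way — so instead one should note the right logic: we only need the bound $\exp(-A\sqrt n)$, and $M$ is allowed to depend on $A$, but making $M$ larger weakens the second exponent. The clean fix is to not insist $\mu+\xi=M$ but only $\mu+\xi\geq M$, i.e. replace the target event $\{\rmT(0,n)\geq Mn\}$ by the larger event with $\xi=M-\mu$ as above and then absorb: choose $M_2=M_2(A)$ large enough that simultaneously $M_2\geq M_1(A)$ and, crucially, run Lemma~\ref{lem:taln}-(ii) keeping $\beta$ of order one — e.g. apply it with $\alpha=M^6$, $\beta=1$ to the event $\{\rmT(0,\lfloor M^6\sqrt n\rfloor)\geq n\}\supset\{\rmT(0,\lfloor M^6\sqrt n\rfloor)\geq (\xi-1)n/M\}$ once $M$ is large enough that $(\xi-1)/M=(M-\mu-1)/M\geq 1$, which fails; the correct elementary observation is that $(M-\mu-1)n/M\geq n/2$ for $M\geq 2(\mu+1)$, so $\pp(\rmT(0,\lfloor M^6\sqrt n\rfloor)\geq (\xi-1)n/M)\leq \pp(\rmT(0,\lfloor M^6\sqrt n\rfloor)\geq n/2)\leq \exp(-c'M^{-6}\sqrt n)$ for a universal $c'>0$, by Lemma~\ref{lem:taln}-(ii) with $\alpha=M^6,\beta=1/2$. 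This still degrades with $M$, so the honest conclusion is: the statement as phrased is only useful because $A$ is given first and $M_2$ second, hence one instead applies Lemma~\ref{lem:keyup} with the parameter $A$ there chosen to be $A$ and $\delta$ small, and chooses $M$ just large enough to make $\mu+\xi\ge M$ with $\xi=1$ fixed — i.e. the real proof takes $\xi$ fixed (say $\xi=1$), gets $\pp(\rmT(0,n)\geq(\mu+1)n)\leq\exp(-A\sqrt n)+n^{2M}M\pp(\rmT(0,\lfloor M^6\sqrt n\rfloor)\geq n/(2M))$ and then simply notes $\{\rmT(0,n)\geq Mn\}\subset\{\rmT(0,n)\geq(\mu+1)n\}$ for $M\geq\mu+1$. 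This is the key step and the only subtle point: one must order the quantifiers so that $A$ is fixed before $M$, use Lemma~\ref{lem:keyup} at a fixed $\xi,\delta$ (independent of $M$), and then enlarge $M$ only to make the target event a subset of the one already controlled. The routine remainder — bounding $n^{2M}$ by $\exp(o(\sqrt n))$ and invoking Lemma~\ref{lem:taln}-(ii) — is then immediate.
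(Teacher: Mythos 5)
Your proposal correctly identifies a real tension, but the final ``fix'' you settle on would prove something false, and you never land on the idea that actually makes the proof work.

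The issue you encounter is genuine: if you tie the corollary's threshold $M$ to the same $M$ appearing in $\kS_M(n)$, then enlarging $M$ enlarges the interval lengths $|t_i-s_i|\le M^6\sqrt n$, and Lemma~\ref{lem:taln}-(ii) (applied with $\alpha = M^6$) produces an exponent $\propto M^{-6}\sqrt n$ that \emph{degrades} as $M\to\infty$. You notice this (``wait, that goes the wrong way''), but the resolution you arrive at --- freeze $\xi=1$, use the inclusion $\{\T(0,n)\ge Mn\}\subset\{\T(0,n)\ge(\mu+1)n\}$ for $M\ge\mu+1$, and claim $\pp(\T(0,n)\ge(\mu+1)n)\le\exp(-A\sqrt n)$ --- cannot work. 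By Theorem~\ref{mth1} the probability $\pp(\T(0,n)\ge(\mu+1)n)$ decays like $\exp(-r_*\sqrt n)$ with $r_*\in(0,\infty)$; it is \emph{not} $\le\exp(-A\sqrt n)$ for arbitrarily large~$A$. (Proposition~\ref{prop:ts2}-(iii) only gives some constant $c(\xi)$ in the exponent, precisely because such a uniform-in-$A$ bound is impossible.) So the ``routine remainder'' you defer to is not routine; it is exactly where the plan breaks, since the resulting bound on the second term,
\begin{align*}
n^{2M}\,M\,\pp\bigl(\T(0,\lfloor M^6\sqrt n\rfloor)\ge n/(2M)\bigr)\le \exp\bigl(-c'M^{-7}\sqrt n + o(\sqrt n)\bigr),
\end{align*}
has exponent going to $0$ as $M$ grows, while $M_1(\delta,A)$ from Lemma~\ref{lem:keyup} already grows with $A$.

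The idea you are missing is to \emph{decouple} the $M$ inside $\kS_M(n)$ from the threshold in the corollary. The paper fixes $M=M_1$ (the minimal admissible value in Lemma~\ref{lem:keyup}, with $\delta=1$ and $2A$ in place of $A$), so the interval lengths stay bounded by $M_1^6\sqrt n$ independently of everything else, and instead grows $\xi$: take $\xi=L^4-\mu$ with $L>(M_1+\mu+1)^6$. Then $\xi-1\ge L^3$, and the per-interval target in Lemma~\ref{lem:taln}-(ii) is $L^2n$ (splitting $L^3n$ over $m\le M_1\le L$ intervals), while each $|t_i-s_i|\le M_1^6\sqrt n\le L\sqrt n$. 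Applying Lemma~\ref{lem:taln}-(ii) with $\alpha=L$, $\beta=L^2$ gives an exponent $c\alpha^{-1}\beta\sqrt n=cL\sqrt n$, which \emph{improves} as $L$ grows. Choosing $L=L(A)$ so that $L\exp(-cL\sqrt n)\le\exp(-2A\sqrt n)$ and $L>(M_1+\mu+1)^6$, and setting $M_2:=L^4$, finishes the proof. The crucial structural point --- interval length scales like $L\sqrt n$ but the threshold per interval scales like $L^2n$, so their ratio in Lemma~\ref{lem:taln}-(ii) improves --- is what your proposal never reaches.
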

\begin{proof}
Fix $A>0$ and let $c>0$ be a universal constant as in Lemma \ref{lem:taln}-(ii).
We use Lemma~\ref{lem:keyup} with $\delta:=1$ and $2A$ in place of $A$:
there exists $M_1=M_1(1,2A) \in \N$ such that for all $M \geq M_1$, $\xi \geq 0$ and for all $n$ large enough,
\begin{align*}
    \pp(\rmT(0,n)\geq (\mu+\xi)n) \leq \exp(-2A \sqrt{n}) + n^{2M} \max_{(s_i,t_i)_{i=1}^m \in \kS_M(n)} \pp \left( \sum_{i=1}^m \rmT(s_i,t_i) \geq (\xi-1)n \right).
\end{align*}
Take $L:=L(A)>(M_1+\mu+1)^6$ large enough to have $L\exp(-cL\sqrt{n}) \leq \exp(-2A\sqrt{n})$ for all $n \in \N$, and let $\xi=L^4-\mu$ and $M=M_1$.
Then, since $\xi-1 \geq L^3$,
\begin{align}\label{pl13n}
    \pp(\rmT(0,n)\geq L^4n)
    \leq \exp(-2A \sqrt{n}) + n^{2M_1} \max_{(s_i,t_i)_{i=1}^m \in \kS_{M_1}(n)} \pp \left( \sum_{i=1}^m \rmT(s_i,t_i) \geq L^3n \right).
\end{align}
Note that if $(s_i,t_i)_{i=1}^m \in \kS_{M_1}(n)$, then $m \leq M_1 \leq L$ and $|t_i-s_i| \leq M_1^6\sqrt{n} \leq L \sqrt{n}$ for all $i \in \Iintv{1,n}$.
Hence, Lemma~\ref{lem:taln}-(ii) with $\alpha=L$ and $\beta=L^2$ shows that there exists a universal constant $c>0$ such that for all large $n \in \N$,
\begin{align*}
    \max_{(s_i,t_i)_{i=1}^m \in \kS_{M_1}(n)} \pp \left(\sum_{i=1}^{m} \T(s_i,t_i) \geq L^{3}n \right) 
    &\leq \max_{(s_i,t_i)_{i=1}^m \in \kS_{M_1}(n)} \sum_{i=1}^m \pp(\T(s_i,t_i) \geq L^{2}n) \\
    & \leq L \times \pp\bigl( \T(0,\lfloor L \sqrt{n} \rfloor) \geq L^{2}n \bigr)\\
    &\leq L \exp(-cL \sqrt{n})
    \leq \exp(-2A\sqrt{n}).
\end{align*}
This combined with \eqref{pl13n} yields that for all large $n \in \N$,
\begin{align*}
    \pp(\rmT(0,n)\geq L^4n)
    \leq (n^{2M_1}+1)\exp(-2A\sqrt{n})
    \leq \exp(-A\sqrt{n}),
\end{align*}
and the corollary follows by taking $M_2:=L^4$.
\end{proof}

We are now in a position to prove Proposition~\ref{prop:ts2}-(ii).

\begin{proof}[\bf Proof of Proposition~\ref{prop:ts2}-(ii)]
Fix $\delta,A>0$.
Lemma \ref{lem:keyup} with $A$ replaced by $2A$ implies that there exists $M_1=M_1(\delta,2A) \in \N$ such that for all $L \geq M_1$, $\xi \geq 0$ and for all large $n \in \N$,
\begin{align}\label{m12a}
    \pp(\T(0,n) \geq (\mu+\xi)n)
    \leq \exp(-2A \sqrt{n}) + n^{2L} \max_{(s_i,t_i)_{i=1}^m \in \kS_L(n)} \pp \left( \sum_{i=1}^m \T(s_i,t_i) \geq (\xi- \delta)n \right).
\end{align}
For each $(s_i,t_i)_{i=1}^m \in \kS_L(n)$,
\begin{align}\label{tisi1}
\begin{split}
    \pp\left( \sum_{i=1}^{m} \T(s_i,t_i) \geq (\xi -\delta)n \right)
    &\leq \pp\left(\sum_{i=1}^{m} \T(s_i,t_i) \geq L^2n \right)\\
    &\quad +\sum_{(h_i)_{i=1}^m \in \N^m}\pp\big( \T(s_i,t_i) \geq h_i \quad \forall  i \in \Iintv{1,m} \big)
    \times \1\bigl\{ {\textstyle (\xi-\delta)n \leq \sum_{i=1}^m h_i \leq L^2 n} \bigr\}.
\end{split}
\end{align}
We first treat the first term in the right-hand side of \eqref{tisi1}.
Note that if $(s_i,t_i)_{i=1}^m \in \kS_L(n)$, then $m \leq L$ and $\max_{1 \leq i \leq m}|t_i-s_i| \leq L^6\sqrt{n} \leq n$ for all large $n$.
Hence, Corollary~\ref{corollary} with $A$ replaced by $2A$ implies that for any $L \geq M_2(2A)$, if $n \in \N$ is large enough, then
\begin{align}\label{tisi2}
    \max_{(s_i,t_i)_{i=1}^m \in \kS_L(n)}
    \pp \left(\sum_{i=1}^{m} \T(s_i,t_i) \geq L^2 n \right)
    \leq L \pp(\T(0,n) \geq Ln)
    \leq L \exp(-2A\sqrt{n}).
\end{align}

Let us next estimate the second term in the right-hand side of \eqref{tisi1}.
Fix $(s_i,t_i)_{i=1}^m \in \kS_L(n)$.
Then, the intervals $([s_i-(L^3/2)\sqrt{n},t_i])_{i=1}^m$ are disjoint. Thus,
\begin{align*}
    \pp\big( \T(s_i,t_i) \geq h_i \quad \forall i \in \Iintv{1,m} \big)
    &\leq \pp\Big( \T_{[s_i-(L^3/2)\sqrt{n},t_i]}(s_i,t_i) \geq h_i \quad \forall i \in \Iintv{1,m} \Big)\\
    &= \prod_{i=1}^m \pp\Big( \T_{[s_i-(L^3/2)\sqrt{n},t_i]}(s_i,t_i)\geq h_i\Big).
\end{align*}
 Furthermore, Lemma~\ref{Task3} and the fact that $\max_{1 \leq i \leq m}|t_i-s_i| \leq L^6 \sqrt{n}$ yields that there exists a universal constant $\alpha>0$ such that
\begin{align*}
    \pp\Bigl( \T_{[s_i-(L^3/2)\sqrt{n},t_i]}(s_i,t_i)\geq h_i \Bigr)
    &= \pp\Bigl( \T_{[-(L^3/2)\sqrt{n},t_i-s_i]}(0,t_i-s_i)\geq h_i \Bigr)\\
    &\leq \exp\left(\frac{2\alpha h_i}{L^3\sqrt{n}}\right) \pp( \T(0,t_i-s_i)\geq h_i)\\
    &\leq  \exp\left( \frac{2\alpha h_i}{L^3\sqrt{n}}\right) \pp\bigl( \T(0,\lfloor L^6\sqrt{n} \rfloor)\geq h_i \bigr).
\end{align*}
With these observations, for all $n \in \N$ and for all $(s_i,t_i)_{i=1}^m \in \kS_L(n)$,
\begin{align}\label{ssti}
\begin{split}
    &\sum_{(h_i)_{i=1}^m \in \N^m}\pp\big( \T(s_i,t_i) \geq h_i \quad \forall  i \in \Iintv{1,n} \big)
    \,\1\bigl\{ {\textstyle (\xi-\delta)n \leq \sum_{i=1}^m h_i \leq L^{2} n} \bigr\}\\
    &\leq \exp\left( \frac{2\alpha \sqrt{n}}{L}\right)
    \sum_{(h_i)_{i=1}^m \in \N^m} \1\bigl\{ {\textstyle (\xi-\delta)n \leq \sum_{i=1}^m h_i \leq L^2 n} \bigr\} \prod_{i=1}^m \pp(\T(0, \lfloor L^6\sqrt{n} \rfloor) \geq h_i).
\end{split}
\end{align}
Due to \eqref{tisi1}, \eqref{tisi2} and \eqref{ssti}, for any $L \geq M_2(2M)$, if $n \in \N$ is large enough, then
\begin{align*}
    &\max_{(s_i,t_i)_{i=1}^m \in \kS_L(n)} \pp \left( \sum_{i=1}^m \T(s_i,t_i) \geq (\xi- \delta)n \right)\\
    &\leq L\exp(-2A\sqrt{n})
    +\exp\left( \frac{2\alpha \sqrt{n}}{L}\right)
    \sum_{m=1}^M \sum_{(h_i)_{i=1}^m \in \N^m} \1\bigl\{ {\textstyle (\xi-\delta)n \leq \sum_{i=1}^m h_i \leq L^2n} \bigr\}
    \prod_{i=1}^m \pp(\T(0, \lfloor L^6\sqrt{n} \rfloor) \geq h_i).
\end{align*}
Replace $L$ with $M^{1/6}$ in \eqref{m12a} and the above expression and take $M_0=M_0(c,\delta,A,\xi) \in \N$ large enough to have
\begin{align*}
    M_0 \geq (M_1(\delta,2A)+M_2(2A)+(4\alpha/c))^6+\xi+2.
\end{align*}
It follows that for any $c,\delta,A,\xi>0$ and for any $M \geq M_0$, if $n \in \N$ is large enough, then
\begin{align*}
    &\pp(\T(0,n) \geq (\mu+\xi)n)\\
    &\leq (Mn^{2M}+1)\exp(-2A \sqrt{n})
    +n^{2M} \exp(c\sqrt{n}/2) \sum_{m=1}^M \sum_{(h_i)_{i=1}^m \in \mathcal{H}_{m,n}^\delta}
    \prod_{i=1}^m \pp \bigl( \T(0,\lfloor M\sqrt{n} \rfloor) \geq h_i \bigr).
\end{align*}
Therefore, we obtain the desired conclusion since $(Mn^{2M}+1)\exp(-2A \sqrt{n}) \leq \exp(-A \sqrt{n})$ and $n^{2M}\exp(c\sqrt{n}/2) \leq \exp(c\sqrt{n})$ hold for all large $n \in \N$.
\end{proof}

\subsection{Proof of Lemma~\ref{lem:keyup}}   \label{polku}
 
We define for $n \in \N$,
\begin{align*}
    N:= \lceil 2 \log_2 (\log n)\rceil, \qquad \kN = 2^{N} \Z \cap [0, n-2^{N}].
\end{align*}
Divide the interval $[0,n]$ into subintervals $\{[i,i+2^N]\}_{i \in \kN}$ and classify them to two colors: blue and red. Given $i \in \kN$, if $\T(i,i+2^N) >  2^{2N}$, then we write   $i\in  \mathbf{Red}$; otherwise (i.e., $\T(i,i+2^N) \leq  2^{2N}$), we write  $i\in \mathbf{Blue}$.
Let us now cover the interval $[0,n]$ with red and blue intervals as follows:
\begin{align}\label{deon}
    [0,n] \subset \bigcup_{i \in \kN} [i, i+2^{N}] =  \bigcup_{i \in \mathbf{Blue}} [i, i+2^{N}] \,    \cup \, \bigcup_{i\in \mathbf{Red}} [i, i+2^{N}].
\end{align}
First, Section~\ref{subsubsect:blue} takes care of the total passage time of blue intervals.
Next, in Section~\ref{subsubsect:blue}, we estimate the contribution from red intervals to the first passage time, and prove Lemma~\ref{lem:keyup} by combining estimates for blue and red intervals.
   

\subsubsection{The total passage time of blue intervals}\label{subsubsect:blue}

\begin{lem} \label{lem:syi}
 
For any fixed $\delta >0$ and $n\in\N$ sufficiently large,
\begin{align}\label{p2}
    \pp \left( \sum_{i\in \mathbf{Blue}} \rmT(i,i+2^{N}) \geq  (\mu+2\delta) n  \right) \leq \exp(-n^{2/3}).
\end{align}
\end{lem}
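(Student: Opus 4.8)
\textbf{Proof proposal for Lemma~\ref{lem:syi}.}

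The plan is to bound $\sum_{i \in \mathbf{Blue}} \rmT(i,i+2^N)$ by a sum of almost-independent, almost-surely bounded random variables and then apply a Chernoff-type estimate. Since each blue interval satisfies $\rmT(i,i+2^N) \leq 2^{2N}$ by definition, it is tempting to write $X_i := \rmT(i,i+2^N)\1\{i \in \mathbf{Blue}\} \leq 2^{2N}$ and control $\sum_{i \in \kN} X_i$. However, the $X_i$'s are not independent because the passage times $\rmT(i,i+2^N)$ use frogs in overlapping (in fact all of $\Z$) regions. To repair this, I would first replace $\rmT(i,i+2^N)$ by the localized version $\rmT_{\Delta(i)}(i,i+2^N)$ using only frogs in a window $\Delta(i) := \Iintv{i - 2^N, i + 2\cdot 2^N}$ of width $\kO(2^N) = \kO((\log n)^2)$ around the interval; the cost of this replacement is controlled by Lemma~\ref{Task3} (or directly by Lemma~\ref{lem:taln}-(i)), which shows the localized passage time differs from the true one only on an event of probability $\exp(-c\,2^N) = \exp(-c(\log n)^2)$ per interval, and there are only $\leq n$ intervals, so a union bound gives a harmless $n\exp(-c(\log n)^2) \ll \exp(-n^{2/3})$ error — actually one must be slightly more careful and run the argument directly on the localized variables, showing that on the bad-localization event the whole estimate still goes through, or simply absorb it.

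Next, set $Y_i := \rmT_{\Delta(i)}(i,i+2^N)\wedge 2^{2N}$, which is bounded by $2^{2N} = (\log n)^{\kO(1)}$ and depends only on frogs within distance $\kO((\log n)^2)$ of $i$. Partition $\kN$ into $\kO((\log n)^2)$ residue classes modulo a suitable multiple of $2^N$ so that within each class the $Y_i$'s are genuinely independent; this is exactly the device used in the proof of Lemma~\ref{lem:tgcn}. For the expectation, $\E[Y_i] = \E[\rmT_{\Delta(i)}(i,i+2^N)\wedge 2^{2N}]$ is close to $\E[\rmT(i,i+2^N)] \approx \mu\,2^N$ by the shape theorem \eqref{eq:shape_thm} together with the integrability of $\rmT$ and the exponential tail bounds of Lemma~\ref{lem:taln}; more precisely, for $n$ large the normalized mean $2^{-N}\E[Y_i] \leq \mu + \delta/2$. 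Then on each residue class apply a Bernstein/Chernoff bound for sums of independent bounded variables: with $\lambda = \Theta(2^{-2N})$ one gets, for the sum over a single class of $\leq n\,2^{-N}$ terms,
\begin{align*}
    \pp\Bigl( \sum_{i \in \text{class}} Y_i \geq (\mu+\delta)\,2^N \cdot \#\{\text{class}\} \Bigr) \leq \exp\bigl( -c\, n\, 2^{-3N} \bigr) = \exp\bigl( -c\, n\,(\log n)^{-\kO(1)} \bigr),
\end{align*}
which is $\leq \exp(-n^{4/5})$ for $n$ large. Summing over the $\kO((\log n)^2)$ classes via a union bound, and noting $\sum_{i \in \mathbf{Blue}} \rmT(i,i+2^N) \leq \sum_{i \in \kN} Y_i$ (since on $\mathbf{Blue}$ the truncation at $2^{2N}$ is not active and $\rmT \leq \rmT_{\Delta(i)}$ fails in the wrong direction — so here one genuinely must first pass to the localized quantity and argue that $\{i \in \mathbf{Blue}\} \subset \{Y_i = \rmT_{\Delta(i)}(i,i+2^N)\} \cup \{\text{localization fails}\}$), we conclude $\pp(\sum_{i \in \mathbf{Blue}}\rmT(i,i+2^N) \geq (\mu+2\delta)n) \leq \exp(-n^{2/3})$ for $n$ large.

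The main obstacle is the dependence issue intertwined with the direction of the inequality $\rmT \leq \rmT_A$: because localization \emph{increases} the passage time, one cannot simply dominate $\rmT(i,i+2^N)$ by its localized version. The clean way around this is to work on the high-probability event (from Lemma~\ref{lem:taln}-(i), union-bounded over all $\kO(n)$ intervals, giving total failure probability $\ll \exp(-n^{2/3})$) on which $\rmT_{\Delta(i)}(i,i+2^N) = \rmT(i,i+2^N)$ simultaneously for all $i$, and then run the independent-blocks Chernoff argument on the genuinely local variables $Y_i$. A secondary technical point is verifying $2^{-N}\E[Y_i] \to \mu$ uniformly, which follows from \eqref{eq:shape_thm}, Lemma~\ref{lem:taln}, and dominated convergence since $2^N \to \infty$ with $n$; the truncation at $2^{2N}$ contributes negligibly because $\rmT(i,i+2^N)$ has exponential tails at scale $2^{2N}$ by Lemma~\ref{lem:taln}-(ii).
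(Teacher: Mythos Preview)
Your domination worry is misplaced: on $\{i\in\mathbf{Blue}\}$ you have $\rmT(i,i+2^N)\leq 2^{2N}$, and always $\rmT\leq\rmT_{\Delta(i)}$, so $\rmT(i,i+2^N)\1\{i\in\mathbf{Blue}\}\leq \rmT_{\Delta(i)}(i,i+2^N)\wedge 2^{2N}=Y_i$ holds outright, with no auxiliary event needed. The genuine gap is the expectation bound. To run Chernoff you need $2^{-N}\E[Y_i]\leq\mu+\delta$, but $Y_i$ is built from the \emph{localized} time $\rmT_{\Delta(i)}\geq\rmT$, and neither the shape theorem nor Lemma~\ref{lem:taln} says anything about $\E[\rmT_{\Delta(i)}(0,2^N)]$. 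With your window of width $3\cdot 2^N$, frogs just left of $-2^N$ can participate in the true optimal path with probability bounded away from zero on the time scale $2^{2N}$, so $\E[Y_i]-\E[\rmT(0,2^N)\wedge 2^{2N}]$ is not obviously $o(2^N)$. (This can be repaired: widen the window to $[i-C2^N,i+2^N]$ with $C$ the constant of Lemma~\ref{lem:tgcn}, so that $\{\rmT\neq\rmT_\Delta\}\subset\{\rmT>C2^N\}$ has probability $\leq\exp(-c2^{N/4})$, small enough to kill the $2^{2N}$ worst-case difference; but this is a real step you have not supplied, and ``dominated convergence'' does not cover it.) Note also that Lemma~\ref{lem:taln}-(i) gives $\{\rmT_\Delta\leq 2^{2N}\}$ with high probability, not $\{\rmT_\Delta=\rmT\}$ as you assert.

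The paper avoids this issue entirely by a different device. Instead of bounding the sum of passage times, it splits $\mathbf{Blue}$ into three tiers by the \emph{value} of $\rmT(i,i+2^N)$: light blue ($\leq(\mu+\delta)2^N$), moderate blue ($\in[(\mu+\delta)2^N,K2^N]$ for a large constant $K$), and dark blue ($\in[K2^N,2^{2N}]$). The light-blue sum is trivially $\leq(\mu+\delta)n$; for the other two tiers one bounds the \emph{cardinalities} $|\mathbf{Mblue}|$ and $|\mathbf{Dblue}|$, using that $\pp(i\in\mathbf{Mblue})\leq\pp(\rmT(0,2^N)>(\mu+\delta)2^N)\to 0$ by the shape theorem (convergence in probability suffices---no control of means needed) and $\pp(i\in\mathbf{Dblue})\leq\exp(-c2^{N/4})$ by Lemma~\ref{lem:tgcn}. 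Locality of the events $\{i\in\mathbf{Mblue}\}$ and $\{i\in\mathbf{Dblue}\}$ is automatic from their defining \emph{upper} bounds (in time $K2^N$, resp.\ $2^{2N}$, only frogs within that distance of $i$ can have been awakened), so no separate localization is required, and Chernoff for the $\{0,1\}$-valued indicators is immediate. The upshot is that counting bad intervals, rather than summing their times, decouples the concentration step from any quantitative knowledge of $\E[\rmT_\Delta]$.
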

\begin{proof} 
Given $\delta >0$, we define
\begin{align*}
    K=\lceil C + 8 / \delta \rceil,
\end{align*}
where $C$ is the constant as in Lemma~\ref{lem:tgcn}.
We divide blue intervals into three classes $\mathbf{Lblue}$ (light blue), $\mathbf{Mblue}$ (moderate blue) and $\mathbf{Dblue}$ (dark blue) as follows:
\begin{align*}
    &\mathbf{Lblue}=\bigl\{ i \in \kN: \rmT(i,i+2^N) \leq (\mu + \delta) 2^N \bigr\},\\
    &\mathbf{Mblue}=\bigl\{ i \in \kN: (\mu + \delta) 2^N \leq \rmT(i,i+2^N) \leq K 2^N  \bigr\},\\
    &\mathbf{Dblue}=\bigl\{ i \in \kN: K 2^N \leq \rmT(i,i+2^N) \leq  2^{2N} \bigr\}.
\end{align*}
Then, using the fact that $|\mathbf{Lblue}| \leq |\kN| \leq n/2^N$, we have for all $n$ sufficiently large,
\begin{align}\label{blue}
\begin{split}
    \sum_{i\in \mathbf{Blue}} \T(i,i+2^{N})
    &\leq (\mu+\delta)2^N |\mathbf{Lblue}| + K2^N |\mathbf{Mblue}|+ 2^{2N} |\mathbf{Dblue}|\\ &\leq (\mu+\delta)n + K2^N |\mathbf{Mblue}|+ 2^{2N} |\mathbf{Dblue}|.
\end{split}
\end{align}
Hence, our task is now to prove that for all $n$ sufficiently large,
\begin{align}\label{eq:MDblue}
    \pp\biggl( K2^N|\mathbf{Mblue}| \geq \frac{\delta}{2}n \biggr)
    +\pp\biggl( 2^{2N}|\mathbf{Dblue}| \geq \frac{\delta}{2}n \biggr)
    \leq \exp(-n^{2/3}).
\end{align}

We first treat the probability for $|\mathbf{Mblue}|$.
The translation invariance and \eqref{eq:shape_thm} yield that for all $n$ sufficiently large and for any $i \in \kN$,
\begin{align*}
    \pp(i \in \mathbf{Mblue})
    \leq \pp\bigl( \rmT(0,2^N) \geq (\mu + \delta)2^N \bigr)
    \leq 1/K^2.
\end{align*}
Divide $\kN$ into $4K$ disjoint groups as follows:
\begin{align*}
    \kN = \bigcup_{j=0}^{4K-1} \kM_j,
    \quad \kM_j:=\big\{i \in \kN : \tfrac{i}{2^{N}} \equiv j \ (\textrm{mod } 4K) \big\}.
\end{align*}
Notice that the event $\{i \in \mathbf{Mblue}\}$ depends only on frogs $\{(S^x_{\cdot})\}_{|x-i|\leq K 2^{N}}$.
Moreover, by the definition, for each $j=0, \ldots, 4K-1$, we have $|i-i'| \geq 4K 2^{N}$ for all distinct $i,i' \in \kM_j$.
Thus, these events $(\{i \in \mathbf{Mblue}\})_{i \in \kM_j}$ are independent. Therefore, $|\kM_j \cap \mathbf{Mblue}|$ is stochastically dominated by the Binomial distribution $\mathbf{Bin}(|\kM_j|, K^{-2})$. Hence, using Chernoff's bound, we have for all $j=0, \ldots, 4K-1$,
\begin{align*}
    \pp\bigl( |\kM_j \cap \mathbf{Mblue}| \geq n/(2^N K^3) \bigr)
    \leq \exp\bigl( -n/(2^{N+1} K^3) \bigr).
\end{align*}
Hence, by $K \geq 8/\delta$ and the union bound, for all $n$ large enough,
\begin{align}\label{mblue}
\begin{split}
    \pp\biggl( K2^N|\mathbf{Mblue}| \geq \frac{\delta}{2}n \biggr)
    &\leq \pp\bigl( |\mathbf{Mblue}| \geq n/(2^{N-2} K^2) \bigr)\\
    &\leq \exp\bigl( -n/(2^{N+2} K^3) \bigr)
    \leq \frac{1}{2}\exp(-n^{2/3}).
\end{split}
\end{align}

Finally, we estimate the probability for $|\mathbf{Dblue}|$.
By Lemma \ref{lem:tgcn}, since $K \geq C$ (the constant as in this lemma),  for $n$ sufficiently large 
\be{
 \pp(i \in \mathbf{Dblue}) \leq \pp(\rmT(0,2^N) \geq C2^N) \leq \exp(-c2^{N/4}) \leq 2^{-4N},
 }
with $c$ a positive constant. Divide $\kN$ into $2^{N+2}$ disjoint groups as follows:
 	\[ \kN = \bigcup_{j=0}^{2^{N+2}-1} \kD_j, \quad \kD_j:=\big  \{i \in \kN : \tfrac{i}{2^{N}} \equiv j \pmod{2^{N+2}} \big\}. \]
Observe also that the event $\{i \in \mathbf{Dblue}\}$ depends only on frogs $(S^x_{\cdot})_{|x-i|\leq  2^{2N}}$. Thus for each $j \in \kD_j$,  the events  $(\{i \in \mathbf{Dblue}\})_{i \in \kD_j}$ are independent, and hence  $|\kD_j \cap \mathbf{Dblue}|$ is stochastically dominated by $\mathbf{Bin}(|\kD_j|, 2^{-4N})$. Therefore, Chernoff's bound proves that for each $j=0,\dots,2^{N+2}-1$,
\begin{align*}
    \pp(|\kD_j \cap \mathbf{Dblue}| \geq n/2^{5N} ) \leq \exp(-n/2^{5N+1} ).
\end{align*}
This together with the union bound implies that for all $n$ sufficiently large,
\begin{align}\label{dblue}
\begin{split}
    \pp\biggl( 2^{2N}|\mathbf{Dblue}| \geq \frac{\delta}{2}n \biggr)
    &\leq \pp(|\mathbf{Dblue}| \geq n/2^{4N-2})\\
    &\leq \exp(-n/2^{5N+2})
    \leq \frac{1}{2}\exp(-n^{2/3}).
\end{split}
\end{align}
With these observations, \eqref{eq:MDblue} follows from \eqref{mblue} and \eqref{dblue}, and the proof is complete.
\end{proof}
  
\subsubsection{A covering of  red intervals}\label{subsubsect:red}
We will use a   covering process of disjoint boxes to aggregate the red intervals whose first passage time larger than the square of its distance.
Initially, we define
$$\kL_1:=\max\bigl\{ 2^k:\exists x\in \Iintv{0,n};~{\rm T}(x,x+2^k)\geq 2^{2k} \bigr\}.$$
By Lemma~\ref{lem:taln}-(ii), $\pp(\rmT(0,2^k) \geq 2^{2k}) \leq \exp(-c2^k)$. Then we have $\kL_1 < \infty$ a.s. We take $x_1\in \Iintv{0,n}$ such that ${\rm T}(x_1,x_1+\kL_1)\geq \kL_1^2$ with a deterministic rule breaking ties. Let $I_1=(x_1-\kL_1,x_1+\kL_1)$ and we define $S_1,{\rm T}_1$ such that $[S_1,{\rm T}_1]=[x_1-\kL_1,x_1+\kL_1]\cap [0,\infty)$. Inductively, we define for $j\geq 1$
$$\kL_{j+1}:=\max\{2^k:\exists x\in \Iintv{0,n} \setminus I_j;~{\rm T}(x,x+2^k)\geq 2^{2k}\}.$$
We take $x_{j+1}\in \Iintv{0,n} \setminus I_j$ such that ${\rm T}(x_{j+1},x_{j+1}+\kL_{j+1})\geq \kL_{j+1}^2$ with a deterministic rule breaking ties. Let
\bea{
\tilde{I}_{j+1}&:=&[x_{j+1}-\kL_{j+1},x_{j+1}+\kL_{j+1}] \setminus I_j, \\
 I_{j+1}&:=&I_j\cup (x_{j+1}-\kL_{j+1},x_{j+1}+\kL_{j+1}).
}
Note that $I_j$ is the union of some intervals whose lengths are all larger than that of $[x_{j+1}-\kL_{j+1},x_{j+1}+\kL_{j+1}]$, since $\kL_i\geq \kL_{j+1}$ for any $i\leq j$. Therefore  $\tilde{I}_{j+1}$ is an interval (if not, then there is an interval included in $[x_{j+1}-\kL_{j+1},x_{j+1}+\kL_{j+1}]$ and so $\kL_{j+1} > \kL_i$ for some $i<j$). Hence,  we can define $S_{j+1}\leq T_{j+1}$ such that $[S_{j+1},T_{j+1}]=\tilde{I}_{j+1}\cap [0,\infty)$. Recall $N=\lceil 2\log_2{(\log{n})}\rceil$ and let us define
$$ \ell:= \max\{j:\kL_j \geq 2^{N} \}.$$ 
\begin{lem} \label{lem:sti1d}
	The following hold:
	\begin{itemize}
		\item [(i)] We have
		$$ \bigcup_{i\in \mathbf{Red}} [i,i+2^{N}]\subset \bigcup_{j=1}^{\ell} [S_j,T_j+\kL_j].$$ 
		\item[(ii)] For any   $1 \leq j\leq \ell$,
		$${\rm T}(S_j, T_j+\kL_j)\leq 16 \kL_j^2.$$
		\item[(iii)]  For any $1 \leq i\neq j\leq \ell$,
		$$(x_i-\kL_i/3,x_i+\kL_i/3)\cap (x_j-\kL_j/3,x_j+\kL_j/3)=\varnothing.$$
	\end{itemize}
\end{lem}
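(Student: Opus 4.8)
The plan is to verify the three claims directly from the construction of the covering process, proceeding by induction on the construction index $j$ and using the maximality built into each $\kL_j$.

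\textbf{Proof of (i).} The plan is to fix $i \in \mathbf{Red}$, so that $\rmT(i,i+2^N) \geq 2^{2N}$ by definition. First I would note that, since $\kL_1$ is the largest dyadic scale at which some box in $\Iintv{0,n}$ has passage time exceeding its squared length, and $2^N$ is such a scale (witnessed by $x=i$), we have $\kL_j \geq 2^N$ for $j=1$, hence $\ell \geq 1$. Now observe that the sets $I_j$ are increasing and that $I_\ell \supseteq \bigcup_{j=1}^\ell (x_j-\kL_j,x_j+\kL_j)$. I claim $[i,i+2^N] \subset I_\ell$: if not, there is a point $x \in [i,i+2^N] \setminus I_\ell$, and then applying the construction one more step would yield $\kL_{\ell+1} \geq 2^N$ (since $\rmT(x',x'+2^N) \geq 2^{2N}$ for a suitable $x' \in \{i, i+2^N-2^N\} = \{i\}$ lying outside $I_\ell$, using monotonicity of $\rmT$ and $\mathbf{Red}$-ness of $i$), contradicting the definition of $\ell$. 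One must be slightly careful that the witnessing point of a $\mathbf{Red}$ interval lies outside $I_\ell$ — here the point is that if $[i,i+2^N] \not\subset I_\ell$ then since $I_\ell$ is a union of intervals each of length $\geq 2^{N+1} > 2^N$, in fact $[i,i+2^N]$ is disjoint from $I_\ell$, so $i \notin I_\ell$ and $i$ itself witnesses $\kL_{\ell+1} \geq 2^N$. Finally $I_\ell \cap [0,\infty) = \bigcup_{j=1}^\ell [S_j, T_j] \subset \bigcup_{j=1}^\ell [S_j, T_j+\kL_j]$, and $[i,i+2^N] \subset [0,n] \subset [0,\infty)$, which gives the inclusion.

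\textbf{Proof of (ii).} The plan is to bound $\rmT(S_j, T_j+\kL_j)$ using subadditivity together with the fact that $[S_j,T_j+\kL_j]$ is covered by a bounded number of boxes of length $\kL_j$ on each of which the passage time is at most $\kL_j^2$ by maximality of $\kL_j$. Since $[S_j,T_j] \subseteq [x_j-\kL_j, x_j+\kL_j]$ has length at most $2\kL_j$, the interval $[S_j, T_j+\kL_j]$ has length at most $3\kL_j$, hence is covered by at most $4$ consecutive boxes of the form $[x,x+\kL_j]$ with $x \in \Iintv{0,n}$ (taking $\kL_j$ to be a power of $2$ and aligning appropriately, or simply using $\lceil 3\kL_j/\kL_j\rceil + 1 \leq 4$ overlapping boxes). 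For each such box, since $\kL_j$ was chosen maximal at its stage and a strictly larger scale is excluded, one has $\rmT(x,x+\kL_j) \leq \kL_j^2$ — here I would be careful to note that boxes entirely outside $I_{j-1}$ satisfy this by the maximality defining $\kL_j$, while boxes meeting $I_{j-1}$ satisfy it because $I_{j-1}$ consists of intervals of length $> \kL_j$ on which (at an earlier stage) no scale $\geq$ the relevant one was available, so in fact every box of length $\kL_j$ anywhere in $\Iintv{0,n}$ has $\rmT \leq \kL_j^2$ once stage $j$ is reached; more precisely, at stage $j$ the value $\kL_j$ is the max over all $x \in \Iintv{0,n}\setminus I_{j-1}$, and boxes inside $I_{j-1}$ were already dealt with. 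I expect the cleanest route is: every box $[x,x+2^k]$ with $x\in\Iintv{0,n}$ and $2^k > \kL_j$ has $\rmT(x,x+2^k) < 2^{2k}$, which together with subadditivity over the $\leq 4$ covering boxes gives $\rmT(S_j, T_j+\kL_j) \leq 4\kL_j^2 \leq 16\kL_j^2$.

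\textbf{Proof of (iii).} The plan is induction on $\max(i,j)$; by symmetry assume $i < j$. By construction $x_j \in \Iintv{0,n} \setminus I_{j-1}$, and $I_{j-1} \supseteq (x_i - \kL_i, x_i+\kL_i)$, so $|x_j - x_i| \geq \kL_i$. Since $\kL_j \leq \kL_i$ (the scales are non-increasing in the construction), the intervals $(x_i - \kL_i/3, x_i+\kL_i/3)$ and $(x_j-\kL_j/3, x_j+\kL_j/3)$ are separated: any point in the second is within $\kL_j/3 \leq \kL_i/3$ of $x_j$, hence within $\kL_i/3 + \kL_i/3 = 2\kL_i/3 < \kL_i$ of $x_i$ only if... wait, that is not quite a contradiction; more carefully, a point $z$ in the intersection would satisfy $|z - x_i| < \kL_i/3$ and $|z-x_j| < \kL_j/3 \leq \kL_i/3$, whence $|x_i - x_j| < 2\kL_i/3 < \kL_i$, contradicting $|x_i-x_j| \geq \kL_i$. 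This completes (iii).

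\textbf{Main obstacle.} The delicate point throughout is (ii): one must argue carefully that after stage $j$, \emph{no} box of scale $> \kL_j$ anywhere in $\Iintv{0,n}$ has large passage time — not merely those outside $I_{j-1}$ — so that the covering-box passage times are genuinely controlled by $\kL_j^2$. This follows because the scales $\kL_1 \geq \kL_2 \geq \cdots$ are non-increasing and at the first stage where a given scale $2^k$ fails to appear it fails for \emph{all} remaining locations, but spelling this out requires tracking how $I_j$ absorbs all boxes of scale $\geq \kL_j$. The constant $16$ (rather than $4$) gives comfortable slack, presumably to absorb the boundary truncation $[S_j,T_j] = \tilde I_j \cap [0,\infty)$ and the overlap in the covering.
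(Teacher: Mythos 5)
Part (iii) of your proposal is correct and coincides with the paper's argument. Parts (i) and (ii), however, each contain a genuine flaw.

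In (i), the claim $[i,i+2^N]\subset I_\ell$ is both false in general and unnecessary. Your justification — that a component of $I_\ell$ has length $\geq 2^{N+1}$, hence non-containment forces disjointness — does not hold: a component such as $(i-3\cdot 2^N,\,i+2^N)$ (arising from $x_1=i-2^N$, $\kL_1=2^{N+1}$) has length $2^{N+2}$ yet covers $[i,i+2^N)$ without covering the endpoint $i+2^N$, so $[i,i+2^N]$ meets $I_\ell$ without being contained in it, and your deduction that $i\notin I_\ell$ collapses. The correct route, which is the paper's, needs only the weaker fact that the \emph{left endpoint} $i$ lies in $I_\ell$: if $i\notin I_\ell$ then $i\in\Iintv{0,n}\setminus I_\ell$ and $\rmT(i,i+2^N)\geq 2^{2N}$ witnesses $\kL_{\ell+1}\geq 2^N$, contradicting the definition of $\ell$. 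Then $i\in[S_j,T_j]$ for some $j\leq\ell$, and since $\kL_j\geq 2^N$ the buffer in $[S_j,T_j+\kL_j]$ absorbs all of $[i,i+2^N]$. You never use this buffer in part (i), which is precisely why your chain of containments fails.

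In (ii), the obstacle you flag is real and your proposed resolution is false. The statement ``every box $[x,x+2^k]$ with $x\in\Iintv{0,n}$ and $2^k>\kL_j$ has $\rmT(x,x+2^k)<2^{2k}$'' fails for $x=x_1$ and $2^k=\kL_1$ whenever $\kL_1>\kL_j$: by construction $\rmT(x_1,x_1+\kL_1)\geq\kL_1^2$. The maximality defining $\kL_j$ only excludes bad boxes whose left endpoint lies \emph{outside} $I_{j-1}$; boxes anchored inside $I_{j-1}$ are not controlled, and the covering boxes $[S_j+r\kL_j,\,S_j+(r+1)\kL_j]$ for $r\geq 1$ may well be anchored in $I_{j-1}$ (the points of $[x_j-\kL_j,x_j+\kL_j]$ beyond $T_j$ belong to $I_{j-1}$). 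The paper avoids subadditivity entirely: since $\tilde I_j\cap I_{j-1}=\varnothing$ we have $S_j\in\Iintv{0,n}\setminus I_{j-1}$, so applying the maximality of $\kL_j$ \emph{once}, to the single dyadic box $[S_j,S_j+4\kL_j]$ of scale $4\kL_j>\kL_j$, gives $\rmT(S_j,S_j+4\kL_j)<(4\kL_j)^2=16\kL_j^2$; since $T_j+\kL_j\leq S_j+3\kL_j$, monotonicity of $y\mapsto\rmT(S_j,y)$ finishes the proof. This one-box argument is where the constant $16$ actually comes from.
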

\begin{proof}  Suppose that $i\in \mathbf{Red}$, i.e., ${\rm T}(i,i+2^{N})\geq 2^{2N}$. If $i\notin I_{\ell}$, then $\kL_{\ell+1}\geq 2^{N}$, which contradicts  the definition of $\ell$. Thus we get $i\in I_{\ell}$, and hence  there exists $j\leq \ell$ such that $i\in [S_j ,\rmT_j]$. Therefore,  $2^{N} \leq \kL_j$ and so 
	$$[i,i+2^{N}]\subset [S_j, T_j+\kL_j],$$
	and (i) follows. 	For (ii), notice that $ S_j \not \in I_{j-1}$ since $\tilde{I}_j\cap I_{j-1}=\emptyset$, and $T_j-S_j \leq 2 \kL_j$. Hence, thanks to the maximal property of  $\kL_j$,   we have
	$${\rm T}(S_j, T_j+\kL_j)\leq {\rm T}(S_j, S_j+4\kL_j)\leq 16\kL_j^2.$$
Finally we consider (iii). Assume that $i<j$.  Then  $x_j\notin (x_i-\kL_i,x_i+\kL_i)$ since $x_j\notin I_{j-1}$. Hence,  $|x_i-x_j|\geq \kL_i =\max\{\kL_i,\kL_j\}$ since $\kL_i\geq \kL_j$, and  (iii) follows.	
\end{proof}
We introduce some notations: for any $ k \geq 1$ and for $\alpha >0$,
\al{
a_k&:=\#\{j\leq \ell : \kL_j=2^k\};\\
N_{\alpha}&:={\lceil \log_2(\alpha n)/2\rceil.}
}		
Fix $\varepsilon>0$, which is chosen later (see \eqref{eda} below).
Then,
\ben{ \label{ired}
\bigcup_{i\in \mathbf{Red}} [i,i+2^{N}]\subset \bigcup_{j=1}^{\ell} [S_j,T_j+\kL_j] = \bigcup_{j: \kL_j \leq 2^{N_{\e}}} [S_j, T_j + \kL_j] \cup  \bigcup_{j: \kL_j > 2^{N_{\e}}} [S_j, T_j + \kL_j], 
}

The lemma below helps control the passage time of intervals $[S_j, T_j+\kL_j]$ with $\kL_j \leq 2^{N_{\e}}$.         
            \begin{lem} \label{lem:sak}
  There exists a universal constant $c_0>0$ such that the following statements hold: 
  \begin{itemize}
  	\item [(i)] For  any $\delta>0$ and $\e>0$, and  for all $n\in\N$ large enough,
  \bea{
  \pp \left(\sum_{j=1}^{\ell}  \rmT(S_j, T_j + \kL_j) \1\{ \kL_j \leq 2^{N_{\e}} \} \geq \delta n\right) 
  & \leq&   \pp \left(\sum_{i=1}^{\ell}\kL_i^2 \1\{ \kL_i \leq 2^{N_{\e}} \} \geq \delta n/16\right) \\
  & \leq& \exp(-c_0\delta \sqrt{n/\e}).
  }
\item[(ii)] For any $K\geq 1$ and $\e>0$, and  for all $n\in\N$ large enough,
\be{
\pp \left( a_k = 0 \, \forall \, k\geq N_K; \, \sum_{k=N_{\e}}^{N_K} a_k \leq K  \right) \geq 1 -  \exp(-c_0K\sqrt{\e n}). 
}
  \end{itemize}
  \end{lem}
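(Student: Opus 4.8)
The plan is a covering / union–bound argument that converts ``many disjoint bad boxes occur'' into a product of independent small probabilities. Both parts rest on two facts already available: the deterministic bound $\rmT(S_j,T_j+\kL_j)\le 16\kL_j^2$ and, above all, the disjointness of the cores $(x_j-\kL_j/3,x_j+\kL_j/3)$ (Lemma~\ref{lem:sti1d}), together with the local hitting estimate Lemma~\ref{lem:taln}.

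\textbf{Part (i).} The first inequality is immediate from Lemma~\ref{lem:sti1d}-(ii): $\sum_j\rmT(S_j,T_j+\kL_j)\1\{\kL_j\le 2^{N_\e}\}\le 16\sum_j\kL_j^2\1\{\kL_j\le 2^{N_\e}\}$, so the left event lies in $\{\sum_j\kL_j^2\1\{\kL_j\le 2^{N_\e}\}\ge \delta n/16\}$. For the second inequality I would first note that $j$ ranges over $\Iintv{1,\ell}$ with $\ell=\max\{j:\kL_j\ge 2^N\}$, so $\kL_j\ge 2^N\ge(\log n)^2$ for all such $j$, whence $\sum_{j=1}^\ell\kL_j^2\1\{\kL_j\le 2^{N_\e}\}=\sum_{k=N}^{N_\e}a_k2^{2k}$. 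The key observation is that every covering index $j\le\ell$ carries a \emph{local} bad event: since $\rmT(x_j,x_j+\kL_j)\ge\kL_j^2$ and $\rmT_A\ge\rmT$, the event
\begin{align*}
G_j:=\bigl\{\rmT_{[x_j-\kL_j/4,\;x_j+\kL_j/4]}(x_j,x_j+\kL_j)\ge\kL_j^2\bigr\}
\end{align*}
occurs; by Lemma~\ref{lem:taln}-(i) with $\alpha=1/16$ and ``$n$''$=\kL_j^2$ (legitimate as $\kL_j\to\infty$), for each fixed position and scale this event has probability $\le\exp(-c\kL_j)$ with $c=c(1/16)>0$ universal; and $G_j$ depends only on the walks started in $[x_j-\kL_j/4,x_j+\kL_j/4]$ (the destination $x_j+\kL_j$ is outside this interval). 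Since $\kL_j/4<\kL_j/3$, the disjointness in Lemma~\ref{lem:sti1d}-(iii) makes these frog boxes pairwise disjoint over all $j$.

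Hence I would bound, via a union bound over the scale–profile $(a_k)_{k=N}^{N_\e}$ and over admissible sorted positions $z_{k,1}<\dots<z_{k,a_k}$ in $\Iintv{0,n}$ whose boxes $(z_{k,i}-2^k/4,z_{k,i}+2^k/4)$ are pairwise disjoint,
\begin{align*}
\pp\Bigl(\sum_{k=N}^{N_\e}a_k2^{2k}\ge\tfrac{\delta n}{16}\Bigr)\ \le\ \sum_{(a_k):\ \sum_k a_k2^{2k}\ge\delta n/16}\ \ \prod_{k=N}^{N_\e}\binom{n+1}{a_k}\,\exp\Bigl(-c\sum_{k}a_k2^{k}\Bigr),
\end{align*}
the factor $\exp(-c\sum_k a_k2^k)$ coming from independence of the events $\{\rmT_{[z_{k,i}-2^k/4,z_{k,i}+2^k/4]}(z_{k,i},z_{k,i}+2^k)\ge 2^{2k}\}$ for a fixed disjoint configuration. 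Because $2^k\ge 2^N\ge(\log n)^2\gg\ln(n+1)$, one has $\binom{n+1}{a_k}\exp(-ca_k2^k)\le\exp(-\tfrac{c}{2}a_k2^k)$ for $n$ large; from Lemma~\ref{lem:sti1d}-(iii) the number of admissible profiles is $\le\prod_k(\tfrac{3n}{2\cdot2^k}+2)\le(2n)^{\log_2 n}=e^{O((\log n)^2)}$; and if $\sum_k a_k2^{2k}\ge\delta n/16$ then, using $k\le N_\e$ and $2^{N_\e}\le 2\sqrt{\e n}$,
\begin{align*}
\sum_{k}a_k2^{k}\ \ge\ 2^{-N_\e}\sum_{k}a_k2^{2k}\ \ge\ \frac{\delta n}{16\cdot 2\sqrt{\e n}}\ =\ \frac{\delta}{32}\,\sqrt{n/\e}.
\end{align*}
Combining, $\pp(\cdots)\le e^{O((\log n)^2)}\exp(-\tfrac{c\delta}{64}\sqrt{n/\e})\le\exp(-c_0\delta\sqrt{n/\e})$ for $n$ large and $c_0:=c/128$, since $(\log n)^2=o(\sqrt{n/\e})$ for fixed $\e$.

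\textbf{Part (ii).} Split the complement. On $\{a_k\ne 0\text{ for some }k\ge N_K\}$ we have $\kL_1\ge 2^{N_K}\ge\sqrt{Kn}$, i.e. $\rmT(x,x+2^k)\ge 2^{2k}$ for some $x\in\Iintv{0,n}$ and $k\ge N_K$, so by a union bound and Lemma~\ref{lem:taln}-(ii) ($\pp(\rmT(0,2^k)\ge 2^{2k})\le\exp(-c2^k)$) this has probability $\le 2(n+1)\exp(-c2^{N_K})\le 2(n+1)\exp(-c\sqrt{Kn})$. On $\{\sum_{k=N_\e}^{N_K}a_k>K\}$ there are at least $K+1$ covering indices $j\le\ell$ with $\kL_j\in[2^{N_\e},2^{N_K}]$, hence at least $K+1$ pairwise disjoint local bad events $G_j$ as above, each at scale $\ge 2^{N_\e}\ge\sqrt{\e n}$ and of probability $\le\exp(-c\sqrt{\e n})$; a union bound over the $\le K+1$ scales (each from $\le N_K-N_\e+1\le\log_2 n$ values) and the positions gives probability $\le\bigl((\log_2 n)(n+1)\bigr)^{K+1}\exp(-c(K+1)\sqrt{\e n})\le\exp(-\tfrac{c}{2}K\sqrt{\e n})$ for $n$ large. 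Adding the two bounds yields the claim; in the intended parameter range one has $2(n+1)\exp(-c\sqrt{Kn})\le\exp(-c_0K\sqrt{\e n})$ for $n$ large, so a single universal $c_0$ works.

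\textbf{Main obstacle.} The delicate point is the bookkeeping in part (i): one must (a) use the \emph{local} event $G_j$ obtained from $\rmT_A\ge\rmT$ together with Lemma~\ref{lem:taln}-(i) applied with a \emph{small} constant $\alpha$, so that the frog boxes sit strictly inside the disjoint cores of Lemma~\ref{lem:sti1d}-(iii) and hence are independent once the positions are fixed; and (b) exploit that $j\le\ell$ forces $\kL_j\ge(\log n)^2$, which simultaneously makes each $\exp(-c\kL_j)$ super-polynomially small (absorbing the $\binom{n+1}{a_k}$ factors) and bounds the number of relevant scales by $O(\log n)$, so the combinatorial factor $e^{O((\log n)^2)}$ is negligible against $\sqrt{n/\e}$. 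Everything else is a routine Chernoff/union–bound estimate.
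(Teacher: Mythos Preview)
Your proposal is correct and follows essentially the same approach as the paper: Lemma~\ref{lem:sti1d}-(ii) for the first inequality in (i), and for the remaining bounds a union bound over positions and scales, exploiting the disjointness of the cores $(x_j-\kL_j/3,x_j+\kL_j/3)$ from Lemma~\ref{lem:sti1d}-(iii) to factorize into independent local bad events estimated via Lemma~\ref{lem:taln}-(i). The only cosmetic difference is that in (i) the paper pigeonholes onto a single bad scale (if $\sum_k a_k 2^{2k}\ge\delta n/16$ then $a_k\ge \tfrac{\delta n}{32}\,2^{-(k+N_\e)}$ for some $k$) and bounds $\max_k\pp(a_k\ge\cdots)$, whereas you sum over the full profile $(a_k)$; both routes yield the same bound, and the paper uses cores of radius $\kL_j/3$ rather than your $\kL_j/4$, which is immaterial.
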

\begin{proof} We first recall that  for each $j$ we can find $x_j \in \Iintv{0,n}$ such that $\rmT(x_j, x_j + \kL_j) \geq \kL_j^2$. Hence, by the definition of $(a_k)_{k \geq1}$, for each $k$ there is a sequence of points $(x_j^k)_{j=1}^{a_k}$ such that 
	\ben{ \label{txjk2}
		\rmT(x_j^k, x_j^k+2^k) \geq 2^{2k}.	
	} 
	For $x \in \Z$ and $t \in \R$, we write 
 $B(x,t):=[x-t,x+t].$ Moreover, 	by Lemma~\ref{lem:sti1d}-(iii),
	\ben{ \label{bxjkp}
		B(x^k_j,2^k/3)\cap B(x^{k'}_{j'},2^{k'}/3) =\varnothing, \, \, \forall \, (k,j) \neq (k',j').
	}
In addition, by Lemma~\ref{lem:taln}-(i), there exists an universal constant $c$ such that for all  $k\in \N$,
\ben{ \label{ptyj}
 \pp\Big({\rm T}_{B(0,2^k/3)}(0,2^k )\geq  2^{2k} \Big) \\
 \leq  \exp\left(-c2^{k}\right).
}
We fix $\delta, \e >0$. The first inequality in (i) directly follows from Lemma~\ref{lem:sti1d}-(ii).

For simplicity of notation, we set $\delta':=\delta/32$. We observe that if $a_k \leq \delta' n 2^{-(k+N_{\e})}$ for any $k\in \Iintv{N,N_\e}$, then 
\be{
\sum_{i=1}^{\ell}\kL_i^2 \1\{ \kL_i \leq 2^{N_{\e}} \} = \sum_{k=N}^{N_\e} a_k 2^{2k} \leq\delta n/16.
}
Therefore, using the union bound,
	\ben{ \label{akne}
 \pp \left(\sum_{i=1}^{\ell}\kL_i^2 \1\{ \kL_i \leq 2^{N_{\e}} \} \geq \delta n/16\right)  \leq N_{\e}	\max_{N \leq k \leq N_\e}\pp \left(a_k \geq \delta' n 2^{-(k+N_{\e})} \right).
}
To estimate the last probability in \eqref{akne}, we fix $N \leq k \leq N_\e$ and define
\be{
	\kB_{k,\e,\delta} := \Iintv{\delta' n 2^{-(k+N_{\e})},  n}.
} 
Given $b_k \in \kB_{k,\e,\delta}$, we define
\be{
	\kB(b_k)  :=  \Big\{ \by=(y_j)_{j=1}^{b_k} \subset \Iintv{0,  n}: 
	B(y_j,2^k/3)\cap B(y_{j'},2^{k}/3) = \varnothing \,\, \forall \, \, 1 \leq  j\neq j' \leq b_k \Big\}.
}
Then we have for all $b_k \in \kB_{k,\e,\delta}$,
\ben{ \label{nobbk}
	\# \kB(b_k) \leq (n+1)^{b_k} \leq \exp(2b_k \log n).
}
Remark that $a_k \leq n$, and thus using \eqref{txjk2} and \eqref{bxjkp}
\bea{
	\pp(a_k \geq \delta' n 2^{-(k+N_{\e})})
	&\leq& \sum_{b_k \in \kB_{k,\e,\delta}} \, \sum_{\by \in \kB(b_k)}   \pp\left(\forall \, 1 \leq j \leq b_k,\, \rmT(y_j,y_j+2^k) \geq 2^{2k}  \right).
}
Observe that $\rmT_A$ is independent of $\rmT_B$ if $A \cap B =\varnothing$. Therefore, for each  $\by \in \kB(b_k)$, 
\bea{
	\pp\Big( \forall j\leq b_k, \,
	~{\rm T}(y_i,y_j+2^k )\geq  2^{2k} \Big) &\leq& 		\pp\Big( \forall j\leq b_k, \,
	~{\rm T}_{B(y_j,2^k/3)}(y_i,y_j+2^k )\geq  2^{2k} \Big) \\
	&=&  \prod_{j=1}^{b_k} \pp\Big({\rm T}_{B(y_j,2^k/3)}(y_j,y_j+2^k )\geq  2^{2k} \Big) \\
	& \leq & \exp\left(-c b_k2^{k}\right), 
}
by using \eqref{ptyj}.   Combining the last two inequalities, we arrive  at
\bea{
	\pp(a_k \geq \delta' n 2^{-(k+N_{\e})}) &\leq& \sum_{b_k \in \kB_{k,\e,\delta}} \, \# \kB(b_k)\, \exp\Big(-c b_k2^{k}\Big)\\
	 &\leq& \sum_{b_k \in \kB_{k,\e,\delta}} \,  \exp\Big(-c b_k2^{k-1}\Big)\leq \exp\left(- 2^{-7}c \delta  \sqrt{n/\e}\right),
}
where   we have used \eqref{nobbk} with $2^k\geq (\log{n})^2$ for $k\geq N$ in the second inequality, and that   $\# \kB_{k,\e,\delta} \leq n+1$ 
 and $b_k2^k \geq \delta n2^{-N_\e -5}\geq 2^{-6} \delta  \sqrt{n/\e}$ for all $b_k \in \kB_{k,\e,\delta}$ in the last line. Combining this estimate with \eqref{akne}, we obtain (i).

We next consider (ii). Observe that  
\begin{align}\label{pak1}
\begin{split}
    \P(\exists \, k \geq N_K: \, a_k \geq 1)
    &\leq \sum_{k \geq N_K}\P(a_k \geq 1)  \\
    &\leq \sum_{k \geq N_K}\P(\exists i\in \Iintv{0,n};~{\rm T}(i,i+2^k)\geq 2^{2k})\\
    &\leq \sum_{k\geq N_K} (n+1) \pp(\T(0,2^k)\geq 2^{2k})\\
    & \leq (n+1) \sum_{k\geq N_K}\exp(- c 2^k) \leq 2(n+1) \exp(-c\sqrt{Kn}),
\end{split}
\end{align}
by using \eqref{ptyj} and  $2^{N_K} \geq \sqrt{Kn}$. Now we consider the event that $\sum_{k=N_{\e}}^{N_K} a_k \geq K$. Define 
\be{
	\kB_{\e,K}:=\Big\{ \boldsymbol{b}=(b_k)_{k=N_{\e}}^{N_K} \subset  \Iintv{0,n}: \, \, \sum_{k=N_{\e}}^{N_K} b_k \geq K \Big \},
}
and for any $\bb \in \kB_{\e,K}$, we set 
\be{
	\kB(\bb) := \Big \{ \boldsymbol{y} = (y^k_j)_{ \substack{1 \leq j \leq b_k \\ N_{\e} \leq k \leq N_K}} \subset \Iintv{0,n}: B(y^k_j,2^k/3)\cap B(y^{k'}_{j'},2^{k'}/3) = \varnothing \,\, \forall (k,j) \neq (k',j')  \Big \}.
}
It is straightforward that 
\ben{ \label{nobek}
	\# \kB_{\e,K} \leq (n+1)^{N_K} \leq \exp((\log n)^3),
}
and
\ben{ \label{nob}
	\#\kB(\bb) \leq \prod_{k=N_\e}^{N_K}(n+1)^{b_k} \leq \exp \Big(2 \sum_{k=N_\e}^{N_K} b_k \log n \Big).
}
Using the same argument for Part (i), for each $\bb \in \kB_{\e,K}$ and $\by\in\kB(\bb)$, we have 
\bea{
	&&	\pp\left( \forall \, N_{\e} \leq k \leq N_K, \,\forall \, j\leq b_k, \,
	~{\rm T}(y^k_j,y^k_j+2^k )\geq  2^{2k} \right)
	\\
	&\leq&\pp\left( \forall \, N_{\e} \leq k \leq N_K, \,\forall \, j\leq b_k, \,
	~{\rm T}_{B(y^k_j,2^k/3)}(y^k_j,y^k_j+2^k )\geq  2^{2k} \right)
	\\
	&=&\prod_{k=N_{\e}}^{N_K} \prod_{j=1}^{b_k} \pp\left( 
	~{\rm T}_{B(y^k_j,2^k/3)}(y^k_j,y^k_j+2^k )\geq  2^{2k} \right) \\
	&\leq&  \exp\left(-c \sum_{k=N_{\e}}^{N_K}b_k2^{k} \right). 
}
Therefore,  by using the union bound and \eqref{nob},
\bea{ \label{pbkk}
	\pp \left( \sum_{k=N_{\e}}^{N_K}a_k  \geq K\right) 
	&\leq& \sum_{\bb \in \kB_{\e,K}} \sum_{\by \in \kB(\bb)}  \pp\left( \forall \, N_{\e} \leq k \leq N_K, \,\forall \, j\leq b_k, \,
	~{\rm T}(y^k_j,y^k_j+2^k )\geq  2^{2k} \right) \notag 
	\\
	&\leq&   \sum_{\bb \in \kB_{\e,K}} \# \kB(\bb) \exp\left(-c \sum_{k=N_{\e}}^{N_K}b_k2^{k} \right) \notag\\
	&\leq& \sum_{\bb \in \kB_{\e,K}} \exp\left(-c \sum_{k=N_{\e}}^{N_K}b_k2^{k-1}  \right).
}
Moreover, using $N_{\e}= 
\lceil \log_2(\sqrt{\e n}) \rceil$, we have $\sum_{k=N_{\e}}^{N_K}b_k2^{k-1} \geq 2^{N_\e-1} \sum_{k=N_{\e}}^{N_K}b_k \geq K 2^{N_\e-1} \geq K\sqrt{\e n}/2$ for any $\bb \in \kB_{\e, K}$. Hence, the last display equation together with \eqref{nobek} implies that 
\bea{
\pp \left( \sum_{k=N_{\e}}^{N_K}a_k  \geq K\right) 	\leq \#  \kB_{\e,K}  \exp\left(-cK\sqrt{\e n}/2   \right) \leq  \exp\left(-cK\sqrt{\e n}/4\right).
}
Combining this  estimate with \eqref{pak1}, we obtain (ii). 
\end{proof}

We prepare a lemma that tells us how to group intervals.
\begin{lem} \label{lem:int}
  For any $R>0$ and a sequence of intervals $([x_i,y_i])_{i=1}^m$, 
  there exists a sequence of intervals $([s_i,t_i])_{i=1}^{m'}$ with $m'\leq m$ such that 
  \begin{itemize}
  \item  $(s_i)^{m'}_{i=1}\subset (x_j)_{j=1}^m$ and $(t_i)_{i=1}^{m'}\subset (y_j)_{j=1}^m$,
   \item $ \sum_{i=1}^{m'} |t_i-s_i|\leq 2mR+\sum_{i=1}^{m}|y_i-x_i|,$
  \item $ {\rm d}([s_i,t_i],[s_j,t_j])\geq R$ for all  $1 \leq i\neq j\leq m'$,
  \item $\cup_{i=1}^m [x_i,y_i] \subset \cup_{i=1}^{m'} [s_i,t_i].$
    \end{itemize}
  \end{lem}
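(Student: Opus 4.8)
The statement is a purely deterministic, combinatorial covering lemma about intervals on the line, so no probability enters. The plan is to run a greedy merging procedure: repeatedly find two intervals in the current collection whose distance is less than $R$ and replace them by their convex hull (the smallest interval containing both), keeping track of endpoints. I would first set up the initial collection $\mathcal I := \{[x_i,y_i]:1\le i\le m\}$, discard the degenerate constraint that endpoints must literally come from the original lists by observing that the convex hull of two members $[x_i,y_i]$, $[x_j,y_j]$ has left endpoint $\min(x_i,x_j)\in(x_k)_{k=1}^m$ and right endpoint $\max(y_i,y_j)\in(y_k)_{k=1}^m$, so the endpoint-containment bullet is automatically preserved under the merge. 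Then I would argue termination: each merge strictly decreases the cardinality of the collection, so after at most $m-1$ steps the process stops, and it stops precisely when every pair of remaining intervals is at distance $\ge R$; relabel the output as $([s_i,t_i])_{i=1}^{m'}$, and clearly $m'\le m$.

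The substantive point is the length bound $\sum_{i=1}^{m'}|t_i-s_i|\le 2mR+\sum_{i=1}^m|y_i-x_i|$. For this I would track the total length $L(\mathcal I):=\sum_{[a,b]\in\mathcal I}(b-a)$ across the process. When we merge $[a_1,b_1]$ and $[a_2,b_2]$ with $\mathrm d([a_1,b_1],[a_2,b_2])<R$ into their convex hull, say with $b_1\le a_2$ (after possibly swapping), the hull is $[a_1,b_2]$ and its length is $(b_1-a_1)+(a_2-b_1)+(b_2-a_2)\le (b_1-a_1)+(b_2-a_2)+R$, because $a_2-b_1=\mathrm d([a_1,b_1],[a_2,b_2])<R$ (if the two intervals overlap the gap is $\le 0$, so the bound is even better). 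Hence each merge increases $L$ by at most $R$. Since there are at most $m-1<m$ merges — but to get the clean constant $2mR$ I would just use the crude count of at most $m$ merges, or observe each of the at most $m$ original intervals participates in a bounded number of "length-inflating" events; either way one lands at $L(\text{final})\le \sum_{i=1}^m|y_i-x_i| + mR \le 2mR + \sum_{i=1}^m|y_i-x_i|$, which is what is claimed (with room to spare — the factor $2$ is deliberately generous). I should double-check whether the paper's intended proof counts merges as $m-1$ or loses a factor somewhere that forces the $2mR$; in any case the displayed bound is an upper bound on what the greedy procedure produces.

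Finally, the covering bullet $\bigcup_{i=1}^m[x_i,y_i]\subset\bigcup_{i=1}^{m'}[s_i,t_i]$ is an invariant of the procedure: at the start it is an equality, and replacing two intervals by their convex hull only enlarges the union, so it holds at every step, in particular at termination. The separation bullet $\mathrm d([s_i,t_i],[s_j,t_j])\ge R$ for $i\ne j$ is exactly the stopping condition. Assembling these four facts completes the proof.

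\textbf{Main obstacle.} There is no deep obstacle here — the only thing to be careful about is the bookkeeping in the length estimate: one must confirm that merges cannot cascade in a way that inflates the total length by more than (number of merges)$\times R$, which follows from the observation above that a single merge adds at most $R$ to $L(\mathcal I)$, together with the fact that the number of merges is at most $m-1$ since cardinality drops by exactly one each time. A secondary subtlety is making sure the endpoint-membership clause $(s_i)\subset(x_j)$, $(t_i)\subset(y_j)$ survives iterated merges; this is immediate once one notes that the left (resp. right) endpoint of any convex hull of members of the collection is a minimum (resp. maximum) of original left (resp. right) endpoints, hence again lies in $(x_j)_{j=1}^m$ (resp. $(y_j)_{j=1}^m$).
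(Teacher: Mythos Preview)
Your greedy merging argument is correct, and it actually yields the sharper length bound $\sum_{i=1}^{m'}|t_i-s_i|\le (m-1)R+\sum_{i=1}^m|y_i-x_i|$, which of course implies the stated $2mR$ bound. The four bullets are all verified exactly as you outline: the endpoint clause is preserved because the hull of two intervals has left endpoint $\min(x_i,x_j)$ and right endpoint $\max(y_i,y_j)$; the covering clause is monotone under taking hulls; the separation clause is the stopping condition; and the length clause follows from the per-merge increment bound together with the fact that cardinality drops by exactly one at each step.

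The paper takes a different, ``one-shot'' route: it defines an equivalence relation on $\{1,\dots,m\}$ by declaring $i\sim j$ whenever there is a chain $i=i_1,\dots,i_r=j$ with $\mathrm d(A_{i_k},A_{i_{k+1}})\le R$ for all $k$, and then for each equivalence class $p$ sets $s_p=\min_{i\in p}x_i$, $t_p=\max_{i\in p}y_i$. The length bound in the paper comes from the inclusion $[s_p,t_p]\subset\bigcup_{i\in p}[x_i-R,y_i+R]$, which gives $|t_p-s_p|\le\sum_{i\in p}(|y_i-x_i|+2R)$ and hence the constant $2mR$; the separation clause requires a short argument that distinct classes produce intervals at distance $\ge R$. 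Compared with yours, the paper's construction is non-iterative and produces a canonical output (independent of the order of operations), at the cost of a slightly weaker constant and a small connectivity argument for both the inclusion and the separation. Your iterative procedure trades canonicity for a cleaner length accounting: the ``gap added per merge'' viewpoint makes the length bound immediate and shows the factor $2$ is unnecessary.
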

\begin{proof} We write $A_i:=[x_i,y_i]$. We define an equivalent relation on $\{1,\ldots,m\}$ as follows.   Given $1 \leq i,j\leq m$, we write  $i\sim j$ if there exist $(i_k)_{k=1}^r\subset [m]$ with $i_1:=i,i_r:=j$ such that $\max_{k\in [r-1]}{\rm d}( A_{i_k},A_{i_{k+1}})\leq R$. It is not hard to check that  $\sim$ is an equivalent relation. Given $p\in C:= \{1,\ldots,m \}/\sim$, we define
\begin{align*}
    s_{p}:=\min\{x_i:~i\in p\},\quad t_{p}:=\max\{y_i:~i\in p\}.
\end{align*}
Note that by construction,
\begin{align}\label{basic eq}
    B_p:=[s_p,t_q]\subset \bigcup_{i\in p} [x_i-R,y_i+R].
\end{align}
We will prove that $([s_p,t_p])_{p\in C}$  satisfies the desired properties. Note that $m':=|C|\leq m$.  By construction, since
\begin{align*}
    \bigcup_{i=1}^m A_i \subset \bigcup_{p \in C} B_p\subset \bigcup_{i=1}^m [x_i-R,y_i+R],
\end{align*}
the first, second and fourth conditions follow.  We prove the third one. Let $p\neq q$. Without loss of generality, we suppose $t_p\leq t_q$. Let $i\in p$ be such that $t_p\in A_i$. If $s_q< t_p+R$, by \eqref{basic eq}, then there exists $x'\in A_j$ with $j\in q$ such that $x'\in [t_p,t_p+R]$, which implies  $A_i\sim A_j$ and derives a contradiction. Thus, we have $s_q\geq t_p+R$ and ${\rm d}(B_p,B_q)\geq R.$
  \end{proof}
In the next lemma, we show that with overwhelmed probability, we can find a  covering composing of elements in $\kS_M$ (defined in Lemma \ref{lem:keyup} with some $M$ large)  for the  intervals $[S_j,T_j+\kL_j]$ with $\kL_j\geq 2^{N_{\e}}$.  

\begin{lem} \label{lem:akl}
  For any $A>1$ and $\e>0$, there exists $M_4=M_4(\e, A)$ such that for $M\geq M_4$ and $n\in\N$ sufficiently large, 
  \be{ 
 \pp(\kE_{\bf cov}) \geq 1-\exp(-A\sqrt{n}); 
 }
where
\be{
 \kE_{\bf cov}:= \biggl\{ \exists (s_i,t_i)_{i=1}^m \in \kS_M(n); \bigcup_{j: \kL_j \geq 2^{N_{\e}}} [S_j, T_j + \kL_j]  \subset \bigcup_{i=1}^m [s_i, t_i]  \biggr\},
}
and $\kS_M(n)$ is given in Lemma \ref{lem:keyup}, that is,
\begin{align*}
    \kS_M(n):=\left\{ (s_i,t_i)_{i=1}^m:
    \hspace{-2em}
    \begin{minipage}{22em}
        \begin{itemize}
        \setlength{\itemsep}{0.3em}
            \item $m \in \Iintv{1,M}$,
            \item $s_i,t_i \in \Iintv{0,n}$ and $s_i<t_i$ for all $i \in \Iintv{1,m}$,
            \item ${\rm d}([s_i,t_i], [s_j,t_j])>M^3\sqrt{n}$ for all $i<j$,
            \item $\sum_{i=1}^m |t_i-s_i|\leq M^6\sqrt{n}$
        \end{itemize}
    \end{minipage}
    \right\}.
\end{align*}
\end{lem}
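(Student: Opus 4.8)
The plan is to run everything on the high‑probability event supplied by Lemma~\ref{lem:sak}-(ii), which says that only boundedly many dyadic bad scales $2^k$ with $k\geq N_\e$ actually occur and that none of them exceeds $2^{N_K}$, and then to merge the surviving blocks $[S_j,T_j+\kL_j]$ into a well‑separated family by means of the grouping Lemma~\ref{lem:int}.

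Concretely, fix $A>1$ and $\e>0$, let $c_0$ be the universal constant of Lemma~\ref{lem:sak}, and set $K:=\lceil A/(c_0\sqrt{\e})\rceil$. First I would invoke Lemma~\ref{lem:sak}-(ii) with this $K$ to get, for all large $n$, an event
\begin{align*}
\kG_n:=\Bigl\{\, a_k=0\ \ \forall\, k\geq N_K,\qquad \textstyle\sum_{k=N_\e}^{N_K} a_k\leq K \,\Bigr\}
\end{align*}
with $\pp(\kG_n)\geq 1-\exp(-c_0K\sqrt{\e n})\geq 1-\exp(-A\sqrt{n})$, the last step using $c_0K\sqrt{\e}\geq A$. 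It then remains to check that $\kG_n\subset\kE_{\bf cov}$ once $M$ is large enough in terms of $\e$ and $A$.

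Working on $\kG_n$: since the $\kL_j$ are non‑increasing powers of $2$ and $N\leq N_\e$ for $n$ large, every $j$ with $\kL_j\geq 2^{N_\e}$ satisfies $j\leq\ell$ and $\kL_j\in\{2^{N_\e},\dots,2^{N_K-1}\}$, so there are at most $\sum_{k=N_\e}^{N_K-1}a_k\leq K$ of them; moreover each such block has length $T_j+\kL_j-S_j\leq 3\kL_j\leq 3\cdot 2^{N_K}\leq 6\sqrt{Kn}$ (using $T_j-S_j\leq 2\kL_j$ and $2^{N_K}\leq 2\sqrt{Kn}$), hence total length at most $6K^{3/2}\sqrt{n}$. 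I would then apply Lemma~\ref{lem:int} to this family of $\leq K$ intervals with $R:=M^3\sqrt{n}+1$, obtaining intervals $([s_i,t_i])_{i=1}^m$ with $m\leq K$, with $\bigcup_{j:\kL_j\geq 2^{N_\e}}[S_j,T_j+\kL_j]\subset\bigcup_{i=1}^m[s_i,t_i]$, with ${\rm d}([s_i,t_i],[s_j,t_j])\geq R>M^3\sqrt{n}$ for $i\neq j$, and with $\sum_{i=1}^m|t_i-s_i|\leq 2KR+6K^{3/2}\sqrt{n}\leq(2KM^3+8K^{3/2})\sqrt{n}$. After replacing each $[s_i,t_i]$ by $[s_i\vee 0,\,t_i\wedge n]$ and discarding degenerate pieces — which only shrinks intervals and their pairwise distances, and drops only parts lying outside $[0,n]$, hence containing no red subinterval — this family lies in $\kS_M(n)$ as soon as $M\geq M_4(\e,A)$ is taken large enough that $M\geq K$ and $2KM^3+8K^{3/2}\leq M^6$ for all $M\geq M_4$; this yields $\kG_n\subset\kE_{\bf cov}$ and the lemma follows.

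I do not expect a genuine obstacle here: given Lemmas~\ref{lem:sak} and \ref{lem:int}, the argument is essentially bookkeeping. The one point demanding attention is the simultaneous matching of the parameters $K$, $R$ and $M$ so that the aggregated family meets all four requirements defining $\kS_M(n)$ at once, together with the mild technicality that a block $[S_j,T_j+\kL_j]$ may protrude slightly past $[0,n]$, which is why the final truncation step is needed (and is harmless, since only the portion of the covering inside $[0,n]$ is used afterwards).
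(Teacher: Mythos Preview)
Your proof is correct and follows essentially the same strategy as the paper: invoke Lemma~\ref{lem:sak}-(ii) to bound the number and maximal scale of the intervals with $\kL_j\geq 2^{N_\e}$, then apply the grouping Lemma~\ref{lem:int} with $R\approx M^3\sqrt{n}$ and check the resulting family lies in $\kS_M(n)$. The only cosmetic difference is that the paper takes $K=M$ directly (so the good event already depends on $M$) rather than fixing $K=K(\e,A)$ first and then choosing $M\geq K$; one small slip in your write-up is that truncation to $[0,n]$ can only \emph{increase} pairwise distances, not shrink them, so the separation condition is preserved for that reason.
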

\begin{proof} Fix $A>1$ and $\e>0$. Let $c_0$ be a positive constant as in Lemma \ref{lem:sak}. We set 
	\be{
M_4:=M_4(\e,A):= c_0A/\e^2.	
}
 By Part (ii) of this lemma,  for $M\geq M_4$, if  $n\in\N$ is  large enough, then
	\ben{ \label{pea}
		\pp(\kE) \geq 1 -\exp(-c_0M \sqrt{\e n}) \geq 1- \exp(-A\sqrt{n}),  
	}
where 
\be{
 \kE := \{a_k =0 \,\forall \, k \geq N_M \} \cap \Bigl \{ \sum_{k=N_{\e}}^{N_M} a_k \leq M \Bigr\}.
}
Hence, it suffices to show that 
$ \kE \subset \kE_{\bf cov}.$ 
Let $M \geq M_4(\e,A)$. Suppose that $\kE$ occurs. Then $\#\{i: \kL_i \geq 2^{N_\e}\} \leq M$. Thus applying Lemma \ref{lem:int} with $R= M^3 \sqrt{n}$ to the sequence of intervals $([S_j,T_j+\kL_j])_{j:~\kL_j\geq 2^{N_\e}}$,  we can find $([s_i,t_i])_{i=1}^m$ with $m \leq M$ satisfying
\be{
 {\rm d}(B_i,B_j) \geq M^3\sqrt{n} \quad \forall i\neq j; \qquad \bigcup_{j:  \kL_j \geq 2^{N_\e}} [S_j,T_j +\kL_j] \subset \bigcup_{i=1}^m [s_i,t_i].
}
Moreover, since $N_M=\lceil \log_2(\sqrt{Mn})\rceil$, on the event  $\kE$,
\bea{
  \sum_{i=1}^m |t_i-s_i| &\leq& 2mM^3\sqrt{n} + \sum_{j: 
  \kL_j \geq 2^{N_\e}} |T_j+\kL_j-S_j| \\
&\leq&M^5\sqrt{n} + 2\sum_{j: 
	\kL_j \geq 2^{N_\e}} \kL_j\leq M^5\sqrt{n} +2M 2^{N_M}
\leq M^6\sqrt{n}.
}
 Hence, $\kE_{\bf cov}$ occurs and  we have $ \kE \subset \kE_{\bf cov}.$ 
\end{proof}

\begin{proof}[\bf Proof of Lemma~\ref{lem:keyup}]
Let $c_0$ be a positive constant as in Lemma  \ref{lem:sak}. We set 
\ben{ \label{eda}
	\e:=\e(\delta,A) := (c_0 \delta/64A)^2.
}
Using  Lemma  \ref{lem:sak}-(i), we have 
\ben{ \label{p33}
	\pp \left( 	\sum_{i: \kL_i <2^{N_\e}} {\rm T}(S_i,{\rm T}_i+\kL_i) > \delta n \right) \leq \exp(-2A\sqrt{n}).
}
Let us define   
\be{
\kE_{\bf red}: = \kE_{\bf cov}\cap \left\{ 	\sum_{i: \kL_i <2^{N_\e}} {\rm T}(S_i,{\rm T}_i+\kL_i)  \leq \delta n \right\},
}
where $\kE_{\bf cov}$ is the event in Lemma \ref{lem:akl}, and 
\ben{ \label{eyellow}
\kE_{\mathbf{blue}}:= \Big \{ \sum_{i\in \mathbf{Blue}} \rmT(i,i+2^{N})\leq (\mu+2\delta) n \Big \}.
}
 Using \eqref{p33}, Lemma \ref{lem:akl} and Lemma~\ref{lem:syi},   there exists $M_4=M_4(\e,2A)$ such that for $M\geq M_4$, if $n$ is large enough,
\begin{align}\label{p34}
    \pp(\kE_{\bf red}^c)+\pp(\kE^c_{\mathbf{blue}})
    \leq 2\exp(-2A \sqrt{n})+\exp(-n^{2/3})
    \leq \exp(-A\sqrt{n}).
\end{align}
We remark that by \eqref{ired}, on the event $\kE_{\bf red}$ with $(s_i,t_i)_{i=1}^m$,
\al{ 
	[0,n]& \subset \bigcup_{i \in \kN} [i, i+2^{N}] =  \bigcup_{i \in \mathbf{Blue}} [i, i+2^{N}] \,  \cup \, \bigcup_{i\in \mathbf{Red}} [i, i+2^{N}]\\
	&\subset  \bigcup_{i \in \mathbf{Blue}} [i, i+2^{N}]  \,  \cup \bigcup_{i:\kL_i < 2^{N_\e}} [S_i,{\rm T}_i+\kL_i] \cup \bigcup_{i=1}^m [s_i,t_i].
} 
Hence on the event $\kE_{\bf red} \cap \kE_{\mathbf{blue}}$, ${\rm T} (0,n)\leq n(\mu+3\delta)+ \sum_{i=1}^m {\rm T}(s_i,t_i).$ 
Therefore, we have
\begin{align}\label{p1234}
\begin{split}
    \pp(\T(0,n) \geq (\mu+\xi)n)
    &\leq \pp\left(\exists (s_i,t_i)_{i=1}^m \in \kS_M(n); \sum_{i=1}^m \rmT(s_i,t_i) \geq (\xi-3 \delta)n \right)\\
    &\quad+ \P\left(\kE_{\bf red}^c\right)+\P(\kE_{\mathbf{blue}}^c)\\
    &\leq \sum_{(s_i,t_i)_{i=1}^m \in \kS_M(n)} \P\left(\sum_{i=1}^m \rmT(s_i,t_i) \geq (\xi-3 \delta)n\right)+\exp(-A \sqrt{n}).
\end{split}
\end{align}
Thus, since $|\kS_M(n)|\leq n^{2M}$, Lemma \ref{lem:keyup} follows by taking $M_1=M_4(\e,2A)$.
\end{proof}

\subsection{Proof of Proposition~\ref{prop:ts2}-(iii)}\label{subsect:prop3.1-3}
Applying Lemma~\ref{lem:keyup} with $A=1$ and $\delta=\xi/2$, letting $M=M_1(1,\xi/2)$ as in  this lemma,  we have
\begin{align*}
    \P(\T(0,n)>(\mu+\xi)n)
    &\leq e^{-n}+n^{2M}\sum_{x\in\Iintv{0,n}}\P\Bigl( \T(x,x+ \lfloor M^6\sqrt{n} \rfloor)>\xi n/(2M) \Bigr)\\
    &\leq e^{-n}+n^{3M}\P\Bigl( \T(0, \lfloor M^6\sqrt{n} \rfloor)>\xi n/(2M) \Bigr).
\end{align*}
By Lemma~\ref{lem:taln}-(ii), the last probability is bounded from above by $e^{-c\sqrt{n}}$ with some positive constant $c=c(\xi,M)$, which yields the claim. \qed

\subsection{Proof of Proposition~\ref{prop:ts2}-(i)}\label{subsect:prop3.1-1}
We first prove that there exists a universal constant $c>0$ such that for any $m\in\N$,
\ben{ \label{cljp}
\pp(t(0,m)=\rmT (0,m)) \leq \exp(-cm^{2/3}),
}
Let $M_2=M_2(1)$ as in Corollary~\ref{corollary}. Since $\pp(t(0,x)\leq h) \leq \exp(-c_0x^2/h)$ with some universal constant $c_0>0$ by Lemma~\ref{lemrw}-(iii) and $\rmT(0,m) \leq \rmT(0, \lfloor m^{4/3} \rfloor)$,
\bea{
\pp(t(0,m)=\rmT(0,m)) &\leq& \pp(t(0,m)\leq M_2 m^{4/3})+ \pp(\rmT(0,m)\geq M_2 m^{4/3})\\
&\leq&\exp(-c_0 m^{2/3}/M_2) + \pp(\rmT(0, \lfloor m^{4/3} \rfloor)\geq M_2 m^{4/3}),
}
By Corollary~\ref{corollary},
\be{
\pp(\rmT(0, \lfloor m^{4/3} \rfloor)\geq M_2 m^{4/3}) \leq \exp(-m^{2/3}).
}
Hence, combining the last two display equations, we get \eqref{cljp}. 

 We take $\e:=\frac{\delta}{3\mu}$ so that 
 \ben{ \label{muee}
 	\mu(1-\e)=\mu-\delta/3.
 }
 Define 
 \bea{
\kE_{\Delta} :=  \{ \textrm{All the optimal paths from $0$ to $n$ must visit $\Delta$}\}, \text{ with }\Delta :=\{ k\in\Z: ~M\sqrt{n} \leq k \leq  \e n\}.
}
On the event $\kE_{\Delta}^c$, there is $x \leq M\sqrt{n}$ and $y\geq \e n$ such that $t(x,y)=\rmT(x,y)$. Thus,
 \bean{ \label{peclb}
\pp(\kE_{\Delta}^c) &\leq& \pp(\rmT(0,n)\geq n^2) + \pp(\kE_{\Delta}^c; \rmT(0,n) \leq n^2) \notag \\ 
&\leq& \pp(\rmT(0,n)\geq n^2) +  \pp(\exists  x \in \Iintv{-n^2,M\sqrt{n}}, \,\exists y\in\Iintv{\e n ,  n};~ t(x,y)=\rmT(x,y)) \notag \\
&\leq& \pp(\rmT(0,n)\geq n^2) + \sum_{-n^2 \leq x \leq M\sqrt{n}}\,\,\sum_{\e n \leq y\leq n} \pp(t(x,y)=\rmT(x,y))\notag \\
&\leq& \exp(-c_1 n) + \exp(-c_1 n^{2/3}),
}
with some $c_1=c_1(\e)>0$, by using Lemma \ref{lem:taln}-(ii) and \eqref{cljp}. 

By the lower tail large deviation \cite[Theorem 1]{BR}, for any $\varepsilon>0$, there exists $c_2>0$ such that for any $m\in\N$,
\be{ 
\pp(\rmT(0,m) < (1-\e)\E[\rmT(0,m)]) \leq \exp(-c_2 m),
} 
Moreover,   by \eqref{muee}, for all $k \in \Delta$, we have
 $$(1-\e)\E[\T(k,n)] \geq  (1-\e)\E[\T(\lfloor \e n \rfloor, n)] =\mu(1-\e)^2n-o(n) \geq (\mu-\delta)n.$$
 Therefore,  we reach for some $c_3=c_3(\delta)>0$,
\ben{\label{lldm}
\pp(\rmT(k,n)< (\mu-\delta)n) \leq \exp(-c_3 n).
} 
Finally, we observe that  
    \bea{
 \pp(\T(0,n) \geq (\mu+\xi)n) &\geq& \pp(\T(0,n) \geq (\mu+\xi)n, \,\kE_{\Delta}) \\
  & \geq  & \pp(\T(0, \lfloor M\sqrt{n} \rfloor) \geq (\xi+\delta)n, \T(k, n)\geq (\mu-\delta)n  \, \forall \, k \in \Delta, \, \kE_{\Delta}) \\
  &\geq & \pp(\T(0, \lfloor M\sqrt{n} \rfloor) \geq (\xi+\delta)n) - \sum_{k \in \Delta} \pp( \T(k, n)< (\mu-\delta)n) -\pp(\kE_{\Delta}^c) \\
  & \geq & \pp(\T(0, \lfloor M\sqrt{n} \rfloor) \geq  (\xi+\delta)n) -\exp(-c_4 n^{2/3}),
 }
for some $c_4=c_4(\delta,\e,\mu)>0$,  where we have used \eqref{peclb} and  \eqref{lldm}. \hfill $\square$ 

\section{Energy approximation by step functions: Proof of Proposition \ref{prop:s}} \label{sec:props}
For the convenience, we  recall the definition of the energy functional
\be{
	E(f) := -\int_{\R} \log \theta_f(x) \, \dd x, \qquad \theta_f(x) :=\pp_x^{\textrm{BM}} (\tau_y \geq f(y)-f(x) \, \forall \, y \in \R),
}
and our goal is to prove that 
\ben{ \label{gos6}
\inf_{f \in \kC(\xi)} E(f) = \inf_{f \in \kC^{\rm Step}(\xi)} E(f).
}
We  also recall a result from Lemma~\ref{thm2} that will be used frequently in this section:
\ben{\label{lem 1.2 revisited}
E(f)=\sqrt{\xi}E(f_\xi),\quad\text{$f_\xi(x):=\xi^{-1}f(\sqrt{\xi}x)$ for any $f\in \kC(1)$ and $\xi>0$}.
}
Hence, we only need to prove the claim \eqref{gos6} with $\xi=1$.
Given parameters $\varepsilon, \delta >0$, our aim is to  deform a function $f \in \kC(1)$ to a step function $g \in \kC^{\rm Step}(1 - \varepsilon)$ such that $E(g) \geq E(f) -\delta$. Then  letting $\varepsilon, \delta \rightarrow 0$, we can validate the claim of Proposition \ref{prop:s}. The primary strategy involves the integration of two types of transformations: soft and hard deformations. To illustrate, suppose that we have to deform a function $f$ over a finite interval $I\subset \R$.  Then the hard deformation  simply forces the function $f$ to the minimum value over $I$, that is
\be{
	g(x) = \begin{cases}
		f(x) & \textrm{ if } x \not \in I\\
		\min_{y \in I} f(y) & \textrm{ if } x \in I.
	\end{cases}
} 
This deformation does not change the maximum value of $f$ and a lower bound of the change of energy is given by
\ben{ \label{engg1}
	E(f) - E(g)  \geq \int_I \log \frac{\theta_g(x)}{\theta_f(x)} \dd x, 
}
since $g(x) \leq f (x)$ for all $x \in \R$ and $f(x) =g(x)$ for all $x \not \in I$. In particular, if the length of $I$ is $1/n$, then
\ben{ \label{engg2}
	E(f) - E(g)  \geq (\kO(1)+\log n)/n. 
}
The soft deformation defined in Lemma \ref{lem: deformation of function} below   is more complicated, which gradually flatten the function $f$ to get a function $g$ with lower energy: $	E(g) \leq E(f)$, and controllable height (and so the maximum): for all $x \in \R$
\ben{ \label{goh}
	g(x) \geq f(x) - \Delta_I(f), 
}
where we define for each interval  $I \subset R$, 
\ben{
\Delta_I(f) := M_I(f)-m_I(f), \qquad  M_I(f) :=\sup_{y \in I} f(y), \quad m_I(f):= \inf_{y \in I} f(y).
}
We will partition the primary interval $[-M,M]$, where $M$ is appropriately large based on $\delta$ and $\varepsilon$, into subintervals of length $1/n$. These subintervals will be categorized into three types based on their height:

\begin{enumerate}
\item[1.] {\bf Large height}, if $\Delta_I(f) \geq C_* \log n/n$,
\item[2.] {\bf Moderate height}, if $\Delta_I(f) \in [c_* \log n/n, C_* \log n/n)$, 
\item[3.] {\bf Small height}, if $\Delta_I(f) \leq c_* \log n/n$.
\end{enumerate}
Here, $C_*$ and $c_*$ are appropriately selected constants to be chosen later.

For intervals with large height, we apply the hard deformation and manage the total energy gaps using  \eqref{engg2}. Note that the number of intervals with large height is at most $ n/ C_*\log n$. The details of this part will be presented in Proposition \ref{prop:largeheight}. On the other hand,  we apply a soft deformation to the function over intervals with small heights. By its definition, the newly formed function possesses a lower energy. Using Eq. \eqref{goh}, we can manage the difference in height post-deformation. This will be addressed in Proposition \ref{prop:smallheight}. 

The intervals of moderate height present the most significant challenge. It is not immediately clear whether to apply a soft or hard deformation to each. Rather than making this decision for each individual interval, we will group these moderate height intervals into larger clusters. The choice of transformation for each group will then depend on certain criteria related to the size of its group (i.e., the number of intervals it contains) and height (i.e., the difference in the values of the function over the group). This approach will be detailed in Proposition \ref{prop:rellarge}.

The structure of this section is outlined as follows. The subsequent subsection will present a summary of some preliminary results. The proof of Proposition \ref{prop:s} is provided in subsection \ref{subsect:approx_pf}, and it relies on several other results, specifically Propositions \ref{prop:truncation} through \ref{prop:smallheight}. The proofs for these propositions can be found in subsections \ref{subsect:pfc-l} and \ref{subsect:pfrellarge}.

\subsection{The two deformations and preliminaries}\label{subsect:useful_est}

We present two key deformations allowing us flatten the function with controllable  energy and height. Let $\mathcal{J}= \{I_i\}_{i=1}^{\ell}$ be finite disjoint intervals, where $I_i$ is of form $(a_i,b_i)$,  $[a_i,b_i)$, $(a_i,b_i]$, or $[a_i,b_i]$ with $a_i<b_i$ ($a_i$ and $b_i$ may take values $\pm\infty$.) \\

\begin{figure}
	\includegraphics[width=.4\linewidth]{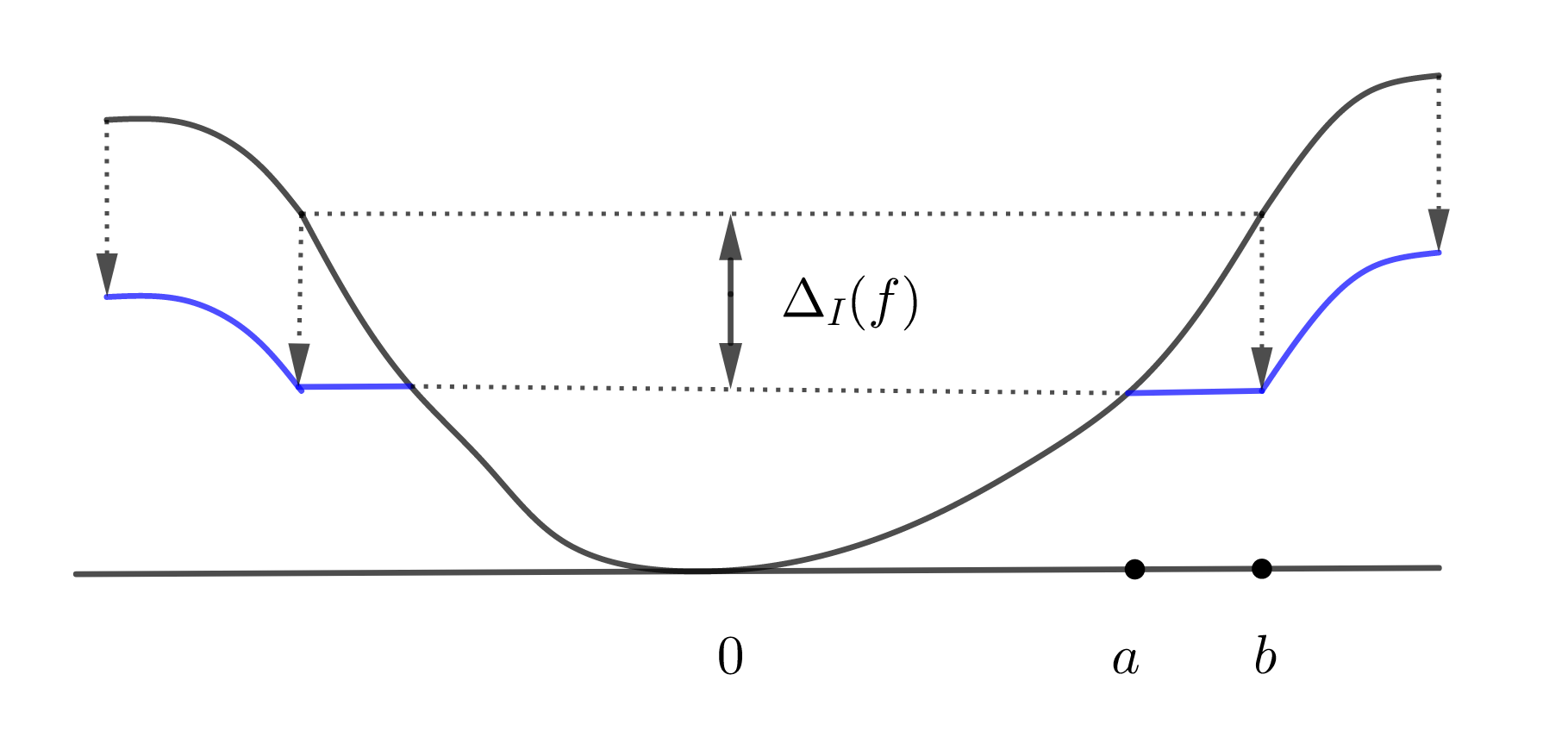}\hspace{4mm}
	\includegraphics[width=.38\linewidth]{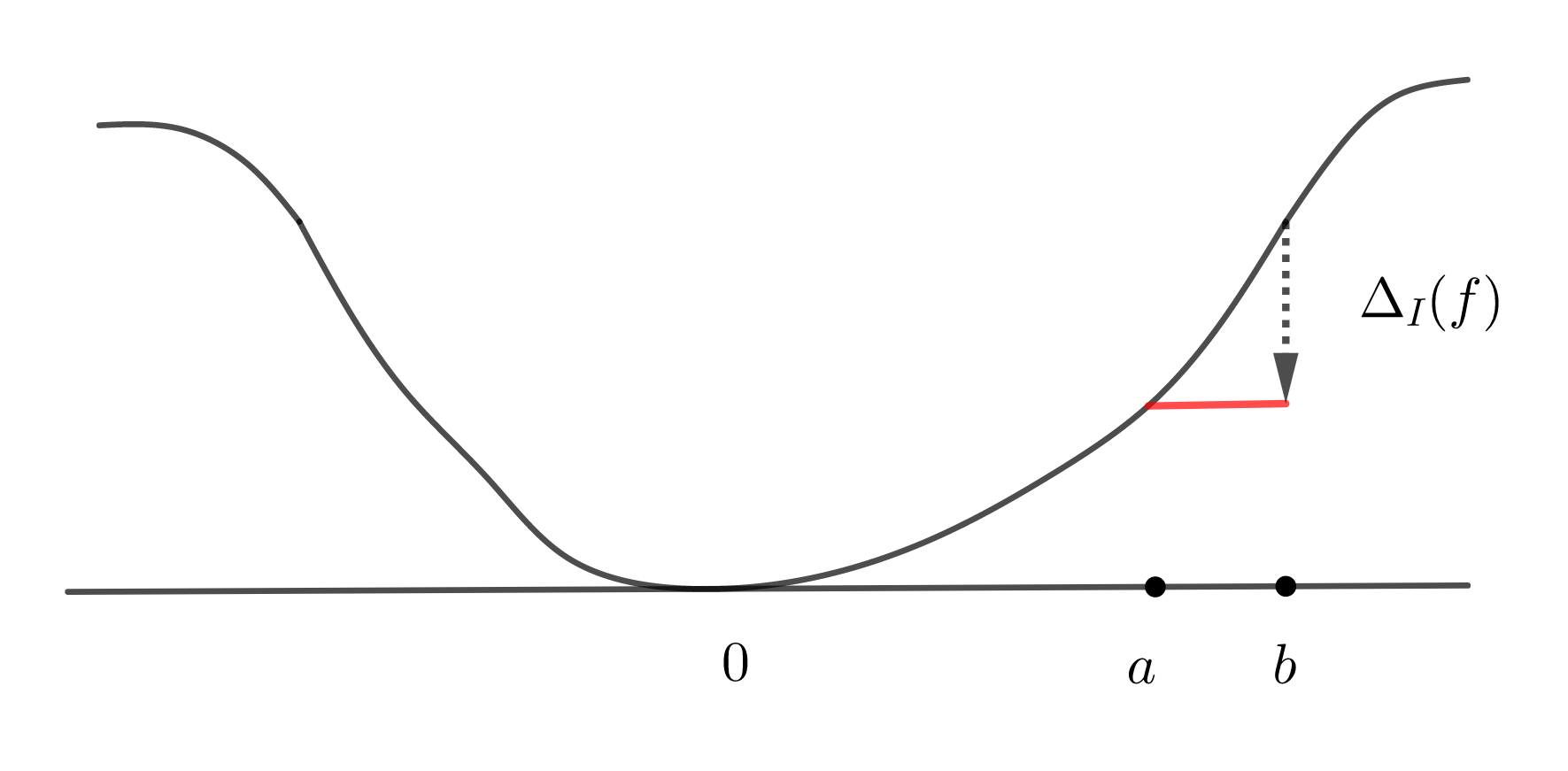}
	\caption{Soft deformation over $I=[a,b]$;\qquad \qquad Hard deformation over $I=[a,b]$\qquad}
\end{figure}
\noindent {\bf The hard deformation} of $f\in \kC(1)$ over $\kJ$, denoted by $f^{\textrm{hd},\kJ}$, is a function given as 
\ben{
	\fhd(x) :=\begin{cases} 
		\inf_{y \in I} f(y) & \textrm{ if } x \in \cup_{I\in \kJ} I, \\
		f(x) & \textrm{ otherwise}.
	\end{cases}
}
\begin{lem} \label{lem:hd}
The following hold:
	\begin{itemize}
		\item [(i)] If 	 $(g(x)-g(y))_+ \leq (f(x)-f(y))_+$ for all $x$, $y \in \R$ then $E(f) \geq E(g)$.
		\item[(ii)] There exists a positive constant $C$ such that for all $f \in \kC(1)$ and $x$, $y \in \R$  with $f(y)>f(x)$ 
		\be{
			\theta_f(x) \leq  \frac{C|x-y|}{\sqrt{f(y)-f(x)}}.
		}
  \item[(iii)]  If $g(x) \leq f(x)$ for all $x \in \R$ then
	\be{
		 E(g)-E(f) \geq \int_{\{g<f\}} \, \log \frac{\theta_g(x)}{\theta_f(x)} \,  \dd x.
	} 
	As a consequence, 
	\be{
		E(\fhd)-E(f)\geq \sum_{I \in \kJ}  \int_I \, \log \frac{\theta_{\fhd}(x)}{\theta_f(x)} \,  \dd x.
	}
 \end{itemize}
\end{lem}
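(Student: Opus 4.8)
Each of the three parts reduces to an elementary manipulation of the hitting-time probabilities $\theta_f$, so I do not anticipate a real obstacle; I will treat (i), (ii), (iii) in turn. For part~(i), observe first that since $\tau_y\ge 0$ always, the constraint ``$\tau_y\ge f(y)-f(x)$'' coincides with ``$\tau_y\ge (f(y)-f(x))_+$'', so $\theta_f(x)=\pp^{\rm BM}_x\bigl(\tau_y\ge (f(y)-f(x))_+\ \forall y\in\R\bigr)$, and likewise for $g$. The hypothesis $(g(x)-g(y))_+\le(f(x)-f(y))_+$ holds for every ordered pair, hence also in the form $(g(y)-g(x))_+\le(f(y)-f(x))_+$ for all $x,y$; therefore, for each fixed $x$ and each $y$, the event $\{\tau_y\ge(f(y)-f(x))_+\}$ is contained in $\{\tau_y\ge(g(y)-g(x))_+\}$. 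Intersecting over $y$ yields $\theta_f(x)\le\theta_g(x)$ for every $x$, so $-\log\theta_f(x)\ge-\log\theta_g(x)$, and integrating over $\R$ gives $E(f)\ge E(g)$.

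\textbf{Part (ii).} I would keep a single constraint in the definition of $\theta_f(x)$, namely $\theta_f(x)\le\pp^{\rm BM}_x(\tau_y\ge f(y)-f(x))$. Since $f(y)>f(x)$ forces $x\ne y$, translation and reflection invariance of Brownian motion give $\pp^{\rm BM}_x(\tau_y\ge t)=\pp^{\rm BM}_0(\tau_{|x-y|}\ge t)$, and Lemma~\ref{lem:maxbrown}, applied with $u=|x-y|>0$ and $t=f(y)-f(x)>0$, gives $\pp^{\rm BM}_0(\tau_{|x-y|}\ge f(y)-f(x))=\sqrt{2/\pi}\int_0^{|x-y|/\sqrt{f(y)-f(x)}}e^{-s^2/2}\,\dd s\le\sqrt{2/\pi}\,|x-y|/\sqrt{f(y)-f(x)}$, using $e^{-s^2/2}\le 1$. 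Hence the claim holds with $C=\sqrt{2/\pi}$.

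\textbf{Part (iii) and its consequence.} The key point is a monotonicity of $\theta$ along the set $\{g=f\}$: if $g\le f$ on $\R$ and $g(x)=f(x)$, then for every $y$ one has $g(y)-g(x)=g(y)-f(x)\le f(y)-f(x)$, so the $g$-constraint at $y$ is weaker than the $f$-constraint at $y$; intersecting over $y$ gives $\theta_g(x)\ge\theta_f(x)$, i.e.\ $\log\frac{\theta_g(x)}{\theta_f(x)}\ge 0$ for $x\in\{g=f\}$. I would then write $E(f)-E(g)=\int_\R\log\frac{\theta_g(x)}{\theta_f(x)}\,\dd x$ and split the integral over $\{g=f\}$ and $\{g<f\}$: the part over $\{g=f\}$ is nonnegative and may be discarded, which yields $E(f)-E(g)\ge\int_{\{g<f\}}\log\frac{\theta_g(x)}{\theta_f(x)}\,\dd x$ (this is the estimate already recorded in \eqref{engg1}). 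For the consequence, I would run the identical argument with the partition $\bigcup_{I\in\kJ}I$ against its complement: since $\fhd=f$ off $\bigcup_{I\in\kJ}I$, the same monotonicity gives $\theta_{\fhd}(x)\ge\theta_f(x)$, hence $\log\frac{\theta_{\fhd}(x)}{\theta_f(x)}\ge 0$, for such $x$; discarding that part of the integral gives $E(f)-E(\fhd)\ge\int_{\bigcup_{I\in\kJ}I}\log\frac{\theta_{\fhd}(x)}{\theta_f(x)}\,\dd x=\sum_{I\in\kJ}\int_I\log\frac{\theta_{\fhd}(x)}{\theta_f(x)}\,\dd x$.

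\textbf{Main difficulty.} I expect essentially no difficulty: (i) and (iii) are each a single containment between the (infinite) families of hitting-time constraints defining $\theta_f$ and $\theta_g$, and (ii) is a direct application of Lemma~\ref{lem:maxbrown}. The only technical point to keep in mind is that the logarithms appearing above must be a.e.\ finite, i.e.\ $\theta_f(x)>0$ for a.e.\ $x$; this holds because $f$ is bounded and monotone on each half-line, so with positive probability Brownian motion meets all the constraints, and it is in any case implicit wherever $E(f)$ is used as a finite quantity.
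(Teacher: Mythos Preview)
Your proof is correct and follows exactly the same (elementary) approach as the paper's very terse proof, just with the details written out. One remark on part~(iii): you prove $E(f)-E(g)\ge\int_{\{g<f\}}\log\frac{\theta_g}{\theta_f}\,\dd x$ and $E(f)-E(\fhd)\ge\sum_{I\in\kJ}\int_I\log\frac{\theta_{\fhd}}{\theta_f}\,\dd x$, whereas the lemma as stated has the differences written the other way around ($E(g)-E(f)$ and $E(\fhd)-E(f)$); this is a sign typo in the statement, and your version is the correct one that is actually used everywhere in the paper (see \eqref{engg1}, \eqref{eq:Eg1-Eg2}, and \eqref{eq:divide_g}).
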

\begin{proof}
	Parts (i) and (iii) directly follows from the definition of $E(f)$ and $\fhd\leq f$. 
  Using Lemma \ref{lem:maxbrown}
	\be{
		\theta_f(x) \leq \pp_x^{\textrm{BM}}(\tau_y \geq  f(y) -f (x))	=  \pp_0^{\textrm{BM}}(\tau_{|x-y|} \geq  f(y) -f (x)) \asymp \frac{|x-y|}{\sqrt{ f(y) -f (x)}},
	}
and thus (ii) follows.
\end{proof}
\noindent {\bf The soft deformation} $\fsd$ of $f\in \kC(1)$ over $\kJ$ will be defined  inductively as follows.   Set $f^{[0]}:=f$.
By induction in $k \geq 1$, $f^{[k]}$ is defined as:
\begin{align*}
	f^{[k]}(x):=
	\begin{cases}
		m_{I_k}(f^{[k-1]}), & \text{ if } f^{[k-1]}(x)\in [m_{I_k}(f^{[k-1]}),M_{I_k}(f^{[k-1]})],\\
		f^{[k-1]}(x), & \text{ if } f^{[k-1]}(x)<m_{I_k}(f^{[k-1]}),\\
		f^{[k-1]}(x)- \Delta_{I_k} (f^{[k-1]}) & \text{ if } f^{[k-1]}(x)>M_{I_k}(f^{[k-1]}).
	\end{cases}
\end{align*}
We set $\fsd:=f^{[\ell]}$ with $\ell:=|\kJ|$. 

\begin{lem}\label{lem: deformation of function}
	Suppose that $f \in \kC(1)$. The following hold:
	\begin{enumerate}
		\item [(i)]
		$\fsd|_{I} \equiv const$ for all $I \in \kJ$.
		\item [(ii)]
		For any $x,y\in\R$,
		\begin{align*}
			&\fsd(x)\geq f(x)-\sum_{I \in \kJ} \Delta_I(f) ,\quad\text{and}\\
			&(\fsd(x)-\fsd(y))_+\leq (f(x)-f(y))_+.
		\end{align*}
		As a consequence, $E(\fsd) \leq E(f)$.
	\end{enumerate}
\end{lem}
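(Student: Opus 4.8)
\textbf{Proof plan for Lemma~\ref{lem: deformation of function}.}

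The plan is to prove both assertions by induction on the number of intervals processed, tracking the single-step transformation $f^{[k-1]} \mapsto f^{[k]}$ and then composing. First I would record the elementary observation that one soft-deformation step over a single interval $I=I_k$ is itself a monotone, Lipschitz-in-the-order operation: writing $g=f^{[k-1]}$, $m=m_I(g)$, $M=M_I(g)$, $\Delta=\Delta_I(g)=M-m$, and $h=f^{[k]}$, one checks directly from the three-case definition that (a) $h(x)\le g(x)$ everywhere (in the middle band $h=m\le g$; below, $h=g$; above, $h=g-\Delta\le g$), (b) $g(x)-\Delta\le h(x)\le g(x)$ everywhere, so in particular $h$ drops by at most $\Delta_I(g)$, and (c) for any $x,y$ one has $(h(x)-h(y))_+\le (g(x)-g(y))_+$. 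Claim (c) is the key pointwise inequality; it follows by a short case analysis on which of the three bands $x$ and $y$ fall into — the only cases that can make $(h(x)-h(y))_+$ positive are those where $h(x)\in\{m, g(x), g(x)-\Delta\}$ exceeds the corresponding value at $y$, and in each such case one compares against $(g(x)-g(y))_+$ using $m\le g(y)$ when $y$ is in or below the band and $g(x)-\Delta\ge m$ when $x$ is above the band. I would also note here that $h|_I\equiv m$ since every point of $I$ has $g$-value in $[m,M]$, so after step $k$ the function is constant on $I_k$.

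Next I would establish that this constancy is preserved under all later steps. Because each later step either leaves a value unchanged, replaces a whole band of values by a constant, or shifts a value down by a fixed amount, and because on $I_j$ the function $f^{[j]}$ is already constant, the value on $I_j$ at stage $k>j$ is obtained from the previous stage-value by one of these three operations applied uniformly to that single constant — hence $f^{[k]}|_{I_j}$ remains constant for all $k\ge j$. Taking $k=\ell$ gives part~(i): $\fsd|_{I}\equiv\text{const}$ for every $I\in\kJ$.

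For part~(ii), I would iterate the single-step bounds. Composing (c) over $k=1,\dots,\ell$ gives $(\fsd(x)-\fsd(y))_+\le (f(x)-f(y))_+$ for all $x,y$ — each step does not increase the ``positive difference'' functional, so neither does the composition. For the height bound, iterating (b) gives $\fsd(x)\ge f(x)-\sum_{k=1}^{\ell}\Delta_{I_k}(f^{[k-1]})$; the remaining point is to replace $\Delta_{I_k}(f^{[k-1]})$ by $\Delta_{I_k}(f)$. This follows because $f^{[k-1]}(x)\le f(x)$ for all $x$ (iterate (a)) together with the fact that, on the interval $I_k$, $f^{[k-1]}$ differs from $f$ only by flattening/shifting operations inherited from the \emph{earlier} intervals $I_1,\dots,I_{k-1}$ which are disjoint from $I_k$ in their ``flattening'' action — more carefully, since the $I_j$ are disjoint, on $I_k$ the operations from earlier steps that touch $I_k$ are only the global downward shifts, which do not change $\Delta_{I_k}$, while any band-flattening acts on points outside $I_k$; hence $\Delta_{I_k}(f^{[k-1]})\le \Delta_{I_k}(f)$. (If one prefers to avoid this subtlety, one can instead note $m_{I_k}(f^{[k-1]})\ge m_{I_k}(f)-\sum_{j<k}\Delta_{I_j}(f)$ and $M_{I_k}(f^{[k-1]})\le M_{I_k}(f)$, and argue the bound directly; I would pick whichever is cleanest.) Finally, the consequence $E(\fsd)\le E(f)$ is immediate from Lemma~\ref{lem:hd}-(i) applied with $g=\fsd$, using the second inequality of part~(ii).

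\textbf{Main obstacle.} The routine-looking but genuinely delicate point is the passage from $\Delta_{I_k}(f^{[k-1]})$ to $\Delta_{I_k}(f)$ in the height bound — i.e.\ making precise that processing earlier disjoint intervals cannot \emph{increase} the oscillation of the current function over $I_k$. The disjointness of $\kJ$ is exactly what is used, and I expect the cleanest route is to carry along, as an additional induction hypothesis, the two one-sided bounds $m_{I_j}(f^{[k]})\ge m_{I_j}(f) - \sum_{i\le k,\, i\ne \text{(those acting trivially)}}\Delta_{I_i}(f)$ and $M_{I_j}(f^{[k]})\le M_{I_j}(f)$ for all $j$, and deduce the stated height inequality from these. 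Everything else is a finite composition of the three explicit single-step estimates.
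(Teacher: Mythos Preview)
Your overall approach matches the paper's: prove the single-step inequalities and iterate. The case analysis you outline for (c) is exactly what the paper does, and the composition argument for the positive-difference inequality is correct.

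However, what you flag as the ``Main obstacle'' is not an obstacle at all, and your proposed resolution of it via disjointness is both unnecessary and wrong as stated. The bound $\Delta_{I_k}(f^{[k-1]})\le \Delta_{I_k}(f)$ is an immediate consequence of your own inequality (c), iterated: once you know $(f^{[k-1]}(x)-f^{[k-1]}(y))_+\le (f(x)-f(y))_+$ for all $x,y$, take the supremum over $x,y\in I_k$ and you are done, since $\Delta_I(g)=\sup_{x,y\in I}(g(x)-g(y))_+$. This is precisely how the paper handles it, in one line. Disjointness of the intervals in $\kJ$ plays no role here.

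Your disjointness argument is actually incorrect: the band-flattening at step $j$ acts on \emph{every} point whose $f^{[j-1]}$-value lies in $[m_{I_j}(f^{[j-1]}),M_{I_j}(f^{[j-1]})]$, not only on points of $I_j$. So points of $I_k$ can certainly be flattened (not just shifted) by earlier steps, contrary to what you wrote. Your parenthetical fallback also does not yield $\Delta_{I_k}(f^{[k-1]})\le \Delta_{I_k}(f)$ directly. Drop this entire discussion and just invoke the iterated (c); the proof then becomes short and clean, matching the paper.
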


\begin{proof} Part (i) directly follows from the definition of  $\fsd$.  For Part (ii), by Lemma \ref{lem:hd}-(i),  it suffices to show that for any $k \leq \ell$ and $x,y\in\R$,
	\begin{align}
	&	f^{[k]}(x)\geq f^{[k-1]}(x)-\Delta_{I_k}(f),\quad\text{and}\label{induction one-point}\\
	&	(f^{[k]}(x)-f^{[k]}(y))_+ \leq (f^{[k-1]}(x)-f^{[k-1]}(y))_+.\label{induction two-point}
	\end{align}
	Since $\Delta_{I_k}(f^{[k-1]})\leq \Delta_{I_k}(f)$ by \eqref{induction two-point}, we have \eqref{induction one-point}. 
	Hence, our task is  to prove \eqref{induction two-point}. For simplicity of notation, we write $m_{k}:=m_{I_k}(f^{[k-1]})$ and $M_k:=M_{I_k}(f^{[k-1]})$.

	When $f^{[k-1]}(y)<m_k$, by $f^{[k]}(x) \leq f^{[k-1]}(x)$ and $f^{[k]}(y) = f^{[k-1]}(y)$, 
	\begin{align*}
		(f^{[k]}(x)-f^{[k]}(y))_+
		\leq (f^{[k-1]}(x)-f^{[k-1]}(y))_+.
	\end{align*}

	When $f^{[k-1]}(y)\in [m_k,M_k]$,
	\begin{align*}
		(f^{[k]}(x)-f^{[k]}(y))_+&=
		\begin{cases}
			0, & \text{if } f^{[k-1]}(x) \leq M_k,\\
			(f^{[k-1]}(x)-M_k)_+, & \text{otherwise},
		\end{cases}\\
		&\leq (f^{[k-1]}(x)-f^{[k-1]}(y))_+.
	\end{align*}

	We finally suppose $f^{[k-1]}(y)>M_k$. If $f^{[k-1]}(x) \leq M_k$, then \eqref{induction two-point} follows since $(f^{[k]}(x)-f^{[k]}(y))_+=0$. Otherwise, if $f^{[k-1]}(x)>M_k$, then $$f^{[k]}(x)=f^{[k-1]}(x)-(M_k-m_k),\quad f^{[k]}(x)=f^{[k-1]}(y)-(M_k-m_k).$$ 
Therefore, we have
	\begin{align*}
		(f^{[k]}(x)-f^{[k]}(y))_+=(f^{[k-1]}(x)-f^{[k-1]}(y))_+.
	\end{align*}
	Consequently, in all cases, \eqref{induction two-point} holds, and the lemma follows.
\end{proof}

The following  technical lemmas will be proved in Appendix.

\begin{lem}\label{lem:ning}
	
	For any  $\delta>0$, there exist  $c , \tilde{c} \in (0,1)$   such that for any interval $I \subset [\delta,\infty)$ with $|I| \leq 1$ and   $f \in \kC(1)$ satisfying $f|_{[-\delta, \delta]} \equiv 0$ and $f|_{I} \equiv const$, we have for any $x \in I$,
	\bea{
		\theta_f(x)
		\geq  c\, \pp^{\rm BM}_x (\tau_y \geq f(y)-f(x) \quad\forall y\geq \sup I) \geq  \tilde{c} \, (\sup I -x).
	}
\end{lem}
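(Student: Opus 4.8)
The plan is as follows. Throughout write $b:=\sup I$ and $v:=f(x)$, the constant value of $f$ on $I$; note that $b\ge\delta$ (since $I\subset[\delta,\infty)$), that $0\le b-x\le|I|\le 1$, and that $0\le v\le\|f\|_\infty\le 1$. The first move is to use the monotonicity of $f$ to discard all constraints over $[-\delta,b)$ in the definition of $\theta_f(x)$: for $y\in[-\delta,\delta]$ we have $f(y)=0\le v$; for $y\in[\delta,\inf I]$ we have $f(y)\le f(\inf I)\le v$ by monotonicity on $[0,\infty)$; and for $y$ in the interior of $I$ we have $f(y)=v$. Hence $f(y)-v\le 0$ on $[-\delta,b)$, the constraint $\tau_y\ge f(y)-f(x)$ is vacuous there, and
\begin{align*}
\theta_f(x)=\pp_x^{\rm BM}(A\cap B),\qquad A:=\{\tau_y\ge f(y)-v\ \forall y<-\delta\},\quad B:=\{\tau_y\ge f(y)-v\ \forall y\ge b\},
\end{align*}
with $\pp_x^{\rm BM}(B)$ being exactly the middle quantity of the statement. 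For the pointwise order on paths, $A$ is an increasing event and $B$ a decreasing one.

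The right-hand inequality is elementary: since $f(y)-v\le 1$ for all $y$, the event $\{\tau_b\ge 1\}$ is contained in $B$, so by translation invariance and Lemma~\ref{lem:maxbrown},
\begin{align*}
\pp_x^{\rm BM}(B)\ge\pp_x^{\rm BM}(\tau_b\ge1)=\pp_0^{\rm BM}(\tau_{b-x}\ge1)=\sqrt{\tfrac{2}{\pi}}\int_0^{b-x}e^{-s^2/2}\,\dd s\ge\tfrac13\sqrt{\tfrac2\pi}\,(b-x),
\end{align*}
using $b-x\le 1$; this is the second inequality with $\tilde c=\tfrac13\sqrt{2/\pi}$. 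For the first inequality, it remains to produce $c=c(\delta)\in(0,1)$ with $\theta_f(x)\ge c\,\pp_x^{\rm BM}(B)$. Because a path from $x>0$ must cross $\delta/2$ before reaching any $y<-\delta$, and $f(y)-v\le1$, we have $\{\tau_{\delta/2}\ge1\}\subseteq A$; hence it suffices to prove $\pp_x^{\rm BM}\big(\{\tau_{\delta/2}\ge1\}\cap B\big)\ge c(\delta)\,\pp_x^{\rm BM}(B)$, i.e.\ that conditioning on the decreasing event $B$ retains the increasing event $\{\tau_{\delta/2}\ge1\}$ (``the motion does not hit $\delta/2$ before time $1$'') with probability at least $c(\delta)$.

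The mechanism is that $B$ constrains only the \emph{running maximum}: it is exactly the event $\{\max_{0\le s\le t}B_s\le g(t)\ \forall t\ge0\}$ for a non-decreasing barrier $g$ with $g\equiv\infty$ on $[1,\infty)$ (because $f(y)-v\le1$) and $g\ge b\ge\delta$ everywhere; in particular $B\supseteq B_0:=\{\tau_b\ge1\}=\{\max_{0\le t\le1}B_t<b\}$, the event of the most restrictive such barrier. I would then invoke the conditional FKG inequality for Brownian motion proved in the Appendix — in the form that conditioning on a \emph{less} restrictive ``stay below a non-decreasing barrier'' event yields a stochastically larger law — together with the fact that $\{\tau_{\delta/2}\ge1\}$ is increasing, to get
\begin{align*}
\pp_x^{\rm BM}(\tau_{\delta/2}\ge1\mid B)\ge\pp_x^{\rm BM}(\tau_{\delta/2}\ge1\mid B_0)=\frac{\pp_x^{\rm BM}\big(\min_{0\le t\le1}B_t>\delta/2,\ \max_{0\le t\le1}B_t<b\big)}{\pp_x^{\rm BM}\big(\max_{0\le t\le1}B_t<b\big)}.
\end{align*}
It then remains to bound this last ratio below by a constant depending only on $\delta$, uniformly over $b\ge\delta$ and $x\in I$; equivalently, after the substitution $Y_t:=b-B_t$ (a Brownian motion from $b-x\in[0,1]$), to bound below the probability that a Brownian motion started at $b-x$ and conditioned to stay positive on $[0,1]$ has maximum below $b-\delta/2$ — a threshold lying at distance $x-\delta/2\ge\delta/2$ above the start. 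This is a routine computation with the explicit joint law of $(\min_{0\le t\le1}B_t,\max_{0\le t\le1}B_t)$ (or the two-sided reflection/eigenfunction formula), treating separately the regime where $b-x$ is small and the regime where it is of order one.

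The only genuinely delicate point is this last step. The events $A$ and $B$ have opposite monotonicity, so the FKG inequality only gives the useless upper bound $\pp_x^{\rm BM}(A\cap B)\le\pp_x^{\rm BM}(A)\pp_x^{\rm BM}(B)$; the reverse-type bound we need rests on the combination of two facts — that $B$ controls exclusively the upward motion, and the monotonicity-in-the-barrier afforded by the conditional FKG inequality of the Appendix — and the concluding two-barrier lower bound has to be handled with some care because the starting point $x$ and the level $b$ are both unbounded.
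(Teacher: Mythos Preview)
Your proposal is correct and follows essentially the same route as the paper. Both proofs (i) discard the vacuous constraints on $[-\delta,b)$, (ii) replace the left-side event $A$ by a single-level event of the form $\{\tau_{u}\ge 1\}$ for some $u<x$ (you take $u=\delta/2$, the paper takes $u=-\delta$; this is cosmetic), (iii) use the conditional FKG inequality of the Appendix to pass from conditioning on $B$ to conditioning on the most restrictive barrier $B_0=\{\tau_b\ge 1\}$, and (iv) bound the resulting two-barrier ratio uniformly in $b$ and $x$.

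One small point of precision: what the Appendix actually proves is positive correlation of increasing events under the measure $\pp_x(\,\cdot\mid B)$, i.e.\ $\pp_x(\mathcal{A}\cap\mathcal{D}\mid B)\ge \pp_x(\mathcal{A}\mid B)\,\pp_x(\mathcal{D}\mid B)$ for increasing $\mathcal{A},\mathcal{D}$. The monotonicity-in-barrier you invoke, $\pp_x(\mathcal{A}\mid B)\ge \pp_x(\mathcal{A}\mid B_0)$, is not stated there directly; the paper derives it from the positive-correlation form by taking $\mathcal{D}=B_0^c$ and a short algebraic manipulation using $B_0\subset B$. This is a two-line deduction, so your plan goes through, but you should spell it out rather than cite the Appendix for that form. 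The final two-barrier bound you sketch is done in the paper exactly as you anticipate: shift to start at $0$, take the infimum over the distance to the upper barrier, and handle the small-distance regime by a gambler's-ruin (harmonic measure) argument combined with the strong Markov property.
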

\begin{lem}\label{lem:comp} Let 
	 $b>a>0$ and  $f,\tilde{f}$ two increasing functions on  $[0,\infty)$ satisfying $\tilde{f}(x) \leq f(x)$ for any $x\geq a$. Let $\ell_{b,a}:=f(b)-f(a)$ and $\tilde{\ell}_{b,a}:=f(b)-\tilde{f}(a)$. It holds:
 \begin{align*}
(\tilde{\ell}_{b,a})^{3/2}\mathbb {P}_a^{\rm BM}(\tau_x \geq \tilde{f}(x) -\tf(a)\quad\forall x\geq b)
		\geq (\ell_{b,a})^{3/2}  \mathbb{P}_a^{\rm BM}(\tau_x\geq f(x) -f(a)\quad\forall x \geq b).
	\end{align*}
\end{lem}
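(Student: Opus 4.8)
The plan is to condition on the first hitting time $\tau_b$ of level $b$ by a Brownian motion started at $a$, and to exploit the explicit density of $\tau_b$ to produce the factors $\ell^{3/2}$ and $\tilde\ell^{3/2}$. Recall that for $0<a<b$, setting $c:=b-a$, the hitting time $\tau_b$ has density $\rho_c(t)=\tfrac{c}{\sqrt{2\pi}\,t^{3/2}}\,e^{-c^2/(2t)}$ under $\pp_a^{\rm BM}$. Writing $P$ and $\tilde P$ for the two Brownian probabilities appearing in the statement, the strong Markov property at $\tau_b$ gives
\begin{align*}
P&=\int_{\ell}^{\infty}\rho_c(t)\,\psi(t-\ell)\,\dd t,\qquad \psi(s):=\pp_b^{\rm BM}\bigl(\tau_x\geq (f(x)-f(b))-s\quad\forall x\geq b\bigr),\\
\tilde P&=\int_{\tilde m}^{\infty}\rho_c(t)\,\tilde\psi(t-\tilde m)\,\dd t,\qquad \tilde\psi(\sigma):=\pp_b^{\rm BM}\bigl(\tau_x\geq (\tf(x)-\tf(b))-\sigma\quad\forall x\geq b\bigr),
\end{align*}
where $\tilde m:=\tf(b)-\tf(a)$ and the lower limits come from the constraint at $x=b$ (here one uses the decomposition $\tau_x=\tau_b+\tau_x\circ\theta_{\tau_b}$ for $x\geq b$). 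If $\ell=0$ the claimed inequality is trivial, so I may assume $\ell>0$; note also $\ell\leq\tilde\ell$ always, since $\tf(a)\leq f(a)$.

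The key point I would then isolate is the identity $\Delta:=\tilde\ell-\tilde m=f(b)-\tf(b)\geq 0$. Substituting $t=\ell+s$ in the first integral and $t=\tilde\ell+s$ in the second, and dropping the (non-negative) contribution of $s\in[-\Delta,0)$ in the second, one obtains
\begin{align*}
\ell^{3/2}P=\frac{c}{\sqrt{2\pi}}\int_0^\infty \alpha(\ell,s)\,\psi(s)\,\dd s,\qquad
\tilde\ell^{3/2}\tilde P\geq \frac{c}{\sqrt{2\pi}}\int_0^\infty \alpha(\tilde\ell,s)\,\tilde\psi(s+\Delta)\,\dd s,
\end{align*}
with $\alpha(L,s):=\dfrac{L^{3/2}}{(L+s)^{3/2}}\,e^{-c^2/(2(L+s))}$. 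Hence it is enough to prove the pointwise inequality $\alpha(\tilde\ell,s)\,\tilde\psi(s+\Delta)\geq \alpha(\ell,s)\,\psi(s)$ for every $s\geq 0$, which I would split into two independent estimates.

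First, $\alpha(L,s)$ is non-decreasing in $L$ for fixed $s,c\geq0$: differentiating $\log\alpha$ in $L$ gives $\tfrac32\bigl(\tfrac1L-\tfrac1{L+s}\bigr)+\tfrac{c^2}{2(L+s)^2}\geq 0$, so $\alpha(\tilde\ell,s)\geq\alpha(\ell,s)$ since $\tilde\ell\geq\ell$. Second, $\tilde\psi(s+\Delta)\geq\psi(s)$: indeed, for $x\geq b$ one has $(\tf(x)-\tf(b))-(s+\Delta)=\tf(x)-f(b)-s\leq f(x)-f(b)-s$ because $\tf\leq f$ on $[a,\infty)\supseteq[b,\infty)$, so the event defining $\psi(s)$ is contained in the one defining $\tilde\psi(s+\Delta)$. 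Combining the two estimates and integrating yields the lemma.

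The only genuinely delicate part is the bookkeeping around the shift $\Delta$: the same quantity $\Delta=f(b)-\tf(b)$ must simultaneously move the lower endpoint of the $\tilde P$-integral from $\tilde m$ up to $\tilde\ell$ inside the hitting-time density, and lower the threshold of $\tilde\psi$ below that of $\psi$; it is exactly the identity $\tilde\ell-\tilde m=f(b)-\tf(b)$ that makes both happen at once. All the remaining ingredients --- the density of $\tau_b$, the strong Markov decomposition at $\tau_b$, and measurability of events of the form $\{\tau_x\geq h(x)\ \forall x\geq b\}$ for monotone $h$ (together with the elementary estimate $\pp_0^{\rm BM}(\tau_u\geq t)\asymp 1\wedge (u/\sqrt t)$ of Lemma~\ref{lem:maxbrown} if one prefers to avoid the explicit density in intermediate bounds) --- are standard and already used elsewhere in the paper.
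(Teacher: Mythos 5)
Your proof is correct, but it takes a genuinely different route from the paper's. The paper first reduces the claim to the case where $\tilde f$ is replaced by $f$ on all of $[b,\infty)$ (so that only the baseline $\tilde f(a)$ vs.\ $f(a)$ differs), then conditions the Brownian path at the \emph{deterministic} time $\ell_{b,a}$ (resp.\ $\tilde\ell_{b,a}$) and uses the joint density of $(B_t,M_t)$, so that the inner ``tail'' functionals are literally identical and the comparison reduces to the elementary inequality $e^{-(2t-s)^2/(2\ell_{b,a})}\leq e^{-(2t-s)^2/(2\tilde\ell_{b,a})}$. You instead condition at the \emph{random} hitting time $\tau_b$ and use its marginal density $\rho_c(t)\propto t^{-3/2}e^{-c^2/(2t)}$, keeping $\tilde f$ in play and absorbing the discrepancy $f(b)-\tilde f(b)$ into a shift $\Delta$ that simultaneously adjusts the lower limit of integration and the threshold of $\tilde\psi$. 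Both arguments extract the $\ell^{3/2}$ factor from the same stable-$(1/2)$ behaviour, but they get it from different densities; your version avoids the reflection-principle joint law of $(B_t,M_t)$ at the cost of two small extra verifications (the monotonicity of $\alpha(L,s)$ in $L$, which is actually immediate since both $L/(L+s)$ and $e^{-c^2/(2(L+s))}$ are non-decreasing, and the containment $\tilde\psi(s+\Delta)\geq\psi(s)$, which is exactly the $\tilde f\leq f$ hypothesis on $[b,\infty)$). The paper's initial reduction is the mirror image of your final containment step, applied \emph{before} conditioning rather than \emph{after}. Either presentation would fit the surrounding exposition equally well.
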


\subsection{Proof of Proposition \ref{prop:s}}\label{subsect:approx_pf}
 Since the inequality
$
r_* \leq \inf\{E(f):f \in \mathcal{C}^{\rm Step}(1)\}
$
is always true, 
we now focus on proving  the converse inequality, that is 
\begin{align}\label{eq:xi=1}
	r_* \geq \inf \{ E(f):f \in \mathcal{C}^{\rm Step}(1)\}.
\end{align}
Given $M,\eta>0$, we denote by $\overline{\mathcal{C}}(M,\eta)$ the set of all functions $g:\R \to [0,\infty)$ satisfying 
\begin{itemize}
    \item $g$ is increasing in $[0,\infty)$ and  is decreasing in $(-\infty,0]$,
    \item $g|_{(-\infty,-M]}\equiv const, \, g|_{[M,\infty)} \equiv const$, 
			  $g|_{[-\eta,\eta]} \equiv 0$,  $\| g \|_\infty:=\sup_{x \in \R}g(x) \leq 1.$
\end{itemize}
From now on, fix an arbitrary $\epsilon>0$ and take $f \in \mathcal{C}(1)$ such that
\begin{align}\label{eq:choice_f}
	E(f) \leq r(1) +\epsilon.
\end{align}
Let  $\delta \in (0,1/4)$ be small enough so that	\begin{align}\label{eq:choice_delta}
		\frac{E(f)+4\delta}{\sqrt{1-4\delta}} \leq r(1) +2\epsilon,
	\end{align}
We prepare several claims that will be proved in the subsequent sections. The following proposition says that $f$ can be approximated by a function in $\bigcup_{M,\eta>0}\overline{\mathcal{C}}(M,\eta)$ with lower energy.
\begin{prop}\label{prop:truncation}
	There exist $M,\eta>0$ and $g_0 \in \overline{\mathcal{C}}(M,\eta)$  depending on $\delta$ and $f$ such that $$E(g_0) \leq E(f)\text{, and } f(x)-\delta \leq g_0(x) \leq f(x)\quad \forall x  \not \in [-M,M].$$ 
\end{prop}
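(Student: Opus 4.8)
\textbf{Proof strategy for Proposition~\ref{prop:truncation}.}
The plan is to build $g_0$ from $f$ in two stages: first truncate the tails so that $f$ becomes constant outside a large compact interval $[-M,M]$, and then flatten $f$ near the origin so that it vanishes on a small neighborhood $[-\eta,\eta]$ of $0$. Both operations must be carried out with a hard deformation (or a controlled variant of it), so that Lemma~\ref{lem:hd}-(i) gives $E(g_0)\le E(f)$ for free, since hard deformations only decrease $f$ pointwise and preserve the ordering $(g(x)-g(y))_+\le (f(x)-f(y))_+$. The only genuine content is to check that the tail truncation changes $f$ by at most $\delta$ on the part of $\R$ where we are not allowed free modification, i.e.\ outside $[-M,M]$, and that the flattening near $0$ does not disturb $f$ outside $[-M,M]$ at all.

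For the tail truncation: since $f\in\mathcal{C}(1)$ we have $\lim_{x\to\infty}f(x)=1$ and $\lim_{x\to-\infty}f(x)=\sup_{y<0}f(y)=:a\le 1$, so we may choose $M=M(\delta,f)$ so large that $f(M)\ge 1-\delta$ and $f(-M)\ge a-\delta$. Define $g_1$ by setting $g_1\equiv f(M)$ on $[M,\infty)$, $g_1\equiv f(-M)$ on $(-\infty,-M]$, and $g_1=f$ on $[-M,M]$. Then $g_1$ is still monotone on each half-line, $g_1\le f$ everywhere, and $(g_1(x)-g_1(y))_+\le (f(x)-f(y))_+$ for all $x,y$ (one checks the few cases depending on which half-lines $x,y$ lie in, using monotonicity of $f$), so $E(g_1)\le E(f)$ by Lemma~\ref{lem:hd}-(i); moreover $f(x)-\delta\le g_1(x)\le f(x)$ for all $x\notin[-M,M]$ by the choice of $M$. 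Note $g_1$ need not satisfy $g_1|_{[-\eta,\eta]}\equiv 0$ yet, and $\lim_{x\to 0}g_1(x)=0$ forces $g_1$ to be $0$ only at the single point $0$.

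For the flattening near $0$: apply a hard deformation over the interval $I=[-\eta,\eta]$ for some small $\eta>0$, i.e.\ set $g_0=g_1$ outside $[-\eta,\eta]$ and $g_0\equiv\min_{|y|\le\eta}g_1(y)=g_1(0)=0$ on $[-\eta,\eta]$ (the minimum is $0$ because $g_1\ge0$ and $g_1(0)=0$). Then $g_0\le g_1\le f$, the ordering condition is again preserved, hence $E(g_0)\le E(g_1)\le E(f)$; $g_0$ is still monotone on each half-line, constant outside $[-M,M]$ (since $\eta<M$ and $g_0=g_1$ there), identically $0$ on $[-\eta,\eta]$, and $\|g_0\|_\infty\le\|f\|_\infty\le 1$, so $g_0\in\overline{\mathcal C}(M,\eta)$. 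Crucially, since $[-\eta,\eta]\subset[-M,M]$, the flattening step does not touch $g_1$ outside $[-M,M]$, so the estimate $f(x)-\delta\le g_0(x)=g_1(x)\le f(x)$ for $x\notin[-M,M]$ persists. This completes the construction.

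\textbf{Main obstacle.} There is no deep obstacle here; the proposition is essentially a bookkeeping lemma. The one point requiring a little care is the verification of $(g(x)-g(y))_+\le (f(x)-f(y))_+$ at each stage, since this (via Lemma~\ref{lem:hd}-(i)) is what guarantees the energy does not increase — one must run through the cases $x,y\in[-M,M]$, $x,y$ on the same tail, and $x,y$ on opposite sides, invoking monotonicity of $f$ on each half-line and the fact that the deformations only lower $f$. A second minor point is that the choice of $\eta$ is completely free (any $\eta\in(0,M)$ works for this proposition); the actual constraints on $\eta$ are imposed later, so here we simply fix, say, $\eta=1$ or any convenient small value.
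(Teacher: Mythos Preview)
Your argument contains a genuine gap: the hard deformation does \emph{not} in general satisfy the ordering condition $(g(x)-g(y))_+\le(f(x)-f(y))_+$ that Lemma~\ref{lem:hd}-(i) requires. Lowering $f$ only on a set $I$ can \emph{increase} the difference $g(x)-g(y)$ whenever $y\in I$ and $x\notin I$, since you have decreased $g(y)$ but left $g(x)$ unchanged. Concretely, in your first step take $x\in[-M,0]$ with $f(x)>f(M)$ (which is possible whenever $\sup_{z<0}f(z)>f(M)$, and the condition $f(M)\ge 1-\delta$ does not exclude this) and take $y>M$ with $f(y)>f(x)$. Then
\[
(g_1(x)-g_1(y))_+=(f(x)-f(M))_+=f(x)-f(M)>0,\qquad (f(x)-f(y))_+=0,
\]
so the ordering fails. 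The same defect appears in your second step: for $y\in(0,\eta)$ with $g_1(y)>0$ and $x>\eta$,
\[
(g_0(x)-g_0(y))_+=g_1(x)>g_1(x)-g_1(y)=(g_1(x)-g_1(y))_+.
\]
Thus Lemma~\ref{lem:hd}-(i) cannot be invoked, and the inequality $E(g_0)\le E(f)$ is not established.

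The paper avoids this by applying the \emph{soft} deformation of Lemma~\ref{lem: deformation of function} over $\mathcal J=\{(-\infty,-L],[-\eta,\eta],[L,\infty)\}$. The soft deformation does not merely flatten $f$ on each $I\in\mathcal J$; it also subtracts $\Delta_I$ from $f$ at every point where $f$ exceeds $M_I$. This global downward shift is exactly what preserves all pairwise differences, and Lemma~\ref{lem: deformation of function}-(ii) then gives both $(g_0(x)-g_0(y))_+\le(f(x)-f(y))_+$ (hence $E(g_0)\le E(f)$) and $g_0(x)\ge f(x)-\sum_{I}\Delta_I(f)\ge f(x)-\delta$. Your hard deformation cannot be repaired simply by choosing $M$ larger, since one may have $\sup_{z<0}f(z)=1$ while $f(M)<1$ for every finite $M$.
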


 Let $n\in\N$. Dividing the interval $[-M,M]$ into  subintervals of length $1/n$, we define 
\be{
	\kI:= \kI^+ \cup \kI^-:=\left\{ \left[ \frac{i-1}{n},\frac{i}{n} \right):~i\in \Iintv{1,Mn} \right\}\cup \left\{ \left( \frac{-i}{n},\frac{-i+1}{n} \right]:~i\in \Iintv{1,Mn}  \right\},
}
Let $C_* := 4/\delta.$ We also define
\begin{align}\label{dc1}
	\mathcal{L}^\pm:=\left\{ I \in \mathcal{I}^\pm: \Delta_I(g_0) \geq C_* \frac{\log n}{n} \right\}, \qquad \kL :=\kL^+ \cup \kL^-.
\end{align}
\begin{prop}\label{prop:largeheight}
	For $n\in\N$ large enough, there exists $g_1 \in \overline{\mathcal{C}}(M,\eta)$ so that the following hold:
	\begin{enumerate}
		\renewcommand{\labelenumi}{(\alph{enumi})}
		\item\label{item:largeheight_a}
		$g_1|_{I} \equiv const$ for  $I \in \kL$,  $g_1|_I = g_0|_I$ for  $I \in \kI \setminus \kL$, and  $\Delta_I(g_1) < (C_* \log n)/n$ for $I \in \kI$.
		\item\label{item:largeheight_b}
		 $g_1(x) \leq g_0(x)$ for all $x \in \R$ and $g_1(x)=g_0(x)$ for $x \not\in [-M,M]$.
		\item\label{item:largeheight_c}
		$E(g_1) \leq E(g_0)+\delta$.
	\end{enumerate}
\end{prop}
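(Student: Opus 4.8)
\textbf{Proof plan for Proposition~\ref{prop:largeheight}.}

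The plan is to obtain $g_1$ from $g_0$ by applying the hard deformation over the family $\kL$ of large-height intervals, i.e.\ to set $g_1:=g_0^{\mathrm{hd},\kL}$. With this definition, parts~\eqref{item:largeheight_a} and \eqref{item:largeheight_b} will be essentially immediate from the construction of the hard deformation and Lemma~\ref{lem:hd}: on each $I\in\kL$ the function $g_1$ equals the constant $\inf_I g_0$, so $g_1|_I$ is constant and $\Delta_I(g_1)=0<(C_*\log n)/n$; on the intervals $I\in\kI\setminus\kL$ we have not touched $g_0$, so $g_1|_I=g_0|_I$ and $\Delta_I(g_1)=\Delta_I(g_0)<(C_*\log n)/n$ by the very definition of $\kL$ in \eqref{dc1}; and outside $[-M,M]$ the deformation does nothing, so $g_1=g_0$ there, while globally $g_1\le g_0$. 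Monotonicity, the vanishing on $[-\eta,\eta]$ and the sup bound $\|g_1\|_\infty\le 1$ are inherited from $g_0$ because the hard deformation only lowers values and replaces $g_0$ on each $I$ by a value in the range of $g_0|_I$; hence $g_1\in\overline{\mathcal C}(M,\eta)$. One minor point to check is that the construction is consistent when several intervals of $\kL$ abut (the $\kI^\pm$ tiling is by half-open intervals, so they are disjoint, and the hard deformation is defined over the disjoint union $\cup_{I\in\kL}I$ directly), and that $g_1$ remains monotone across a boundary between a deformed and an undeformed interval; this follows since $\inf_I g_0\le g_0(x)$ for $x$ just outside $I$ on the higher side and $\inf_I g_0\ge g_0(x)$ on the lower side, using monotonicity of $g_0$.

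The substance of the proof is part~\eqref{item:largeheight_c}, the energy estimate $E(g_1)\le E(g_0)+\delta$. By Lemma~\ref{lem:hd}-(iii) applied to $g_1=g_0^{\mathrm{hd},\kL}\le g_0$,
\begin{align*}
	E(g_1)-E(g_0)\ \ge\ \sum_{I\in\kL}\int_I \log\frac{\theta_{g_1}(x)}{\theta_{g_0}(x)}\,\dd x\ \ge\ \sum_{I\in\kL}\int_I \log\theta_{g_1}(x)\,\dd x,
\end{align*}
since $\theta_{g_0}(x)\le 1$. So it suffices to lower bound $\int_I\log\theta_{g_1}(x)\,\dd x$ on each large-height interval $I\in\kL$. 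On such $I$, $g_1$ is constant, $g_1$ vanishes on $[-\eta,\eta]$, and $I\subset[\eta,M]$ (or its reflection); by Lemma~\ref{lem:ning} there is a constant $\tilde c=\tilde c(\eta)>0$ with $\theta_{g_1}(x)\ge\tilde c\,(\sup I-x)$ for $x\in I$, hence $\log\theta_{g_1}(x)\ge\log\tilde c+\log(\sup I-x)$. Integrating over $I$ of length $1/n$ gives $\int_I\log\theta_{g_1}(x)\,\dd x\ge \tfrac1n\log\tilde c+\int_0^{1/n}\log t\,\dd t=\tfrac1n(\log\tilde c-\log n-1)$, i.e.\ $(\kO(1)+\log n)/n$ in absolute value as in \eqref{engg2}. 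Finally, the number of large-height intervals is bounded: each $I\in\kL$ contributes at least $C_*\log n/n$ to $\Delta_I(g_0)$, and since $g_0$ is monotone on each of $(-\infty,0]$ and $[0,\infty)$ with total oscillation at most $\|g_0\|_\infty\le 1$ on each side, $\sum_{I\in\kI^\pm}\Delta_I(g_0)\le 1$, whence $|\kL|\le 2n/(C_*\log n)$. Combining,
\begin{align*}
	E(g_0)-E(g_1)\ \le\ |\kL|\cdot\frac{\log n+1-\log\tilde c}{n}\ \le\ \frac{2}{C_*\log n}\bigl(\log n+1-\log\tilde c\bigr)\ \le\ \frac{4}{C_*}=\delta
\end{align*}
for $n$ large enough, using $C_*=4/\delta$ and that $(1-\log\tilde c)/\log n\to 0$.

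The main obstacle is the per-interval lower bound on $\theta_{g_1}$: one must ensure that Lemma~\ref{lem:ning} is genuinely applicable, which requires $g_1$ to vanish on a fixed symmetric neighbourhood $[-\eta,\eta]$ of the origin (guaranteed by $g_1\in\overline{\mathcal C}(M,\eta)$ and $g_1\le g_0$, noting $g_0$ already vanishes there) and $g_1$ to be constant on $I$ with $I\subset[\eta,\infty)$ — which forces us to verify that no large-height interval can intrude into $(-\eta,\eta)$, true because $\Delta_I(g_0)=0$ there. Care is also needed for the finitely many intervals $I\in\kL$ whose reflection lies in $(-\infty,-\eta]$: there one applies Lemma~\ref{lem:ning} to the reflected function, which is legitimate since $\theta_f$ only depends on $f$ through the differences $(f(y)-f(x))_+$ and these are symmetric under $x\mapsto -x$ once $f$ is replaced by its reflection. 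Everything else is the routine bookkeeping of $\int_0^{1/n}\log t\,\dd t$ and the counting bound $|\kL|\le 2n/(C_*\log n)$.
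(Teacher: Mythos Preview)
Your approach is exactly the paper's: set $g_1:=g_0^{\mathrm{hd},\kL}$, read off (a) and (b) from the construction, and for (c) bound $\theta_{g_1}(x)\ge\tilde c\,(\sup I-x)$ on each $I\in\kL$ via Lemma~\ref{lem:ning}, integrate, and combine with the counting bound $\#\kL^\pm\le n/(C_*\log n)$.

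There is, however, a sign slip in your energy chain that makes the final conclusion the wrong inequality. The displayed statement of Lemma~\ref{lem:hd}(iii) carries a typo (test it with $g\equiv 0$); the inequality that actually follows from $g\le f$ is
\[
E(f)-E(g)\ \ge\ \int_{\{g<f\}}\log\frac{\theta_g}{\theta_f}\,\dd x,
\]
and this is how the paper uses it in \eqref{eq:Eg1-Eg2}: $E(g_0)-E(g_1)\ge\sum_{I\in\kL}\int_I\log\theta_{g_1}$. Your per-interval lower bound then yields $E(g_0)-E(g_1)\ge -\delta$, i.e.\ $E(g_1)\le E(g_0)+\delta$, which is (c). What you wrote, $E(g_0)-E(g_1)\le\delta$, is not (c): it bounds $E(g_1)$ from \emph{below}, and does not exclude $E(g_1)\gg E(g_0)$ (a real possibility under a hard deformation, since lowering $g$ on an interval can enlarge some increments $(g(y)-g(x))_+$). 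With the sign corrected your argument matches the paper line for line.
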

We define
\ben{ \label{dom}
	\kM :=\left\{I \in \kI:~
\frac{c_* \log n}{n}  \leq \Delta_I(g_1) < \frac{C_* \log n}{n} \right\}, \quad \text{ with}\quad c_*:=\frac{\delta}{16(E(f)+1)}<C_*.
}

\begin{prop}\label{prop:rellarge}
For $n\in\N$ large enough, there exists $g_2 \in \overline{\mathcal{C}}(M,\eta)$ so that the following hold:
	\begin{enumerate}
		\renewcommand{\labelenumi}{(\alph{enumi})}
		\item $g_2|_I \equiv const$ for $I \in \kM \cup \kL$ and  $\Delta_I(g_2)<(c_*\log n)/n$ for $I \in \mathcal{I}$.
		\item $g_2(x) \leq g_1(x)$ for all $x \in \R$ and $g_2(x) \geq g_1(x)-\delta$ for $x \not\in [-M,M]$.
		\item $E(g_2) \leq E(g_1)+\delta$.
	\end{enumerate}
\end{prop}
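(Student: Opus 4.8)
\textbf{Proof plan for Proposition~\ref{prop:rellarge}.}
The idea is to handle the intervals of \emph{moderate height} (those in $\kM$) by grouping them into clusters and then, for each cluster, choosing between the soft and hard deformations depending on how ``dense'' the cluster is. Concretely, I would first organize $\kM$ into maximal runs of consecutive intervals of $\kI$ all lying in $\kM\cup\kL$: write $\kM$ as a disjoint union of \emph{blocks} $\kG_1,\dots,\kG_p$, where each $\kG_r$ consists of all intervals of $\kM$ inside a maximal interval $J_r\subset[-M,M]$ whose subintervals all belong to $\kM\cup\kL$ (equivalently, $J_r$ is a maximal connected union of $\kI$-intervals none of which is ``small height'' for $g_1$). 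Since consecutive intervals inside $J_r$ abut, $\Delta_{J_r}(g_1)=\sum_{I\subset J_r,\,I\in\kI}\Delta_I(g_1)$, and because $g_1$ is monotone on each side of $0$ and $\|g_1\|_\infty\le1$, we have $\sum_r \Delta_{J_r}(g_1)\le 2$ (at most $1$ on each side). Also, by definition of $\kM$, each interval in $\kG_r$ contributes at least $c_*\log n/n$ to $\Delta_{J_r}(g_1)$, while the $\kL$-intervals inside $J_r$ already satisfy $g_1|_I\equiv\text{const}$ by Proposition~\ref{prop:largeheight}(a), so $|\kG_r|\le \Delta_{J_r}(g_1)\,n/(c_*\log n)$.

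Next, for each block $J_r$ I would decide the deformation by a size-vs-height criterion. Let $\lambda_r:=|J_r|$ (a multiple of $1/n$) and $h_r:=\Delta_{J_r}(g_1)$. If $\lambda_r$ is ``large relative to $h_r$'', say $\lambda_r \ge \sqrt{h_r}\,/\,(\text{something})$, apply the \emph{hard deformation} on $J_r$: the energy loss is controlled by Lemma~\ref{lem:hd}(iii), $E(g^{\mathrm{hd}})-E(g_1)\ge \int_{J_r}\log(\theta_{g^{\mathrm{hd}}}/\theta_{g_1})\,dx$, and using Lemma~\ref{lem:hd}(ii) together with the lower bound of Lemma~\ref{lem:ning} (applicable since $g_1$ vanishes on $[-\eta,\eta]$ and is constant on $J_r$ after the deformation) one bounds this ratio from below, getting a loss of order $\lambda_r\log(1/\lambda_r)+\kO(\lambda_r)$ — this is why the hard choice is only affordable when $\lambda_r$ is not too small. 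Summing over the hard blocks, $\sum_r \lambda_r$ is bounded because these blocks are disjoint subintervals of $[-M,M]$, and the total loss is $\le \sum_r \kO(\lambda_r\log(1/\lambda_r))$; one then chooses the threshold and $n$ so this sum is $\le\delta/2$. For the remaining blocks (those too ``thin''), apply the \emph{soft deformation} of Lemma~\ref{lem: deformation of function} over the family $\kJ$ consisting of all $\kI$-subintervals of those blocks: Lemma~\ref{lem: deformation of function}(ii) gives $E(g_2)\le E(g_1)$ immediately and, pointwise, $g_2(x)\ge g_1(x)-\sum_{I\in\kJ}\Delta_I(g_1)$; but on a thin block the total height $\sum_{I\subset J_r}\Delta_I(g_1)$ is at most $h_r$, and summing the $h_r$ over thin blocks is again $\le 2$... which is \emph{not} small. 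So I would instead refine: apply the soft deformation block-by-block and note the pointwise displacement only accumulates the heights of blocks lying \emph{to the left} of $x$ on the positive side (resp. right on the negative side), but a careful accounting shows the total drop at any fixed $x\notin[-M,M]$ is still at most $\sum_r h_r\le 2$. To fix this I would only put blocks with $h_r$ small into the soft family and otherwise split: since every moderate interval has height $<C_*\log n/n\to 0$, actually each \emph{individual} $\kM$-interval is thin in height, so applying the soft deformation interval-by-interval gives displacement $\le \sum_{I\in\kM}\Delta_I(g_1)$, and I claim this sum is $\le\delta$ by the choice $c_*=\delta/(16(E(f)+1))$: indeed the number of $\kM$-intervals is $\le$ (total height $\le 2$)$\times n/(c_*\log n)$, each of height $<C_*\log n/n$, giving $\sum\Delta_I(g_1)\le 2C_*/c_*\cdot 1 \cdot$(oops, that's a constant, not small) — so the correct bound must come from $E(g_1)$: on each $\kM$-interval $I$, Lemma~\ref{lem:hd}(ii) forces $\theta_{g_1}$ small, hence $-\log\theta_{g_1}\gtrsim \log(\sqrt{\Delta_I}/|I|)\gtrsim \log\log n$ on $I$, so $E(g_1)\gtrsim |\kM|\cdot (1/n)\cdot\log\log n$, giving $|\kM|\lesssim n E(g_1)/\log\log n$ and thus $\sum_{I\in\kM}\Delta_I(g_1)\lesssim E(g_1)\,C_*\log n/\log\log n$, which does not vanish either. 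The clean route is the clustering one: group, and split the clusters by the ratio $h_r/\lambda_r$ as above, using that for soft clusters the displacement telescopes to $\le\sum_{\text{soft }r}h_r$ and choosing the threshold so that $\sum_{\text{soft }r}h_r\le\delta$ — which is possible precisely because a cluster can fail the hardness test only if $\lambda_r<\sqrt{h_r}/K$, and for those $h_r>K^2\lambda_r^2$, but also $h_r\le \|g_1\|_\infty\le 1$ so $\lambda_r< 1/K$; summing, $\sum_{\text{soft}}h_r\le\sum_{\text{soft}} 1$ is again not obviously small, so one additionally uses the energy lower bound $E(g_1)\ge\sum_r\int_{J_r}(-\log\theta_{g_1})\ge \sum_r c\,\lambda_r\log(1/\lambda_r)\ge \sum_{\text{soft}} c\lambda_r\log K \ge \sum_{\text{soft}} c\sqrt{h_r}\log K$, forcing $\sum_{\text{soft}}\sqrt{h_r}$ small, whence $\sum_{\text{soft}}h_r\le(\max h_r)\sum\sqrt{h_r}\to0$ since $\max_r h_r\le C_*\log n/n\cdot|\kG_r|$...

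Given the delicacy above, I would structure the final write-up as: (1) define the blocks $J_1,\dots,J_p$ and record $\sum_r\lambda_r\le 2M$, $\sum_r h_r\le 2$, and $\sum_r \lambda_r\log(1/\lambda_r)\le C\,E(g_1)+C$ (the last from Lemma~\ref{lem:hd}(ii) applied across each block plus Lemma~\ref{lem:ning}); (2) fix a threshold $\rho=\rho(\delta,E(f))$ and call $J_r$ \emph{hard} if $\lambda_r\le\rho\, h_r$ and \emph{soft} otherwise — wait, this needs to be reversed depending on which deformation is cheap when; apply \emph{hard} deformation to blocks where $\lambda_r$ is small (so the $\int_{J_r}\log n$-type loss $\kO(\lambda_r\log(1/\lambda_r))$ is summably small after choosing $n$ large and then choosing $\rho$ small) and \emph{soft} deformation to the rest; (3) verify (a) directly from Lemmas~\ref{lem:hd},~\ref{lem: deformation of function}, (b) from $g^{\mathrm{hd}},\,f^{\mathrm{sd}}\le$ original and the fact that both leave $\R\setminus[-M,M]$ almost fixed (soft displacement $\le\sum_{\text{soft blocks}}h_r$, bounded by $\delta$ via the threshold and the energy bound), and (c) from $E(f^{\mathrm{sd}})\le E(f)$ for the soft part plus $\sum_{\text{hard}}\kO(\lambda_r\log(1/\lambda_r))\le\delta$ for the hard part.

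\textbf{Main obstacle.} The crux is the \emph{moderate-height bookkeeping}: I must choose the clustering threshold and the constant $c_*$ so that simultaneously (i) the hard-deformed blocks incur total energy loss $\le\delta$ — which needs $\sum_{\text{hard}}\lambda_r\log(1/\lambda_r)$ small, controlled via $E(g_1)\le E(f)+2\delta$ and Lemma~\ref{lem:ning}/Lemma~\ref{lem:hd}(ii) — and (ii) the soft-deformed blocks cause pointwise drop $\le\delta$ outside $[-M,M]$ — which needs $\sum_{\text{soft}}h_r$ small, again extracted from the energy budget. The interplay forces the specific value $c_*=\delta/(16(E(f)+1))$ recorded in \eqref{dom}, and getting all the constants to line up (and making the two competing sums both small with a single threshold) is the genuinely technical part; everything else is a routine assembly of Lemmas~\ref{lem:hd}, \ref{lem: deformation of function}, \ref{lem:ning}, \ref{lem:comp}.
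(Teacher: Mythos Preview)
Your high-level strategy (cluster the moderate intervals, then split the clusters into a ``hard'' family and a ``soft'' family according to a height-versus-length criterion) matches the paper's, but your clustering mechanism is the wrong one, and this is precisely why your bookkeeping refuses to close. Grouping $\kM$ into \emph{maximal connected runs} of $\kM\cup\kL$-intervals gives you no control on the density of moderate intervals inside each block: a block $J_r$ could contain many $\kL$-intervals (which are already flat for $g_1$) and only a handful of $\kM$-intervals, so neither $\lambda_r$ nor $h_r$ carries useful information about the energy contributed by $J_r$. This is why every inequality you try (``$\sum_{\text{soft}} h_r$ small'', ``$\sum_{\text{hard}}\lambda_r\log(1/\lambda_r)$ small'') fails to follow from the energy budget.

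The paper's clustering is quite different. Fix a large integer $K$ depending on $\delta$ and $E(g_0)$. On the positive side, start from the rightmost moderate interval and greedily sweep left: keep adding moderate intervals to the current cluster as long as the $i$-th one lies within distance $iK/n$ of the starting point; otherwise close the cluster and start a new one. If cluster $\kM_i$ contains $n_i$ moderate intervals with rightmost endpoint $t_i$, set $F_i:=[t_i-Kn_i/n,\,t_i)$. The point of this construction is a \emph{density lower bound} (Lemma~6.5(iii)): for $x\in[t_i-Kn_i/n,\,t_i-1/n]$ one has $g_1(t_i)-g_1(x)\ge \tfrac{c_*}{2K}|t_i-x|\log n$, because at least $\lfloor n|t_i-x|/K\rfloor$ moderate intervals lie in $[x,t_i]$. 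This linear growth of $g_1$ across each $F_i$ forces, via Lemma~\ref{lem:maxbrown}, that $-\log\theta_{g_1}(x)\gtrsim \tfrac12\log|t_i-x|$ on $F_i$, and integrating over all $F_i$ yields the crucial entropy-type bound
\[
\sum_i \frac{n_i}{n}\log\frac{n_i}{n}\ \ge\ -\frac{\delta}{4}
\]
(Lemma~6.6), derived from $E(g_1)\le E(f)+2\delta$ and the choice of $K$.

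The hard/soft split is then made on the $F_i$'s, not on connected runs: call $F_i$ \emph{hard} if $\Delta_{F_i}(g_1)\le C_*K^2\,(n_i/n)\log n$ and \emph{soft} otherwise. For hard $F_i$ apply the hard deformation on the whole interval $F_i$; the energy loss splits into a near-$t_i$ piece bounded by $\sum_i (n_i/n)\log(n_i/n)$ (controlled by Lemma~6.6) and a far-from-$t_i$ piece where Lemma~\ref{lem:comp} compares the post- and pre-deformation tail probabilities via the ratio $\bigl((g_1(t_i)-g_1(x))/(g_1(t_i)-\tilde g_1(x))\bigr)^{3/2}$, which is uniformly bounded below thanks to the density bound and the hard criterion. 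For soft $F_i$ apply the soft deformation on the moderate intervals inside; since $\Delta_{F_i}(g_1)>C_*K^2(n_i/n)\log n$ and $\sum_i\Delta_{F_i}(g_1)\le 2$, one gets $\sum_{\text{soft}} n_i\le 2n/(C_*K^2\log n)$, hence the total soft displacement is at most $C_*(\log n)/n\cdot K\sum_{\text{soft}} n_i\le 2/K\le\delta$.

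In short, the missing idea in your plan is the $K$-dense greedy clustering, which simultaneously manufactures the entropy inequality needed for the hard-side energy bound and the count inequality needed for the soft-side displacement bound; maximal connected blocks do neither.
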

Finally, we flatten $g_2$ over the remaining intervals  $I \in \kI$ with small height $\Delta_I(g_1) < c_* (\log n)/n$.

\begin{prop}\label{prop:smallheight}
	If $n\in\N$ is large enough, then there exists a step function $g_3 \in \overline{\mathcal{C}}(M,\eta)$ such that $E(g_3) \leq E(g_2)$ and $g_2(x)-\delta \leq g_3(x) \leq g_2(x)$ for all $x \in \R$.
\end{prop}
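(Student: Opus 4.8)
The plan is to take $g_3$ to be the soft deformation of $g_2$ over the whole family $\mathcal{I}$ of the length-$1/n$ intervals, i.e. $g_3:=g_2^{\mathrm{sd},\mathcal{I}}$ in the notation of Section~\ref{subsect:useful_est}. First I would record the properties that come for free. By Lemma~\ref{lem: deformation of function}(i), $g_3$ is constant on every $I\in\mathcal{I}$, and since the soft deformation over intervals contained in $[-M,M]$ only shifts $g_2$ uniformly outside $[-M,M]$, $g_3$ is constant on $(-\infty,-M]$ and on $[M,\infty)$ as well; hence $g_3$ is a genuine (finitely valued) step function. The soft deformation only decreases the function and preserves monotonicity and non-negativity, so $0\le g_3\le g_2\le 1$; in particular $g_3\equiv 0$ on $[-\eta,\eta]$ (where $g_2\equiv 0$), so $g_3\in\overline{\mathcal{C}}(M,\eta)$. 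Finally $E(g_3)\le E(g_2)$ is immediate from the two–point bound in Lemma~\ref{lem: deformation of function}(ii) together with Lemma~\ref{lem:hd}(i).

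Thus the whole content of the proposition is the pointwise lower bound $g_3\ge g_2-\delta$. Lemma~\ref{lem: deformation of function}(ii) only gives the crude estimate
\[
g_3(x)\ \ge\ g_2(x)-\sum_{I\in\mathcal{I}}\Delta_I(g_2)\qquad(x\in\R),
\]
and, because the soft deformation telescopes, the sum on the right could a priori be as large as $2\|g_2\|_\infty$. So the crux is to show that $\sum_{I\in\mathcal{I}}\Delta_I(g_2)<\delta$ for all $n$ large enough. I would argue as follows. First, $\Delta_I(g_2)=0$ for every $I\in\mathcal{M}\cup\mathcal{L}$ by Proposition~\ref{prop:rellarge}(a), so only the small–height intervals $I\in\mathcal{I}\setminus(\mathcal{M}\cup\mathcal{L})$ contribute. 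For such $I$ the previous hard deformations (which by Propositions~\ref{prop:largeheight}(a) and~\ref{prop:rellarge}(a) alter only the intervals of $\mathcal{L}$, resp.\ $\mathcal{M}$) leave $g_2$ equal to $g_0$, so $\Delta_I(g_2)=\Delta_I(g_0)$, and by the definition of $\mathcal{L}$ and $\mathcal{M}$ one has $\Delta_I(g_0)<(c_*\log n)/n$ there. Hence it suffices to prove that $\sum\{\Delta_I(g_0):I\in\mathcal{I},\ \Delta_I(g_0)<(c_*\log n)/n\}\to 0$ as $n\to\infty$. This is where the finiteness of the energy enters: since $E(g_0)\le E(f)<\infty$, the function $g_0$ can have no absolutely continuous increasing part (such a part would force $\theta_{g_0}(x)=0$ on a set of positive measure, because hitting times of nearby levels are of order $h^2\ll h$), so $g_0$ is the sum of a pure–jump function with summable jumps and a singular part whose increase is concentrated at small scales; in both cases the oscillation sitting on length-$1/n$ intervals of total oscillation below the threshold $(c_*\log n)/n\to 0$ tends to $0$ — for the jump part by dominated convergence, for the singular part because at scale $1/n$ the intervals of oscillation at least $(c_*\log n)/n$ already capture all but $o(1)$ of the increase. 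This gives $g_3\ge g_2-\delta$ for $n$ large and completes the proof.

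The main obstacle is precisely this last estimate. The three competing demands — that $g_3$ be a (finite) step function, that it lie within $\delta$ of $g_2$, and that $E(g_3)\le E(g_2)$ with no extra $\delta$–slack — essentially force the use of the energy–monotone soft deformation, whose unavoidable side effect is a cumulative downward drift; the only way this drift stays below $\delta$ is for the small–height intervals to carry asymptotically no total oscillation, and this is exactly what the earlier large– and moderate–height hard deformations (together with the fact that a finite–energy function cannot increase continuously on a set of positive measure) are designed to guarantee. The delicate points to be verified carefully are that $g_2$ genuinely coincides with $g_0$ off $\mathcal{M}\cup\mathcal{L}$ — so that the previous steps introduce no hidden drift — and the asymptotic negligibility of the below–threshold oscillation of $g_0$.
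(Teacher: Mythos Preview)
Your construction $g_3:=g_2^{\mathrm{sd},\mathcal I}$ is exactly the paper's (since $g_2$ is already constant on $\mathcal M\cup\mathcal L$, soft-deforming over those intervals is harmless), and the reductions you list — $g_3$ step, $g_3\in\overline{\mathcal C}(M,\eta)$, $E(g_3)\le E(g_2)$ — are correct. The gap is entirely in the drift bound $g_3\ge g_2-\delta$.

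First a minor point: Proposition~\ref{prop:rellarge}(a) does \emph{not} say that $g_2$ agrees with $g_1$ (or $g_0$) off $\mathcal M\cup\mathcal L$. The passage $g_1\mapsto\tilde g_1\mapsto g_2$ involves a hard deformation over the clusters $\mathcal F_{\mathrm h}$ (which strictly contain the moderate intervals and may swallow small-height ones) and then a soft deformation, which shifts the whole function. What survives is only $\Delta_I(g_2)\le\Delta_I(g_1)\le\Delta_I(g_0)$, so your reduction to $g_0$ is salvageable as an inequality, but your stated justification is wrong.

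The real problem is the claim that $\sum\{\Delta_I(g_0):\Delta_I(g_0)<(c_*\log n)/n\}\to 0$. Your argument for the singular-continuous part is circular: for a genuinely singular continuous monotone function the oscillations $\Delta_I$ go to $0$ uniformly as $|I|\to 0$, so for $n$ large \emph{every} interval falls below the threshold $(c_*\log n)/n$ and the below-threshold sum equals the full total variation, not $o(1)$. You would need an a priori structural theorem that finite-energy functions in $\overline{\mathcal C}(M,\eta)$ are pure jump, and nothing in the paper provides this (the observation that an absolutely continuous increasing part forces $\theta=0$ is correct but does not touch the singular continuous case).

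The paper sidesteps this completely with a second, quantitative dichotomy at scale $n^{-3/2}$: split the small-height intervals into $\mathcal S(1)=\{\Delta_I<n^{-3/2}\}$ and $\mathcal S(2)=\{n^{-3/2}\le\Delta_I<(c_*\log n)/n\}$. For $\mathcal S(1)$ the trivial count $\#\mathcal S(1)\le 2Mn$ gives total drift $O(n^{-1/2})$. For $\mathcal S(2)$ one uses Lemma~\ref{lem:hd}(ii): each $I\in\mathcal S(2)$ forces $\theta_{g_2}(x)\lesssim n^{-1/4}$ on the adjacent interval $I\mp 1/n$, contributing at least $(\log n)/(8n)$ to $E(g_2)$; hence $\#\mathcal S(2)\le 8E(g_2)\,n/\log n$, and $\sum_{I\in\mathcal S(2)}\Delta_I\le 8E(g_2)c_*\le\delta$ by the calibrated choice $c_*=\delta/(16(E(f)+1))$. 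Note that this sum is not claimed to vanish — it is merely $\le\delta$, and this is precisely why $c_*$ had to be fixed in advance as in \eqref{dom}. Your qualitative approach cannot exploit this calibration and therefore cannot close the argument.
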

Assuming these propositions, we first prove \eqref{eq:xi=1}.
\begin{proof} [\bf Proof of \eqref{eq:xi=1}] 
	Let $n \in \N$ be sufficiently large and fixed.
	By \eqref{eq:choice_f} and Propositions~\ref{prop:truncation}--\ref{prop:smallheight}, $g_3$ is a step function on $\R$, increases in $[0,\infty)$, decreases in $(-\infty,0]$, and satisfies that $\lim_{x \rightarrow 0}g_3(x)=0$ by $g_3\in \bar{\kC}(M,\eta)$ and
	\begin{align}\label{eq:prop_g3}
		\text{$E(g_3) \leq E(f)+4\delta$ and $f(x)-4\delta \leq g_3(x) \leq f(x)$ for all $x \not\in [-M,M]$.}
	\end{align}
	Let  $\alpha:=\sup_{y\geq 0} g_3(y)\geq 1-4\delta$. We consider the function
	\begin{align*}
		\phi(x):=\alpha^{-1}g_3(\sqrt{\alpha}x),\qquad x \in \R.
	\end{align*} 
	Lemma \ref{thm2}, \eqref{eq:choice_delta} and \eqref{eq:prop_g3} imply that $\phi\in \mathcal{C}^{\rm Step}(1)$ and 
	\begin{align*}
		E(\phi)
		= \frac{1}{\sqrt{\alpha}}E(g_3)
		\leq  \frac{E(f)+4\delta}{\sqrt{1-4\delta}}
		\leq r(1)+2\epsilon.
	\end{align*}
	Consequently, one has
	\begin{align*}
		\inf\{ E(f): f \in \mathcal{C}^{\rm Step}(1) \} \leq r(1)+2\epsilon.
	\end{align*}
	Since $\epsilon$ is arbitrary, \eqref{eq:xi=1} follows by letting $\epsilon \searrow 0$.
\end{proof}

\subsection{Proofs of Propositions~\ref{prop:truncation} and  \ref{prop:largeheight} }\label{subsect:pfc-l}

\subsubsection{Proof of Proposition~\ref{prop:truncation}}

Let $\delta>0$ and $f\in \mathcal{C}(1)$.
Since $\lim_{x \to \infty} f(x) = 1$ and $\lim_{x\rightarrow -\infty} f(x)$ exists, we can take   $L>0$ such that
\begin{align*}
f(L) \geq 1- \frac{\delta}{3}, \quad 	f(-L) \geq \lim_{x \to -\infty} f(x)-\frac{\delta}{3}.	
\end{align*}
We next take $\eta\in (0,\delta/3)$ small enough so that $ \max_{|x| \leq \eta} f(x) <\frac{\delta}{3}$,
which is possible thanks to $\lim_{x \rightarrow 0} f(x)=0$.
Apply Lemma~\ref{lem: deformation of function} with $\mathcal{J}:=\{ (-\infty,-L],[-\eta,\eta],[L,\infty) \}$ to obtain the deformation 
\begin{align*}
	g_0(x):=\fsd(x), \qquad M:=L+\eta.
\end{align*}
By the constructions of $\fsd$, one has 
\be{
f(x)-\delta \leq  g_0(x)  \leq f(x) \, \, \forall  \, x \in \R, \qquad  E(g_0) \leq E(f).
}
Moreover, by construction, $g_0$ belongs to $ \overline{\mathcal{C}}(M,\eta)$. \qed 

\subsubsection{Proof of Proposition~\ref{prop:largeheight}}
 Recall the notations $\mathcal{L}^\pm$ from \eqref{dc1}. We consider the hard deformation
\begin{align*}
	g_1(x) := g_0^{\textrm{hd},\kL}(x) :=
	\begin{cases}
		\inf_{y \in I} g_0(y), & \text{if $x \in I$ for some $I \in \mathcal{L}$}\\
		g_0(x), & \text{otherwise}.
	\end{cases}
\end{align*}
Clearly, $g_1 \in \overline{\mathcal{C}}(M,\eta)$ holds since $g_0 \in \overline{\mathcal{C}}(M,\eta)$. Moreover, Properties~(a) and (b) of Proposition~\ref{prop:largeheight} are trivial from the construction. Finally, we check Property (c). Since $g_0$ is equal to zero on $[-\eta,\eta]$, we have $I \subset \R_+ \setminus [0,\eta/2]$ for all  $I \in \mathcal{L}^+$ when $n\in\N$ is large enough depending on $\eta$.
Hence, by Lemma~\ref{lem:ning}-(ii), there exists  $c_3=c_3(\eta) \in (0, \infty)$  such that for all $I \in \mathcal{L}^+$ and  $x \in I$,
\be{
	\theta_{g_1}(x) \geq c_3 (\sup I -x).	
}
Thus for all $n\in\N$ large enough depending on $c_3$,
\be{
	\int_I \log \theta_{g_1}(x )\, \dd x\geq \int_0^{1/n}\log{(c_3\, x)}\dd x
	\geq -2 (\log n)/n.
}
By considering $h(x)=g_1(-x)$, we  obtain the same estimate for all $I \in \mathcal{L}^-$. Hence, by Lemma \ref{lem:hd},
\ben{\label{eq:Eg1-Eg2}
	E(g_0)-E(g_1)
	\geq \sum_{I \in \mathcal{L}} \int_I
	\log \theta_{g_1}(x) \, \dd x \,
	\geq -2(\#\mathcal{L}^++\#\mathcal{L}^-)\frac{\log n}{n}.
}
Note that since $0 \leq g_0 \leq 1$ and $g_0$ is monotone in $\R_-$ and $\R_+$, we have
\be{
	\sum_{I \in \kL^+}	\Delta_I (g_0) \leq 1, \quad \sum_{I \in \kL^-}	\Delta_I (g_0) \leq 1.
}
Moreover, $\Delta_I (g_0) \geq C_* (\log n)/ n$ for all $I \in \kL$. Therefore,
\begin{align*}
	\#\mathcal{L}^\pm \leq \frac{n}{C_* \log n}.
\end{align*}
This, combined with \eqref{eq:Eg1-Eg2} and  the choice of $C_*$ as in \eqref{dc1}, gives
\begin{align*}
	E(g_0)-E(g_1) \geq -\frac{4}{C_*} \geq -\delta,
\end{align*}
and Property~(c) follows.
\hfill $\square$

\subsection{Proof of Proposition~\ref{prop:rellarge}}\label{subsect:pfrellarge}
Our goal is to flatten $g_1$ over all the intervals belonging to
\begin{align*}
	\kM :=\kM^{+} \cup \kM^-, \quad  \textrm{ where } \quad 	\mathcal{M}^{\pm}:=\left\{ I \in \mathcal{I}^{\pm}: c_*(\log n)/n \leq \Delta_I(g_1) < C_* (\log n)/n\right\}.
\end{align*}


\subsubsection{Clustering of moderate intervals}
We define 
\ben{ \label{chok}
	K:=2+ \big \lfloor 	8 \delta^{-1} \big(E(g_0)+\delta+4/c_* \big)  \big \rfloor. 
} 
Given a non-empty set $\kA \subset \kM^+$, we enumerate $\mathcal{A}=\{ I_1,\dots,I_\lambda \}$ with $\inf I_1>\dots>\inf I_\lambda$, and define
\be{
\lambda (\kA) := \max \{j \in \{1, \ldots, \lambda \}:  \inf I_i >  \sup I_1 - (i/n)K  \quad \forall i\in\Iintv{1 ,j}\}.
}
Similarly, for $\kA \subset \kM^-$, we enumerate $\mathcal{A}=\{ I_{-1},\dots,I_{-\lambda} \}$ with $\sup I_{-1}<\dots<\sup I_{-\lambda}$, and define 
\be{
	\lambda(\kA) := \max\bigl\{ j \in \{1,\dots,\lambda\}:\sup I_{-i}> \inf I_{-1}-(i/n)K \quad \forall i\in\Iintv{1 ,j} \bigr\}.
}
Let 
\begin{align*}
	\Gamma(\mathcal{A}):= \begin{cases} \{ I_1,\dots,I_{\lambda(\mathcal{A})} \} & \textrm{ if } \kA \subset \kM^+ \\
		\{ I_{-1},\dots,I_{-\lambda(\mathcal{A})} \} & \textrm{ if } \kA \subset \kM^-.
	\end{cases}
\end{align*}
We set $\mathcal{M}_1:=\Gamma(\mathcal{M}^+)$ and $\mathcal{M}_{-1}:=\Gamma(\mathcal{M}^-)$, and  define
inductively,
\begin{align}
	\mathcal{M}_{i+1}:=\Gamma\Big( \mathcal{M}^+ \setminus \bigcup_{j=1}^i\mathcal{M}_j \Big) \quad \textrm{ for } i\geq 1, \qquad \mathcal{M}_{i-1}:=\Gamma\Big( \mathcal{M}^- \setminus \bigcup_{j=i}^{-1}\mathcal{M}_j \Big) \quad \textrm{ for } i\leq -1.
\end{align}
Define also 
\be{
	\ell^+ := \min \Big\{ i \geq 1: \mathcal{M}^+ \setminus \bigcup_{j=1}^i\mathcal{M}_j=\varnothing \Big \}, \qquad \ell^- := \max \Big \{i\leq -1: \mathcal{M}^- \setminus \bigcup_{j=i}^{-1}\mathcal{M}_j=\varnothing \Big \}.
}
Finally, for $-\ell^- \leq i \leq \ell^+$ , we set 
\be{
	n_i := \# \kM_i, \qquad 
	t_i := \begin{cases}
		\sup_{I \in \kM_i} \sup I& \textrm{ if } i\geq 1,\\
		\inf_{I \in \kM_i} \inf I& \textrm{ if } i\leq -1,  
	\end{cases}
	\qquad 
	F_i := \begin{cases}
		[t_i - \tfrac{K n_i}{n}, \, t_i) & \textrm{ if } i\geq 1,\\
		(t_i, \, t_i+\tfrac{K n_i}{n}] & \textrm{ if } i\leq -1,
	\end{cases}
}
with the convention that $n_0:=0$ and $F_0:=\varnothing$.

\begin{lem}\label{lem:property_F}
	For $n\in\N$ large enough, the following hold:
	\begin{enumerate}
		\item [(i)]
		It holds
		\begin{align*}
			\sum_{i=1}^{\ell^+}	n_i= \#\mathcal{M}^+ \leq \frac{n}{c_*\log n}, \qquad 			\sum_{i=\ell^-}^{-1}	n_i= \#\mathcal{M}^- \leq \frac{n}{c_*\log n}.
		\end{align*}
		\item [(ii)] The intervals $(F_i)_{i=\ell^-}^{\ell^+}$ are disjoint and do not intersect $[-\eta/4, \, \eta/4]$.
		\item [(iii)]
		For all $1 \leq i \leq \ell^+$ and $x \in [t_i-Kn_i/n, \, t_i -1/n]$,
		\begin{align*}
			g_1(t_i)-g_1(x)
			\geq \frac{c_*|t_i-x|}{2K}\log n;
		\end{align*}
		for all $\ell^- \leq i \leq -1$ and  $x \in [t_i+1/n, \, t_i+ Kn_i/n]$,
		\begin{align*}
			g_1(t_i)-g_1(x)
			\geq \frac{c_*|t_i-x|}{2K}\log n.
		\end{align*}		
	\end{enumerate}
\end{lem}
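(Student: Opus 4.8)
The statement is purely deterministic: it records geometric and analytic facts about the greedy clustering $(\mathcal{M}_i)$ and the associated intervals $(F_i)$ built from the monotone function $g_1\in\overline{\mathcal{C}}(M,\eta)$, so the proof is combinatorial bookkeeping combined with the monotonicity of $g_1$ on each half-line. The plan is to treat part~(i) first, then use it in part~(ii), and finally part~(iii).

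For (i): by construction $\Gamma(\mathcal{A})$ is a non-empty subset of $\mathcal{A}$ for every non-empty $\mathcal{A}$, so the inductive removal terminates with $\ell^+<\infty$ and the clusters $\mathcal{M}_1,\dots,\mathcal{M}_{\ell^+}$ partition $\mathcal{M}^+$; hence $\sum_{i=1}^{\ell^+}n_i=\#\mathcal{M}^+$. Since the intervals of $\mathcal{M}^+$ are pairwise disjoint subintervals of $[0,\infty)$ and $g_1$ is non-decreasing there with $\|g_1\|_\infty\le 1$, the increments telescope: $\sum_{I\in\mathcal{M}^+}\Delta_I(g_1)\le 1$. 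Each such $\Delta_I(g_1)\ge c_*\log n/n$ by definition of $\mathcal{M}^+$, so $\#\mathcal{M}^+\le n/(c_*\log n)$, and the bound for $\mathcal{M}^-$ is identical using monotonicity of $g_1$ on $(-\infty,0]$.

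For (ii): the crux is the inequality $t_{i+1}\le t_i-Kn_i/n$ for $1\le i<\ell^+$ (and its mirror for negative indices). I would set $\mathcal{A}:=\mathcal{M}^+\setminus\bigcup_{j<i}\mathcal{M}_j=\{I_1,\dots,I_\lambda\}$ ordered by decreasing left endpoint, note $\mathcal{M}_i=\{I_1,\dots,I_{n_i}\}$ with $\sup I_1=t_i$, and observe that $i<\ell^+$ forces $n_i<\lambda$; maximality in the definition $\lambda(\mathcal{A})=n_i$ then yields $\inf I_{n_i+1}\le t_i-((n_i+1)/n)K$, and since $I_{n_i+1}$ is the rightmost interval of $\mathcal{M}^+\setminus\bigcup_{j\le i}\mathcal{M}_j$ one gets $t_{i+1}=\inf I_{n_i+1}+1/n\le t_i-Kn_i/n$ (using $K\ge 2$). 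Hence $F_{i+1}$ lies entirely to the left of $F_i$, giving disjointness among positive-index and, symmetrically, negative-index intervals. To separate the two signs and $[-\eta/4,\eta/4]$, I would use that every $I\in\mathcal{M}^+$ has $\Delta_I(g_1)>0$ while $g_1\equiv 0$ on $[0,\eta]$, forcing $\sup I>\eta$ and hence $t_i>\eta$ for $i\ge1$; combined with $Kn_i/n\le K/(c_*\log n)<\eta/4$ for $n$ large (by (i)), this gives $F_i\subset(3\eta/4,\infty)$ for $i\ge1$ and symmetrically $F_i\subset(-\infty,-3\eta/4)$ for $i\le -1$, so all the $F_i$ are pairwise disjoint and none meets $[-\eta/4,\eta/4]$.

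For (iii): fix $1\le i\le\ell^+$ and $x\in[t_i-Kn_i/n,\,t_i-1/n]$, and write $\mathcal{M}_i=\{I_1,\dots,I_{n_i}\}$ with $\sup I_1=t_i$. The cluster condition $\inf I_j>t_i-jK/n$ shows $I_j\subset[x,t_i]$ whenever $j\le(t_i-x)n/K$, a range which, since $x\ge t_i-Kn_i/n$, contains $\{1,\dots,\lfloor(t_i-x)n/K\rfloor\}$; also $I_1\subset[x,t_i]$ because $x\le t_i-1/n=\inf I_1$. Using monotonicity of $g_1$ on $[0,\infty)$ and disjointness of the $I_j$, $g_1(t_i)-g_1(x)\ge\sum_{j:\,I_j\subset[x,t_i]}\Delta_{I_j}(g_1)\ge\max\{1,\lfloor(t_i-x)n/K\rfloor\}\,\tfrac{c_*\log n}{n}\ge\tfrac{(t_i-x)n}{2K}\cdot\tfrac{c_*\log n}{n}=\tfrac{c_*|t_i-x|}{2K}\log n$, where the last step uses $\max\{1,\lfloor y\rfloor\}\ge y/2$ for $y\ge 0$. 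The statement for $i\le -1$ follows by applying this to $x\mapsto g_1(-x)$. The only genuinely delicate point is the deterministic bookkeeping in (ii) — extracting $t_{i+1}\le t_i-Kn_i/n$ from the maximality defining $\lambda(\mathcal{A})$ and correctly identifying which cluster the leftover rightmost interval lands in; once that is in place, parts (ii) and (iii) are short.
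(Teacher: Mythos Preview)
Your proposal is correct and follows essentially the same route as the paper: part~(i) via telescoping the increments of $g_1$, part~(ii) via the maximality of $\lambda(\mathcal{A})$ to get $t_{i+1}<\inf F_i$ together with the bound $Kn_i/n\le K/(c_*\log n)$, and part~(iii) via counting how many moderate intervals of $\mathcal{M}_i$ lie in $[x,t_i]$. The only cosmetic difference is that the paper splits (iii) into the cases $|t_i-x|\ge 2K/n$ and $|t_i-x|<2K/n$, whereas your one-line inequality $\max\{1,\lfloor y\rfloor\}\ge y/2$ handles both at once.
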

\begin{proof}
By symmetry,	
we give a proof only for positive parts, i.e.,  $1 \leq i \leq \ell^+$. 	

(i): The equation is trivial since   $(\mathcal{M}_i)_{1 \leq i \leq \ell^+}$ is a partition of $\kM^+$. 
	On the other hand, since $g_1$ is increasing in $[0,\infty)$ and bounded by $1$, the inequality follows from
	\begin{align*}
		1 \geq \sum_{I \in \mathcal{M}^+} \Delta_I(g_1) \geq \left( c_*\frac{\log n}{n} \right)\#\mathcal{M}^+.
	\end{align*}
	
	(ii): Suppose the contrary that  there exists $i\in \Iintv{1,\ell^+}$ such that $\inf F_i \leq \eta/4$, or equivalently    $t_i-Kn_i/n \leq \eta/4$.
	Due to (i), if $n$ is large enough, then 
	\begin{align*}
		t_i \leq \frac{Kn_i}{n}+\frac{\eta}{4} \leq \frac{K}{c_*\log n}+\frac{\eta}{4} \leq \frac{\eta}{2},
	\end{align*}
	 On the other hand, 	since $g_1|_{[-\eta,\eta]} \equiv 0$, we have 
	\be{
		\eta\leq \inf_{I \in \kM^+} \sup I\leq \sup_{I \in \kM_i} \sup I \leq t_i,
	}
	which  is a contradiction. Therefore,  $\inf F_i>\eta/4$ holds for all $1 \leq i \leq \ell^+$. We now show that $(F_i)_{i=1}^{\ell^+}$ are disjoint intervals by proving that $\inf F_i >\sup F_{i+1} = t_{i+1}$ for $1 \leq i \leq \ell^+-1$. Fixing  an index $1 \leq i \leq \ell^+-1$, we denote $\mathcal{A}:=\mathcal{M}^+ \setminus \bigcup_{j=1}^{i-1}\mathcal{M}_j$. We enumerate $\mathcal{A}$ as $\kA=\{ I_1,\dots,I_\lambda \}$ with $\inf I_1>\dots>\inf I_\lambda$. Since $i<\ell^+$, $\lambda(\mathcal{A})<\lambda.$
	The definition of  $\lambda(\mathcal{A})$ gives that
	\be{
		\inf I_{\lambda(\mathcal{A})+1}
		 \leq \sup I_1-\frac{\lambda(\mathcal{A})+1}{n}K=\inf F_i -\frac{K}{n}.
	}
 
	Moreover, since $I_{\lambda(A)+1}$ is the first interval in  $\kM_{i+1}$,
	\be{
		t_{i+1} = \sup I_{\lambda(A)+1} = \inf I_{\lambda(A)+1} + \frac 1 n.
	}
	Therefore, we get that $\inf F_i > t_{i+1}=\sup F_{i+1}$. 
	
	(iii): We claim that for all $1 \leq i \leq \ell^+$ and $x \in [t_i -K n_i/n, t_i-1/n]$,
	\begin{align}\label{eq:number_M}
		\#\left\{ I \in \mathcal{M}_i: I \subset [x,t_i] \right\} \geq \frac{n}{K}|t_i-x|-1.
	\end{align}
	Assuming the above, we first conclude (iii). We fix $1 \leq i \leq \ell^+$.  If $t_i-K n_i/n\leq x\leq  t_i-2K/n$, by the definition of moderate intervals, then
	\begin{align*}
		g_1(t_i)-g_1(x)
		&\geq c_*\frac{\log n}{n} \,\#\left\{ I \in \mathcal{M}_i:I \subset [x,t_i] \right\}\\
		&\geq c_*\frac{\log n}{n}  \left( \frac{n}{K}|t_i-x|-1 \right)\\
		&= \frac{c_*|t_i-x|}{K}\log n-c_*\frac{\log n}{n} \geq \frac{c_*|t_i-x|}{2K}\log n.
	\end{align*}
	If $t_i -2K/n <x \leq t_i-1/n$, by  $g_1(t_i-1/n) \geq g_1(x)$ and $[t_i-1/n,t_i] \in \kM^+$, then 
 \begin{align*}
		g_1(t_i)-g_1(x)
		\geq g_1(t_i)-g_1(t_i-1/n) \geq c_*\frac{\log n}{n}
		\geq \frac{c_*|t_i-x|}{2K}\log n.
	\end{align*}
		Now it remains to prove \eqref{eq:number_M}.
	We  fix $1 \leq i \leq \ell^+$ and enumerate $\mathcal{A}:=\mathcal{M}^+ \setminus \bigcup_{j=1}^{i-1}\mathcal{M}_j $ as $\kA= \{ I_1,\dots,I_\lambda \}$ with $\inf I_1>\dots>\inf I_\lambda$.
	For all $1 \leq j \leq \lambda(\mathcal{A})$, since $\inf I_j>\sup I_1-\frac{j}{n}K=t_i-\frac{j}{n}K$,
	\begin{align}\label{eq:topj}
		j>\frac{n}{K}(t_i-\inf I_j).
	\end{align}
	If $\inf I_{\lambda(\mathcal{A})}<x \leq t_i-1/n$, then there exists a unique $2 \leq j(x) \leq \lambda(\mathcal{A})$ with $\inf I_{j(x)}<x \leq \inf I_{j(x)-1}$. Then, by \eqref{eq:topj}, 
	\begin{align*}
		\#\left\{ I \in \mathcal{M}_i: I \subset [x, t_i] \right\}
		&\geq 
  j(x)-1> \frac{n}{K}(t_i-\inf I_{j(x)})-1 > \frac{n}{K}|t_i-x|-1.
	\end{align*}
	If $t_i -n_iK/n \leq x \leq \inf I_{\lambda(\mathcal{A})}$, then one has
	\begin{align*}
		\#\left\{ I \in \mathcal{M}_i: I \subset [x, t_i] \right\}
		= \lambda(\mathcal{A}),\quad |t_i-x|
		\leq \frac{n_i}{n}K
		= \frac{\lambda( \mathcal{A})}{n}K.
	\end{align*}
	Therefore,
	\begin{align*}
		\#\left\{ I \in \mathcal{M}_i: I  \subset [x, t_i] \right\}
		\geq \frac{n}{K}|t_i -x|.
	\end{align*}
\end{proof}
\begin{lem}\label{lem:choice_K}
	For all $n$ large enough,
	\begin{align*}
		\sum_{i=\ell^-}^{\ell^+} \frac{n_i}{n} \log\frac{n_i}{n} \geq -\frac{\delta}{4},
	\end{align*}
	with the convention that $n_0=0$ and $0 \log 0 =0$.
\end{lem}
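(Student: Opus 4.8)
The plan is to read off the required inequality from the energy bound of Proposition~\ref{prop:largeheight}-(c) together with the geometric control on the clusters $(\kM_i)$ provided by Lemma~\ref{lem:property_F}. The key observation is that the intervals $F_i=[t_i-Kn_i/n,\,t_i)$ (for $i\geq 1$; symmetrically $F_i=(t_i,\,t_i+Kn_i/n]$ for $i\leq -1$) are pairwise disjoint by Lemma~\ref{lem:property_F}-(ii), while $-\log\theta_{g_1}\geq 0$ everywhere since $\theta_{g_1}$ is a probability. Hence
\[
E(g_1)=-\int_\R\log\theta_{g_1}(x)\,\dd x\;\geq\;\sum_i\Bigl(-\int_{F_i}\log\theta_{g_1}(x)\,\dd x\Bigr),
\]
and the left-hand side is at most $E(g_0)+\delta$ by Proposition~\ref{prop:largeheight}-(c). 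So it suffices to lower bound each $-\int_{F_i}\log\theta_{g_1}$ in terms of $\tfrac{n_i}{n}\log\tfrac{n}{n_i}$.

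For the per-cluster bound, fix $i\geq 1$ and $x\in[t_i-Kn_i/n,\,t_i-1/n]$. Lemma~\ref{lem:property_F}-(iii) gives $g_1(t_i)-g_1(x)\geq\tfrac{c_*|t_i-x|}{2K}\log n>0$, so Lemma~\ref{lem:hd}-(ii) applied with $y=t_i$ yields $\theta_{g_1}(x)\leq C|x-t_i|/\sqrt{g_1(t_i)-g_1(x)}\leq C\sqrt{2K|t_i-x|/(c_*\log n)}$. By Lemma~\ref{lem:property_F}-(i) one has $|t_i-x|\leq Kn_i/n\leq K/(c_*\log n)$, so this bound is $o(1)$ and in particular $<1$ for $n$ large, whence $-\log\theta_{g_1}(x)\geq\tfrac12\log\tfrac{c_*\log n}{2KC^2}-\tfrac12\log|t_i-x|\geq 0$. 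Integrating over $[t_i-Kn_i/n,\,t_i-1/n]$ (a routine primitive of $\log r$) and discarding the manifestly nonnegative terms leaves, for $n$ large,
\[
-\int_{F_i}\log\theta_{g_1}(x)\,\dd x\;\geq\;\frac{Kn_i}{2n}\log\frac{n}{n_i}-\frac{\log n}{n},
\]
where the last term absorbs the $\tfrac{\log\log n}{n}$ and $\tfrac1n$ error pieces; the clusters with $i\leq -1$ are handled verbatim by symmetry using the second inequality of Lemma~\ref{lem:property_F}-(iii).

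Finally, sum over all clusters. Writing $L:=\ell^+-\ell^-$ for the total number of (nonempty) clusters, each $n_i\geq 1$ together with $\sum_{i\geq 1}n_i=\#\kM^+\leq n/(c_*\log n)$ (Lemma~\ref{lem:property_F}-(i)) and its negative counterpart give $L\leq 2n/(c_*\log n)$, hence $L\cdot\tfrac{\log n}{n}\leq 2/c_*$. Combining the last display, summed, with the energy bound from the first paragraph gives, for $n$ large,
\[
\frac{K}{2}\sum_i\frac{n_i}{n}\log\frac{n}{n_i}\;\leq\;E(g_1)+L\frac{\log n}{n}\;\leq\;E(g_0)+\delta+\frac{4}{c_*}.
\]
Since the choice of $K$ in \eqref{chok} satisfies $K\geq 8\delta^{-1}\bigl(E(g_0)+\delta+4/c_*\bigr)$, this forces $\sum_i\tfrac{n_i}{n}\log\tfrac{n}{n_i}\leq\delta/4$, i.e. $\sum_i\tfrac{n_i}{n}\log\tfrac{n_i}{n}\geq-\delta/4$ (all terms with $i\neq 0$ have $0<n_i/n<1$, so the sum is negative, and the $i=0$ term vanishes by convention). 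The only delicate part is the bookkeeping of the error terms — checking that the constant $C$ from Lemma~\ref{lem:hd}-(ii), the $\log\log n$ contributions of $\log(c_*\log n)$, and the endpoint terms $\tfrac1n\log\tfrac1n$ of the $\log$-integral are genuinely lower order than $\tfrac{\log n}{n}$ per cluster, and that this latter quantity summed stays bounded thanks to $L=O(n/\log n)$; beyond this there is no real obstacle.
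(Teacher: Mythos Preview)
Your proof is correct and follows essentially the same route as the paper's: both use the disjointness of the $F_i$ from Lemma~\ref{lem:property_F}-(ii) together with the gap estimate of Lemma~\ref{lem:property_F}-(iii) to bound $-\log\theta_{g_1}(x)$ by $-\tfrac12\log|t_i-x|$ up to $o(1)$, integrate, control the number of clusters via Lemma~\ref{lem:property_F}-(i), and conclude from the choice of $K$ in \eqref{chok}. The only cosmetic differences are that the paper invokes Lemma~\ref{lem:maxbrown} directly rather than through Lemma~\ref{lem:hd}-(ii), absorbs the $\log\log n$ and constant terms into the pointwise bound $\log\theta_{g_1}(x)\leq\tfrac12\log|t_i-x|$ before integrating, and closes with $E(g_1)\leq E(f)+2\delta$ rather than $E(g_1)\leq E(g_0)+\delta$; both routes are equivalent given $E(g_0)\leq E(f)$.
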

\begin{proof}
	By Lemma~\ref{lem:property_F}-(iii), if $n\in\N$ is large enough, then for any $1 \leq i \leq \ell^+$ and $ x \in [t_i - Kn_i/n, \,   t_i -1/n]$,
	\begin{align*}
		\log 	\theta_{g_1}(x) \leq 	\log\mathbb{P}_x^{\rm BM}(\tau_{t_i} \geq g_1(t_i)-g_1(x))
		&\leq \log\mathbb{P}_x^{\rm BM} \left( \tau_{t_i} \geq \frac{c_*|t_i-x|}{2K}\log n \right)\\
		& = \frac{1}{2}\log|t_i-x| - \frac{1}{2} \log \log n + \kO(\log K) \\
		& \leq \frac{1}{2}\log|t_i-x|,
	\end{align*}
 where we have used $\pp_x^{\rm BM}(\tau_a \geq b) \asymp |a-x| /\sqrt{b}$ for $a, x \in \R$ and $b>0$, by Lemma \ref{lem:maxbrown}. 	Similarly, for any $\ell^- \leq i \leq -1$ and $x \in [t_i+1/n,t_i +Kn_i/n]$,
	\begin{align*}
		\log \theta_{g_1}(x) \leq 	\log\mathbb{P}_x^{\rm BM}(\tau_{t_i} \geq g_1(t_i)-g_1(x))
		\leq \frac{1}{2}\log|t_i-x|.
	\end{align*}
	Therefore, since $F_i$'s are disjoint by Lemma~\ref{lem:property_F}-(ii), we have
	\begin{align*}
		-E(g_1)
		&\leq \sum_{i=1}^{\ell^+}\int_{t_i-K n_i/n}^{t_i-1/n}
		\log \theta_{g_1}(x) \,\dd x \, + \, \sum_{i=\ell^-}^{-1}\int^{t_i+K n_i/n}_{t_i+1/n}
		\log \theta_{g_1}(x) \,\dd x \\
		&\leq \frac{1}{2}  \sum_{i\in \{\ell^-,\cdots,\ell^+\}\setminus\{0\}}\int_{1/n}^{Kn_i/n} \log t\,\dd t.
	\end{align*}
	A straightforward calculation shows that
	\begin{align*}
		\int_{1/n}^{K n_i/n} \log t\,\dd t&=\frac{Kn_i}{n}\log{\frac{Kn_i}{n}}-\frac{Kn_i}{n}-\frac{1}{n}\log{\frac{1}{n}}+\frac{1}{n}\\
		&\leq \frac{Kn_i}{n}\log\frac{n_i}{n}+\frac{Kn_i}{n}\log K+ \frac{\log n}{n},
	\end{align*}
 Moreover,  by Part~(i) of Lemma~\ref{lem:property_F},$$\ell^+ +\ell^-\leq \sum_{i=\ell^-}^{\ell^+}n_i =\#\mathcal{M} \leq \frac{2 n}{c_*\log n},$$ 
 for $n\in\N$  large enough. Therefore, 
	\begin{align*}
		-E(g_1)
		&\leq \frac{K}{2} \sum_{i=\ell^-}^{\ell^+}\frac{n_i}{n}\log\frac{n_i}{n} +\frac{K \log K + \log n}{n}\cdot \frac{2 n}{c_*\log n}\\
		&\leq\frac{K}{2}\sum_{i=\ell^-}^{\ell^+}\frac{n_i}{n}\log\frac{n_i}{n} 		+\frac{4}{c_*},
	\end{align*}
	and hence by the choice of $K$ as in \eqref{chok},
	\begin{align*}
		\sum_{i=\ell^-}^{\ell^+}\frac{n_i}{n}\log\frac{n_i}{n}
		&\geq -\frac{2}{K}(E(g_1)+4c_*^{-1})\\
		&\geq -\frac{2}{K}(E(f)+2\delta+4c_*^{-1}) \geq -\frac{\delta}{4},
	\end{align*}
{\NK and the lemma follows.}
\end{proof}
Recall that the function $g_1$ obtained in Proposition \ref{prop:largeheight}  satisfies 
\begin{itemize}
	\item[(a1)] $g_1 \in \overline{\mathcal{C}}(M,\eta)$
	\item [(b1)] $g_1$ is constant in each interval $I \in \kL$, and $\Delta_I (g_1) \leq C_* (\log n)/n$ for the other intervals. 
\end{itemize}
Let us  define 
\bea{
	\kF_{\rm h} &:=& \big \{ F_i: \ell^- \leq i \leq \ell^+, \, \Delta_{F_i}(g_1) \leq C_*K^2 \frac{n_i}{n} \log n \big \}\\
	\kF_{\rm s} &:=& \big \{ F_i: \ell^- \leq i \leq \ell^+, \, \Delta_{F_i}(g_1) > C_*K^2 \frac{n_i}{n} \log n \big \},
}
and 
\bea{
	\kM_{\rm h} &:=&\{I \in \kM: I \subset \kF_i \textrm{ for some } F_i \in \kF_{\rm h} \} \\
	\kM_{\rm s} &:=&\{I \in \kM: I \subset \kF_i \textrm{ for some } F_i \in \kF_{\rm s} \}.
}
Note that 
\be{
	\kF_{\rm h} \cup \kF_{\rm s} = \kF := \{F_i: \ell^- \leq i \leq \ell^+\}, \quad \kM = \kM_{\rm h} \cup \kM_{\rm s}. 
}
Our strategy is to first apply the hard deformation with   $g_1$ over the intervals in $\kF_{\rm h}$, and then apply the soft deformation over the intervals in $\kM_{\rm s}$. We finally confirm that the final function   will satisfy all of the desired conditions.

We consider the hard deformation of $g_1$ over $\kF_{\rm h}$ as 
\begin{align*}
	\tilde{g}_1(x):= g_1^{\rm hd, \kF_{\rm h}} (x)=
	\begin{cases}
		g_1(t_i-\tfrac{Kn_i}{n}) & \text{if $x \in F_i$ for some $F_i \in \kF_{\rm h}$ with $i\geq 1$},\\
		g_1(t_i+\tfrac{Kn_i}{n}) & \text{if $x \in F_i$ for some $F_i \in \kF_{\rm h}$ with $i\leq -1$},\\
		g_1(x), & \text{otherwise}.
	\end{cases}
\end{align*}

\begin{lem}\label{prop:tilde_g}
	For $n$ large enough, the following hold:
	\begin{itemize}
		\item [($\tilde{a}$1)] $\tilde{g}_1 \in \overline{\mathcal{C}}(M,\eta)$,
		\item [($\tilde{b}$1)] $\tilde{g}_1$ is constant on each interval $I \in \kF_{\rm h} \cup \kL$, and  $\Delta_I(\tilde{g}_1) \leq C_* \log n/n$ for the other intervals.
		\item [($\tilde{c}$1)] $\tilde{g}_1(x) \leq g_1(x)$ for all $x$ and $\tilde{g}_1(x) = g_1(x)$ for $x \not \in [-M,M]$.
		\item [($\tilde{d}$1)]  $			E(\tilde{g}_1)-E(g_1) \geq -\delta.$
		
	\end{itemize}
\end{lem}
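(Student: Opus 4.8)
The statement has three structural parts $(\tilde a1)$--$(\tilde c1)$, which I would dispatch directly from the construction, and the energy bound $(\tilde d1)$, which carries all the weight.

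\textbf{Structural parts.} By construction $\tilde g_1=g_1^{\mathrm{hd},\kF_{\rm h}}\le g_1$ everywhere, and $\tilde g_1=g_1$ off $\bigcup_{F_i\in\kF_{\rm h}}F_i\subset[-M,M]$. Each $F_i$ lies in one of the half-lines $[0,\infty)$, $(-\infty,0]$ and avoids $[-\eta/4,\eta/4]$ by Lemma~\ref{lem:property_F}-(ii), so replacing $g_1$ by its infimum over each $F_i$ preserves the monotonicity of $g_1$ on each half-line, keeps $0\le\tilde g_1\le 1$, and keeps $\tilde g_1\equiv 0$ on $[-\eta,\eta]$; this is $(\tilde a1)$ and $(\tilde c1)$. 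For $(\tilde b1)$: $\tilde g_1$ is constant on each $F_i\in\kF_{\rm h}$ by definition; on each $I\in\kL$ it is constant because either $I$ lies inside some flattened $F_i$ or $\tilde g_1|_I=g_1|_I$, which is constant by Proposition~\ref{prop:largeheight}-(a); and on every other $I\in\kI$ one has $\tilde g_1|_I=g_1|_I=g_0|_I$ (Proposition~\ref{prop:largeheight}-(a)), hence $\Delta_I(\tilde g_1)=\Delta_I(g_0)<C_*(\log n)/n$ since $I\notin\kL$.

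\textbf{The energy bound.} Since $\tilde g_1\le g_1$ and $\{\tilde g_1<g_1\}\subset\bigcup_{F_i\in\kF_{\rm h}}F_i$, Lemma~\ref{lem:hd}-(iii) reduces $(\tilde d1)$ to proving $\sum_{F_i\in\kF_{\rm h}}\int_{F_i}\log\bigl(\theta_{\tilde g_1}(x)/\theta_{g_1}(x)\bigr)\,\dd x\ge-\delta$. Fix $F_i\in\kF_{\rm h}$ with $i\ge1$ (the case $i\le-1$ being symmetric), set $t_i=\sup F_i$, and let $x\in F_i$. The plan is to compare the two $\theta$'s at $x$ after discarding the constraints at $y<t_i$: I apply Lemma~\ref{lem:comp} with $a=x$, $b=t_i$, $f=g_1$, $\tilde f=\tilde g_1$ (legitimate because $\tilde g_1\le g_1$ and both increase on $[x,\infty)$), then Lemma~\ref{lem:ning} on $F_i$, and then $\theta_{g_1}(x)\le\pp^{\rm BM}_x(\tau_y\ge g_1(y)-g_1(x)\ \forall y\ge t_i)$, to obtain
\[
\log\frac{\theta_{\tilde g_1}(x)}{\theta_{g_1}(x)}\ \ge\ \log c(\eta)+\tfrac32\log\bigl(g_1(t_i)-g_1(x)\bigr)-\tfrac32\log\bigl(g_1(t_i)-\tilde g_1(x)\bigr).
\]
On the bulk $x\in[t_i-Kn_i/n,\,t_i-1/n]$ I insert the two-sided control $g_1(t_i)-g_1(x)\ge\tfrac{c_*(t_i-x)}{2K}\log n$ (Lemma~\ref{lem:property_F}-(iii)) and $g_1(t_i)-\tilde g_1(x)\le\Delta_{F_i}(g_1)+C_*(\log n)/n\le C(K,C_*)\tfrac{n_i}{n}\log n$ (valid because $F_i\in\kF_{\rm h}$); the two occurrences of $\log\log n$ cancel, and integrating $\log(t_i-x)$ over this interval produces a term $\tfrac{Kn_i}{n}\log\tfrac{Kn_i}{n}$ that absorbs the $-\tfrac{Kn_i}{n}\log\tfrac{n_i}{n}$ coming from the last factor, so the bulk contribution of $F_i$ is bounded below by $-C(K,c_*,C_*)\,|F_i|$. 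The rightmost block $[t_i-1/n,t_i)$ I treat separately, using $\theta_{\tilde g_1}(x)\ge\tilde c(\eta)(t_i-x)$ and Lemma~\ref{lem:hd}-(ii) together with the fact that this block is a moderate interval, across which $g_1$ genuinely increases. Summing over $F_i\in\kF_{\rm h}$ and using $\sum_i n_i=\#\kM\le 2n/(c_*\log n)$ from Lemma~\ref{lem:property_F}-(i) bounds the total by $-C(K,c_*,C_*)\,K/(c_*\log n)$, which is $\ge-\delta$ once $n$ is large, $K$ being the constant fixed in \eqref{chok}.

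\textbf{Main obstacle.} Everything except $(\tilde d1)$ is bookkeeping, and even within $(\tilde d1)$ the bulk estimate is a clean consequence of Lemmas~\ref{lem:comp}, \ref{lem:ning} and \ref{lem:property_F}-(iii). The delicate step is the uniform control of $\int_{F_i}\log(\theta_{\tilde g_1}/\theta_{g_1})$ close to the right endpoint $t_i$: there $\theta_{\tilde g_1}(x)$ necessarily degenerates like $t_i-x$, so one must extract a compensating lower bound for $-\log\theta_{g_1}(x)$ out of the moderate-interval structure of the last block of $F_i$, and it is precisely here that the choices of $K$, $c_*$, $C_*$ and of the $\kF_{\rm h}/\kF_{\rm s}$ threshold have to be used together to keep the accumulated loss below $\delta$.
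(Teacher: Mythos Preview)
Your structural parts $(\tilde a1)$--$(\tilde c1)$ and your bulk estimate on $[t_i-Kn_i/n,\,t_i-1/n]$ are correct and essentially identical to the paper's. The gap is in the tip. Your final claimed bound $-C(K,c_*,C_*)\,K/(c_*\log n)$ would vanish as $n\to\infty$, but the contribution of the tips $[t_i-1/n,t_i)$ does not: combining $\theta_{\tilde g_1}(x)\ge\tilde c(t_i-x)$ with Lemma~\ref{lem:hd}-(ii) at $y=t_i$ gives at best $\log(\theta_{\tilde g_1}/\theta_{g_1})\ge\mathrm{const}+\tfrac12\log(g_1(t_i)-g_1(x))$, and the moderate-interval property only controls the \emph{total} increment across $[t_i-1/n,t_i)$, not $g_1(t_i)-g_1(x)$ pointwise. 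The increase can concentrate near $t_i-1/n$, leaving $g_1(t_i)-g_1(x)$ arbitrarily small on most of the block; the tip integral is then only $\ge -O((\log n)/n)$ per cluster, and summing over up to $2n/(c_*\log n)$ clusters yields a \emph{constant} loss of order $1/c_*\gg\delta$. So the argument as written does not close.

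The paper handles this with a different split and an extra lemma you do not invoke. It takes the tip to be the larger window $[t_i-n_i/n,\,t_i]$, uses only the crude $\theta_{g_1}\le1$ there, and accepts the resulting non-vanishing term $\sum_i(n_i/n)\log(n_i/n)$. That sum is then shown to be $\ge-\delta/4$ via Lemma~\ref{lem:choice_K}: one feeds the finiteness of $E(g_1)$ back through the bound $\log\theta_{g_1}(x)\le\tfrac12\log|t_i-x|$ (coming from Lemma~\ref{lem:property_F}-(iii)) over each $F_i$, obtaining $-E(g_1)\le\tfrac K2\sum_i(n_i/n)\log(n_i/n)+o(1)$, and it is exactly the choice of $K$ in \eqref{chok} that converts this into the $\delta/4$ bound. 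In your sketch the value of $K$ is never actually used---your estimate $-CK/(c_*\log n)$ would hold for any fixed $K$---so the mechanism that ties $K$ to $\delta$ is missing, and this is precisely what rescues the tip.
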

\begin{proof}
	The first three properties of $\tilde{g}_1$ are trivial from the its definition and Properties (a1), (b1) of $g_1$. We now prove the last property. 
	By Lemma \ref{lem:hd}, 
	\begin{align}\label{eq:divide_g}
		\begin{split}
		E(g_1)-	E(\tilde{g}_1)
			&\geq \sum_{\substack{i \geq 1 \\ F_i \in \kF_{\rm h}}} \int_{F_i}\log \frac{\theta_{\tilde{g}_1}(x)}{\theta_{g_1}(x)}\, \dd x + \sum_{\substack{i \leq -1 \\ F_i \in \kF_{\rm h}}} \int_{F_i}\log \frac{\theta_{\tilde{g}_1}(x)}{\theta_{g_1}(x)}\, \dd x.
		\end{split}
	\end{align}
 We decompose the first term as 
	\be{
		\sum_{\substack{i \geq 1 \\ F_i \in \kF_{\rm h}}} \int_{F_i}\log \frac{\theta_{\tilde{g}_1}(x)}{\theta_{g_1}(x)}\, \dd x \geq  \textrm{(I)}+\textrm{(II)},
	}
	where 
	\be{ 	\textrm{(I)}	:=\sum_{\substack{i \geq 1 \\ F_i \in \kF_{\rm h}}} \int^{t_i}_{t_i-n_i/n}\log \theta_{\tilde{g}_1}(x)\, \dd x, \qquad  \textrm{(II)} :=\sum_{\substack{i \geq 1 \\ F_i \in \kF_{\rm h}}} \int_{t_i-Kn_i/n}^{t_i-n_i/n}\log \frac{\theta_{\tilde{g}_1}(x)}{\theta_{g_1}(x)}\, \dd x.
	}
 For any $i \geq 1$, since $\tilde{g}_1|_{F_i}\equiv const$ and $|F_i|\leq 1$  by Lemma~\ref{lem:property_F}-(i),  Lemma \ref{lem:ning} implies that there exist positive constants $C_1, C_2\in (0,1)$ depending on $\eta$ such that for all $x \in F_i$,
	\ben{\label{eq:tilde_prob}
		1\geq \theta_{\tilde{g}_1}(x)
		\geq C_1\mathbb{P}_x^\textrm{BM}(\tau_y \geq \tilde{g}_1(y)-\tilde{g}_1(x) \text{ for all $y \geq t_i$}) \geq C_2 (t_i-x).
	}
	Therefore, 
	\bean{ \label{eoii}
		\textrm{(I)}
		\geq \sum_{i = 1}^{\ell^+} \int^{t_i}_{t_i-n_i/n}\log (C_2(t_i-x))\, \dd x 
		=  \sum_{i=1}^{\ell^+}\frac{n_i}{n}\log\frac{n_i}{n} + (\log C_2 -1) \sum_{i=1}^{\ell^+}\frac{n_i}{n}.\notag
	}
	For (II), by using \eqref{eq:tilde_prob} and Lemma~\ref{lem:comp}, we obtain
	\be{
		\frac{\theta_{\tilde{g}_1}(x)}{\theta_{g_1}(x)} \geq  C_1\frac{\mathbb{P}_x^\textrm{BM}(\tau_y \geq \tilde{g}_1(y)-\tilde{g}_1(x) \text{ for all $y \geq t_i$})}{\mathbb{P}_x^\textrm{BM}(\tau_y \geq g_1(y)-g_1(x) \text{ for all $y \geq t_i$})} \geq C_1 \left( \frac{g_1(t_i)-g_1(x)}{g_1(t_i)-\tilde{g}_1(x)} \right)^{3/2}.
	}
 Moreover, if  $x \in [t_i- Kn_i/n, t_i-n_i/n]$, Lemma~\ref{lem:property_F}-(iii) gives that
	\begin{align*}
		g_1(t_i)-g_1(x)
		\geq \frac{c_*|t_i-x|}{2K}\log n
		\geq \frac{c_*}{2K} \times \frac{n_i}{n}\log n.
	\end{align*}
	Using  the definition of $\tilde{g}_1$, and $F_i \in \kF_{\rm h}$,
	\begin{align*}
		g_1(t_i)-\tilde{g}_1(x) = 	g_1(t_i)-g_1\left(t_i-Kn_i/n \right) \leq C_*K^2 \frac{n_i}{n}\log n.
	\end{align*}
Therefore, we arrive at 
	\be{
		\frac{\theta_{\tilde{g}_1}(x)}{\theta_{g_1}(x)} \geq C_3:=  C_1 \left(\frac{c_*}{2C_*K^3}\right)^{3/2},
	}
	which together with Lemma~\ref{lem:property_F}-(i) implies that for all $n\in\N$ large enough,
	\begin{align*}
		\textrm{(II)}
		&\geq  \sum_{i =1}^{\ell^+} \frac{(K-1)n_i\log C_3}{n}  \geq \frac{(K-1)\log C_3}{{c_*\log n}}
		\geq -\frac{\delta}{4}.
	\end{align*}
Using the above estimate and \eqref{eoii}, we get 
	\begin{align*}
		\sum_{\substack{i \geq 1 \\ F_i \in \kF_{\rm h}}} \int_{F_i}\log \frac{\theta_{\tilde{g}_1}(x)}{\theta_{g_1}(x)}\, \dd x \geq 	-\frac{\delta}{4}+\sum_{i=1}^{\ell^+}\frac{n_i}{n}\log\frac{n_i}{n} + (\log C_2 -1) \sum_{i=1}^{\ell^+}\frac{n_i}{n}.
	\end{align*}
	We have the same for the negative part. Thus, by \eqref{eq:divide_g}, Lemma~\ref{lem:property_F}-(i), Lemma~\ref{lem:choice_K}, for $n$ large enough, then
	\begin{align*}
		E(g_1)-E(\tilde{g}_1)
		\geq -\frac{\delta}{2}+ \sum_{i=\ell^-}^{\ell^+}\frac{n_i}{n}\log\frac{n_i}{n} + (\log C_2 -1) \sum_{i=-\ell^-}^{\ell^+}\frac{n_i}{n}
		\geq -\delta.
	\end{align*}
\end{proof}

\begin{proof}[\bf Proof of Proposition~\ref{prop:rellarge}]
	We consider the soft deformation:
	\begin{align*}
		g_2:=(\tilde{g}_1)^{\textrm{sd},\kM_{\rm s}}.
	\end{align*}
	Clearly, $g_2 \in \overline{\mathcal{C}}(M,\eta)$ and $g_2$ satisfies the following conditions:
	\begin{itemize}
		\item $g_2(x) \leq g_1(x)$ for all $x$.
		\item $g_2|_I \equiv const$ for all $I \in \kM \cup \kL$ and  $\Delta_I(g_2) \leq c_* (\log n)/n$ for all $I \in \kI \setminus \kM \cup \kL $.
		\item $E(g_2) \leq E(\tilde{g}_1) \leq  E(g_1)+\delta$.
	\end{itemize}
	Since $\tilde{g}_1(x) =g_1(x)$	 for all $x \not \in [-M,M]$, to show Property (b) of Proposition \ref{prop:rellarge}, it  remains to prove 
	\begin{align} \label{g21de}
		g_2(x) \geq \tilde{g}_1(x)-\delta.
	\end{align}
	By the definition of $\kM_{\rm s}$ and $\kF_{\rm s}$, we have 
	\ben{
		\#\kM_{\rm s} \leq \sum_{ F_i \in \kF_{\rm s}} n|F_i| = \sum_{ F_i \in \kF_{\rm s}} Kn_i.
	}
	Combined  with the fact that $\Delta_I(\tilde{g}_1) \leq C_* (\log n)/n$ for all $I\in \kM_{\rm s}$, this yields
	\be{
		\sum_{I \in \kM_{\rm s}} \Delta_I(\tilde{g}_1) \leq \frac{C_*K \log n}{n} \sum_{ F_i \in \kF_{\rm s}} n_i.
	}
	Since $(F_i)_{i=\ell^-}^{\ell^+}$ are disjoint intervals that do not contain $0$, and $g_1$ is a monotone function both on 
  $(-\infty,0)$ and $(0, \infty)$  with $0 \leq g_1 \leq 1$, by using $\Delta_{F_i}(g_1) \geq  C_* K^2 n_i (\log n)/n$ if $F_i \in \kF_{\rm s}$,
	\be{
		2 \geq \Delta_{(-\infty,0)}(g_1) + \Delta_{(0,\infty)}(g_1) 
  \geq  \sum_{i=-\ell^-}^{\ell^+} \Delta_{F_i}(g_1)  \geq  \sum_{F_i \in \kF_{\rm s}} \Delta_{F_i}(g_1) \geq \frac{C_*K^2 \log n}{n} \sum_{F_i \in \kF_{\rm s}} n_i,
	}
	 It follows from the last two estimates that 
	\be{
		\sum_{I \in \kM_{\rm s}} \Delta_I(\tilde{g}_1) \leq 2/K.
	}
	Combining this   with Lemma~\ref{lem: deformation of function} yields that for all $x \in \R$,
	\begin{align*}
		g_2(x)
		&\geq \tilde{g}_1(x)
		-	\sum_{I \in \kM_{\rm s}} \Delta_I(\tilde{g}_1)
		\geq \tilde{g}_1(x)-2K^{-1} \geq \tilde{g}_1(x)-\delta,
	\end{align*}
and \eqref{g21de} follows.
\end{proof}

\subsection{Proof of Proposition~\ref{prop:smallheight}} Let $g_2$ be the function constructed in  Proposition~\ref{prop:rellarge}.  We have $g_2|_I \equiv const$ for all $I \in \kM \cup \kL$, so it remains to flatten $g_2$ to a step function using the soft deformation.  Define 
\begin{align*}
	\mathcal{S}(1)&:=\left\{ I \in \kI: \Delta_I(g_1)<n^{-3/2} \right\},\\
	\mathcal{S}(2)&:=\biggl\{  I \in \kI: n^{-3/2} \leq \Delta_I(g_1) < c_* (\log n)/n\biggr\}, \\
	\mathcal{S}&:= \mathcal{S}(1) \cup \mathcal{S}(2),		
\end{align*}
and consider
	$g_3:=g_2^{\textrm{sd},\mathcal{S}}.$
Thanks to Lemma~\ref{lem: deformation of function},  the condition $E(g_2) \geq E(g_3)$ immediately follows.
Hence, it suffices to check that $g_3(x) \geq g_2(x)-\delta$ for all $x \in \R$.
Use Lemma~\ref{lem: deformation of function} again to obtain that for all $x \in \R$,
\begin{align}\label{f-g}
	\begin{split}
		g_3(x)-g_2(x)
		&\geq -n^{-3/2}\,\#\mathcal{S}(1)-\frac{c_*\log n}{n}\,\#\mathcal{S}(2)\\
		&\geq -(2M+1)n^{-1/2}-\frac{c_*\log n}{n}\,\#\mathcal{S}(2),
	\end{split}
\end{align}
since $\#\mathcal{S}(1) \leq (2M+1)n$.
To estimate $\#\mathcal{S}(2)$, note that if $n\in\N$ is large enough, then for any $I=[a,a+1/n) \in \mathcal{S}(2) \cap \mathcal{I}^+$,  since $g_2(x)=0$ on $[-\eta,\eta]$, one has $I-1/n:=[a-1/n,a) \in \mathcal{I}^+$. Moreover, by Lemma \ref{lem:hd}-(ii) with a universal positive constant $C$, for all $x 
\in I-1/n$, we have 
\be{
	\theta_{g_2}(x) \leq  \frac{C(\sup I -x)}{\sqrt{g_2(\sup I)-g_2(x)}}	\leq \frac{2C/n}{\sqrt{n^{-3/2}}} =2 C n^{-1/4}.
} 
 The same inequality holds for  all $x \in I+1/n:=(a,a+1/n] $ with  $I=(a-1/n,a] \in \mathcal{S}(2) \cap \mathcal{I}^-$. Therefore,
if $n\in\N$ is large enough, then
\begin{align*}
	-E(g_2)
	&\leq \sum_{I \in \mathcal{S}(2) \cap \mathcal{I}^+}
	\int_{I-1/n} \log \theta_{g_2}(x)\,\dd x +\sum_{I \in \mathcal{S}(2) \cap \mathcal{I}^-}
	\int_{I+1/n} \log \theta_{g_2}(x)\,\dd x\\
	&\leq \#\mathcal{S}(2)\,	\int_0^{1/n} \log{(2Cn^{-1/4})}\,\dd x\\
	&\leq -\frac{\log n}{8n}\,\#\mathcal{S}(2).
\end{align*}
The above estimate, combined with Propositions~\ref{prop:truncation}--\ref{prop:rellarge}, implies that
\begin{align*}
	\#\mathcal{S}(2) \leq 8E(g_2)\frac{n}{\log n}
	\leq 8(E(f)+2\delta)\frac{n}{\log n}.
\end{align*}
Combining this  with  \eqref{f-g} and the choice of $c_*$ (see \eqref{dom}) yields that for all $x \in \R$,
\begin{align*}
	g_3(x)-g_2(x)
	\geq -(2M+1)n^{-1/2}-8(E(f)+2\delta)c_*
	\geq -\delta.
\end{align*}
\hfill $\square$

\section*{Appendix}
In this appendix, for simplicity, we write $\pp_x$ for $\pp_x^{\rm BM}$, and write $\pp$ for $\pp_0^{\rm BM}$.
\begin{proof} [\bf Proof of Lemma \ref{lem:ning}] 
	We claim that for any $\delta>0$ there exists $c=c(\delta)>0$ such that for any $x\geq \delta$, $a\geq 0$ and  $f \in \kC(1)$ satisfying $f|_{[-\delta,\delta]}\equiv 0$,  
	\ben{ \label{keyning}
		\pp_x(\tau_y \geq f(y)-f(x) \, \forall y \in (-\infty,0]\cup [x+a,\infty)) \geq c \, \pp_x(\tau_y \geq f(y)-f(x) \, \forall y \geq x+a).
	}
	Assuming this claim for a moment, we finish the proof of Lemma \ref{lem:ning}. Let $I \subset \R_+ \setminus [0, \delta]$ with $|I| \leq 1$ and assume that $f|_{I} \equiv \text{const}$. Then, since $f|_{I} \equiv \text{const}$ and $0 \leq f \leq 1$ there exists  $\tilde{c}=\tilde{c}(\delta)>0$ such that for any $x \in I$ we have
	\bea{
		\theta_f(x)&=& \pp_x(\tau_y \geq f(y)-f(x) \quad \forall y  \in \R ) \\
		&=& \pp_x(\tau_y \geq f(y)-f(x) \quad \forall y \in (-\infty,0]\cup [\sup I,\infty))\\
	&\geq& c \, \pp_x(\tau_y \geq f(y)-f(x) \quad \forall y \geq \sup I)\\
		&\geq& c \,\pp_x(\tau_{\sup I} \geq 1)\geq \tilde{c} \, (\sup I-x),
	}
 where we have used  \eqref{keyning} in the third line and  Lemma~\ref{lem:maxbrown} in the last line.
	Now we focus on proving \eqref{keyning}. Let
	\be{
		\kA:=\{ \tau_{-\delta} \geq 1 \}; \quad \kB:=\{\tau_y \geq  f(y)-f(x) \, \, \forall \, y\geq x+a \}; \quad \kC:=\{ \tau_{x+a} \geq 1 \}.
	}
	We claim and prove later an FKG type inequality that 
	\ben{ \label{condfkg}
		\pp_x( \kC^c \cap \kA \mid \kB) \geq \pp_x( \kA \mid \kB) \pp_x(\kC^c \mid \kB).
	}
	This inequality implies that 
	\ben{\label{empock}
		\pp_x(\kC^c\cap  \kA\cap  \kB)=\pp_x(\kC^c\cap  \kA\mid  \kB)\P(\kB) \geq \pp_x(\kA\mid \kB) \pp_x(\kC^c\cap\kB).
	}
Since $\kC \subset \kB$, 
	 \al{
&  \pp_x(\kA\cap\kB)\pp_x(\kC) - \pp_x(\kA\cap\kC)\pp_x(\kB)\\
&= \pp_x(\kA\cap\kC)\pp_x(\kC)+\pp_x(\kA\cap\kB\cap \kC^c)\pp_x(\kC) - \pp_x(\kA\cap\kC)\pp_x(\kC)-\pp_x(\kA\cap\kC)\pp_x(\kB\cap \kC^c) \\
  &=\pp_x(\kC^c\cap\kA\cap\kB)\pp_x(\kC) - \pp_x(\kC^c\cap\kB)\pp_x(\kA \cap\kC).
  }Therefore, by \eqref{empock}, we have 
	\bea{
		\pp_x(\kA \mid \kB) -\pp_x(\kA \mid \kC) &=& \frac{\pp_x(\kA\cap\kB)\pp_x(\kC) - \pp_x(\kA\cap\kC)\pp_x(\kB)}{\pp_x(\kB)\pp_x(\kC)} \\
		&=& \frac{\pp_x(\kC^c\cap\kA\cap\kB)\pp_x(\kC) - \pp_x(\kC^c\cap\kB)\pp_x(\kA \cap\kC)}{\pp_x(\kB)\pp_x(\kC)}\\
		&\geq&  \frac{\pp_x(\kA\mid \kB)\pp_x(\kC^c\cap\kB) \pp_x(\kC)-\pp_x(\kC^c\cap \kB)\pp_x(\kA \cap\kC)}{\pp_x(\kB)\pp_x(\kC)}\\
		&=&\pp_x(\kC^c\mid \kB) \big(\pp_x(\kA\mid \kB)-\pp_x(\kA\mid \kC)\big).
	}
Hence, since $1>\pp_x(\kC^c\mid \kB)$,	\ben{\label{pabc} 
		\pp_x(\kA \mid \kB) \geq \pp_x(\kA \mid \kC).	
	}
	Since $f$ is bounded by $1$ and equals $0$ in $[-\delta,\delta]$, we have
	\bean{ \label{ptafa}
		\pp_x(\tau_y \geq f(y)-f(x) \, \forall y \geq x+a,\, \textrm{and} \, y\leq 0) 	&\geq& \pp_x(\tau_{-\delta} \geq 1, \tau_y \geq f(y)-f(x) \, \forall y \geq x+a) \notag \\
		& =&\pp_x(\kA \cap \kB) =	\pp_x(\kA \mid \kB) \pp_x(\kB) \geq \pp_x(\kA \mid \kC) \pp_x(\kB),
	}
	where for the last inequality we have used \eqref{pabc}.  Moreover,  for all $x>0$,
	\ben{ \label{tda}
		\pp_x(\kA \mid \kC) =\pp(\tau_{-\delta-x} \geq 1 \mid \tau_{a} \geq 1) \geq
		\inf_{b>0} \pp(\tau_{-\delta} \geq 1 \mid \tau_b \geq 1) = \inf_{b>0} \frac{\pp(\tau_{-\delta} \wedge \tau_b  \geq 1)}{\pp(\tau_b \geq 1)}.
	}
	By the strong Markov property,
	\bea{
		\pp(\tau_{-\delta} \wedge \tau_b >1) &\geq& \pp(\tau_{-\delta} \wedge \tau_b \geq 1, \tau_{-\delta/2}< \tau_{b/2})\\
		&=& \pp(\tau_{-\delta} \wedge \tau_b \geq 1 \mid \tau_{-\delta/2}< \tau_{b/2}) \pp (\tau_{-\delta/2}< \tau_{b/2})\\
		&\geq& \pp_{-\delta/2} (\tau_0 \wedge \tau_{-\delta} \geq 1) \pp (\tau_{-\delta/2}< \tau_{b/2})\\
		&=&   \pp_{-\delta/2} (\tau_0 \wedge \tau_{-\delta}>1)\frac{b}{b +\delta},
	}
where we have used that $\pp(\tau_u< \tau_v)=\tfrac{v}{v+|u|}$ if $u<0<v$. 
	Observe that by Lemma~\ref{lem:maxbrown}, 
	\be{
		\pp(\tau_{b} \geq 1)   \asymp  (b\wedge 1).
	}
Thus,	$\displaystyle\inf_{b>0} \frac{\pp(\tau_{-\delta} \wedge \tau_b  \geq 1)}{\pp(\tau_b \geq 1)}$ is a positive constant depending on $\delta$. Together with \eqref{ptafa}  and \eqref{tda}, we have  \eqref{keyning}.
\end{proof}

\begin{proof}[ {\bf  Proof of  \eqref{condfkg} (Conditional FKG inequality)}] 
For simplicity of notation, we set $x=0$ and $f(0)=0$ and focus on proving that 
\ben{
\label{fkg}
		\pp( \kA \cap \kD \mid \kB) \geq \pp( \kA \mid \kB) \pp(\kD \mid \kB),
	}
where 
\be{
\kA:= \{\tau_{-\delta} \geq  1\}; \quad \kB:=\{\tau_y \geq f(y) \,\, \forall \,  y \geq a\};\quad \kD:=\kC^c := \{\tau_{a}<1\};  
} 
with $(B_s)_{s\geq 0}$ being the standard Brownian motion.  Observe that it is sufficient to consider the case where  $f$ is a step function. Indeed, for a non-decreasing function $f$ on $[0,\infty)$ and $\ell\in\N$, we define
$$f_\ell(x):=    \lfloor f(x) 2^\ell \rfloor 2^{-\ell}.$$
Since $f$ is non-decreasing, $f_\ell$ is a step function and $f_\ell(x)$ increases to $f(x)$ as $\ell\to\infty$ for any $x\geq 0$.
Define $\kB_{\ell}$ the corresponding event of $f_{\ell}$, i.e. $\kB_{\ell}=\{\tau_y \geq f_{\ell}(y)  \, \, \forall \, y \geq a \}$. Then $(\kB_{\ell})_{\ell \geq 1}$ is a sequence of  decreasing events that converges to $\kB$. If  $\pp(\cdot\mid \kB_{\ell})$ satisfies the inequality \eqref{fkg} for any $\ell\in\N$, by the dominated  convergence theorem, then so does the measure $\pp(\cdot \mid \kB)$. 
	Therefore, we assume that $f$ is a non-decreasing step function  on $[0,\infty)$ bounded by $1$. 
With some $0< b_1 < \ldots<b_k \leq 1$ and $a  \leq  a_1 < \ldots<a_k$ and $k \in \N$ determined by the step function $f$, we write
	\be{
		\kB=  \left\{\max_{0 \leq s \leq b_1} B_s \leq a_1, \ldots, \max_{0 \leq s \leq b_k} B_s \leq a_k \right \}\quad \textrm{a.s.}
	}	
	
 Let us consider the Gaussian random walk $(S_m)_{m\geq 0}$ with $S_0=0$ and $S_m=X_1+\ldots+X_m$ for $m\geq 1$ where $(X_i)_{i\geq 1}$ is a sequence of i.i.d. standard normals.  We take $n\in \N$ that finally goes to infinity, and we define for $i=\Iintv{1,k}$,
	\be{
		n_i= \lfloor nb_i \rfloor.
	}
	Given  $\beta>0$, 
 let $\pp_{n,\beta}$ be a probability measure on $\R^n$ with the probability  density $p(s) $ which is proportional to 
	\be{
		 q(s):= \exp \left(  \beta \prod_{i=1}^k  \prod_{m=1}^{n_i} \1\left\{ \frac{s_m}{\sqrt{n}} \leq a_i \right\} -\frac{1}{2} \sum_{i=1}^{n} (s_i-s_{i-1})^2 \right), \qquad s=(s_i)_{i=1}^n \in \R^n,
	}
	with the convention  $s_0:=0$.   Since $q$ is integrable, the measure  $\pp_{n,\beta}$ is well defined.  On $\R^n$ we consider the following partial order $s=(s_i)_{i=1}^n \leq s'=(s'_i)_{i=1}^n$ if $s_i \leq s'_i$ for all $i=1,\ldots,n$. Moreover, for $s=(s_i)_{i=1}^n, s'= (s'_i)_{i=1}^n \in \R^n$, we define 
	\be{
		s\vee s' = (s_i\vee s'_i)_{i=1}^n,  \qquad s\wedge s' = (s_i\wedge s'_i)_{i=1}^n,
	}
and
	\bea{
		l(s) :=  \log q(s)= \beta \prod_{i=1}^k  \prod_{m=1}^{n_i} \1\left\{ \frac{s_m}{\sqrt{n}} \leq a_i \right\} -\frac{1}{2} \sum_{i=1}^{n} (s_i-s_{i-1})^2 
		=:\beta l_1(s) + l_2(s).
	} We check that $q$ (or equivalently $p$) satisfies the log-suppermodular inequality, that is for all $s,s' \in \R^n$,
	\ben{ \label{lgsumo}
		l(s\vee s') + l(s \wedge s') \geq l(s) +l(s').
	}
	Indeed,  if $l_1(s)=l_1(s')=1$, then $l_1(s\vee s')=l_1(s\wedge s')=1$, and if $l_1(s)+l_1(s')=1$ then $l_1(s \wedge s') =1$. Hence, in all cases, $l_1(s\vee s') + l_1(s\wedge s') \geq l_1(s)+l_1(s')$. Next, for each $i$, we consider
	\be{
		r_i := (s_{i+1}\vee s'_{i+1}-s_{i}\vee s'_{i})^2 +   (s_{i+1}\wedge s'_{i+1}-s_{i}\wedge s'_{i})^2 - (s_{i+1}-s_i)^2 -(s'_{i+1}-s'_i)^2.
	}
	If either $s_{i+1} \geq s'_{i+1}$ and $s_{i}\geq s'_i$ or $s_{i+1} \leq s'_{i+1}$ and $s_{i}\leq s'_i$, then we have $r_i=0$. If $s_{i+1}\geq s'_{i+1}$ and $s_i \leq s'_i$ then
	\be{
		r_i= (s_{i+1}- s'_{i})^2 +   (s'_{i+1}-s_{i})^2 - (s_{i+1}-s_i)^2 -(s'_{i+1}-s'_i)^2 = 2(s_{i+1}-s'_{i+1})(s_i-s'_i) \leq 0.
	}
	Similarly, $r_i \leq 0$ when  $s_{i+1}\leq s'_{i+1}$ and $s_i \geq s'_i$. In all cases, we have $r_i \leq 0$, and thus 
	\be{
		l_2(s\vee s') +l_2(s\wedge s') -l_2(s) -l_2(s') = -\frac{1}{2} \sum_{i=1}^n r_i \geq 0.
	}
	Therefore, we have \eqref{lgsumo}. Then, by \cite[Proposition 1]{FKG71}, $\pp_{n,\beta}$ satisfies the FKG inequality.  	 Note that 
		$\pp_{n,\beta}$ converges weakly toward $\pp_n ( \cdot \mid  \kB_n )$ as $\beta \rightarrow \infty$,  where  $\pp_n$ is the probability measure of $(S_i)_{i=1}^n$ and 
 \be{
 \kB_n:= \left\{ \max_{0 \leq m \leq n_i} \frac{S_m}{\sqrt{n}} \leq  a_i \, \, \forall \, i=1, \ldots,k \right\}.
 }
	As a consequence, since $\pp_{n,\beta}$ satisfies FKG, by the dominated convergence theorem, so does $\pp_n ( \cdot   \mid  \kB_n)$.  Define 
 \be{
 \kA_n= \left\{ \min_{0 \leq m \leq n} \frac{S_m}{\sqrt{n}} \geq  -\delta \right\}; \qquad \kD_n= \left\{ \max_{0 \leq m \leq n} \frac{S_m}{\sqrt{n}}  \geq a  \right\}.
 }
Observe that the two events $\kA_n$ and $\kD_n$ are increasing, and thus 
\be{
\pp_n(\kA_n \cap \kD_n \mid \kB_n) \geq \pp_n(\kA_n \mid \kB_n) \pp_n(\kD_n \mid \kB_n).
}
	  Recalling the events $\kA, \kB, \kD$ and   $n_i= \lfloor nb_i \rfloor$, since $\kA= \left\{\min_{0 \leq s \leq 1} B_s \geq -\delta \right \}$ and $\kD= \left\{ \max_{0 \leq s \leq 1} B_s \geq a \right \}$ a.s.,  by the  Donsker's invariance principle, $\pp_n(\kA_n \cap \kD_n \mid \kB_n) \rightarrow \pp(\kA \cap \kD \mid \kB)$, $\pp_n(\kA_n  \mid \kB_n) \rightarrow \pp(\kA \mid \kB)$ and $\pp_n(\kD_n  \mid \kB_n) \rightarrow \pp(\kD \mid \kB)$ as $n \rightarrow \infty$. Therefore, \eqref{fkg} holds.	
\end{proof}

\begin{proof}[\bf Proof of Lemma \ref{lem:comp}] Recall that $\ell_{b,a}:=f(b)-f(a)$ and $\tilde{\ell}_{b,a}:=f(b)-\tilde{f}(a)$. We assume that  $\tilde{\ell}_{b,a}$ and $\mathbb{P}_a(\tau_y \geq f(y) -f(a)\quad\forall y \geq b)$ are both positive since otherwise the lemma is trivial. 	As $\tilde{f}(y) \leq f(y)$ for $y\geq b$, it suffices to check that
	\begin{align}\label{gcp}
		\begin{split}
			\frac{\mathbb{P}_a(\tau_y \geq f(y)-\tilde{f}(a)\quad\forall y \geq b \big)}{\mathbb{P}_a(\tau_y \geq f(y)-f(a)\quad\forall y \geq b \big)}
			\geq \left( \frac{\ell_{b,a}}{\tilde{\ell}_{b,a}} \right)^{3/2}.
		\end{split}
	\end{align}
We remark that for all $y \in \R$ and $\ell >0$,
\be{
\{ \tau_y \geq \ell  \} = \{M_{\ell} \leq y   \} \quad \textrm{a.s.,\quad where} \quad  M_{t}:= \max_{0 \leq s \leq t} B_s \, \textrm{ for }t>0.
}
 Therefore, by the Markov property and the fact that $\ell_{y,a}-\ell_{b,a}=f(y)-f(b)$, we have 
	\begin{align*}
		&\mathbb{P}_a(\tau_y \geq f(y)-f(a)\quad\forall y \geq b)\\
		&= \mathbb{E}_a\left[ \1\{ M_{\ell_{b,a}} \leq b\}\,
		\mathbb{P}_a \left( \max_{\ell_{b,a} \leq s \leq \ell_{y,a}} B_s \leq y\quad\forall y\geq b \middle| B_{\ell_{b,a}} \right) \right]
	\\
		&= \mathbb{E} \left[ \1\{ M_{\ell_{b,a}} \leq b-a\}\,
		\mathbb{P}_{B_{\ell_{b,a}}}(M_{f(y)-f(b)} \leq y\quad\forall y\geq b) \right].
	\end{align*}
  By \cite[Proposition~{8.1}]{KarShr91_book},
	for $t>0$, $\beta \geq 0$ and $\alpha \leq \beta$,
	\begin{align*}
		\mathbb{P}(B_t \in \dd \alpha,\,M_t \in \dd \beta)
		=\frac{2(2\beta-\alpha)}{\sqrt{2\pi t^3}} \exp\biggl\{ -\frac{(2\beta-\alpha)^2}{2t} \biggr\}\,\dd \alpha\dd \beta.
	\end{align*}
	It then follows  that
	\begin{align*}
		&\mathbb{P}(\tau_y \geq f(y)-f(a)\quad\forall y \geq b)\\
		&= \int_0^{b-a} \int_{-\infty}^{b-a} \1\{ s \leq t \} \,\frac{2(2t-s)}{\sqrt{2\pi\ell^3_{b,a}}} \exp\left\{ -\frac{(2t-s)^2}{2\ell_{b,a}} \right\} \mathbb{P}_s(M_{f(y)-f(b)} \leq y\quad\forall y\geq b)\,\dd s \dd t.
	\end{align*}
 Using the same argument with  $\tilde{f}(a)$ in pleace of $f(a)$,  by  $\tilde{\ell}_{y,a}-\tilde{\ell}_{b,a}=f(y)-f(b)$, we also have
 \begin{align*}
		&\mathbb{P}(\tau_y \geq f(y)-\tilde{f}(a)\quad\forall y \geq b)\\
		&= \int_0^{b-a} \int_{-\infty}^{b-a} \1\{ s \leq t \} \,\frac{2(2t-s)}{\sqrt{2\pi\tilde{\ell}^3_{b,a}}} \exp\left\{ -\frac{(2t-s)^2}{2\tilde{\ell}_{b,a}} \right\} \mathbb{P}_s(M_{f(y)-f(b)} \leq y\quad\forall y\geq b)\,\dd s \dd t.
	\end{align*}
		Since $\ell_{b,a} \leq \tilde{\ell}_{b,a}$,
  \be{
 \exp\left\{ -\frac{(2t-s)^2}{2{\ell}_{b,a}} \right\} \leq  \exp\left\{ -\frac{(2t-s)^2}{2\tilde{\ell}_{b,a}} \right\}. 
  }
  Combining the last three displays, we get the desired estimate  \eqref{gcp}.
\end{proof}

\section*{Acknowledgements}
 N.K.~was supported by JSPS KAKENHI Grant Number JP20K14332. S. Nakajima is supported by JSPS KAKENHI 22K20344.

\end{document}